\renewcommand{\PrintDOI}[1]{\href{http://dx.doi.org/\detokenize{#1}}{doi: \detokenize{#1}}}
\setlist[enumerate,1]{label=\textup{(\arabic*)}}
\numberwithin{equation}{section}
\theoremstyle{plain}
\newtheorem{theorem}[equation]{Theorem}
\newtheorem{thmx}{Theorem}
\newtheorem{corx}[thmx]{Corollary}
\newtheorem{lemma}[equation]{Lemma}
\newtheorem{proposition}[equation]{Proposition}
\newtheorem{corollary}[equation]{Corollary}
\theoremstyle{definition}
\newtheorem{definition}[equation]{Definition}
\theoremstyle{remark}
\newtheorem{remark}[equation]{Remark}
\newtheorem{example}[equation]{Example}
\DeclareMathOperator{\Bis}{Bis}
\DeclareMathOperator{\supp}{supp}
\DeclareMathOperator{\Cu}{Cu}
\newcommand{\OO}{\mathcal O}
\newcommand{\LL}{\mathcal L}
\newcommand{\RR}{\mathcal{R}}
\newcommand{\NN}{\mathcal{N}}
\newcommand{\ZZ}{\mathcal{Z}}
\newcommand{\K}{\mathcal K}
\newcommand{\cl}[1]{\overline{#1}}
\newcommand{\Free}{\mathbb F}
\newcommand{\F}{F}
\newcommand{\E}{\mathbb E}
\newcommand{\EL}{\mathbb{EL}} 
\newcommand*{\alb}{\hspace{0pt}} 
\newcommand*{\nb}{\nobreakdash}
\newcommand*{\Star}{\(^*\)\nobreakdash-}
\newcommand*{\C}{\mathbb C}
\newcommand*{\Z}{\mathbb Z}
\newcommand*{\Q}{\mathbb Q}
\newcommand*{\R}{\mathbb R}
\newcommand*{\N}{\mathbb N}
\newcommand*{\Sphere}{\mathbb S}
\newcommand*{\T}{\mathbb T}
\newcommand*{\Gr}{\mathcal G}
\newcommand{\GrH}{\mathcal H}
\newcommand*{\s}{s}
\newcommand*{\rg}{r}
\newcommand*{\Locmult}{\mathcal{M}_\mathrm{loc}}
\newcommand*{\hull}{\mathrm{I}} 
\newcommand*{\Null}{\mathcal N}
\newcommand*{\Bound}{\mathbb B}
\newcommand*{\Comp}{\mathbb K}
\newcommand*{\Fin}{\mathbb F}
\newcommand*{\red}{\mathrm r}
\newcommand*{\ess}{\mathrm{ess}}
\newcommand*{\diff}{\mathrm d}
\newcommand*{\Sect}{\mathfrak S}
\newcommand*{\Cst}{\mathrm C^*}
\newcommand*{\Mult}{\mathcal M}
\newcommand*{\Cont}{\mathrm C}
\newcommand*{\Contc}{\Cont_\mathrm c} 
\newcommand*{\Borel}{\mathfrak{B}}
\newcommand*{\Meager}{\mathfrak{M}}
\newcommand*{\B}{\mathcal B}
\newcommand*{\defeq}{\mathrel{\vcentcolon=}}
\newcommand*{\congto}{\xrightarrow\sim}
\DeclarePairedDelimiter{\abs}{\lvert}{\rvert}
\DeclarePairedDelimiter{\norm}{\lVert}{\rVert}
\DeclarePairedDelimiterX{\setgiven}[2]{\{}{\}}{#1\,{:}\,\mathopen{}#2}
\newcommand*{\into}{\rightarrowtail}
\newcommand*{\onto}{\twoheadrightarrow}
\begin{document}

\title[Type semigroups and twisted groupoid $\Cst$-algebras]{Type
  semigroups for twisted groupoids\\
  and a dichotomy for groupoid $\Cst$-algebras}

\author{Bartosz Kosma Kwa\'sniewski}
\email{bartoszk@math.uwb.edu.pl}
\address{Faculty of Mathematics\\
  University  of Bia\l ystok\\
  ul.\@ K.~Cio\l kowskiego 1M\\
  15-245 Bia\l ystok\\
  Poland}

\author{Ralf Meyer}
\email{rmeyer2@uni-goettingen.de}
\address{Mathematisches Institut\\
  Georg-August-Universit\"at G\"ottingen\\
  Bunsenstra\ss e 3--5\\
  37073 G\"ottingen\\
  Germany}

\author{Akshara Prasad}
\email{a.prasad@uni-goettingen.de}
\address{Mathematisches Institut\\
  Georg-August-Universit\"at G\"ottingen\\
  Bunsenstra\ss e 3--5\\
  37073 G\"ottingen\\
  Germany}

\thanks{The research of Bartosz Kwa\'sniewski was supported by the
  National Science Centre, Poland, through the WEAVE-UNISONO grant
  no.~2023/05/Y/ST1/00046. We thank Pere Ara
  for bringing~\cite{Ara-Moreno-Pardo:Nonstable_K_for_Graph_Algs} to our attention and a number of valuable comments which improved Corollary~\ref{cor:type_semigroups_for_Exel_Pardo} and Proposition~\ref{prop:reducing_to_graphs}.
  We are very grateful to the anonymous referees for their insightful,
  expert comments, which have improved and refined the presentation in many places.
  Among other things, they resulted in Remarks \ref{rem:Tarski_proof}, \ref{rem:Lsc_Cuntz_semigroup} and the short proof of Lemma~\ref{lem:regular_measure_extension}.
  We thank Diego Mart\'inez for his comments on
  Theorem
  \ref{thm:stably_finite_twisted}.\ref{item:stably_finite_Cartan8} and
  the notion of a precompact bisection in the non-Hausdorff case.
  We also thank Enrique Pardo, Aaron Kettner, Eusebio Gardella and the editors for their remarks and suggestions.
}

\begin{abstract}
  We develop a theory of type semigroups for arbitrary twisted, not
  necessarily Hausdorff \'etale groupoids.  The type semigroup is a
  dynamical version of the Cuntz semigroup.  We relate it to traces,
  ideals, pure infiniteness, and stable finiteness of the reduced
  and essential \(\Cst\)\nb-algebras.  If the reduced
  \(\Cst\)\nb-algebra of a twisted groupoid is simple and the type
  semigroup satisfies a weak version of
  almost unperforation, then the \(\Cst\)\nb-algebra is either
  stably finite or purely infinite.  We apply our theory to Cartan
  inclusions.  We calculate the type semigroup for the possibly
  non-Hausdorff groupoids associated to self-similar group actions
  on graphs and deduce a dichotomy for the resulting Exel--Pardo
  algebras.
\end{abstract}

\subjclass[2010]{06F05, 22A22, 46L55, 46L35}
\maketitle

\section{Introduction}
\label{sec:introduction}

Over the course of a hundred years, the Banach--Tarski
paradox~\cite{Banach1924} has had a huge impact on various fields of
mathematics and inspired countless researchers.  It
led von Neumann to introduce amenable groups, which divided the
world of groups into those that do not allow paradoxical actions,
and those which do. The
border between the two cases is whether a finitely additive measure
exists.  Similarly, von Neumann factors either admit a semifinite
trace (are of Type I or~II) or they are purely infinite (Type~III),
and then every projection~\(p\) is equivalent to its doubling~\(p\oplus p\).
In fact, examples of Type~III factors were constructed by von
Neumann using paradoxical actions of nonamenable groups.  An
analogous dichotomy occurs in the classification of
\(\Cst\)\nb-algebras, namely, a classifiable simple
\(\Cst\)\nb-algebra is either stably finite or purely infinite.  The
classification programme has been a powerful driving force for
operator algebraists in recent decades, and it is surprising how the
spirit of Tarski's Theorem remains present in important parts of this
theory.

Tarski~\cite{Tarski1938} discovered that preordered abelian monoids
are a good framework to study paradoxicality.
Tarski considered monoids with the algebraic preorder,
and he proved a fundamental monoid version of the Hahn--Banach
Theorem: for any element \(x\neq 0\) in a monoid \((S,+)\) that is
not paradoxical, there is a state \(\nu\colon S\to [0,\infty]\) with
\(\nu(x)=1\).  Here \(x\in S\setminus\{0\}\) is \emph{paradoxical}
if \((n+1)x\le n x\) for some \(n\in \N\).
This is applied in~\cite{Tarski1938} to an action of a group~\(G\)
on a set~\(X\) by expanding the action to allow the addition of
equidecomposability types and form a semigroup \(S(G,X)\) -- the so
called \emph{type semigroup}
(see~\cite{Wagon-Tomkowicz:Banach-Tarski_Paradox}).
Since in this construction decompositions into arbitrary
subsets are allowed, \(S(G,X)\) has some useful properties.
For instance,  an element \(x\neq 0\) is
paradoxical in \(S(G,X)\) if and only if it is \emph{properly
  infinite}, that is, \(2x=x\) (see
\cite{Wagon-Tomkowicz:Banach-Tarski_Paradox}*{Corollary~10.21}).
We say that a monoid with this property
\emph{has plain paradoxes}.
This condition has been considered before as property (QQ)
in~\cite{Ortega-Perera-Rordam:CFP_refinement} and is strictly weaker than
\emph{almost unperforation}, which is used in a number of sources (see
\cites{Elliott-Robert-Santiago:The_Cone,
  Gardella-Perera:Cuntz_semigroups, Ortega-Perera-Rordam:CFP_Cuntz,
  Rordam:stable_and_real_rank}).
The referee also pointed out some work by Cotlar and
Aumann on extension of functionals~\cite{Aumann:Erweiterungen_additiv}
that deserves more attention in connection with Tarski's Theorem.

More recent literature studies type semigroups in a
\(\Cst\)\nb-algebraic context, where only decompositions into some open
subsets should be allowed to get elements in the Cuntz semigroups of
groupoid \(\Cst\)\nb-algebras.  This may destroy the property of
having plain paradoxes.  Nevertheless, these type semigroups have
been useful.  For instance, if~\(X\) is a Cantor set with an action
of a group~\(G\) and~\(\OO\) is the family of all compact open subsets
in~\(X\), then a type semigroup was used by
Kerr--Nowak~\cite{Kerr-Nowak:Residually_finite} and by
Sierakowski--R\o rdam~\cite{Rordam-Sierakowski:Purely_infinite} to
study the properties of the dynamical system and the resulting
crossed product \({\Cont_0(X)\rtimes G}\).  In particular, Tarski's
Theorem was applied to characterise when \(\Cont_0(X)\rtimes G\) is
stably finite~\cite{Kerr-Nowak:Residually_finite} and purely
infinite~\cite{Rordam-Sierakowski:Purely_infinite}.  Independently,
B\"onicke--Li~\cite{Boenicke-Li:Ideal} and
Rainone--Sims~\cite{Rainone-Sims:Dichotomy} generalised these
results and the type semigroup to the setting of an ample, Hausdorff
groupoid.  The theory of type semigroups was further developed and applied, for
instance, in \cites{Ara_Bonicke_Bosa_Li:type_semigroup,
  Ara-Exel:Dynamical_systems,
  Pask-Sierakowski-Sims:Quasitraces_finite_infinite,
  Ma:Purely_infinite_groupoids}.   It is closely related to the purely
algebraic theory of type monoids associated to Boolean inverse semigroups
(see \cites{Wehrung:Monoids_Boolean, Ara-Bosa-Pardo-Sims:Separated_graphs}).
In this ample case, the type
semigroup is closely related to \(K\)-theory: it maps in a canonical way to the
Murray-von Neumann monoid of equivalence classes of projections in the
groupoid \(\Cst\)\nb-algebra.  In fact, a method of inducing maps on
rings from states on their \(K_0\)\nb-group using an analogue of
Tarski's Theorem has a long tradition
(see~\cite{Goodearl-Handelman:Rank_K0}).

For a general \(\Cst\)\nb-algebra~\(A\), which may have very few
projections, a good replacement for the Murray--von Neumann semigroup
is the Cuntz semigroup \(W(A)\) constructed from positive elements.
Cuntz~\cite{Cuntz:Dimension_functions} described quasitraces on~\(A\)
through states on~\(W(A)\), in order to ``make available for
\(\Cst\)\nb-algebras the results of Goodearl and
Handelman~\cite{Goodearl-Handelman:Rank_K0}''.  The Cuntz semigroup
turned out to be crucial in Elliott's programme of classifying
nuclear separable simple \(\Cst\)\nb-algebras (alias \emph{Elliott
  \(\Cst\)\nb-algebras}).  Toms~\cite{Toms:On_class} showed that the
classification only works under an additional regularity assumption.
According to the Toms--Winter conjecture, this should be one of the
following conditions: finite nuclear dimension, \(\ZZ\)\nb-stability
or strict comparison.  Nowadays, it is known that finite nuclear
dimension and \(\ZZ\)\nb-stability are equivalent for
infinite-dimensional, separable, simple, unital, nuclear C*-algebras.
In~\cite{Rordam:stable_and_real_rank} R\o{}rdam proved that every
\(\ZZ\)\nb-stable \(\Cst\)\nb-algebra~\(A\) has almost
unperforated~\(W(A)\), and that almost unperforation implies strict
comparison for exact simple unital~\(A\).  The latter
implication used a characterisation of stable domination in~\(W(A)\) in
terms of states on~\(W(A)\), which was made explicit in
\cite{Ortega-Perera-Rordam:CFP_Cuntz}*{Proposition~2.1} but it
appeared in proofs of \cite{Rordam:On_simple_II}*{Proposition 3.1},
\cite{Rordam:stable_and_real_rank}*{Proposition 3.2}.  It is
attributed to Goodearl--Handelman~\cite{Goodearl-Handelman:Rank_K0} in
these sources.  However, we believe it is much closer to Tarski's
Theorem.  Almost unperforation is closely related to the dichotomy
between pure infiniteness and stable finiteness for simple
\(\Cst\)\nb-algebras.  R\o{}rdam's examples in
\cite{Rordam:finite_and_infinite}*{Theorem~6.10} show that this
dichotomy may fail in general.

The theory of Cuntz semigroups suggests a definition of type
semigroups for actions on arbitrary spaces, not necessarily totally
disconnected.  Ma introduced in~\cite{Ma:Purely_infinite_groupoids} a
relevant semigroup generated by all open subsets in the unit space
of a Hausdorff locally compact \'etale groupoid, and before that he considered this semigroup
coming from discrete group actions \cite{Ma:type_semigroups_comparison}.  In the present
article, we improve upon his work and push this idea much further.
We consider an arbitrary \'etale groupoid~\(\Gr\) with a locally
compact Hausdorff unit space~\(X\).  So we allow~\(\Gr\) to be
non-Hausdorff.  In our construction, our definition of the type
semigroup \(S_\B(\Gr)\) depends on a set of bisections~\(\B\)
that implement the ``decompositions,'' which we call an
\emph{inverse semigroup basis} for~\(\Gr\) in
Definition~\ref{def:isg_basis}.  The flexibility of this choice is
very useful.  In the ample case, a natural choice for~\(\B\) is the set
of all compact open bisections; then~\(S_\B(\Gr)\) is the ordered
quotient of the type semigroup as defined
in~\cites{Boenicke-Li:Ideal, Rainone-Sims:Dichotomy}.  In general,
to relate \(S_\B(\Gr)\) with functions on the groupoid, we assume
that~\(\B\) consists of \(\sigma\)\nb-compact subsets.  In the twisted
case, we also need that the twist becomes trivial on the
subsets in~\(\B\).  These assumptions allow to build a canonical
homomorphism \(S_\B(\Gr)\to W(\Cst(\Gr,\LL))\).  Since the Cuntz
semigroup is natural \cites{Antoine-Perera-Thiel:Tensor_products_Cu, Combes-Zettl:Order_traces}, we may replace \(\Cst(\Gr,\LL)\) by any of its
quotients, such as the reduced \(\Cst\)\nb-algebra
\(\Cst_\red(\Gr,\LL)\) or the essential \(\Cst\)\nb-algebra
\(\Cst_\ess(\Gr,\LL)\).  Yet another natural assumption is
that~\(\B\) consists of bisections that are \emph{precompact} in the
sense that their ranges and sources are precompact subsets of~\(X\),
as some structural results
hold only in this case.  A crucial ingredient in the
construction and analysis of \(S_\B(\Gr)\) is the \emph{way-below
  relation}~\(\ll\), also called \emph{compact containment}.  In
particular, if the groupoid is not ample, we need to consider
\emph{regular ideals} and \emph{regular states} on \(S_\B(\Gr)\),
where regularity is defined using the auxiliary relation~\(\ll\).
For instance, we prove the following dynamical analogue of the
correspondence between quasi-traces and functionals on the Cuntz
semigroup (see \cite{Elliott-Robert-Santiago:The_Cone}*{Theorem 4.4},
\cite{Gardella-Perera:Cuntz_semigroups}*{Theorem 6.9}),
which may also be seen as a significant improvement of the
correspondence between regular states and regular groupoid dimension
functions in~\cite{Ma:Purely_infinite_groupoids}*{Theorem 4.11} (see
Theorem~\ref{thm:Riesz_for_monoids}):
\begin{thmx}
  \label{thmx:inducing_traces}
  There is a bijection between regular states on~\(S_\B(\Gr)\) and
  lower semicontinuous traces on~\(\Cst_\red(\Gr,\LL)\) that are
  induced from \(\Cont_0(X)\) through the canonical generalised
  conditional expectation.
\end{thmx}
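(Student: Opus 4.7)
The plan is to establish the bijection by passing through the intermediate data of $\B$-invariant Radon measures on the unit space~$X$. In one direction, let $\tau$ be a lower semicontinuous trace on $\Cst_\red(\Gr,\LL)$ induced from $\Cont_0(X)$ through the canonical generalised conditional expectation $E$, so that $\tau = \mu \circ E$ for some lower semicontinuous weight $\mu$ on $\Cont_0(X)$, that is, a Radon measure on~$X$. Applying the trace identity $\tau(f^*f) = \tau(ff^*)$ to sections $f \in \Contc(\Gr,\LL)$ supported on a single bisection $U \in \B$ forces $\mu(\s(U)) = \mu(\rg(U))$, so that $\mu$ is $\B$-invariant. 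I then set $\nu([U]) \defeq \mu(\s(U))$; additivity on $S_\B(\Gr)$ follows from $\sigma$-additivity of $\mu$ together with the disjoint-union model for sums in the type semigroup, and regularity of $\nu$ with respect to the way-below relation $\ll$ is equivalent to inner regularity of the Radon measure~$\mu$.

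In the reverse direction, given a regular state $\nu$ on $S_\B(\Gr)$, I reconstruct a Radon measure by setting $\mu_\nu(V) \defeq \sup\{\nu([U]) : U \in \B,\ U \subseteq V\}$ for open $V \subseteq X$, then extending by inner regularity to Borel sets. The hypothesis that $\B$ is an inverse semigroup basis consisting of $\sigma$-compact subsets yields countable additivity, while $\B$-invariance of $\mu_\nu$ is built into the construction of $S_\B(\Gr)$. Then $\tau_\nu \defeq \mu_\nu \circ E$ is positive because $E$ and $\mu_\nu$ are, lower semicontinuous because $\mu_\nu$ is Radon, and satisfies the trace identity by reducing via density to the bisection case already verified in the first paragraph. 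The two assignments are visibly mutually inverse, as each is parameterised by the same $\B$-invariant Radon measure on~$X$: from $\tau$ we read off $\mu$ via the restriction to $\Cont_0(X)$, and from $\nu$ we recover $\mu_\nu$ via its values on source sets of bisections.

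The step that requires the most care is verifying the trace identity for $\tau_\nu$ on general positive elements of $\Cst_\red(\Gr,\LL)$. In the non-Hausdorff setting, $E$ does not take values in $\Cont_0(X)$ itself but in a larger space of lower semicontinuous generalised functions on~$X$, so the reduction from an arbitrary $a \in \Cst_\red(\Gr,\LL)$ to sums of elements supported on bisections in $\B$ must be carried out with care; here $\sigma$-compactness of the elements of $\B$ and triviality of the twist $\LL$ on each bisection in $\B$ are exactly the hypotheses that make the approximation argument go through. A secondary obstacle is aligning the lower semicontinuity of $\tau_\nu$ with the regularity of $\nu$ via the relation $\ll$: regularity of $\nu$ must deliver that $\mu_\nu$ is a genuine Radon measure, and conversely inner regularity of $\mu$ for a trace $\tau = \mu \circ E$ must translate back into $\ll$-regularity of the associated state, so that the correspondence is tight on both classes.
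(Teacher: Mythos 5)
Your overall architecture --- routing the bijection through $\Gr$\nb-invariant regular Borel measures on~$X$ --- is exactly the paper's (Proposition~\ref{prop:states_and_traces} combined with Theorem~\ref{thm:Riesz_for_monoids}), and your first direction (reading off invariance of $\mu$ from the trace identity applied to sections supported on single trivialising bisections) is carried out the same way there. However, two of the steps you describe are genuine gaps rather than routine verifications. First, the passage from a regular state $\nu$ to a Borel measure: the set function $U \mapsto \nu([1_U])$ is defined only on the lattice $\OO$, which is merely a basis closed under finite unions, and $X$ need not be second countable, so neither Carath\'eodory's theorem nor ``extending by inner regularity'' applies off the shelf; your claim that $\sigma$\nb-compactness of the elements of $\B$ yields countable additivity is unsubstantiated and is not how the extension actually works. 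The paper isolates this step as a standalone result (Theorem~\ref{thm:regular_measure_extension}, a Riesz-type extension theorem for regular dimension functions on a basis, proved by adapting Rudin's argument) and explicitly flags it in the introduction as the reason Appendix~\ref{sec:measures} exists. You also do not address why $\nu$ is well defined and order-preserving on $S_\B(\Gr)$, i.e.\ why $f \precsim_\B g$ forces $\int f\,\diff\mu \le \int g\,\diff\mu$; this needs the regularity of $\mu$ together with the compact-subset formulation of $\precsim_\B$.

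Second, and more seriously, verifying the trace identity for $\tau_\nu = \overline{\tau}\circ\E$ on general positive elements by ``reducing via density to the bisection case'' does not work: a lower semicontinuous trace is not continuous, so knowing $\tau_\nu(f^* * f) = \tau_\nu(f * f^*)$ on the dense \Star{}subalgebra $\Sect(\Gr,\LL)$ does not transfer to the completion, and a general element of $\Cst_\red(\Gr,\LL)$ is not a finite sum of sections supported on bisections in any case. The paper's mechanism is exact rather than approximate: it uses the injective contraction $j\colon \Cst_\red(\Gr,\LL)\to\Borel(\Gr,\LL)$ into bounded Borel sections, under which $\E(f^* * f)(x) = \sum_{\s(\gamma)=x}\norm{j(f)(\gamma)}^2$ and $\E(f * f^*)(x) = \sum_{\rg(\gamma)=x}\norm{j(f)(\gamma)}^2$ pointwise, and then concludes by integrating and invoking the characterisation of $\Gr$\nb-invariance as $\int\sum_{\s(\gamma)=x}h(\gamma)\,\diff\mu(x) = \int\sum_{\rg(\gamma)=x}h(\gamma)\,\diff\mu(x)$ for all nonnegative Borel functions $h$ on $\Gr$ (Lemma~\ref{lem:G-invariant_measure}). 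You would need to supply this argument, or an equivalent one, to close the proof.
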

A proof of Theorem \ref{thmx:inducing_traces} in full generality needs
a measure extension result  for  deficient topological measures from \cite{Butler:Deficient_topological_measures},
which we learned from a referee.
We need to use the reduced \(\Cst\)\nb-algebra in
Theorem~\ref{thmx:inducing_traces} because it is not clear how to
induce traces on the essential algebra~\(\Cst_\ess(\Gr,\LL)\).  Pure
infiniteness and simplicity criteria, however, work for
\(\Cst_\ess(\Gr,\LL)\) rather than for \(\Cst_\red(\Gr,\LL)\) (see
\cites{Kwasniewski-Meyer:Pure_infiniteness,
  Kwasniewski-Meyer:Essential}).
Hence we get best results when \(\Cst_\red(\Gr,\LL)=
\Cst_\ess(\Gr,\LL)\).
This is automatic when~\(\Gr\) is Hausdorff, but also holds in a
number of non-Hausdorff cases.
There has been some recent progress about when this happens after we
completed this work (see, for instance, \cite{Hume:Zero_singular_ideal}).

The following sample of results follows from Theorems
\ref{the:purely_infinite_semigroup}
and~\ref{thm:stably_finite_twisted}, and Corollaries
\ref{cor:pure_infinite_implies_paradoxical}
and~\ref{cor:dichotomy_for_topologically_free_groupoids}.
All the more technical notation will be explained below.

\begin{thmx}
  \label{thmx:sample_results}
  Let \((\Gr,\LL)\) be a twisted \'etale groupoid and let~\(\B\) be
  an inverse semigroup basis for~\(\Gr\) that consists of precompact
  \(\sigma\)\nb-compact bisections that trivialise the
  twist~\(\LL\).
  \begin{enumerate}
  \item\label{enu:sample_results1}%
    If~\(\Cst_\red(\Gr,\LL)\) is simple, it is stably finite if and
    only if~\(S_\B(\Gr)\) has a nontrivial state.

  \item\label{enu:sample_results2}%
    Suppose that~\(\Gr\) is residually topologically free and
    \((\Gr,\LL)\) is essentially exact.  There is a natural
    bijection between regular ideals in the type
    semigroup~\(S_\B(\Gr)\) and ideals in the essential
    \(\Cst\)\nb-algebra \(\Cst_\ess(\Gr,\LL)\).

  \item\label{enu:sample_results3}%
    Suppose, in addition to the assumption
    of~\ref{enu:sample_results2}, that there are only finitely many
    regular ideals in \(S_\B(\Gr)\) or that these ideals can be
    separated by the compact open subsets in \(\B \cap 2^X\).  Then
    if~\(S_\B(\Gr)\) is purely infinite, the
    \(\Cst\)\nb-algebra~\(\Cst_\ess(\Gr,\LL)\) is purely
    infinite.

  \item\label{enu:sample_results4}%
    If~\(S_\B(\Gr)\) is almost unperforated
    and~\(\Cst_\red(\Gr,\LL)\) is purely infinite,
    then~\(S_\B(\Gr)\) is purely infinite.
  \item\label{enu:sample_results5}%
    Assume that~\(\Gr\) is topologically free and that~\(S_\B(\Gr)\)
    has plain paradoxes.  If~\(\Cst_\red(\Gr,\LL)\) is
    simple, then it is either purely infinite or stably finite.
  \end{enumerate}
\end{thmx}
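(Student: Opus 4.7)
The plan is to derive each of the five assertions from more technical results stated later in the paper. The hypotheses on $(\Gr,\LL)$ and $\B$---precompact, $\sigma$\nb-compact bisections that trivialise $\LL$ and form an inverse semigroup basis---are precisely those needed to invoke those results, so the main task is to match up definitions and chain equivalences rather than to do new analysis.

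For \ref{enu:sample_results1}, I would combine Theorem~\ref{thmx:inducing_traces} with the standard fact that a simple $\Cst$\nb-algebra is stably finite if and only if it admits a nonzero lower semicontinuous (possibly densely defined) trace. Theorem~\ref{thmx:inducing_traces} identifies such traces, when induced from $\Cont_0(X)$, with regular states on $S_\B(\Gr)$; under simplicity of $\Cst_\red(\Gr,\LL)$, every nonzero lower semicontinuous trace is automatically of this induced form. The statement allows arbitrary nontrivial states rather than regular ones, so a density/averaging argument along the way-below relation is needed to pass from a general state to a regular one; this is the content of Theorem~\ref{thm:stably_finite_twisted}.

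For \ref{enu:sample_results2}, residual topological freeness together with essential exactness yields the by-now-standard bijection between ideals in $\Cst_\ess(\Gr,\LL)$ and open invariant subsets of $X$. On the semigroup side, regular ideals in $S_\B(\Gr)$ correspond to the same open invariant subsets once one unpacks the construction of $S_\B(\Gr)$ from the bisections in $\B$. Part~\ref{enu:sample_results3} is then obtained by checking pure infiniteness of $\Cst_\ess(\Gr,\LL)$ ideal by ideal: finiteness of the lattice of regular ideals, or separation by compact open subsets in $\B\cap 2^X$, ensures that pure infiniteness of each element of $S_\B(\Gr)$ transfers to proper infiniteness of its image in the Cuntz semigroup of $\Cst_\ess(\Gr,\LL)$. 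Both \ref{enu:sample_results2} and \ref{enu:sample_results3} are packaged in Theorem~\ref{the:purely_infinite_semigroup}.

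For \ref{enu:sample_results4}, I would pull back pure infiniteness via the canonical homomorphism $S_\B(\Gr)\to W(\Cst_\red(\Gr,\LL))$. If $\Cst_\red(\Gr,\LL)$ is purely infinite, every nonzero element of its Cuntz semigroup is properly infinite. Combined with almost unperforation of $S_\B(\Gr)$ and R\o{}rdam's version of Tarski's Theorem from Appendix~\ref{sec:Rordam-Tarski}, this forces $2x\le x$ already in $S_\B(\Gr)$ for every nonzero $x$, i.e.\ pure infiniteness of the semigroup; this is what Corollary~\ref{cor:pure_infinite_implies_paradoxical} should package. Finally, \ref{enu:sample_results5} combines the earlier parts with the fact that topological freeness makes $\Cst_\red(\Gr,\LL)=\Cst_\ess(\Gr,\LL)$: if $\Cst_\red(\Gr,\LL)$ is simple and not stably finite, \ref{enu:sample_results1} rules out any nontrivial state on $S_\B(\Gr)$, so Tarski's Theorem makes every nonzero element paradoxical; plain paradoxes upgrade paradoxicality to proper infiniteness, hence $S_\B(\Gr)$ is purely infinite, and \ref{enu:sample_results3} then lifts this to pure infiniteness of $\Cst_\red(\Gr,\LL)$. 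The hardest step I anticipate is \ref{enu:sample_results3}: even granted the ideal bijection of \ref{enu:sample_results2}, transferring pure infiniteness between a potentially very large lattice of ideals on the semigroup side and $\Cst_\ess(\Gr,\LL)$ requires the extra hypothesis to avoid limit pathologies, and the non-Hausdorff case demands care with the generalised conditional expectation, where the gap between $\Cst_\red$ and $\Cst_\ess$ is genuine.
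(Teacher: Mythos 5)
Your overall strategy---deriving each part from the later technical theorems---is exactly what the paper does (it states that Theorem~\ref{thmx:sample_results} follows from Theorems \ref{the:purely_infinite_semigroup} and~\ref{thm:stably_finite_twisted} and Corollaries \ref{cor:pure_infinite_implies_paradoxical} and~\ref{cor:dichotomy_for_topologically_free_groupoids}), and your treatment of \ref{enu:sample_results2}, \ref{enu:sample_results3} and \ref{enu:sample_results5} matches the paper's route, modulo two slips: in \ref{enu:sample_results5} it is simplicity of \(\Cst_\red(\Gr,\LL)\), not topological freeness, that forces \(\Cst_\red(\Gr,\LL)=\Cst_\ess(\Gr,\LL)\) (the kernel of the quotient map is an ideal in a simple algebra); and in \ref{enu:sample_results4} you cannot literally ``pull back'' proper infiniteness along the canonical map \(S_\B(\Gr)\to W(\Cst_\red(\Gr,\LL))\), since that map is order-preserving but not an order embedding --- the actual mechanism of Corollary~\ref{cor:pure_infinite_implies_paradoxical} is the contrapositive: a non-properly-infinite element yields, via almost unperforation and Proposition~\ref{prop:regular_Tarski}, a nontrivial \emph{regular} state, hence a tracial state on a hereditary subalgebra, contradicting pure infiniteness of the \(\Cst\)-algebra.

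The genuine gap is in \ref{enu:sample_results1}. For the implication ``stably finite \(\Rightarrow\) nontrivial state'' you invoke the ``standard fact'' that a simple stably finite \(\Cst\)-algebra admits a nonzero lower semicontinuous trace, and then claim that under simplicity every such trace is induced from \(\Cont_0(X)\). Neither step is available here: producing genuine traces (rather than \(2\)-quasitraces) on stably finite simple \(\Cst\)-algebras requires exactness, which is not assumed; and the paper explicitly remarks after Proposition~\ref{prop:states_and_traces} that traces are only known to factor through~\(\E\) under extra hypotheses such as principality or almost finiteness --- simplicity alone does not suffice. The paper's proof of Theorem~\ref{thm:stably_finite_twisted} avoids traces entirely in this direction: if \(S_\B(\Gr)\) had a paradoxical element, then Proposition~\ref{prop:Blackadar_Cuntz_stable_finite} (using the order-preserving map into the Cuntz semigroup, precompactness of the supports, and the Blackadar--Cuntz theorem for stable simple \(\Cst\)-algebras) would produce an infinite projection in \(\Cst_\red(\Gr,\LL)\otimes\Comp\); so stable finiteness forces every nonzero element of \(S_\B(\Gr)\) to be non-paradoxical, and Tarski's Theorem (Theorem~\ref{the:Tarski}) then supplies the state. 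Your trace-theoretic route would need nuclearity or exactness plus a principality-type assumption, neither of which is in the hypotheses.
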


The dichotomy result in~\ref{enu:sample_results5} generalises results
of Rainone--Sims \cite{Rainone-Sims:Dichotomy}*{Theorem 7.4} and
B\"o{}nicke--Li \cite{Boenicke-Li:Ideal}*{Theorem~D}.
The latter require the groupoid to be Hausdorff and ample, and do not
consider twists.
A similar statement for locally compact Hausdorff \'etale groupoids,
without a twist, appears in
\cite{Ma:Purely_infinite_groupoids}*{Theorem 6.11} but it assumes
dynamical comparision, and is not using type semigroups.
The main ingredients that go into proving~\ref{enu:sample_results5} are
\ref{enu:sample_results1}, \ref{enu:sample_results3}
and~\ref{enu:sample_results4}.
The type semigroup characterisation of stable finiteness
in~\ref{enu:sample_results1} generalises of
\cite{Boenicke-Li:Ideal}*{Theorem~E} and of
\cite{Rainone-Sims:Dichotomy}*{Theorem 6.5}, both of which require the
groupoid to be ample, Hausdorff and have compact unit space.
A relevant result here is also
\cite{Kerr-Nowak:Residually_finite}*{Theorem 5.2}, which is stated for
minimal actions of free groups.
Pure inifiniteness criteria in \ref{enu:sample_results3} are a far
reaching generalisation of \cite{Boenicke-Li:Ideal}*{Corollary~C}, and
together with \ref{enu:sample_results4}, also of
\cite{Rainone-Sims:Dichotomy}*{Theorem 7.3}.
The statement \ref{enu:sample_results4} is a partial converse to
\ref{enu:sample_results3}.
Finally, \ref{enu:sample_results2} obtains a bijection between ideals
in the type semigroup and in \(\Cst_\ess(\Gr,\LL)\), which may be
viewed as a far-reaching generalisation of \cite{Boenicke-Li:Ideal}*{Theorem 3.10}.

To the best of our knowledge, type semigroups were not applied to
twisted dynamics before.  Allowing twists is important, as it allows
to apply our theory to Renault's Cartan \(\Cst\)\nb-inclusions
\(A\subseteq B\).  The class of \(\Cst\)\nb-algebras admitting
Cartan subalgebras is vast, and it contains all classifiable
\(\Cst\)\nb-algebras (see~\cite{Li:Classifiable_Cartan}).  For any
Cartan inclusion \(A\subseteq B\), we construct an ordered monoid
\(W(A,B)\) that can be used to study properties of~\(B\).  In
particular, applying~\ref{enu:sample_results5} in
Theorem~\ref{thmx:sample_results} we get the following version of
the dichotomy known to hold for simple \(\ZZ\)\nb-stable
\(\Cst\)\nb-algebras (see
Corollary~\ref{cor:Cartan_type_semigroup}):

\begin{corx}
  Let \(A\subseteq B\) be a Cartan inclusion such that the
  associated semigroup \(W(A,B)\) has plain paradoxes.
  Assume that~\(B\) is simple or, equivalently, that \(W(A,B)\) is
  simple.  Then~\(B\) is either properly infinite or stably finite.
\end{corx}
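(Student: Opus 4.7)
The plan is to reduce the corollary to Theorem~\ref{thmx:sample_results}\ref{enu:sample_results5} applied to the Weyl twisted groupoid \((\Gr,\LL)\) associated by Renault's theorem to the Cartan pair \(A\subseteq B\). That identification gives \(B \cong \Cst_\red(\Gr,\LL)\) and \(A \cong \Cont_0(\Gr^{(0)})\) with \(\Gr\) a Hausdorff, étale, topologically principal (hence topologically free) groupoid. The monoid \(W(A,B)\) is, by its construction in the Cartan section of the paper, the type semigroup \(S_\B(\Gr)\) for the natural choice of basis~\(\B\) consisting of precompact \(\sigma\)\nb-compact open bisections that trivialise the twist~\(\LL\); this~\(\B\) satisfies all the standing hypotheses of Theorem~\ref{thmx:sample_results}.

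With the identification \(W(A,B) \cong S_\B(\Gr)\) in hand, all hypotheses of Theorem~\ref{thmx:sample_results}\ref{enu:sample_results5} are in place: \(\Gr\) is topologically free, \(S_\B(\Gr)\) has plain paradoxes by the assumed plain paradoxes of \(W(A,B)\), and \(\Cst_\red(\Gr,\LL) \cong B\) is simple. The theorem yields that~\(B\) is purely infinite or stably finite, and since a purely infinite simple \(\Cst\)\nb-algebra is automatically properly infinite, the stated dichotomy follows. For the ``equivalently'' clause asserting that \(B\) is simple iff \(W(A,B)\) is simple, I would appeal to the ideal correspondence of Theorem~\ref{thmx:sample_results}\ref{enu:sample_results2}: Hausdorffness of the Weyl groupoid makes essential exactness automatic because \(\Cst_\red(\Gr,\LL) = \Cst_\ess(\Gr,\LL)\), and topological principality is inherited by closed invariant subsets, yielding residual topological freeness. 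The correspondence then translates absence of nontrivial ideals in~\(B\) into absence of nontrivial regular ideals in \(S_\B(\Gr)\), which is simplicity of the monoid.

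The main obstacle I anticipate is nothing beyond the bookkeeping identification of \(W(A,B)\) with \(S_\B(\Gr)\) for the correct inverse semigroup basis~\(\B\) and the corresponding matching of the orders; this is a definitional matter that the Cartan section of the paper must already pin down. Once that identification is established, the corollary is an immediate application of the two parts of Theorem~\ref{thmx:sample_results} quoted above, with no additional analysis needed on the Cartan side.
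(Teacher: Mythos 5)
Your proposal is correct and follows essentially the same route as the paper: the identification \(W(A,B)\cong S_\B(\Gr)\) for the Weyl twisted groupoid (Proposition~\ref{prop:Cartan_type_semigroup}) followed by the dichotomy of Theorem~\ref{thmx:sample_results}.\ref{enu:sample_results5}, i.e.\ Corollary~\ref{cor:dichotomy_for_topologically_free_groupoids} together with Theorems \ref{the:purely_infinite_semigroup} and~\ref{thm:stably_finite_twisted}. The only cosmetic difference is in the ``equivalently'' clause, where the paper argues directly that simplicity of \(S_\B(\Gr)\) is equivalent to minimality of~\(\Gr\) (Corollary~\ref{cor:minimal_implies_simple}) and hence to simplicity of~\(B\) since~\(\Gr\) is Hausdorff and topologically free, whereas you route through the regular-ideal correspondence; both reduce to the same facts.
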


If a \(\Cst\)\nb-algebra with finite ideal structure is purely
infinite, then its ideals are separated by projections, which
is also called the ideal property.
In the presence of the ideal property, pure infiniteness is equivalent
to strong pure infiniteness, which is equivalent to
\(\mathcal{O}_\infty\)\nb-\alb{}stability under some extra conditions
(see~\cite{Kirchberg-Rordam:Infinite_absorbing}).
In general, it is not known whether strong pure infiniteness and pure
infiniteness are equivalent.
Our techniques to prove that \(\Cst\)\nb-algebras with infinitely many
ideals are purely infinite only work if we assume some projections to
exist (see Theorem~\ref{thmx:sample_results}.\ref{enu:sample_results3}
or Theorem~\ref{the:purely_infinite_semigroup}), and they imply the
ideal property at the same time.
For instance, we propose the following dichotomy for Kumjian's
\(\Cst\)\nb-diagonals as an analogue of R\o{}rdam's dichotomy for
separable nuclear
\(\Cst\)\nb-algebras~\cite{Rordam:stable_and_real_rank} (see Corollary
\ref{cor:diagonal_type_semigroup}):

\begin{corx}
  Let~\(B\) be a nuclear \(\Cst\)\nb-algebra with a
  \(\Cst\)\nb-diagonal \(A\subseteq B\) of real rank zero, such that
  \(W(A,B)\) is almost unperforated.  Then~\(B\) is either purely
  infinite or has a nontrivial lower semicontinuous trace.
\end{corx}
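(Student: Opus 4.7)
The approach is to model the \(\Cst\)\nb-diagonal inclusion \(A\subseteq B\) by a twisted \'etale groupoid and then apply Theorems~\ref{thmx:inducing_traces} and~\ref{thmx:sample_results}. By Kumjian's reconstruction theorem, there is a principal Hausdorff locally compact \'etale twisted groupoid \((\Gr,\LL)\) with unit space~\(X\) such that \(A=\Cont_0(X)\subseteq \Cst_\red(\Gr,\LL)=B\); since \(\Gr\) is Hausdorff, \(\Cst_\red(\Gr,\LL)=\Cst_\ess(\Gr,\LL)\). The real rank zero hypothesis on the inclusion forces \(X\) to be totally disconnected, so \(\Gr\) is ample. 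I take \(\B\) to be the inverse semigroup basis consisting of all compact open bisections on which \(\LL\) is trivial; these are automatically precompact and \(\sigma\)\nb-compact. By the construction of \(W(A,B)\) in the ample real-rank-zero case, the canonical morphism \(S_\B(\Gr)\to W(A,B)\) is an isomorphism. Thus \(S_\B(\Gr)\) is almost unperforated, and in particular has plain paradoxes.

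I then split according to whether \(S_\B(\Gr)\) admits a nontrivial state. If it does, then in the ample setting the way-below relation \(\ll\) coincides with inclusion on compact open subsets, so every state on \(S_\B(\Gr)\) is regular; Theorem~\ref{thmx:inducing_traces} therefore yields a nontrivial lower semicontinuous trace on \(\Cst_\red(\Gr,\LL)=B\), and we are done. Otherwise, by R\o{}rdam's variation of Tarski's theorem (Appendix~\ref{sec:Rordam-Tarski}) every nonzero element of \(S_\B(\Gr)\) is paradoxical, and plain paradoxes upgrade paradoxicality to proper infiniteness, so \(S_\B(\Gr)\) is purely infinite.

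To transfer pure infiniteness from \(S_\B(\Gr)\) to \(B\) I invoke Theorem~\ref{thmx:sample_results}.\ref{enu:sample_results3}. Its hypotheses all hold in our setting: \(\Gr\) is principal, hence residually topologically free; \(\Gr\) is Hausdorff and principal with nuclear \(\Cst_\red(\Gr,\LL)=B\), which forces \(\Gr\) to be (topologically) amenable, so \((\Gr,\LL)\) is essentially exact; and since \(X\) is totally disconnected, the regular ideals of \(S_\B(\Gr)\), which correspond to \(\Gr\)\nb-invariant open subsets of~\(X\), are separated by the compact open subsets in \(\B\cap 2^X\). We conclude that \(B=\Cst_\ess(\Gr,\LL)\) is purely infinite in this case, completing the dichotomy.

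The main obstacles I expect to encounter are (i) verifying the identification \(W(A,B)\cong S_\B(\Gr)\) for \(\Cst\)\nb-diagonals of real rank zero, which requires unpacking the definition of \(W(A,B)\) and checking that in the ample case it reduces to the dynamical type semigroup of this paper, and (ii) the passage from nuclearity of~\(B\) to essential exactness of \((\Gr,\LL)\), which relies on the standard amenability results for principal Hausdorff \'etale groupoids together with the fact that for Hausdorff groupoids essential exactness reduces to exactness. Once these two inputs are in place, the remainder is a clean application of the machinery assembled in Theorems~\ref{thmx:inducing_traces} and~\ref{thmx:sample_results}.
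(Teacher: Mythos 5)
Your overall strategy is essentially the paper's own (Corollary~\ref{cor:diagonal_type_semigroup}, of which the stated result is the special case where real rank zero of~\(A\) guarantees that projections separate the restricted ideals): model the diagonal by a principal Hausdorff twisted groupoid, use nuclearity to get amenability and hence exactness, run the Tarski dichotomy on the type semigroup, and transfer pure infiniteness via Theorem~\ref{the:purely_infinite_semigroup}. The step that does not survive scrutiny is the claimed isomorphism \(S_\B(\Gr)\cong W(A,B)\) for \(\B\) the compact open bisections. Proposition~\ref{prop:Cartan_type_semigroup} defines \(W(A,B)\) as \(S_{\B'}(\Gr)\) for the strictly larger basis \(\B'\) of \emph{all} \(\sigma\)\nb-compact precompact bisections trivialising~\(\LL\), whose idempotent lattice \(\OO'\) consists of all \(\sigma\)\nb-compact precompact open subsets of~\(X\), not only the compact open ones. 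The canonical map \(S_\B(\Gr)\to S_{\B'}(\Gr)\) need not be surjective even in the ample case: already for the degenerate diagonal \(A=B=\Cont(X)\) with \(X\) the Cantor set, \(S_\B(X)=\Contc(X,\N)\), whereas \(W(A,A)=\F(\OO')\) contains the class of \(1_U\) for \(U\) open, \(\sigma\)\nb-compact and not clopen; since \(\precsim\) reduces to \(\le\) for the trivial groupoid, that class is not represented by any continuous function. So the verification you flag as obstacle~(i) — that \(W(A,B)\) ``reduces to'' the compact-open type semigroup — would in fact fail.

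The damage is limited, for two reasons. First, the paper avoids the basis change altogether: it applies Theorems~\ref{the:purely_infinite_semigroup} and~\ref{the:purely_infinite_semigroup_dichotomy} directly to \(W(A,B)=S_{\B'}(\Gr)\), invoking hypothesis~\ref{enu:pure_infiniteness_groupoid3} of Theorem~\ref{the:purely_infinite_semigroup} (the compact open subsets in \(\OO'\) separate the invariant open subsets — exactly what real rank zero buys) rather than hypothesis~\ref{enu:pure_infiniteness_groupoid2}; the price is that states on \(S_{\B'}(\Gr)\) are no longer automatically regular, so where you argue ``every state is regular'' one must instead use the regularisation of states under almost unperforation (Corollary~\ref{cor:regular_states_for_unperforated}) before feeding the state into Theorem~\ref{thm:Riesz_for_monoids}. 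Second, your route is repairable: in the ample case every \(\sigma\)\nb-compact precompact bisection is an increasing union of compact open bisections, and a compactness argument with Proposition~\ref{pro:compactly_contained} shows that \(S_\B(\Gr)\to W(A,B)\) is an order embedding; that is enough to pull almost unperforation back to \(S_\B(\Gr)\), and your dichotomy — carried out on \(S_\B(\Gr)\) rather than on \(W(A,B)\) — then goes through as written. One of these two fixes must be supplied; as stated, the asserted isomorphism is false.
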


\(\Cst\)\nb-algebras with groupoid models which are not necessarily Hausdorff
have been receiving increased attention in recent years.
An important class of examples of \(\Cst\)\nb-algebras are those
coming from self-similar groups by
Nekrashevych~\cite{Nekrashevych:Cstar_selfsimilar}, which were
generalised by Exel--Pardo~\cite{Exel-Pardo:Self-similar} to
self-similar group actions.  Their motivation was to cover a class
of examples by Katsura~\cite{Katsura:Actions_Kirchberg}, which gave
models for all Kirchberg algebras.  These \(\Cst\)\nb-algebras are
all groupoid \(\Cst\)\nb-algebras, but the underlying groupoids may
fail to be Hausdorff even in very classical examples such as the
Grigorchuk group.  For a self-similar action of a group~\(\Gamma\)
on a row-finite graph~\(E\) with no sources, we calculate the
relevant type semigroup \(W(\Gamma,E)\) and we show that it is isomorphic
to the type semigroup \(W(E_\Gamma)\) of the ``quotient graph'' \(E_\Gamma\) constructed by Larki~\cite{Larki:dichotomy_for_self-similar_graphs}.
Using this and a result from \cite{Ara-Moreno-Pardo:Nonstable_K_for_Graph_Algs} we conclude
that the
monoid \(W(\Gamma,E)\) is always unperforated (and so  has plain paradoxes). As a consequence, we get the following dichotomy for
Exel--Pardo algebras
(see Theorem~\ref{thm:Dichotomy_Exel_Pardo}):

\begin{corx}
  Let \((\Gamma,E)\) be a self-similar action of a discrete
  group~\(\Gamma\) on a row-finite graph~\(E\) with no sources.  If
  the Exel--Pardo algebra~\(\OO_{(\Gamma,E)}\) is simple, then it is
  either purely infinite or stably finite.
\end{corx}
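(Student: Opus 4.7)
The plan is to realise the Exel--Pardo algebra $\OO_{(\Gamma,E)}$ as the reduced $\Cst$-algebra of an ample \'etale groupoid $\Gr_{(\Gamma,E)}$, the Exel--Pardo groupoid of the self-similar action $(\Gamma,E)$, and then to invoke Theorem~\ref{thmx:sample_results}.\ref{enu:sample_results5}. This groupoid need not be Hausdorff, but as an inverse semigroup basis~$\B$ I would take all compact open bisections, which are automatically precompact and $\sigma$\nb-compact, and since there is no twist the triviality condition on~$\B$ is vacuous.

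First, I would translate the hypothesis that $\OO_{(\Gamma,E)}$ is simple into the conditions required by Theorem~\ref{thmx:sample_results}.\ref{enu:sample_results5}. Simplicity of $\Cst_\red(\Gr_{(\Gamma,E)})$ forces its agreement with $\Cst_\ess(\Gr_{(\Gamma,E)})$, and in the Exel--Pardo setting it is equivalent to $\Gr_{(\Gamma,E)}$ being topologically free together with minimal; minimality in turn corresponds to cofinality of the quotient graph $E/\Gamma$.

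Next, I would invoke the computation announced in the excerpt: the type semigroup $W(\Gamma,E)=S_\B(\Gr_{(\Gamma,E)})$ is isomorphic to $W(E/\Gamma)$, and this latter monoid has plain paradoxes whenever $E/\Gamma$ is cofinal. Combined with the previous step, this supplies the plain-paradoxes hypothesis for $S_\B(\Gr_{(\Gamma,E)})$.

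With topological freeness of $\Gr_{(\Gamma,E)}$, plain paradoxes of $S_\B(\Gr_{(\Gamma,E)})$, and simplicity of $\Cst_\red(\Gr_{(\Gamma,E)})\cong\OO_{(\Gamma,E)}$ all in hand, Theorem~\ref{thmx:sample_results}.\ref{enu:sample_results5} immediately yields the desired dichotomy. The genuine work lies not in this final assembly but in the two ingredients imported from the body of the paper: the identification $W(\Gamma,E)\cong W(E/\Gamma)$ and the proof that cofinality of $E/\Gamma$ implies plain paradoxes. These are where the self-similar structure and the possible non-Hausdorff features of $\Gr_{(\Gamma,E)}$ interact, and I expect the plain-paradoxes step for $W(E/\Gamma)$ in the cofinal case to be the main obstacle.
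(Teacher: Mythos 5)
Your proposal is correct and follows essentially the same route as the paper: the paper's proof of Theorem~\ref{thm:Dichotomy_Exel_Pardo} likewise deduces minimality and topological freeness of the groupoid from simplicity (via Corollary~\ref{cor:dichotomy_for_universal}, which also collapses the universal, reduced and essential algebras), imports the isomorphism \(W(\Gamma,E)\cong W(E/\Gamma)\) from Corollary~\ref{cor:reducing_to_graphs} and the plain-paradoxes property for cofinal quotient graphs from Proposition~\ref{prop:unperforation_for_minimal_exel_pardo}, and then applies the general dichotomy. You also correctly locate the genuine work in those two imported ingredients.
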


This generalises a theorem of
Larki~\cite{Larki:dichotomy_for_self-similar_graphs}, who assumes
that~\(\Gamma\) is amenable and that the action is ``pseudo-free'';
the latter implies, in particular, that the underlying groupoid is
Hausdorff.  The use of type semigroups clarifies a key step in the
proof in~\cite{Larki:dichotomy_for_self-similar_graphs}.

\smallskip

The paper is organised as follows.  In
Section~\ref{sec:preordered_abelian}, we discuss a generalisation of Tarski's theorem due to R\o{}rdam and study relevant facts in the generality of
preordered monoids.  We also briefly recall generalities about the
Cuntz semigroup.  In Section~\ref{sec:twisted_groupoids} we gather
and prove some basic facts concerning \(\Cst\)\nb-algebras
associated to twisted \'etale (not necessarily Hausdorff) groupoids.
Section~\ref{sec:type_semigroup} introduces the type semigroup.  We
first treat the case of a topological space, and then take the
quotient by an equivalence relation of dynamical nature that depends
on the choice of a family~\(\B\) of bisections of the groupoid.  We
also define the type semigroup in an equivalent way using a
stabilised groupoid, discuss its behavior under Morita equivalence,
and relate it to the Cuntz semigroups of twisted groupoid
\(\Cst\)\nb-algebras.  Section~\ref{sec:ideals_states} is devoted to
regular states and ideals in the type semigroup and their
relationship with the groupoid itself and groupoid
\(\Cst\)\nb-algebras.  In Section~\ref{sec:pure_infiniteness} we
prove our main results on pure infiniteness and stable finiteness.
We also define a type semigroup for a Cartan inclusion.  Finally, in
Section~\ref{sec:self-similar} we calculate the type semigroups for
self-similar actions and apply our results to Exel--Pardo algebras.

\section{Preordered abelian monoids}
\label{sec:preordered_abelian}

Any abelian monoid carries an intrinsic algebraic preorder, and many
other sources work in this setting (see, for instance,
\cites{Ara_Bonicke_Bosa_Li:type_semigroup,Boenicke-Li:Ideal,
  Ara-Exel:Dynamical_systems, Rainone-Sims:Dichotomy,
  Pask-Sierakowski-Sims:Quasitraces_finite_infinite}).  Our work,
however, needs more general partial orders.  To clarify the
relationships and differences, as well as to get a clear picture, we
allow general preordered monoids where possible.

We denote by \(\N=\{0,1,\dotsc\}\) the abelian monoid of natural
numbers starting from zero.  A \emph{preordered
  \textup{(}abelian\textup{)} monoid} is an abelian semigroup~\(S\)
with a neutral element~\(0\) and a preorder relation~\(\le\)
on~\(S\) such that \(0\le x\) for every \(x\in S\), and \(x\le y\)
implies \(x+z\le y+z\) for all \(x,y,z\in S\).  When~\(\le\) is a
partial order, we call~\(S\) an \emph{ordered
  \textup{(}abelian\textup{)} monoid}.  A preordered monoid~\(S\) is
\emph{conical} if \(x\le 0\) implies \(x=0\).  Every abelian
monoid~\(S\) is
a preordered monoid when equipped with the \emph{algebraic order}:
\(x\le y\) if \(x+z=y\) for some \(z\in S\).  Every preordered
monoid~\(S\) induces an ordered monoid
\(\widetilde{S}\defeq S/{\approx}\), where \(x\approx y\) if
\(x\le y\) and \(y\le x\).  Then \([x]+[y]=[x+y]\), and
\([x]\le [y]\) if \(x\le y\).  The map
\(S\ni x\mapsto [x]\in \widetilde{S}\) sends nonzero elements to
nonzero ones if and only if~\(S\) is conical.

An \emph{ideal} in a preordered monoid~\(S\) is a submonoid~\(I\)
such that \(x\le y\in I\) implies \(x\in I\) (this is called an
\emph{order ideal} in~\cite{Ara_Bonicke_Bosa_Li:type_semigroup}).
The ideal generated by \(y\in S\) is
\[
  \langle y \rangle\defeq \setgiven{x\in S}{x \le n\cdot y \text{
      for some } n\in \N}.
\]
We call \(y\in S\setminus\{0\}\) an \emph{order unit} if
\(\langle y \rangle = S\) or, equivalently, if for every \(x\in S\)
there is \(n\in \N\) with \(x\le n y\).  The monoid~\(S\) is
\emph{simple} if it has no nontrivial ideals or, equivalently, if every
nonzero element is an order unit.  Any simple preordered monoid is
conical or satisfies \(\tilde{S} = \{0\}\).

For an ideal~\(I\), the quotient preordered monoid
is defined as \(S/I\defeq S/{\sim}\) where we declare \(x\sim y\) if
and only if \(x+a=y+b\) for some \(a,b\in I\).  Then
\([x]+[y]=[x+y]\), and \([x]\le [y]\) if \(x\le y+a\) for some
\(a\in I\).  Note that \(S/I\) is always conical.
Any quotient of an algebraically preordered monoid by an ideal is
algebraically ordered.

\begin{definition}
  \label{def:paradoxical}
  An element~\(x\) in a preordered monoid~\(S\) is \emph{infinite}
  if it is nonzero and \(x+y\le x\) for some
  \(y\in S\setminus\{0\}\).  Otherwise, \(x\) is \emph{finite}.  We
  call \(x\in S\setminus\{0\}\) \emph{properly infinite} if
  \(2 x\le x\).  We call~\(S\) \emph{purely infinite} if every
  \(x\in S\setminus\{0\}\) is properly infinite and \(S\neq \{0\}\).
\end{definition}

\begin{remark}
  \label{rem:about_infnite_elements}
  If~\(S\) is conical, then an element~\(x\) is (properly) infinite
  in~\(S\) if and only if~\([x]\) is (properly) infinite in the
  ordered monoid~\(\widetilde{S}\).  In an ordered monoid, the
  inequalities \(x+y\le x\) and \(2 x\le x\) are equalities.
\end{remark}

The next lemma is analogous to the \(\Cst\)\nb-algebraic result
\cite{Kirchberg-Rordam:Non-simple_pi}*{Proposition~3.14}.

\begin{lemma}
  \label{lem:infiniteness_ideal}
  Let~\(S\) be a preordered abelian monoid and \(y\in S\).  Then
  \[
    I(y)\defeq\setgiven{z\in S}{y+z \le y}
  \]
  is an ideal in~\(S\)
  contained in~\(\langle y \rangle\).  If \(y\neq 0\) or if~\(S\) is
  conical, then
  \begin{enumerate}
  \item\label{enu:infiniteness_ideal1}%
    \(y\) is infinite if and only if \(I(y)\neq 0\);
  \item\label{enu:infiniteness_ideal2}%
    \(y\) is properly infinite if and only if
    \(I(y)= \langle y \rangle \neq 0\);
  \item\label{enu:infiniteness_ideal3}%
    the image of~\(y\) in \(S/I(y)\) is finite.
  \end{enumerate}
\end{lemma}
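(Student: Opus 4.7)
The plan is to verify the ideal structure first and then tackle \ref{enu:infiniteness_ideal1}--\ref{enu:infiniteness_ideal3} by direct manipulation of the defining inequality $y+z \le y$. Showing $I(y)$ is an ideal is routine: $0 \in I(y)$ by reflexivity; closure under addition follows from $y + z_1 + z_2 \le y + z_2 \le y$; and downward closure is immediate from the compatibility of $\le$ with addition. For the inclusion $I(y) \subseteq \langle y \rangle$, since $0 \le y$, any $z \in I(y)$ satisfies $z \le y + z \le y = 1 \cdot y$.

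For \ref{enu:infiniteness_ideal1}, I would just unwind the definitions: $y$ is infinite iff $y \neq 0$ and there exists $z \neq 0$ with $y + z \le y$, which is exactly $y \neq 0$ together with $I(y) \neq \{0\}$. Under the hypothesis of the lemma, the corner case $y = 0$ with $S$ conical forces $I(0) = \setgiven{z \in S}{z \le 0} = \{0\}$, so both sides of the equivalence fail; when $y \neq 0$, the equivalence is immediate. For \ref{enu:infiniteness_ideal2}, assume first $2y \le y$; a short induction gives $(n+1)y \le y$ for all $n \ge 1$, so $y + x \le y + ny = (n+1)y \le y$ for any $x \le ny$, whence $\langle y \rangle \subseteq I(y)$. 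Combined with the reverse inclusion and $y \in \langle y \rangle$, this yields $I(y) = \langle y \rangle \neq 0$. Conversely, $I(y) = \langle y \rangle \neq 0$ forces $y \neq 0$ (otherwise conicality would force $\langle 0 \rangle = \{0\}$), hence $y \in \langle y \rangle = I(y)$, i.e.\ $2y \le y$.

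For \ref{enu:infiniteness_ideal3}, I have to show that the image $[y] \in S/I(y)$ is finite. Unpacking the quotient's order, $[y] + [z] \le [y]$ means that $y + z + a \le y + b$ for some $a, b \in I(y)$. Chaining,
\[
  y + z \le y + z + a \le y + b \le y,
\]
where the first step uses $0 \le a$ and the last uses $b \in I(y)$. Hence $z \in I(y)$, so $[z] = 0$ in $S/I(y)$, confirming that $[y]$ absorbs no nonzero element and is therefore finite. The argument throughout is essentially bookkeeping; the only subtlety is keeping track of the hypothesis ``$y \neq 0$ or $S$ is conical'' so as to rule out the degenerate case $y = 0$ with nontrivial $\setgiven{z \in S}{z \le 0}$ in the non-conical setting.
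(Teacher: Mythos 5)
Your proof is correct and follows essentially the same route as the paper's: the ideal properties of \(I(y)\) via the same one-line inequalities, part~(2) via the equivalence \(y\in I(y)\Leftrightarrow I(y)=\langle y\rangle\), and part~(3) by the chain \(y+z\le y+z+a\le y+b\le y\) (the paper phrases this as a proof by contradiction, which is the same computation). The only cosmetic difference is that you make the induction \((n+1)y\le y\) explicit where the paper instead appeals to \(I(y)\) being an ideal containing \(y\).
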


\begin{proof}
  If \(x\le z \in I(y)\), then \(y+x\le y+z\le y\) and so
  \(x\in I(y)\).  If \(x,z\in I(y)\), then \(x+z\in I(y)\) because
  \(x+y+z\le y+z\le y\), and \(x\in \langle y\rangle\) because
  \(x\le x+y\le y\).  Hence \(I(y)\) is an ideal contained
  in~\(\langle y\rangle\).  \ref{enu:infiniteness_ideal1} is obvious
  (if~\(S\) is conical, then \(I(y)\neq 0\) implies \(y\neq 0\)).
  \ref{enu:infiniteness_ideal2} holds because a nonzero
  element~\(y\) is properly infinite if and only if \(y\in I(y)\),
  if and only if \(I(y)= \langle y \rangle\).  To
  see~\ref{enu:infiniteness_ideal3} assume that the image of~\(y\)
  in \(S/I(y)\) is infinite.  Then there are
  \(x\in S\setminus I(y)\) and \(z,z'\in I(y)\) such that
  \(x+y +z\le y+z'\).  But then \(x+y\le x +y+z\le y+z' \le y\).  So
  \(x\in I(y)\), a contradiction.
\end{proof}

\begin{lemma}
  \label{lem:residually_infinite}
  For any \(y\in S\setminus\{0\}\) in a preordered monoid~\(S\) the
  following are equivalent:
  \begin{enumerate}
  \item\label{enu:residually_infinite1}%
    \(y\) is properly infinite;
  \item\label{enu:residually_infinite2}%
    for every ideal~\(I\) in~\(S\) with \(y\notin I\), the image
    of~\(y\) in~\(S/I\) is infinite;
  \item\label{enu:residually_infinite3}%
    \(\langle y \rangle=\setgiven{x\in S}{x \le y}\).
  \end{enumerate}
  A  preordered monoid~\(S\) is simple and purely infinite if
  and only if \(\widetilde{S}\subseteq \{0,\infty\}\).
\end{lemma}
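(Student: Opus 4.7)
The plan is to establish \ref{enu:residually_infinite1}$\iff$\ref{enu:residually_infinite3} by a direct inductive unpacking of proper infiniteness, then \ref{enu:residually_infinite1}$\iff$\ref{enu:residually_infinite2} via the ideal $I(y)$ from Lemma~\ref{lem:infiniteness_ideal}, and finally to deduce the characterisation of simple purely infinite monoids from these equivalences combined with basic arithmetic in $\widetilde{S}$.

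For \ref{enu:residually_infinite1}$\iff$\ref{enu:residually_infinite3}, induction starting from $2 y \le y$ gives $n y \le y$ for every $n \ge 1$, which collapses $\langle y \rangle$ onto $\setgiven{x}{x \le y}$; conversely, condition~\ref{enu:residually_infinite3} applied to $x = 2 y$ recovers $2 y \le y$. For \ref{enu:residually_infinite1}$\Rightarrow$\ref{enu:residually_infinite2}, the inequality $2 y \le y$ survives any quotient map $S \to S/I$, so whenever $[y] \neq 0$ it witnesses that $[y]$ is properly infinite and hence infinite in $S/I$.

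The main obstacle will be \ref{enu:residually_infinite2}$\Rightarrow$\ref{enu:residually_infinite1}. The strategy is to feed the hypothesis the specific ideal $I(y) \defeq \setgiven{z}{y + z \le y}$ constructed in Lemma~\ref{lem:infiniteness_ideal}. Part~\ref{enu:infiniteness_ideal3} of that lemma tells us that the image of $y$ in $S/I(y)$ is \emph{finite}; consistency with hypothesis~\ref{enu:residually_infinite2} then forces $y \in I(y)$, which rearranges to $2 y \le y$. Identifying $I(y)$ as the correct universal ideal to feed into \ref{enu:residually_infinite2} is the one nonroutine idea of the argument.

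For the final statement, let $[x], [y]$ be nonzero in $\widetilde{S}$ with $S$ simple and purely infinite. Simplicity produces $x \le n y$ for some $n$ and pure infiniteness gives $n y \le y$, so $[x] \le [y]$; symmetry then yields $[x] = [y]$, and $2[x] = [2 x] = [x]$ shows that the unique nonzero class is absorbing, so $\widetilde{S} \subseteq \{0, \infty\}$. The reverse implication unwinds almost tautologically once one notes that $\infty + \infty = \infty$ is forced by antisymmetry in the ordered monoid $\widetilde{S}$: any nonzero $y \in S$ with $[y] = \infty$ is an order unit (every $[x] \le [y]$ in $\{0, \infty\}$ pulls back to $x \le y$) and is properly infinite (since $2[y] = [y]$).
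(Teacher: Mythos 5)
Your proof is correct and follows essentially the same route as the paper's: \ref{enu:residually_infinite1}\(\Leftrightarrow\)\ref{enu:residually_infinite3} by unwinding \(2y\le y\), and \ref{enu:residually_infinite2}\(\Rightarrow\)\ref{enu:residually_infinite1} by testing the hypothesis against the ideal \(I(y)\) and invoking Lemma~\ref{lem:infiniteness_ideal}.\ref{enu:infiniteness_ideal3} --- exactly the paper's argument, phrased directly rather than contrapositively. The one caveat, which your write-up shares with the paper's own proof, is that the converse of the final claim tacitly assumes no nonzero element is equivalent to \(0\): if some \(y\neq 0\) satisfies \(y\approx 0\) while \(\widetilde{S}=\{0,\infty\}\) has two elements, then \(\langle y\rangle=\setgiven{x\in S}{[x]=0}\) is a proper nontrivial ideal and \(S\) fails to be simple, so strictly speaking that direction needs \(S\) conical (or ``all nonzero elements mutually equivalent'' in place of \(\widetilde{S}\subseteq\{0,\infty\}\)); your treatment, which only checks that elements with \([y]=\infty\) are order units, is no less complete than the original here.
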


\begin{proof}
  If~\(y\) is properly infinite, then its image in~\(S/I\) is
  clearly properly infinite for every ideal~\(I\) in~\(S\) with
  \(y\notin I\).  Hence \ref{enu:residually_infinite1}
  implies~\ref{enu:residually_infinite2}.  Conversely, if~\(y\) is
  not properly infinite, then~\(I(y)\) is a proper subideal
  of~\(\langle y\rangle\) by
  Lemma~\ref{lem:infiniteness_ideal}.\ref{enu:infiniteness_ideal2}.
  In particular, \(y\not\in I(y)\).  The image of~\(y\) is finite
  in~\(S/I(y)\) by
  Lemma~\ref{lem:infiniteness_ideal}.\ref{enu:infiniteness_ideal3}.
  Thus~\ref{enu:residually_infinite2}
  implies~\ref{enu:residually_infinite1}.  Since
  \(2y\in \langle y \rangle\), \ref{enu:residually_infinite3}
  implies \ref{enu:residually_infinite1}.  Conversely, if~\(y\) is
  properly infinite, then \(x+y\le y\) for all
  \(x\in \langle y \rangle\) by
  Lemma~\ref{lem:infiniteness_ideal}.\ref{enu:infiniteness_ideal2}.
  Since \(x\le x+y\), we conclude that \(x\le y\).
  Hence~\ref{enu:residually_infinite1}
  implies~\ref{enu:residually_infinite3}.

  If~\(S\) is simple and purely infinite,
  then~\ref{enu:residually_infinite3}
  implies \(x\le y\) and \(y\le x\) for all
  \(x,y\in S\setminus\{0\}\).  Thus all nonzero elements are
  equivalent, that is, \(\tilde{S} \subseteq \{0,\infty\}\).
  Conversely, in the latter case~\(\tilde{S}\) is purely infinite
  and then so is~\(S\).
\end{proof}

\begin{remark}
  \label{rem:purely_infinite_simple_ordered}
  By the above lemma, an ordered monoid~\(S\) is simple purely
  infinite if and only if \(S\cong \{0,\infty\}\), where~\(\infty\)
  is an idempotent.  Preordered monoids that are purely infinite and
  simple may have a much more complex structure.  For instance,
  the Murray--von Neumann semigroup of any Kirchberg
  \(\Cst\)\nb-algebra is a conical, algebraically ordered refinement
  monoid that is purely infinite and simple.
\end{remark}

Recall that  a preordered monoid~\(S\) is \emph{unperforated} if for all
\(x,y\in S\) we have \(x\le y\) whenever \(nx\le n y\) for some \(n\ge 1\).
We will use weaker versions of this condition in our results.

\begin{definition}
  \label{def:stably_dominated}
  If \((n+1)x\le n y\) for some \(n\ge 1\), we call~\(x\)
  \emph{stably dominated by}~\(y\) and write \(x<_\s y\) (see
  \cite{Ortega-Perera-Rordam:CFP_Cuntz}*{Definition~2.2}).  An
  element \(x\in S\setminus\{0\}\) is \emph{paradoxical} if
  \(x<_\s x\).  The preordered monoid~\(S\) is \emph{almost
    unperforated} if \(x<_\s y\) implies \(x\le y\) for all
  \(x,y\in S\).
\end{definition}

\begin{remark}
  \label{rem:paradoxical_elements}
  Every properly infinite element is paradoxical.  If
  \((n+1)x\le n x\), then \((n+k)x\le n x\) for every \(k\ge 1\).
  In particular, \(x <_\s x\) implies \(2x <_\s x\).  Hence if~\(S\)
  is almost unperforated, then being paradoxical is the same as
  being properly infinite.  If \(x<_\s y\) then~\(x\) is in the
  ideal \(\langle y \rangle\) generated by~\(y\).  Hence, in view of
  Lemma~\ref{lem:residually_infinite}, if~\(S\) is purely
  infinite it is almost unperforated.
\end{remark}

\begin{lemma}\label{lem:paradoxical_implies_properly_infinite}
  An element \(x\in S\) in a conical preordered abelian monoid is
  paradoxical if and only if \(n x\) is properly infinite for some
  \(n\ge 1\).
\end{lemma}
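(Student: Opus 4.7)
The plan is to unwind both definitions and connect them with elementary manipulations in the preorder, exploiting the observation recorded in Remark~\ref{rem:paradoxical_elements} that $(n+1)x \le nx$ propagates to $(n+k)x \le nx$ for all $k \ge 1$.

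For the forward direction, I would start from paradoxicality: $x \ne 0$ and $(n+1)x \le nx$ for some $n \ge 1$. Iterating (or simply quoting the remark) with $k = n$ yields $2(nx) = (n+n)x \le nx$. This is exactly proper infiniteness of $nx$ provided $nx \ne 0$. To secure the latter, I would add $x$ to the defining inequality $0 \le (n-1)x$ to obtain $x \le nx$; conicality of $S$ together with $x \ne 0$ then forces $nx \ne 0$.

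For the converse, assume $nx$ is properly infinite for some $n \ge 1$, so $nx \ne 0$ and $2nx \le nx$. The same chain $x \le nx$ and conicality give $x \ne 0$. The main step is to upgrade $2nx \le nx$ to an inequality of the form $(m+1)x \le mx$: adding $nx$ to $0 \le (n-1)x$ gives $nx \le (2n-1)x$, and transitivity with $2nx \le nx$ produces $2nx \le (2n-1)x$. Setting $m = 2n-1 \ge 1$, this is precisely $x <_\s x$, which together with $x \ne 0$ means $x$ is paradoxical.

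The argument is essentially bookkeeping with the preorder axioms, and I do not anticipate any substantial obstacle. The only point requiring care is preserving the nonvanishing hypotheses in both directions, which is exactly where conicality is needed and explains why that assumption cannot be dropped.
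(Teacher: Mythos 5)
Your proof is correct and takes essentially the same route as the paper's: the same use of \(0 \le (n-1)x\) plus conicality to secure \(nx \ne 0\) in the forward direction, and the same propagation of \((n+1)x \le nx\) to \(2nx \le nx\). Two cosmetic differences only: the paper's converse derives \((n+1)x \le nx\) directly via \((n+1)x \le 2nx \le nx\) rather than passing through \(m = 2n-1\), and in that direction \(x \ne 0\) follows simply from \(nx \ne 0\) (if \(x=0\) then \(nx=0\)), so your appeal to \(x \le nx\) and conicality there is unnecessary.
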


\begin{proof}
  If~\(nx\) is properly infinite, then \(x\neq 0\) and
  \(2nx\le n x\).  This implies \((n+1)x\le n x\), so~\(x\) is
  paradoxical.  If~\(x\) is paradoxical, then \(nx +x\le n x\) for
  some \(n\ge 1\).  Since \(x\neq 0\) and~\(S\) is conical, this
  implies \(nx\neq 0\).  As in Remark~\ref{rem:paradoxical_elements}
  we get \(2n x \le nx\).  Hence~\(nx\) is properly infinite.
\end{proof}

\begin{definition}
  \label{def:state}
  A \emph{state} on an ordered abelian monoid~\(S\) is an additive
  and order-preserving map \(\nu\colon S\to [0,\infty]\) with
  \(\nu(0)=0\) or, equivalently, \(\nu \not\equiv \infty\).  It is
  \emph{faithful} if \(\nu(S\setminus\{0\})\subseteq (0,\infty]\)
  and it is \emph{finite} if \(\nu(S)\subseteq [0,\infty)\).  It is
  \emph{trivial} if it takes only the values \(0\) and~\(\infty\),
  and \emph{nontrivial} otherwise.
\end{definition}

\begin{remark}
  \label{rem:trvial_states_and_ideals}
  Let~\(S\) be a preordered monoid.  There is a bijection between
  trivial states on~\(S\) and ideals in~\(S\), mapping a state~\(\nu\)
  to the ideal \(\setgiven{t\in S}{\nu(t)=0}\).  Since \([0,\infty]\)
  is partially ordered, there is a bijection between states~\(\nu\) on
  a preordered monoid~\(S\) and states~\(\widetilde{\nu}\) on the
  associated ordered monoid~\(\widetilde{S}\), where
  \(\nu(x)= \widetilde{\nu}([x])\) for \(x\in S\).  This implies a natural
  bijection between ideals in \(S\) and~\(\widetilde{S}\).
\end{remark}

\begin{remark}
  \label{rem:faithful_implies_conical}
  If~\(S\) admits a faithful state, then~\(S\) is conical.  If~\(S\)
  is conical, then the correspondence in
  Remark~\ref{rem:trvial_states_and_ideals} restricts to a bijection
  between faithful states on~\(S\) and faithful states on \(\widetilde{S}\).
\end{remark}

\begin{remark}
  \label{rem:faithful_states_algebraic_order}
  Let~\(S\) be a monoid with the algebraic preorder that admits a
  faithful finite state~\(\nu\).  Then \(S=\widetilde{S}\) is an ordered
  monoid.  Indeed, if \(x,y\in S\) satisfy \([x]=[y]\), then
  \(x+x'=y\) and \(x=y+y'\) for some \(x',y'\in S\).  Therefore,
  \(\nu(x+y) +\nu(x')+\nu(y')=\nu(x+y)\), which implies \(x'=y'=0\)
  because~\(\nu\) is faithful and finite.  Hence \(x=y\).
\end{remark}

\begin{lemma}
  \label{lem:simplicity_implies_faithfulness_of_states}
  Every nontrivial state on a simple monoid is faithful and finite.
\end{lemma}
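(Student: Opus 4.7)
The plan is to exhibit two natural ideals attached to a nontrivial state~\(\nu\) on a simple preordered monoid~\(S\) and to apply simplicity to each of them.

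First I would reduce to the ordered case using the bijection between states on~\(S\) and states on~\(\widetilde{S}\) from Remark~\ref{rem:trvial_states_and_ideals}, together with the bijection between ideals in~\(S\) and in~\(\widetilde{S}\), so that simplicity of~\(S\) is equivalent to simplicity of~\(\widetilde{S}\). After this reduction, I may assume \(\nu\) is a state on an ordered monoid.

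The key step is to observe that the two subsets
\[
  I_0\defeq\setgiven{x\in S}{\nu(x)=0},\qquad
  I_\infty\defeq\setgiven{x\in S}{\nu(x)<\infty}
\]
are ideals of~\(S\). For~\(I_0\) this follows because \(\nu\) is additive, order-preserving, and vanishes on~\(0\); for~\(I_\infty\) it follows because \(\nu(x+y)=\nu(x)+\nu(y)\) is finite when both summands are finite, and \(x\le y\) with \(\nu(y)<\infty\) forces \(\nu(x)<\infty\). Nontriviality of~\(\nu\) yields some \(y\in S\) with \(0<\nu(y)<\infty\); hence \(y\notin I_0\) and \(y\in I_\infty\setminus\{0\}\). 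Simplicity of~\(S\) then forces \(I_0=\{0\}\) and \(I_\infty=S\), which is precisely the statement that~\(\nu\) is faithful and finite.

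I do not expect a serious obstacle: the only thing to check carefully is that both \(I_0\) and~\(I_\infty\) really are order ideals in the sense of Section~\ref{sec:preordered_abelian}, which uses only additivity, monotonicity, and the convention that \(a+\infty=\infty\) in \([0,\infty]\). The argument also clarifies the content of Remark~\ref{rem:trvial_states_and_ideals}, since~\(I_0\) is exactly the ideal associated to the trivial state obtained from~\(\nu\) by collapsing all finite positive values, while~\(I_\infty\) expresses that finiteness of~\(\nu\) is an ideal property.
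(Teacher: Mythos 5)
Your argument is correct. The paper's own proof is more direct: it unwinds simplicity into the statement that any two nonzero elements \(x,y\) satisfy \(x\le n y\) and \(y\le m x\) for some \(m,n\ge 1\), and then applies \(\nu\) to these inequalities to transfer the property \(\nu\in(0,\infty)\) from one nonzero element to all others. You instead package the failure of faithfulness and of finiteness into the two ideals \(I_0\) and \(I_\infty\) and invoke simplicity twice. The two routes are equivalent via the paper's own definition of simplicity (``no nontrivial ideals, or equivalently every nonzero element is an order unit''), but note that your step ``simplicity forces \(I_0=\{0\}\)'' implicitly uses the order‑unit characterisation anyway: one needs to know that an ideal containing a nonzero element must be all of \(S\), which is exactly the statement that nonzero elements are order units. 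So your proof is not shorter, just reorganised; what it buys is a cleaner conceptual picture (faithfulness and finiteness are ``ideal properties'' of a state), at the cost of having to verify that \(I_0\) and \(I_\infty\) are ideals. Two minor remarks: the reduction to the ordered quotient \(\widetilde S\) at the start is unnecessary, since the ideal argument works verbatim for preordered monoids; and your closing identification of \(I_0\) with ``the ideal associated to the trivial state obtained by collapsing finite positive values'' is a nice observation consistent with Remark~\ref{rem:trvial_states_and_ideals}, though not needed for the proof.
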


\begin{proof}
  If \(S\) is simple then for any \(x,y\in S\setminus\{0\}\) there are
  \(m,n\in \N \setminus\{0\}\) with \(x\leq n y\) and \(y \leq m x\).
  Hence if \(\nu\) is a state with \(\nu(x)\in (0,\infty)\), then also
  \(\nu(y)\in (0,\infty)\).
\end{proof}

Tarski's Theorem says, roughly speaking, that all elements that are
not paradoxical may be seen by nontrivial states.
Its original version for monoids with algebraic preorder was proven
in~\cite{Tarski1938} (see also
\cite{Wagon-Tomkowicz:Banach-Tarski_Paradox}*{Theorem~9.1}).
Tarski’s results were further developed in the 1950s in work of Cotlar
and Aumann~\cite{Aumann:Erweiterungen_additiv} on extensions of
additive monotone functionals on ordered semigroups; this forms part
of the historical background of the extension-type arguments used
here.
For ordered monoids, Tarski's Theorem follows from
Ortega--Perera--R\o{}rdam's characterisation of stable domination in
terms of states (see
\cite{Ortega-Perera-Rordam:CFP_Cuntz}*{Proposition~2.1}), whose proof
refers to Goodearl--Handelman's extension result
\cite{Goodearl-Handelman:Rank_K0}*{Lemma~4.1}, where only ordered
groups are considered.
However, the authors of~\cite{Ortega-Perera-Rordam:CFP_Cuntz} should
in fact refer to R\o{}rdam's
\cite{Rordam:On_simple_II}*{Proposition~3.1}, which uses
\cite{Goodearl-Handelman:Rank_K0}*{Lemma~4.1} in a non-trivial way.
Moreover, the proof of
\cite{Ortega-Perera-Rordam:CFP_Cuntz}*{Proposition~2.1} repeats the
arguments in the proofs of R\o{}rdam's results
\cite{Rordam:On_simple_II}*{Proposition~3.2} and
\cite{Rordam:stable_and_real_rank}*{Proposition~3.2}, which are valid
in the setting of partially ordered semigroups.
Therefore, both the characterisation of stable domination and Tarski's
Theorem hold in the full generality of partially ordered
semigroups\footnote{A full self-contained proof based on an adaptation
  of the proof of Tarski's Theorem can be found in Appendix A of the
  initial version of this article, see arXiv:2502.17190 v2}:

\begin{theorem}[Stable domination characterisation]
  \label{thm:Handelman's_tarski}
  Let~\(S\) be a preordered abelian monoid and let \(x,y\in S\).  Then
  \(x<_\s y\) if and only if \(x\in \langle y \rangle\) and
  \(\nu(x)<\nu(y)\) for all states~\(\nu\) on~\(S\) with \(\nu(y)=1\).
\end{theorem}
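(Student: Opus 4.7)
The ``only if'' direction is a direct computation: from $(n+1)x\le ny$ with $n\ge 1$ we obtain $x\le(n+1)x\le ny$, so $x\in\langle y\rangle$; and any state $\nu$ with $\nu(y)=1$ satisfies $(n+1)\nu(x)\le n<\infty$, hence $\nu(x)\le n/(n+1)<1=\nu(y)$.

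For the converse I argue by contrapositive: assume $x\in\langle y\rangle$ and $x\not<_\s y$, and construct a state $\nu$ on $S$ with $\nu(y)=1$ and $\nu(x)\ge 1$. Passing to $\widetilde{S}$ (which is automatically ordered and conical, and whose states correspond bijectively with those of $S$ by Remark~\ref{rem:trvial_states_and_ideals}), I may assume $S$ is ordered. The case that $y$ is paradoxical in $S$ is handled directly: if $(k+1)y\le ky$ then $\ell y\le ky$ for all $\ell\ge k$ by iteration, and combining with $x\le my$ (valid since $x\in\langle y\rangle$) yields $(N+1)x\le(N+1)my\le Ny$ for $N$ sufficiently large, contradicting $x\not<_\s y$. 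So $y$ is non-paradoxical in $S$.

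The heart of the argument is a single application of Tarski's Theorem (Theorem~\ref{the:Tarski}) to an auxiliary preordered monoid. I enlarge the preorder of $S$ by adjoining the single new relation $y\preceq x$, obtaining a preordered monoid $(T,\preceq)$ with $T=S$ as an underlying set. Key Claim: $y$ is non-paradoxical in $(T,\preceq)$. Indeed, a derivation of $(k+1)y\preceq ky$ must use the generator $y\preceq x$ at least once -- otherwise it would reduce to $(k+1)y\le ky$ in $S$, which has just been excluded -- and a combinatorial unwinding of such a derivation (tracking how replacements $u+y\mapsto u+x$ interleave with $\le$-steps in $S$, and exploiting conicality of $S$ to discard auxiliary summands) extracts an inequality $(n+1)x\le ny$ in $S$ for some $n\ge 1$, contradicting the standing hypothesis. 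Tarski's Theorem applied to $(T,\preceq)$ then furnishes a state $\nu$ with $\nu(y)=1$, and the relation $y\preceq x$ forces $\nu(x)\ge\nu(y)=1$. Since $\preceq$ is finer than $\le$, $\nu$ restricts to a state on $(S,\le)$ with the required properties.

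The main obstacle is the Key Claim. Preordered monoids need not be cancellative, so the auxiliary summands accumulated along a derivation cannot simply be removed; the combinatorial bookkeeping required to isolate the ``net'' effect of a putative paradoxical derivation of $y$ in $(T,\preceq)$ as an inequality $(n+1)x\le ny$ in $S$ is the technical core of the proof.
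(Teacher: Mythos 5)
Your ``only if'' direction and your reduction to the case where \(y\) is not paradoxical in \(S\) are both correct (the latter reproduces Lemma~\ref{lem:stable_domination_for_paradoxical}), and the plan of adjoining the relation \(y\preceq x\) and applying Tarski's Theorem to the enlarged preordered monoid \(T\) would indeed finish the proof \emph{if} your Key Claim held. But the Key Claim is exactly where the proof is missing, and it is not a deferrable technicality: by Theorem~\ref{the:Tarski} applied to \(T\), ``\(y\) is not paradoxical in \((T,\preceq)\)'' is \emph{equivalent} to ``there is a state on \(S\) with \(\nu(y)=1\) and \(\nu(x)\ge 1\)'' (a state on \(T\) is precisely a state on \(S\) satisfying \(\nu(y)\le\nu(x)\)), so the Key Claim is, modulo Tarski, a restatement of the hard implication of Theorem~\ref{thm:Handelman's_tarski} itself; the reduction does not reduce anything. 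Concretely, a derivation of \((k+1)y\preceq ky\) normalises to a chain \((k+1)y\le c_1+j_1y\), \(c_1+j_1x\le c_2+j_2y\), \(\dotsc\), \(c_m+j_mx\le ky\) in \(S\); summing these yields only \(C+Jx+(k+1)y\le C+(J+k)y\) with \(C=\sum c_i\) and \(J=\sum j_i\), and there is no way to remove the parasitic summand~\(C\). Conicality is of no help here: it says \(z\le 0\Rightarrow z=0\) and provides no cancellation of the form \(C+a\le C+b\Rightarrow a\le b\). So the ``combinatorial unwinding'' you appeal to is precisely the open content of the theorem.

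The paper closes this gap by a different mechanism. It proves a refined version of Tarski's Theorem (Lemma~\ref{lem:Tarski} and Theorem~\ref{the:Tarski}) that produces a state \(\nu\) with \(\nu(y)=1\) whose value at one prescribed element of \(\langle y\rangle\) is the explicit infimum~\eqref{eq:lem_Tarski2} over honest inequalities \(qy+kz\le py\) holding in~\(S\). Combined with a compactness argument in \([0,\infty]^S\) giving a uniform bound \(\nu(x)\le (m'-1)/m<1\) over all states with \(\nu(y)=1\), this infimum formula is what converts the analytic information ``\(\nu(x)<1\) for all states'' back into a concrete inequality in \(S\), from which \((n+1)x\le ny\) follows by arithmetic. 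If you want to salvage your route, you must supply an independent proof of the Key Claim, and any such proof will have to contain an extension argument of the type of Lemma~\ref{lem:Tarski} anyway.
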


\begin{remark}
  \label{rem:dynamical_comparison_for_semigroups}
  Theorem~\ref{thm:Handelman's_tarski} holds by the proof of
  \cite{Rordam:stable_and_real_rank}*{Proposition~3.2}, which is
  stated in a weaker form.  Namely, it says that almost unperforation
  is equivalent to the \emph{strict comparison} property: \(x\le y\)
  whenever~\(x\) is in the ideal generated by~\(y\) and
  \(\nu(x)<\nu(y)\) for all states~\(\nu\) on~\(S\) with \(\nu(y)=1\).
\end{remark}

\begin{corollary}[Tarski's Theorem]
  \label{cor:original_Tarski}
  In any preordered abelian monoid~\(S\), an element
  \(y\in S\setminus \{0\}\) is not paradoxical if and only if there is
  a state \(\nu\colon S\to [0,\infty]\) with \(\nu(y)=1\).
\end{corollary}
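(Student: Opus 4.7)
The plan is to deduce the corollary directly from the R\o{}rdam--Tarski Theorem (Theorem~\ref{thm:Handelman's_tarski}) by setting \(x=y\). Since the preorder is reflexive, \(y\le 1\cdot y\), and hence \(y\in\langle y\rangle\) automatically, so the first clause in Theorem~\ref{thm:Handelman's_tarski} is free and does not enter the argument.

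With this choice, Theorem~\ref{thm:Handelman's_tarski} says that \(y<_\s y\) holds if and only if \(\nu(y)<\nu(y)\) for every state \(\nu\) on~\(S\) with \(\nu(y)=1\). The inequality \(1<1\) is false, so the quantified condition on the right-hand side holds exactly when there is no state~\(\nu\) on~\(S\) with \(\nu(y)=1\) (in which case the condition is vacuously true). Combining these two equivalences, \(y\) is paradoxical -- that is, \(y<_\s y\) -- if and only if there is no state \(\nu\colon S\to[0,\infty]\) with \(\nu(y)=1\). Negating yields the claim.

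Since Theorem~\ref{thm:Handelman's_tarski} is assumed, there is no genuine obstacle; the only things to check are that \(y\in\langle y\rangle\) (which is reflexivity of~\(\le\)) and that "\(\nu(y)<\nu(y)\) for all \(\nu\) with \(\nu(y)=1\)" is logically the same as "no such \(\nu\) exists''. The assumption \(y\neq 0\) in the statement is not used in the derivation itself; it is just the natural setting, since paradoxicality is defined only for nonzero elements.
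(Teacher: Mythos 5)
Your proof is correct and is exactly the paper's argument: the paper's entire proof reads ``Apply Theorem~\ref{thm:Handelman's_tarski} to \(x=y\),'' and your write-up simply spells out the logical bookkeeping (reflexivity gives \(y\in\langle y\rangle\), and the vacuous-truth reading of the quantified condition) that the paper leaves implicit.
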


\begin{proof}
  Apply Theorem~\ref{thm:Handelman's_tarski} to \(x=y\).
\end{proof}

\begin{remark}
  \label{rem:Tarski_proof}
  Tarski's Theorem is also a consequence of \([0,\infty]\) being
  injective in the category of positively ordered monoids, see
  \cite{Blackadar-Rordam:Extending_states}*{Corolllary 2.7} or
  \cite{Wehrung:Injective}*{Example 3.10}.  Indeed, it is
  straightforward that \(y\in S\setminus \{0\}\) is not paradoxical if
  and only if there is a well-defined state \(\phi\) on the submonoid
  \(S_0\defeq\{0,y,2y,\dotsc\}\subseteq S\) with \(\phi(y)=1\). If
  this holds, then~\(\phi\) extends to~\(S\) because \([0,\infty]\) is
  injective.
\end{remark}

We give a name to a consequence of almost unperforation that suffices
for many of our results. This is called property (QQ)
in~\cite{Ortega-Perera-Rordam:CFP_Cuntz}.

\begin{definition}
  A preordered abelian monoid~\(S\) \emph{has plain paradoxes} if
  \((n+1)x\le n x\) implies \(2x\le x\) for all \(x\in S\),
  \(n\ge 1\), that is, if every paradoxical element in~\(S\) is
  properly infinite.
\end{definition}

\begin{remark}
  \label{rem:assumptions_needed_for_dichotomy}
  Let~\(S\) be a preordered abelian monoid. It has plain paradoxes in
  the following cases:
  \begin{itemize}
  \item if~\(S\) is almost unperforated by
    Remark~\ref{rem:paradoxical_elements};
  \item if~\(S\) is an algebraically ordered monoid with the
    refinement property and the \emph{strong Corona factorisation property}
    by \cite{Ortega-Perera-Rordam:CFP_refinement}*{Theorem~5.14};
  \item if~\(S\) is a simple refinement monoid with the \emph{Corona
      factorisation property};
  \item if~\(S\) is simple and every paradoxical element is infinite
    (see Lemma~\ref{lem:residually_infinite}).
  \end{itemize}
\end{remark}

\begin{remark}
  \label{rem:almost_unperforation_for_ordered_quotient}
  A preordered monoid~\(S\) is (almost) unperforated if and only if its
  ordered quotient~\(\widetilde{S}\) is.  The same holds with
  paradoxes being plain.  In particular, if \(x\in S\setminus\{0\}\)
  and \([x]=0\) in~\(\widetilde{S}\), then \(x\le 0\) is properly
  infinite in~\(S\).  For conical~\(S\), see
  Remark~\ref{rem:about_infnite_elements}.
\end{remark}

\begin{corollary}
  \label{cor:dichotomy_for_monoids}
  Let~\(S\) be a nonzero preordered abelian monoid.  If~\(S\) has
  plain paradoxes, then either~\(S\) admits a nontrivial
  state or~\(S\) is purely infinite.  Conversely, if~\(S\) is simple
  and either~\(S\) admits a nontrivial \textup{(}necessarily
  faithful finite\textup{)} state or~\(S\) is purely infinite,
  then~\(S\) has plain paradoxes.
\end{corollary}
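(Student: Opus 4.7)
The plan is to derive both implications from Tarski's Theorem (the corollary of Theorem~\ref{thm:Handelman's_tarski}) and Lemma~\ref{lem:simplicity_implies_faithfulness_of_states}; both directions are essentially one-step arguments once those tools are in hand.

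For the forward direction, I assume $S$ has plain paradoxes and admits no nontrivial state, and aim to show that every $x\in S\setminus\{0\}$ is properly infinite. Fix such an $x$. If $x$ were not paradoxical, Tarski's Theorem would produce a state $\nu\colon S\to[0,\infty]$ with $\nu(x)=1$; since $1\notin\{0,\infty\}$, this state is nontrivial, contradicting the hypothesis. Hence $x$ is paradoxical, and the plain paradoxes property promotes this to $2x\leq x$, i.e., $x$ is properly infinite. As $S$ is nonzero, it is purely infinite.

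For the converse, I assume $S$ is simple and split according to the two alternatives. If $S$ is purely infinite, every nonzero $x$ already satisfies $2x\leq x$, so the defining implication of plain paradoxes holds automatically for $x\neq 0$ and trivially for $x=0$. If instead $S$ admits a nontrivial state $\nu$, Lemma~\ref{lem:simplicity_implies_faithfulness_of_states} upgrades $\nu$ to a faithful finite state. I then show that no $x\in S\setminus\{0\}$ can be paradoxical: if $(n+1)x\leq nx$ with $n\geq 1$, applying $\nu$ yields $(n+1)\nu(x)\leq n\nu(x)$ in $[0,\infty)$, so $\nu(x)\leq 0$; this contradicts $\nu(x)\in(0,\infty)$. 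Thus plain paradoxes holds vacuously.

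I do not expect any genuine obstacle: the real content sits inside the previously proved Theorem~\ref{thm:Handelman's_tarski} and Lemma~\ref{lem:simplicity_implies_faithfulness_of_states}. The only point to watch is to check that the state produced by Tarski's Theorem is nontrivial, which is immediate from $\nu(x)=1$, and to handle the subtraction $(n+1)\nu(x)\leq n\nu(x)$ in the extended reals by first observing that $\nu(x)<\infty$ thanks to finiteness of the state.
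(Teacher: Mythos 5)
Your proof is correct and follows essentially the same route as the paper: the forward direction via Tarski's Theorem (no nontrivial state forces every nonzero element to be paradoxical, hence properly infinite under plain paradoxes), and the converse via Lemma~\ref{lem:simplicity_implies_faithfulness_of_states} together with the easy direction of Tarski's Theorem, which you correctly spell out as applying the faithful finite state to \((n+1)x\le nx\).
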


\begin{proof}
  By Tarski's Theorem, if~\(S\) has no nontrivial states, then all
  nonzero elements in~\(S\) are paradoxical.  Then~\(S\) is purely
  infinite if and only if~\(S\) has plain paradoxes.
  If~\(S\) is simple and admits a nontrivial state, this state has
  to be faithful and finite, and then~\(S\) does not have
  paradoxical elements by the easy direction in Tarski's Theorem.
  In particular, \(S\) has plain paradoxes.
\end{proof}

\begin{example}
  The smallest simple monoid that fails to have plain paradoxes
  is \(S\defeq\{0,1,\infty\}\), where~\(\infty\) is an absorber,
  \(1+1=\infty\), and~\(S\) is equipped with the natural linear
  order, which is also the algebraic order.  The element~\(1\) is
  paradoxical, but not properly infinite.
\end{example}

\begin{definition}[\cite{Gierz_Hofmann_Keimel_Lawson_Mislove_Scott:Continuous_lattices}]
  \label{def:way_below}
  Let \((S,{\le})\) be a poset and \(x,y\in S\).  We write \(x\ll y\)
  and say that~\(x\) is \emph{way below}~\(y\) if, for any directed
  subset \(D\subseteq S\) for which \(\sup D\) exists and
  \(\sup D \ge y\), there is \(d\in D\) with \(x \le d\).  We
  call~\(S\) \emph{continuous} if for every \(x\in S\) the set
  \(\setgiven{y\in S}{y\ll x }\) is directed and \(x\) is its supremum.  (Continuity is called the ``axiom
  of approximation'' in
  \cite{Gierz_Hofmann_Keimel_Lawson_Mislove_Scott:Continuous_lattices}*{Definition~I-1.6}.)
\end{definition}

\begin{remark}[see
  \cite{Gierz_Hofmann_Keimel_Lawson_Mislove_Scott:Continuous_lattices}*{Proposition~I-1.2}]
  \label{rem:easy_ll_properties}
  If \(x\ll y\), then \(x\le y\).  If \(w\le x\ll y\le z\), then
  \(w \ll z\).  In particular, the relation~\(\ll\) is transitive.
  If \(x,y \ll z\) and \(x\vee y\) exists, then \(x\vee y \ll z\).
  Therefore, if \(x\vee y\) exists for all \(x,y\in S\), then the
  set of \(x\in S\) with \(x\ll y\) is directed.
\end{remark}

\begin{definition}
  \label{def:closed_lower_semicontinuous}
  Let \((S,{\le})\) be an ordered monoid.  We call a state
  \(\nu\colon S\to [0,\infty]\) \emph{lower semicontinuous} if
  \(\nu(x)=\sup {}\setgiven{\nu(y)}{y\in S \text{ and }y\ll x }\)
  for every \(x\in S\).  An ideal~\(I\) in~\(S\) is \emph{closed}
  if~\(f\in I\) whenever~\(\{k\in I\colon k\ll f\}\subseteq I\).
\end{definition}

\begin{remark}
  The above definition works best, and is usually formulated
  for~\(S\) continuous and \emph{directed complete}, which means
  that every increasing net in~\(S\) has a supremum in~\(S\).  Then
  a state \(\nu\colon S\to [0,\infty]\) is lower semicontinuous if
  it respects suprema of increasing nets, and an ideal~\(I\) is
  closed if it is closed under suprema of increasing nets.
  States that respect suprema of increasing sequences are also called \emph{functionals} in the literature.
\end{remark}

Let~\(B\) be a \(\Cst\)\nb-algebra.  We recall the definitions of
the (uncompleted) ordered \emph{Cuntz semigroup} \(W(B)\) introduced
in~\cite{Cuntz:Dimension_functions} and its completed version
\(\Cu(B)\) studied in~\cite{Coward-Elliott-Ivanescu:Cuntz_invariant}
(see \cite{Gardella-Perera:Cuntz_semigroups} for a modern account).
For positive elements \(a, b\in B^+\), we write \(a \precsim b\) and
call~\(a\) \emph{Cuntz subequivalent} to~\(b\) if, for every
\(\varepsilon>0\), there is \(x \in B\) with
\(\norm{a-x^* b x} <\varepsilon\).  By
\cite{Kirchberg-Rordam:Infinite_absorbing}*{Lemma 2.3.(iv)},
\begin{equation}
  \label{eq:algebraic_Cuntz_comparison}
  a \precsim b \quad \Longleftrightarrow\quad
  \forall_{\varepsilon >0}\ \exists_{z\in B}\quad
  (a-\varepsilon)_+=z^*z\ \text{and}\ zz^*\in b B b.
\end{equation}
Here \((a-\varepsilon)_+\in B\) is the positive part of
\(a-\varepsilon\cdot 1\in \Mult(B)\).

We call \(a,b\in B^+\) \emph{Cuntz equivalent} and write
\(a\approx b\) if \(a \precsim b\) and \(b\precsim a\).  Let
\(\Cu(B)\defeq (B\otimes \Comp)^+/{\approx}\) be the set of Cuntz
equivalence classes of positive elements in \(B\otimes \Comp\).  It is
an abelian ordered monoid where for
\(a,b \in (B\otimes \Comp)^+\) we write \([a]\le [b]\) if and only
if \(a\precsim b\), and \([a]+[b] \defeq [a'+b']\) where \(a'\)
and~\(b'\) are orthogonal and Cuntz equivalent to \(a\) and~\(b\),
respectively.  The semigroup~\(W(B)\) is then the subsemigroup of
\(\Cu(B)\) of Cuntz classes with a representative in
\(M_{\infty}(B)^+\defeq\bigcup_{n=1} M_n(B)^+\).  Namely,
\(W(B)=M_{\infty}(B)^+/{\approx}\), where for \(a\in M_n(B)^+\) and
\(b\in M_m(B)^+\), we write \(a\precsim b\) if \(x_k b x_k^*\to a\)
for some sequence \((x_k)\in M_{m,n}(B)\), and
\(a\approx b\) if \(a\precsim b\) and \(b\precsim a\).  Then
\(W(B)\) is equipped with the addition induced by the direct sum
\((a\oplus b) = \text{diag}(a, b)\in M_{n+m}(B)\), and with the
order induced by \(a\precsim b\).

The Cuntz semigroup accommodates a lot of information about the
\(\Cst\)\nb-algebra.  For instance, the \(\Cst\)\nb-algebra~\(B\) is
\emph{purely infinite} if and only if the Cuntz semigroup \(\Cu(B)\)
is purely infinite (see~\cite{Kirchberg-Rordam:Non-simple_pi}).  In
general, an element \(b\in B^+\setminus\{0\}\) is \emph{properly
  infinite} or \emph{infinite} in \(B\), respectively, if and only
if~\([b]\) is properly infinite or infinite in \(W(B)\).
The ordered
monoid \(\Cu(B)\) together with \(K_1(B)\), or an appropriately modified version of \(\Cu(B)\), is a complete invariant for finite classifiable
\(\Cst\)\nb-algebras
(see~\cites{Antoine-Dadarlat-Perera-Santiago:Recovering_Elliott, Gardella-Perera:Cuntz_semigroups, Cantier:unitary_Cuntz_semigroup}).

The Cuntz semigroup \(\Cu(B)\) is both sequentially directed complete
and sequentially continuous (one can drop ``sequentially'' when~\(B\)
is separable), and \([a]\ll[b]\) if and only if
\(a\precsim (b-\varepsilon)_+\) for some \(\varepsilon >0\) (see
\cites{Gardella-Perera:Cuntz_semigroups,
  Antoine-Perera-Thiel:Tensor_products_Cu}).  There is a bijection
between ideals in~\(B\) and closed ideals in \(\Cu(B)\), and
quasi-traces on~\(B\) and functionals on \(\Cu(B)\) (see
\cites{Ciuperca-Robert-Santiago:Cuntz_semigroup_ideals,
  Elliott-Robert-Santiago:The_Cone,
  Gardella-Perera:Cuntz_semigroups}).  In particular, \(B\) is simple
if and only if \(\Cu(B)\) is \emph{topologically simple}, that is, it
contains no nontrivial closed ideals.  By the main result
of~\cite{Rordam:stable_and_real_rank}, if~\(B\) is
\(\ZZ\)\nb-absorbing, then \(\Cu(B)=W(B\otimes \Comp)\) is almost
unperforated.  One may formulate a dichotomy for simple
\(\Cst\)\nb-algebras as follows:

\begin{proposition}
  Let~\(B\) be a \(\Cst\)\nb-algebra and let \(S\subseteq \Cu(B)\)
  be a subsemigroup which is dense in the sense that
  \(\sup {}\setgiven{y\in S}{y\ll x }=x \) for every \(x\in \Cu(B)\).
  If~\(S\) is simple and has plain paradoxes \textup{(}which
  is automatic when~\(B\) is \(\ZZ\)\nb-absorbing\textup{)},
  then~\(B\) is simple and either purely infinite or stably finite.
\end{proposition}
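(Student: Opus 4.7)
The plan is to prove~\(B\) is simple and then apply Corollary~\ref{cor:dichotomy_for_monoids} to~\(S\), transferring each of the two alternatives to~\(B\) via the density of~\(S\) in~\(\Cu(B)\). The automatic clause follows because \(\ZZ\)-absorption forces \(\Cu(B) = W(B\otimes\Comp)\) to be almost unperforated by R\o{}rdam's theorem, and almost unperforation passes to any subsemigroup, so~\(S\) has plain paradoxes.

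For simplicity of~\(B\), I would take a closed ideal \(I \subseteq \Cu(B)\) and note that \(I \cap S\) is an ideal of the simple monoid~\(S\), hence equals \(\{0\}\) or~\(S\). In the first case, any \(x \in I\) satisfies \(x = \sup \setgiven{y \in S}{y \ll x} = 0\) by density, so \(I = 0\). In the second case, using density and the way-below property, for every \(x \in \Cu(B)\) and every \(k \ll x\) I can find \(y \in S\) with \(k \le y\); since \(y \in S \subseteq I\) and~\(I\) is an order ideal, \(k \in I\), and closure of~\(I\) then gives \(x \in I\). Thus \(\Cu(B)\) is topologically simple and~\(B\) is simple.

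Corollary~\ref{cor:dichotomy_for_monoids} now splits into two cases. If~\(S\) is purely infinite, for each \(x \in \Cu(B) \setminus \{0\}\) I would produce an increasing sequence \(y_n \in S\) with \(y_n \ll y_{n+1} \ll x\) and \(\sup_n y_n = x\), iterating density against interpolation of~\(\ll\) in~\(\Cu(B)\). Pure infiniteness of each nonzero~\(y_n\) combined with continuity of addition along directed suprema yields \(2x = \sup_n 2 y_n \le \sup_n y_n = x\), so \(\Cu(B)\) is purely infinite and hence so is~\(B\).

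If instead~\(S\) admits a nontrivial state~\(\nu\), Lemma~\ref{lem:simplicity_implies_faithfulness_of_states} makes it faithful and finite, so~\(S\) has no nonzero properly infinite element: \(2 y \le y\) would force \(2 \nu(y) \le \nu(y) < \infty\) and hence \(\nu(y) = 0\), so \(y = 0\). Were~\(B\) not stably finite, an infinite projection would exist in some~\(M_n(B)\); simplicity of~\(B\) makes it properly infinite, giving a compact \([p] \in \Cu(B)\) with \(2[p] \le [p]\). Then \(\setgiven{y \in S}{y \ll [p]}\) is directed (sums of two elements stay \(\ll 2[p] \le [p]\)) with supremum~\([p]\), and compactness \([p] \ll [p]\) yields some~\(y\) in the set with \([p] \le y \le [p]\); hence \([p] = y \in S\) is a nonzero properly infinite element of~\(S\), a contradiction. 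The chief technical obstacle is the Cu-theoretic approximation in the purely infinite case, where producing the cofinal sequence in~\(S\) requires iterating density together with the interpolation and directed-completeness axioms of~\(\Cu(B)\).
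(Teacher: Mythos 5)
Your simplicity argument is the same as the paper's (density of the simple~\(S\) forces \(\Cu(B)\) to be topologically simple), and you invoke Corollary~\ref{cor:dichotomy_for_monoids} at the same point; but both branches of the case analysis then diverge from the paper, and each has a soft spot. In the purely infinite branch, the paper simply observes that a simple purely infinite \emph{ordered} monoid is \(\{0,\infty\}\) (Remark~\ref{rem:purely_infinite_simple_ordered}), so density forces \(\Cu(B)=\{0,\infty\}\) and \(B\) is purely infinite with no approximation needed. Your route via a cofinal \(\ll\)-increasing sequence \((y_n)\) in~\(S\) runs into the obstacle you yourself flag: the set \(\setgiven{y\in S}{y\ll x}\) is not directed a priori (sums of two of its elements are only way below \(2x\), not~\(x\)), and \(x\ll \sup D\) only produces a dominating element of~\(D\) when \(D\) is directed, so ``iterating density against interpolation'' does not by itself yield the sequence, nor does \(2x=\sup_n 2y_n\) follow for a non-directed supremum. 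The gap closes the moment you note \(S\cong\{0,\infty\}\) --- but at that point the sequence is superfluous and you have reproduced the paper's argument.

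In the stably finite branch the routes genuinely differ. The paper extends the faithful finite state~\(\nu\) on~\(S\) to a faithful lower semicontinuous state \(\cl{\nu}(x)=\sup\setgiven{\nu(y)}{y\ll x,\ y\in S}\) on \(\Cu(B)\), obtains a faithful semifinite lower semicontinuous \(2\)\nb-quasitrace on~\(B\), and concludes stable finiteness from that; this is constructive and gives more (an actual quasitrace). Your contrapositive starts from ``\(B\) not stably finite \(\Rightarrow\) an infinite projection exists in some \(M_n(B)\),'' which is only valid if one \emph{defines} stable finiteness by the absence of infinite projections in \(B\otimes\Comp\); under the quasitrace definition it fails for simple, purely infinite, stably projectionless algebras such as \(\Cont_0(\R)\otimes\OO_2\), which have no projections at all. (That case cannot actually occur here, since then \(\Cu(B)=\{0,\infty\}\) and \(S\) would have no nontrivial state --- but your proof does not say this, and it is exactly the point at issue.) Granting that definition, the rest of your argument is correct and rather elegant: \([p]\) compact and properly infinite makes \(\setgiven{y\in S}{y\ll[p]}\) directed because \(y_1+y_2\ll 2[p]\le[p]\), so compactness pins \([p]\) into~\(S\) and contradicts the faithful finite state. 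Still, the paper's direct construction of the quasitrace is the safer and stronger conclusion, and I would recommend adopting it.
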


\begin{proof}
  Since~\(S\) is simple and dense in~\(\Cu(B)\), the latter has no
  nontrivial closed ideals.  Then~\(B\) is simple.  If~\(S\) is purely
  infinite and simple, then \(S=\{0,\infty\}\) by
  Remark~\ref{rem:purely_infinite_simple_ordered}.  Since it is
  dense in \(\Cu(B)\), also \(\Cu(B)=\{0,\infty\}\).  Then~\(B\) is
  simple and purely infinite.  Otherwise,
  Corollary~\ref{cor:dichotomy_for_monoids}, provides a faithful
  finite state~\(\nu\) on~\(S\).  Then the formula
  \(\overline{\nu}(x)\defeq\sup {}\setgiven{\nu(y)}{y\ll x,\ y\in S}\)
  defines a faithful functional on \(\Cu(B)\).  This,
  in turn, induces a faithful semi-finite lower semicontinuous
  2\nb-quasi-trace on~\(B\), and so~\(B\) is stably finite (see, for
  instance,
  \cite{Blanchard-Kirchberg:Non-simple_infinite}*{Remark~2.27(viii)}).
\end{proof}

\begin{remark}
  Allowing subsemigroups of the Cuntz semigroup is important.  For
  instance, the ordered monoid \(W(\C)=\N\) is simple, but
  \(\Cu(\C)=\N\cup \{\infty\}\) is only topologically simple.
\end{remark}

\section{Twisted groupoid \texorpdfstring{\(\Cst\)}{C*}-algebras}
\label{sec:twisted_groupoids}

Throughout this paper, \(\Gr\) stands for an \'etale groupoid with a
locally compact Hausdorff unit space~\(X\).  Hence the range and
source maps \(\rg,\s\colon\Gr\to X\subseteq \Gr\) are open and
locally injective, and~\(\Gr\) is locally compact and locally
Hausdorff.

Recall that an (open) \emph{bisection}, or a \emph{slice},
of~\(\Gr\), is an open subset \(U\subseteq \Gr\) such that
\(\rg|_U\) and~\(\s|_U\) are injective.  Then the map
\(\theta_U\defeq \rg|_U\circ \s|_U^{-1}\colon \s(U)\to \rg(U)\) is a
partial homeomorphism of~\(X\).  The family \(\Bis(\Gr)\) of all
bisections becomes an inverse semigroup with the operations
\[
  U\cdot V\defeq \setgiven{\gamma\cdot\eta}{\gamma\in U,\ \eta\in
    V},
  \qquad
  U^{-1}\defeq \setgiven{\gamma^{-1}}{\gamma\in U}
\]
for \(U,V\in \Bis(\Gr)\), and \(\emptyset\in\Bis(\Gr)\) is a zero
and \(X\in \Bis(\Gr)\) is a unit element.  The partial
homeomorphisms \((\theta_U)_{U\in \Bis(\Gr)}\) constitute an inverse
semigroup action of \(\Bis(\Gr)\) on~\(X\).  The corresponding
transformation groupoid \(X\rtimes \Bis(\Gr)\) is canonically
isomorphic to~\(\Gr\).  More generally, for any inverse subsemigroup
\(S\subseteq \Bis(\Gr)\) there is a canonical homomorphism
\(X\rtimes S\to \Gr\).  It is an isomorphism if and only if~\(S\) is
\emph{wide}, that is, \(\bigcup S = \Gr\) and \(U\cap V\) is a union
of bisections in~\(S\) for all \(U,V\in S\) (see, for instance,
\cite{Kwasniewski-Meyer:Essential}*{Proposition~2.2}).

A \emph{twist} over~\(\Gr\) is a Fell line bundle~$\LL$ over~$\Gr$
in the sense of~\cite{Kumjian:Fell_bundles}.  Thus
\(\LL=(L_\gamma)_{\gamma\in \Gr}\) is a locally trivial bundle of
one-dimensional complex Banach spaces, together with multiplication
maps
\(L_{\gamma}\times L_{\eta}\ni (z_{\gamma},z_{\eta})\mapsto
z_{\gamma}\cdot z_{\eta}\in L_{\gamma\eta}\) for
\((\gamma,\eta)\in \Gr^2\), and involutions
\(L_{\gamma}\ni z\mapsto \overline{z}\in L_{\gamma^{-1}}\) for
\(\gamma\in\Gr\), which are continuous and consistent with each
other in a certain way.  Any such bundle is locally trivial.  In
fact, the family
\begin{equation}
  \label{eq:SL_definition}
  S(\LL)\defeq \setgiven{U\in \Bis(\Gr)}{\text{ the restricted bundle }\LL|_U\text{ can be trivialised}}
\end{equation}
is a wide unital inverse subsemigroup of \(\Bis(\Gr)\) (see, for
instance,
\cite{Bardadyn-Kwasniewski-McKee:Banach_algebras}*{Lemma~4.2}).  In
particular, we always assume that \emph{\(\LL|_X = X \times \C\) is
  trivial}.  When \(\Sigma\defeq\setgiven{z\in \LL}{\abs{z}=1}\) is
given the topology and multiplication from~\(\LL\), it becomes a
topological groupoid.  There is a natural exact sequence
\(X\times \T\into \Sigma \onto \Gr\), turning~\(\Sigma\) into a
central \(\T\)\nb-extension of~\(\Gr\).  Every central
\(\T\)\nb-extension of~\(\Gr\) arises this way (see
\cite{Kumjian:Fell_bundles}*{Example~2.5.iv}).  This gives an
equivalence between Fell line bundles and twists as defined by
Kumjian and Renault in \cites{Kumjian:Diagonals,
  Renault:Cartan.Subalgebras}.  Fell line bundles which are trivial
as bundles are equivalent to (normalised) continuous
\(2\)\nb-cocycles \(\sigma\colon G^2 \to \T\) (see, for instance,
\cite{Bardadyn-Kwasniewski-McKee:Banach_algebras}*{Example~4.3}).

Fix a twisted groupoid \((\Gr, \LL)\).  For \(U \subseteq \Gr\) let
$\Contc(U , \LL)$ be the space of continuous compactly supported
sections of~\(\LL|_U\).  In general, \(\Gr\) need not be Hausdorff.
In this generality, the space of \emph{quasi-continuous compactly
  supported functions} is defined as
\[
  \Sect(\Gr , \LL)\defeq \operatorname{span}
  \setgiven{ f \in \Contc(U , \LL) }{ U \in \Bis(\Gr)},
\]
where a section in \(\Contc(U , \LL)\) is extended to a section
from~\(\Gr\) to~\(\LL\) that vanishes off~\(U\).  When~\(\Gr\) is
Hausdorff, then \(\Sect(\Gr , \LL)=\Contc(\Gr , \LL)\).  In general,
\(\Sect(\Gr , \LL)\) is a \Star{}algebra with operations:
\[
  (f * g)(\gamma) \defeq \sum_{\rg(\eta)
    = \rg(\gamma)} f(\eta) \cdot g(\eta^{-1} \gamma),\qquad
  f^*(\gamma)\defeq f(\gamma^{-1})^* ,\quad
  f,g \in \Sect(\Gr , \LL) ,\ \gamma \in \Gr ,
\]
where $\cdot$ and the final $*$ indicate the product and involution
from~$\LL$.  The \emph{universal \(\Cst\)\nb-algebra}
\(\Cst(\Gr,\LL)\) of \((\Gr , \LL)\) is defined as the maximal
\(\Cst\)\nb-completion of \(\Sect(\Gr , \LL)\).  It contains
\(\Cont_0(X)\) as a \(\Cst\)\nb-subalgebra.

Let \(\Borel(X)\) denote the \(\Cst\)\nb-algebra of bounded
Borel functions on~\(X\).
There is a unique completely contractive positive map \(E\colon
\Cst(\Gr,\LL)\to \Borel(X)\) with \(E(f)=f|_X\) for all \(f\in \Sect(\Gr , \LL)\).
Let \(\Null_E\defeq \setgiven{a\in \Cst(\Gr,\LL)}{E(a^*a)=0}\).
This is an ideal, and the quotient
\(\Cst_\red(\Gr,\LL)\defeq \Cst(\Gr,\LL)/\Null_E\) is the
\emph{reduced \(\Cst\)\nb-algebra} of \((\Gr , \LL)\).
The convolution formula  \(\Lambda(f) \xi \defeq f * \xi\) for \(f \in
\Sect(\Gr , \LL)\), \(\xi \in \ell^{2}(\Gr,\LL)\), defines an
injective \Star{}homomorphism \(\Lambda \colon \Sect(\Gr , \LL) \to
\Bound(\ell^{2}(\Gr,\LL))\), called the \emph{regular representation},
that extends to a faithful representation of \(\Cst_\red(\Gr,\LL)\).

Let
\(\Meager(X) \defeq \setgiven{f\in \Borel(X)}{ f \text{ vanishes on a
    comeagre subset}}\).  This is an ideal in~\(\Borel(X)\), and the
quotient \(\Cst\)\nb-algebra \(\Borel(X)/ \Meager(X)\) coincides both
with the injective hull and with the local multiplier algebra of
\(\Cont_0(X)\), that is,
\[
  \hull(\Cont_0(X))\cong \Locmult(\Cont_0(X))
  \cong \Borel(X)/\Meager(X)
\]
(see \cite{Kwasniewski-Meyer:Essential}*{Subsection~4.4} and the
references therein).  It naturally contains \(\Cont_0(X)\) as a
subalgebra.  Composing \(E\colon \Cst(\Gr,\LL)\to \Borel(X)\) with the
quotient map \(\Borel(X)\onto \Borel(X)/\Meager(X)\) gives a
pseudo-expectation
\(\EL\colon \Cst(\Gr,\LL)\to \Borel(X)/\Meager(X) \cong \hull(\Cont_0(X))\) for the
\(\Cst\)\nb-inclusion \(\Cont_0(X)\subseteq \Cst(\Gr,\LL)\).  Let
\[
  \Null_{\EL}\defeq \setgiven{a\in \Cst(\Gr,\LL)}{\EL(a^*a)=0}.
\]
The quotient \(\Cst_\ess(\Gr,\LL)\defeq \Cst(\Gr,\LL)/\Null_{\EL}\) is
called the \emph{essential \(\Cst\)\nb-algebra} of \((\Gr , \LL)\).
Then \(\Cst_\ess(\Gr,\LL)\) can be viewed as a quotient of
\(\Cst_\red(\Gr,\LL)\).
Let \(\Gr_{\text{H}}\) be the set of elements $\gamma \in \Gr$ that are Hausdorff in the sense that
for any \(\eta \in \Gr \setminus \{ \gamma \}\) the elements \(\gamma\) and \(\eta\) can be separated by disjoint open sets in \(\Gr\). By
\cite{Bardadyn-Kwasniewski-McKee:Banach_algebras_simple_purely_infinite}*{Lemma~4.3
  and Proposition~4.15}, \(\ell^2(\Gr_{\text{H}},\LL)\) is an invariant subspace for the regular representation \(\Lambda\) and the corresponding
subrepresentation \(\Lambda_{\ess} \colon \Sect(\Gr , \LL) \to \Bound(\ell^{2}(\Gr_{\text{H}},\LL))\)
extends to a representation of \(\Cst_\ess(\Gr,\LL)\) which is faithful when ~\(\Gr\) has
a countable cover by open bisections. \label{page:essential_rep} Thus it is natural to call \(\Lambda_{\ess}\) the \emph{essential representation}.

While \(\Sect(\Gr , \LL)\) embeds into \(\Cst(\Gr,\LL)\) and
\(\Cst_\red(\Gr,\LL)\) as a dense \Star{}subalgebra, the canonical
map from \(\Sect(\Gr , \LL)\) to \(\Cst_\ess(\Gr,\LL)\) need not
be injective.  Of course, its range is still dense.  In addition,
the spaces \(\Cont_0(U,\LL)\), \(U\in\Bis(\Gr)\), embed into
\(\Cst_\ess(\Gr,\LL)\) and form an inverse semigroup grading of
\(\Cst_\ess(\Gr,\LL)\).  In particular, \(\Cont_0(X)\) is embedded
as a \(\Cst\)\nb-subalgebra of \(\Cst_\ess(\Gr,\LL)\).

A \(\Cst\)\nb-algebra~\(D\) is called an \emph{exotic
  \(\Cst\)\nb-algebra} of \((\Gr , \LL)\) if there are surjective
\Star{}homomorphisms \(\Cst(\Gr,\LL)\onto D \onto \Cst_\ess(\Gr,\LL)\)
whose composite is the quotient map
\(\Cst(\Gr,\LL) \onto \Cst_\ess(\Gr,\LL)\) (see
\cite{Kwasniewski-Meyer:Essential}*{Subsection~4.2} or
\cite{Bardadyn-Kwasniewski-McKee:Banach_algebras_simple_purely_infinite}*{Subsection~4.1}).

\begin{proposition}
  \label{prop:essential_reduced_coincide}
  Let \(\E\colon \Cst_\red(\Gr,\LL)\to \Borel(X)\) be the canonical
  generalised expectation given by restriction of sections to~\(X\).
  The canonical quotient map is an isomorphism
  \(\Cst_\red(\Gr,\LL) \cong \Cst_\ess(\Gr,\LL)\) if and only if
  \(\setgiven*{x\in X}{\E(f)(x)\neq 0}\) is not meagre for every
  \(f\in \Cst_\red(\Gr,\Sigma)^+\setminus\{0\}\).  This always holds
  when~\(\Gr\) is Hausdorff or when~\(\Gr\) is ample and every
  compact open subset in~\(\Gr\) is equal to the interior of its
  closure \textup{(}such open subsets are also called
  regular\textup{)}.
\end{proposition}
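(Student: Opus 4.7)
The plan is to reformulate the biconditional as the equality of the two ideals $\Null_E\subseteq \Null_{\EL}$ in $\Cst(\Gr,\LL)$, and then verify this in the two named special cases by separate arguments. The inclusion is automatic, since a function that vanishes identically certainly vanishes on a comeagre set, so there is a canonical surjection $\Cst_\red(\Gr,\LL)\onto \Cst_\ess(\Gr,\LL)$ and an induced map $\E\colon \Cst_\red(\Gr,\LL)\to \Borel(X)$ which is faithful on positive elements (the pointwise Cauchy--Schwarz inequality $\abs{E(a)(x)}^2\le E(a^*a)(x)$ shows that $E$ factors through $\Cst_\red(\Gr,\LL)$). Since every $f\in \Cst_\red(\Gr,\LL)^+$ has the form $[a]^*[a]$, this surjection is an isomorphism if and only if $\E(f)\notin \Meager(X)$ for every nonzero such $f$, which by the definition of $\Meager(X)$ is precisely the non-meagreness condition in the statement.

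For the Hausdorff case, $\E$ maps $\Cst_\red(\Gr,\LL)$ into $\Cont_0(X)$ and is the classical faithful conditional expectation. For $0\ne f\in \Cst_\red(\Gr,\LL)^+$ the function $\E(f)\in \Cont_0(X)^+$ is nonzero and continuous, so $\setgiven{x\in X}{\E(f)(x)>0}$ is a nonempty open subset of $X$. As $X$ is locally compact Hausdorff, it is a Baire space, whence this set is non-meagre.

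The ample regular case is the main obstacle, because $\E(f)$ is then only a bounded Borel function and can a priori have non-open support. Given $0\ne f\in \Cst_\red(\Gr,\LL)^+$, my plan is to pass to a spectral cut-off $(f-\varepsilon)_+$ and approximate it in norm by $g^* g$ for some $g=\sum_{i=1}^n \xi_i\in \Sect(\Gr,\LL)$ with $\xi_i\in \Cont_c(U_i,\LL)$ on compact open bisections $U_i$ trivialising $\LL$. The diagonal terms $\xi_i^* * \xi_i$ contribute positive continuous functions $\abs{\xi_i}^2$ on the open sets $\s(U_i)\subseteq X$, while off-diagonal terms $\xi_i^* * \xi_j$ restrict to $X\cap \s(U_i^{-1}U_j)$ and could in principle cancel against the diagonal contributions. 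The regularity hypothesis -- that every compact open subset of $\Gr$ equals the interior of its closure -- is the decisive ingredient: it forces the non-Hausdorff locus of $\Gr$, where sections in $\Sect(\Gr,\LL)$ can jump, to meet $X$ in a meagre subset, so cancellations happen only on that meagre set. Outside it, $\E(g^* g)$ coincides with an honestly continuous nonnegative function that is strictly positive on a nonempty open piece of some $\s(U_i)$, proving $\E(g^* g)\notin \Meager(X)$; a sup-norm approximation then transfers non-meagreness from $g^* g$ to $(f-\varepsilon)_+$, and letting $\varepsilon\to 0$ delivers it for $f$. The technical heart of the argument is this interplay between off-diagonal cancellations on $X$ and the non-Hausdorff locus of $\Gr$, which the regularity hypothesis is designed to control.
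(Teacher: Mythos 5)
Your reduction of the statement to the non-meagreness condition is correct, and your Hausdorff argument (a nonzero element of $\Cont_0(X)^+$ has nonempty open, hence non-meagre, support in a Baire space) is exactly right; note that the paper itself does not prove this proposition but cites \cite{Kwasniewski-Meyer:Essential}*{Proposition~7.4.7} for these parts and \cite{Clark-Exel-Pardo-Sims-Starling:Simplicity_non-Hausdorff}*{Lemma~4.11} (twisted version in \cite{Bardadyn-Kwasniewski-McKee:Banach_algebras_simple_purely_infinite}) for the ample case. It is precisely in the ample case that your proposal has a genuine gap, on two counts.

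First, the mechanism you attribute to the regularity hypothesis is not the right one. Writing $g=\sum_i\xi_i$ with $\xi_i\in\Contc(U_i,\LL)$, one has $\E(g^**g)(x)=\sum_{\s(\gamma)=x}\abs{\sum_i\xi_i(\gamma)}^2$, so ``cancellation'' is simply the vanishing of the pointwise sum $\sum_i\xi_i(\gamma)$, and this can happen on large \emph{open} sets (take $\xi_2=-\xi_1$ on $U_1\cap U_2$), not only on the non-Hausdorff locus. The danger is not that cancellation occurs, but that a \emph{nonzero} quasi-continuous function could be supported on a nowhere dense set: e.g.\ $1_{U}-1_{V}$ is supported on $U\triangle V$ plus boundary points, and $U\setminus V$ could a priori have empty interior. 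The regularity hypothesis is what rules this out: if $U\subseteq\cl{U\cap V}$ then $U=\mathrm{int}(U)\subseteq\mathrm{int}(\cl{U\cap V})=U\cap V$, so $U\subseteq V$. The lemma one actually needs is that a nonzero finite combination of indicators of compact open bisections satisfies $\abs{\,\cdot\,}>\delta$ on a nonempty open set for a controlled $\delta>0$; your sketch neither states nor proves this.

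Second, your final step runs the approximation in the wrong direction. Non-meagreness of $\setgiven{x}{\E(h)(x)\neq 0}$ is \emph{not} preserved under uniform approximation: if $\norm{\E(g^**g)-\E((f-\varepsilon)_+)}_\infty<\delta$ and $\E(g^**g)$ is nonzero only with values below $\delta$ on the non-meagre part of its support, nothing follows for $(f-\varepsilon)_+$. To close the argument you must produce a nonempty open set $V\subseteq\Gr$ and a $\delta$ strictly larger than the approximation error with $\abs{j(b)}>\delta$ on $V$ for an approximant $b\in\Sect(\Gr,\LL)$ of $f^{1/2}$; only then does $\E(f)>0$ on the nonempty open set $\s(V)$. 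Obtaining this quantitative lower bound from the regularity hypothesis is the technical heart of the cited lemmas, and it is exactly the piece missing from your plan.
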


\begin{proof}
  This is a part of
  \cite{Kwasniewski-Meyer:Essential}*{Proposition~7.4.7}, except for
  the statement about ample~\(\Gr\).  In the untwisted case, the
  latter is proven in
  \cite{Clark-Exel-Pardo-Sims-Starling:Simplicity_non-Hausdorff}*{Lemma~4.11}.
  This is generalised to the twisted case in
  \cite{Bardadyn-Kwasniewski-McKee:Banach_algebras_simple_purely_infinite}*{Lemma~4.7,
    see also Remark~4.13}.
\end{proof}

\begin{lemma}
  \label{lem:hereditary_subalgebras_essential}
  For any \(f_U\in \Cont_0(X)^+\) with open support \(U\subseteq X\)
  the hereditary \(\Cst\)\nb-subalgebra \(f_U\Cst_\ess(\Gr,\LL)f_U\)
  of \(\Cst_\ess(\Gr,\LL)\) generated by~\(f_U\) is canonically
  isomorphic to the essential algebra for the restricted pair
  \((\Gr_U,\LL_U)\) where
  \(\Gr_U\defeq \rg^{-1}(U) \cap \s^{-1}(U) \subseteq \Gr\) and
  \(\LL_U\defeq \LL|_{\Gr_U}\).

  Moreover, \(f_U\Cst_\ess(\Gr,\LL)f_U\) is an ideal if and only
  if~\(U\) is \(\Gr\)\nb-invariant.  In particular, if
  \(\Cst_\ess(\Gr,\LL)\) is simple, then~\(\Gr\) is minimal, that
  is, there are no nontrivial open \(\Gr\)\nb-invariant subsets
  in~\(X\).
\end{lemma}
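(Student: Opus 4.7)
The plan is to treat the three assertions in turn. For the isomorphism, I would construct a canonical \Star{}homomorphism \(\iota\colon \Cst_\ess(\Gr_U,\LL_U) \to \Cst_\ess(\Gr,\LL)\) by extending quasi-continuous sections from \(\Gr_U\) to \(\Gr\) by zero. Since \(U\) is open, \(\Gr_U\) is an open subgroupoid, so extension by zero embeds \(\Sect(\Gr_U,\LL_U)\) as a \Star{}subalgebra of \(\Sect(\Gr,\LL)\). Openness also gives \((\Gr_U)_{\text{H}} = \Gr_{\text{H}} \cap \Gr_U\), so the essential representation \(\Lambda_{\ess}\) for \(\Gr\) restricts on the invariant subspace \(\ell^2((\Gr_U)_{\text{H}},\LL)\) to the essential representation for \(\Gr_U\); this both provides the factorisation of~\(\iota\) through the essential quotient and ensures its injectivity in the countable-cover case.

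I would then identify the image of~\(\iota\) with the hereditary \(\Cst\)\nb-subalgebra \(\overline{f_U \Cst_\ess(\Gr,\LL) f_U} = \overline{\Cont_0(U) \Cst_\ess(\Gr,\LL) \Cont_0(U)}\). On the one hand, a hereditary-generating element \(h_1 a h_2\) with \(h_1,h_2 \in \Cont_0(U)\), \(V \in \Bis(\Gr)\) and \(a \in \Contc(V,\LL)\) is supported on \(V \cap \rg^{-1}(U) \cap \s^{-1}(U) = V \cap \Gr_U \in \Bis(\Gr_U)\), hence lies in \(\Sect(\Gr_U,\LL_U)\). On the other hand, for \(g \in \Contc(W,\LL)\) with \(W \in \Bis(\Gr_U)\) we have \(\rg(\supp g), \s(\supp g) \subseteq U\); choosing \(h \in \Contc(U)^+\) equal to~\(1\) on this union forces \(g = h g h \in \Cont_0(U) \Cst_\ess(\Gr,\LL) \Cont_0(U)\), which identifies the image with the hereditary subalgebra.

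For the ideal criterion: if \(U\) is \(\Gr\)\nb-invariant then \(\Gr_U\) is an open invariant subgroupoid, so by the standard ideal/quotient decomposition \(\overline{f_U \Cst_\ess(\Gr,\LL) f_U} \cong \Cst_\ess(\Gr_U,\LL_U)\) is an ideal with quotient the essential algebra over \(X \setminus U\). Conversely, if \(U\) is not invariant, choose \(\gamma_0 \in \Gr\) with \(\s(\gamma_0) \in U\) but \(\rg(\gamma_0) \notin U\), a bisection \(V \ni \gamma_0\) with \(\s(V) \subseteq U\), and \(g \in \Contc(V,\LL)\) nonzero at~\(\gamma_0\). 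A direct computation shows \(g * f_U * g^* \in \Cont_0(X)\) takes the value \(\abs{g(\gamma_0)}^2 f_U(\s(\gamma_0)) \neq 0\) at \(\rg(\gamma_0) \notin U\), so it is not supported in~\(U\); since the intersection of the hereditary subalgebra with \(\Cont_0(X)\) equals \(\Cont_0(U)\), this element lies outside the hereditary subalgebra and the ideal property fails. The minimality corollary follows at once: any proper nonempty open invariant \(U\) produces, via any \(f_U \in \Cont_0(X)^+\) with open support~\(U\), a proper nonzero ideal (properness comes from any \(h \in \Cont_0(X)\) nonzero outside~\(U\)).

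The main obstacle I anticipate is the well-definedness and injectivity of \(\iota\) at the essential level in the non-Hausdorff case: one needs to verify that the pseudo-expectation \(\EL\) for \(\Gr\) restricts correctly to sections from~\(\Gr_U\), yielding the pseudo-expectation for \(\Gr_U\), and that no additional essential-level vanishing appears under the enlargement. The identification \((\Gr_U)_{\text{H}} = \Gr_{\text{H}} \cap \Gr_U\) together with \(\Lambda_{\ess}\) is the key structural ingredient; everything else reduces to convolution bookkeeping on bisections.
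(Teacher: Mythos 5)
Your overall architecture is the same as the paper's: extend sections by zero to embed \(\Sect(\Gr_U,\LL_U)\) into \(\Sect(\Gr,\LL)\), identify the image with the hereditary subalgebra generated by \(\Cont_0(U)\), and then test the ideal property with a bisection whose source lies in~\(U\) but whose range leaves it. Your identification of the hereditary subalgebra and your convolution computation \((g*f_U*g^*)(\rg(\gamma_0))=\abs{g(\gamma_0)}^2 f_U(\s(\gamma_0))\) are correct and in fact more detailed than the paper, which dismisses the second assertion as straightforward.

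The one point where your primary route falls short of the stated generality is injectivity at the essential level. You propose to get it from the essential representation \(\Lambda_{\ess}\) on \(\ell^2(\Gr_{\mathrm H},\LL)\) via \((\Gr_U)_{\mathrm H}=\Gr_{\mathrm H}\cap\Gr_U\), but that representation is only known to be faithful on \(\Cst_\ess\) when \(\Gr\) has a countable cover by open bisections, a hypothesis the lemma does not impose. The paper avoids this by arguing entirely with the pseudo-expectations: the extension-by-zero map \(\Psi\colon\Cst(\Gr_U,\LL_U)\onto f_U\Cst(\Gr,\LL)f_U\) satisfies \(\EL\circ\Psi=\EL_U\) once one identifies \(\Borel(U)/\Meager(U)\) with \(f_U\bigl(\Borel(X)/\Meager(X)\bigr)f_U\) (this uses that a meager subset of the open set~\(U\) is meager in~\(X\)); hence \(\Psi\) maps \(\Null_{\EL_U}\) into \(\Null_{\EL}\) and the induced surjection on essential quotients is injective because \(\EL(\Psi(a)^*\Psi(a))=\EL_U(a^*a)\) and both pseudo-expectations are faithful on their essential quotients. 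This is exactly the verification you relegate to your final paragraph as ``the main obstacle''; it should be carried out and promoted to the main argument, with the \(\Lambda_{\ess}\) picture at most as a remark for the second countable case.
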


\begin{proof}
  We may treat \(\Sect(\Gr_U,\LL_U)\) as a \Star{}subalgebra of
  \(\Sect(\Gr , \LL)\) in an obvious way.  This inclusion gives a
  \Star{}homomorphism \(\Sect(\Gr_U,\LL_U)\to \Cst(\Gr,\LL)\).  Its
  range is dense in the \(\Cst\)\nb-subalgebra
  \(f_U\Cst(\Gr,\LL)f_U\).  By the universal property of full groupoid \(\Cst\)-algebras, this
  homomorphism extends to a surjective \Star{}homomorphism
  \(\Psi\colon \Cst(\Gr_U,\LL_U)\onto f_U\Cst(\Gr,\LL)f_U\).  Let
  \(\EL\) and~\(\EL_U\) denote the canonical pseudo-expectations on
  \(\Cst(\Gr,\LL)\) and \(\Cst(\Gr_U,\LL_U)\), respectively.
  Identifying \(\Borel(U)/\Meager(U)\) with
  \(f_U \bigl(\Borel(X)/\Meager(X)\bigr) f_U\), we get
  \(\EL\circ \Psi =\EL_U\).  This implies that~\(\Psi\) is faithful,
  as \(\EL\) and~\(\EL_U\) are.  The second part of the assertion is
  now straightforward.
\end{proof}

\begin{remark}
  \label{rem:hereditary_subalgebras_reduced}
  Lemma~\ref{lem:hereditary_subalgebras_essential} holds with~\(\Cst_\ess(\Gr,\LL)\)
  replaced by~\(\Cst_\red(\Gr,\LL)\), by
  virtually the same proof.
\end{remark}

By Lemma~\ref{lem:hereditary_subalgebras_essential}, for every open
\(\Gr\)\nb-invariant subset~\(U\), extending sections from~\(U\)
to~\(\Gr\) by zero outside~\(U\) gives an embedding
\(\Cst_\ess(\Gr_U,\LL_U) \into \Cst_\ess(\Gr,\LL)\) whose range is
an ideal in \(\Cst_\ess(\Gr,\LL)\).

\begin{definition}
  The twisted groupoid \((\Gr,\LL)\) is \emph{essentially exact} if
  for any open invariant subset \(U\subseteq X\), the restriction of
  sections gives a well-defined, surjective \Star{}homomorphism
  \(\Cst_\ess(\Gr,\LL) \onto \Cst_\ess(\Gr_{X\setminus U},
  \LL_{X\setminus U})\) such that the sequence of essential twisted
  groupoid \(\Cst\)\nb-algebras
  \[
    \Cst_\ess(\Gr_U,\LL_U)
    \into \Cst_\ess(\Gr,\LL)
    \onto \Cst_\ess(\Gr_{X\setminus U},\LL_{X\setminus U})
  \]
  is exact
  (\cite{Kwasniewski-Meyer:Pure_infiniteness}*{Definition~4.23,
    Example~4.25}).  The sequence as above always exists for reduced
  \(\Cst\)\nb-algebras, and if it is exact, then we call
  \((\Gr,\LL)\) \emph{exact} (see
  \cite{Kwasniewski-Meyer:Pure_infiniteness}*{Definition~4.17,
    Example~4.18}).  If the twist is trivial, the name \emph{inner
    exact} for such groupoids is often used; it was introduced
  in~\cite{AnantharamanDelaroch:Weak_containment}.
\end{definition}

When~\(\Gr\) is Hausdorff, then exactness and essential exactness
coincide.

\begin{definition}
  The groupoid~\(\Gr\) is called \emph{effective} if the interior of
  the isotropy bundle
  \(\operatorname{Iso}(\Gr)\defeq\setgiven{\gamma\in\Gr}{\rg(\gamma)=\s(\gamma)}\)
  is~\(X\).  It is \emph{topologically free} if the interior of
  \(\operatorname{Iso}(\Gr)\setminus X\) is empty.  It is
  \emph{residually topologically free} if each restriction~\(\Gr_Y\)
  to a closed \(\Gr\)\nb-invariant subset \(Y\subseteq X\) is
  topologically free.
\end{definition}

\begin{proposition}
  \label{prop:ideals_twisted_groupoids}
  Let \((\Gr,\LL)\) be a twisted groupoid where~\(\Gr\) is \'etale,
  residually topologically free and essentially exact, with a
  locally compact Hausdorff unit space \(X\defeq \Gr^0\).  Then all
  ideals in \(\Cst_\ess(\Gr,\LL)\) are of the form
  \(\Cst_\ess(\Gr_U,\LL_U)\) for an open \(\Gr\)\nb-invariant subset
  \(U\subseteq X\).  If, in addition, \(X\) is metrisable, then the
  primitive ideal space of \(\Cst_\ess(\Gr,\LL)\) is homeomorphic to
  the quasi-orbit space \(X/{\sim}\), which is defined as the orbit
  space of the equivalence relation with \(x \sim y\) if and only if
  \(\cl{\Gr x} = \cl{\Gr y}\).
\end{proposition}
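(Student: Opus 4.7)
For part (1), the approach is to show that $U \mapsto \Cst_\ess(\Gr_U, \LL_U)$ is a bijection between open $\Gr$-invariant subsets of $X$ and ideals of $\Cst_\ess(\Gr, \LL)$. Given an ideal~$J$, let $U_J$ be the largest open $U \subseteq X$ with $\Cont_0(U) \subseteq J$; equivalently, $U_J$ is the open support of $J \cap \Cont_0(X)$. The invariance of~$U_J$ under~$\Gr$ follows from the inverse semigroup grading of $\Cst_\ess(\Gr, \LL)$ by bisections: for $f \in \Cont_0(U_J)$ and $a \in \Cont_0(V, \LL)$ with $V \in \Bis(\Gr)$, the element $a^* f a \in J \cap \Cont_0(X)$ is supported in $\theta_V^{-1}(U_J) \cap \s(V)$, so maximality of~$U_J$ forces $\theta_V^{-1}(U_J) \cap \s(V) \subseteq U_J$. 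As~$V$ varies, this gives $\Gr$-invariance of~$U_J$. By Lemma~\ref{lem:hereditary_subalgebras_essential} and the remark following it, $\Cst_\ess(\Gr_{U_J}, \LL_{U_J})$ is then an ideal, and since it is generated by $\Cont_0(U_J) \subseteq J$, we obtain $\Cst_\ess(\Gr_{U_J}, \LL_{U_J}) \subseteq J$.

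The reverse inclusion is the key step and uses both essential exactness and residual topological freeness. Set $Y \defeq X \setminus U_J$. By essential exactness, the quotient $\Cst_\ess(\Gr, \LL)/\Cst_\ess(\Gr_{U_J}, \LL_{U_J}) \cong \Cst_\ess(\Gr_Y, \LL_Y)$ is well defined, and the image $\bar{J}$ of~$J$ there is an ideal satisfying $\bar{J} \cap \Cont_0(Y) = 0$ by the maximality of~$U_J$. Since $\Gr_Y$ is topologically free by residual topological freeness, the detection-of-ideals property, namely that $\Cont_0(Y)$ meets every nonzero ideal of $\Cst_\ess(\Gr_Y, \LL_Y)$, applies in this non-Hausdorff twisted setting via~\cite{Kwasniewski-Meyer:Essential}. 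Hence $\bar{J} = 0$, yielding $J = \Cst_\ess(\Gr_{U_J}, \LL_{U_J})$. Injectivity of $U \mapsto \Cst_\ess(\Gr_U, \LL_U)$ is immediate since $U_{\Cst_\ess(\Gr_U, \LL_U)} = U$.

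For part (2), assume~$X$ is metrisable, hence second countable. In a separable $\Cst$-algebra, primitive ideals coincide with prime ideals, so under the lattice isomorphism from part~(1) they correspond to closed $\Gr$-invariant $Y \subseteq X$ that are \emph{prime}: $Y \subseteq Z_1 \cup Z_2$ for closed invariant $Z_1, Z_2$ forces $Y \subseteq Z_1$ or $Y \subseteq Z_2$. Orbit closures $\overline{\Gr x}$ are clearly prime. For the converse, note that~$Y$ is itself Polish, and for any nonempty relatively open $V \subseteq Y$ the set $W_V \defeq \setgiven{y \in Y}{\Gr y \cap V \neq \emptyset}$ is open and $\Gr$-invariant in~$Y$; the decomposition $Y = \overline{W_V} \cup (Y \setminus W_V)$ consists of two closed invariant subsets, and since $W_V \supseteq V \neq \emptyset$, primeness forces $Y = \overline{W_V}$, i.e., $W_V$ is dense in~$Y$. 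Letting $\{V_n\}$ range over a countable base of~$Y$, the intersection $\bigcap_n W_{V_n}$ is a dense $G_\delta$ by Baire category, and every $x$ in it satisfies $\overline{\Gr x} = Y$. Two points $x, y$ produce the same orbit closure precisely when $x \sim y$, yielding a bijection $X/{\sim} \to \mathrm{Prim}(\Cst_\ess(\Gr, \LL))$ given by $[x] \mapsto \Cst_\ess(\Gr_{X \setminus \overline{\Gr x}}, \LL_{X \setminus \overline{\Gr x}})$.

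To verify this bijection is a homeomorphism, observe that a basic closed set in $\mathrm{Prim}$ has the form $\setgiven{P}{I \subseteq P}$ for an ideal~$I$; under the correspondence, this becomes $\setgiven{[x] \in X/{\sim}}{\overline{\Gr x} \subseteq Z}$ for the closed invariant set $Z$ dual to~$I$, which is precisely the image of~$Z$ in $X/{\sim}$, hence closed in the quotient topology. The main obstacle in the plan is the detection-of-ideals property for topologically free groupoids in the non-Hausdorff twisted essential setting, which I would invoke as a black box from~\cite{Kwasniewski-Meyer:Essential} rather than reprove; the Baire category step and lattice-theoretic identification of primitive ideals are then routine.
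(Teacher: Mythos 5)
The paper does not prove this proposition itself: it simply cites \cite{Kwasniewski-Meyer:Pure_infiniteness}*{Corollary~5.10}. Your proposal reconstructs what is essentially the standard argument behind that citation, and the reconstruction is sound in outline. Part (1) is correct: the invariance of \(U_J\) via the grading, the inclusion \(\Cst_\ess(\Gr_{U_J},\LL_{U_J})\subseteq J\), and the reduction of the reverse inclusion to the detection-of-ideals property for the topologically free restriction \(\Gr_Y\) (which is legitimately available as a black box, e.g.\ \cite{Kwasniewski-Meyer:Essential}*{Theorem~7.29}) all work; note only that your claim \(\bar J\cap\Cont_0(Y)=0\) deserves one explicit line (lift \(g\in\bar J\cap\Cont_0(Y)^+\) to \(f\in\Cont_0(X)\); since the lift differs from an element of \(J\) by something in \(\ker q\subseteq J\), it lies in \(J\cap\Cont_0(X)=\Cont_0(U_J)\), so \(g=0\)). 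The Baire-category identification of prime closed invariant sets with orbit closures is also the standard route.

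Two points need attention. First, your inference ``\(X\) is metrisable, hence second countable'' is false for general locally compact Hausdorff spaces (an uncountable discrete space is metrisable and locally compact but not second countable), and both steps of part (2) that rely on it are genuinely at stake: the identification of primitive with prime ideals requires separability of the \(\Cst\)\nb-algebra, and the Baire argument requires a countable base for \(Y\). So as written your part (2) only proves the statement under the stronger hypothesis that \(X\) is second countable (which is what the cited Corollary~5.10 actually assumes); if one insists on the literal hypothesis ``metrisable,'' an additional argument or a \(\sigma\)\nb-compactness assumption is needed. Second, your topology check is one-directional: you show that hull-kernel closed sets map to quotient-closed sets, but a homeomorphism also needs the converse, namely that every saturated closed subset of \(X\) arises as \(\setgiven{x}{\cl{\Gr x}\subseteq Z}\) for a closed invariant \(Z\); this is immediate (take \(Z\) to be the set itself), but should be said.
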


\begin{proof}
  This is a special case of
  \cite{Kwasniewski-Meyer:Pure_infiniteness}*{Corollary~5.10}.
\end{proof}

As the construction of the Cuntz semigroup involves stabilisation, we
discuss the stabilisation of twisted groupoid algebras.  We may tensor
\((\Gr,\LL)\) with the full equivalence relation \(\RR= \N\times \N\)
with the trivial twist to get a natural twisted groupoid
\((\Gr\times \RR, \LL\otimes \C)\).  Namely, \(\Gr\times \RR\) is the
standard product of groupoids, with the unit space
\(X\times \setgiven{(n,n)}{n\in \N}\cong X\times \N\).  We equip it
with the Fell line bundle \(\LL\otimes \C\), where
\((\LL\otimes \C)_{(\gamma,\nu)}\defeq \LL_{\gamma}\), for
\((\gamma,\nu)\in\Gr\times \RR\), and the multiplication, involution
and modulus is inherited from~\(\LL\).  The topology on
\(\LL\otimes \C\) is determined by the requirement that for any
\(a\in \Contc(U,\LL)\), \(U\in \Bis(\Gr)\), and \(b\in \Contc(V)\),
\(V\in \Bis(\RR)\), the section of \(\LL\otimes \C\) given by
\(a \odot b (\gamma,\nu)\defeq b(\nu) a(\gamma)\) is continuous, and
so \(a \odot b \in \Contc(U\times V ,\LL\otimes \C)\).

\begin{lemma}
  \label{lem:stabilisation_groupoids}
  If \((\Gr,\LL)\) is any twisted \'etale groupoid, then there are
  natural \Star{}\alb{}isomorphisms
  \(\Cst(\Gr\times \RR, \LL\otimes \C)\cong \Cst(\Gr,\LL)\otimes
  \Comp\) and
  \(\Cst_\red(\Gr\times \RR, \LL\otimes \C)\cong
  \Cst_\red(\Gr,\LL)\otimes \Comp\).
  If~\(\Gr\) has a countable cover by bisections, then
  \(\Cst_\ess(\Gr\times \RR, \LL\otimes \C)\cong
  \Cst_\ess(\Gr,\LL)\otimes \Comp\).
\end{lemma}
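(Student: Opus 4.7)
My plan is to build a canonical \(*\)-homomorphism at the dense algebraic level and then extend to each of the three completions using representation-theoretic factorisations. Since \(\RR = \N \times \N\) is the pair groupoid on~\(\N\), it is discrete and Hausdorff, and we have canonical isomorphisms \(\Cst(\RR) = \Cst_\red(\RR) = \Cst_\ess(\RR) \cong \Comp\), under which \(\Sect(\RR)\) identifies with the algebra \(M_\infty\) of finite-rank matrix units \(e_{m,n}\). I would first verify that the assignment \(a \otimes b \mapsto a \odot b\) yields a \(*\)-homomorphism from the algebraic tensor product \(\Sect(\Gr,\LL) \odot \Sect(\RR)\) into \(\Sect(\Gr \times \RR, \LL \otimes \C)\): the check amounts to noting that composable pairs in \(\Gr \times \RR\) are exactly products of composable pairs in each factor, so convolution and involution decouple. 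Density of the image in \(\Sect(\Gr \times \RR, \LL \otimes \C)\) follows from the fact that bisections of the form \(U \times V\) with \(U \in \Bis(\Gr)\) and \(V \in \Bis(\RR)\) form a wide inverse subsemigroup of \(\Bis(\Gr \times \RR)\). Under the identification with \(M_\infty\), this dense subalgebra is precisely \(M_\infty(\Sect(\Gr,\LL))\).

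For the full \(\Cst\)-algebra, the universal property extends this to a surjection \(\Cst(\Gr,\LL) \otimes_{\max} \Comp \onto \Cst(\Gr \times \RR, \LL \otimes \C)\); nuclearity of~\(\Comp\) removes the ambiguity and gives \(\Cst(\Gr,\LL) \otimes \Comp\) on the left. Conversely, any \(f \in \Sect(\Gr \times \RR, \LL \otimes \C)\) decomposes uniquely as a finite sum \(\sum_{(m,n) \in F} f_{m,n} \odot e_{m,n}\) with \(f_{m,n} \in \Sect(\Gr,\LL)\), providing an explicit inverse on dense subalgebras that extends to the full completion. For the reduced algebra, the crucial observation is that \(\ell^2(\Gr \times \RR, \LL \otimes \C) \cong \ell^2(\Gr, \LL) \otimes \ell^2(\N)\) canonically: since \((\LL \otimes \C)_{(\gamma,\nu)} = \LL_\gamma\) and the source fibres satisfy \((\Gr \times \RR)_{(x,n)} \cong \Gr_x \times \N\), the Hilbert-space decomposition respects the Fell-bundle structure. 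Under this identification, the regular representation factorises on elementary generators as \(\Lambda_{\Gr \times \RR}(a \odot b) = \Lambda_\Gr(a) \otimes \Lambda_\RR(b)\), and closing up gives the isomorphism \(\Cst_\red(\Gr \times \RR, \LL \otimes \C) \cong \Cst_\red(\Gr, \LL) \otimes \Comp\).

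The essential case is the most delicate, and I expect it to be the main obstacle. The countable cover hypothesis on~\(\Gr\) transfers to \(\Gr \times \RR\) because \(\RR\) is already a countable union of singleton bisections, so the essential representation of \((\Gr \times \RR, \LL \otimes \C)\) is faithful by the result cited on page~\pageref{page:essential_rep}. The key geometric identity is \((\Gr \times \RR)_{\text{H}} = \Gr_{\text{H}} \times \RR\): distinct points \((\gamma,\nu) \neq (\eta,\mu)\) are either separated trivially in the discrete \(\RR\)\nobreakdash-coordinate when \(\nu \neq \mu\), or separated in \(\Gr\) when \(\gamma \neq \eta\), which lifts to a product separation; conversely, restricting a separating pair of opens in \(\Gr \times \RR\) to the slice \(\Gr \times \{\nu\}\) yields separation in~\(\Gr\). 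This gives \(\ell^2((\Gr \times \RR)_{\text{H}}, \LL \otimes \C) \cong \ell^2(\Gr_{\text{H}}, \LL) \otimes \ell^2(\N)\), and the same tensor factorisation of the essential representation as in the reduced case produces \(\Cst_\ess(\Gr \times \RR, \LL \otimes \C) \cong \Cst_\ess(\Gr, \LL) \otimes \Comp\). The main technical care needed throughout is to keep track of the Fell-bundle structure of \(\LL \otimes \C\) under the Hilbert-space tensor decompositions; once this bookkeeping is done, each isomorphism follows from dense-subalgebra arguments.
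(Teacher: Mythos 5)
Your proposal follows essentially the same route as the paper: the identification of \(\Sect(\Gr\times\RR,\LL\otimes\C)\) with \(\Sect(\Gr,\LL)\otimes\Sect(\RR)\) on the dense level, the universal property (plus nuclearity of \(\Comp\)) for the full algebra, the tensor factorisation \(\Lambda^{\Gr\times\RR}(a\odot b)=\Lambda^{\Gr}(a)\otimes\Lambda^{\RR}(b)\) of the regular representation for the reduced algebra, and the passage to the essential subrepresentation for the essential algebra. Your explicit verification that \((\Gr\times\RR)_{\mathrm H}=\Gr_{\mathrm H}\times\RR\) is a welcome detail that the paper leaves implicit when it asserts that the reduced argument carries over to the essential case.
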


\begin{proof}
  Obviously, \(\Cst( \RR)=\Cst_\red( \RR)=\Cst_\ess(\RR)\cong \Comp\).
  Let \(\Lambda^{\Gr\times\RR}\) be the regular representation of
  \(\Cst_\red(\Gr\times \RR, \LL\otimes \C)\) on
  \(\ell^2(\Gr\times \RR, \LL\otimes \C)\cong \ell^2(\Gr,\LL)\otimes
  \ell^{2}(\RR)\).  Let \(\Lambda^{\Gr}\) and~\(\Lambda^{\RR}\) be the
  regular representations of \(\Cst_\red(\Gr, \LL)\) and
  \(\Cst_\red(\RR)\), respectively.  Then
  \(\Lambda^{\Gr}\otimes\Lambda^{\RR}\) is a faithful representation
  of \(\Cst_\red(\Gr,\LL)\otimes \Comp\) on
  \(\ell^2(\Gr,\LL)\otimes \ell^{2}(\RR)\) (see, for instance,
  \cite{Blackadar:Operator_algebras}*{II.9.1.3}).  Clearly,
  \(\Lambda^{\Gr}(a)\otimes\Lambda^{\RR}(b)=\Lambda^{\Gr\times\RR}(a
  \odot b)\) for any \(a\in \Contc(U,\LL)\), \(U\in \Bis(\Gr)\), and
  \(b\in \Contc(V)\), \(V\in \Bis(\RR)\).  Thus the representations
  \(\Lambda^{\Gr}\otimes\Lambda^{\RR}\) and \(\Lambda^{\Gr\times\RR}\)
  have the same ranges, and since they are both faithful,
  \(\Cst_\red(\Gr\times \RR, \LL\otimes \C)\cong
  \Cst_\red(\Gr,\LL)\otimes \Comp\).

  The above implies also that we have an injective \Star{}homomorphism
  from the algebraic tensor product
  \(\Sect(\Gr , \LL) \otimes \Sect(\RR)\) into
  \(\Sect(\Gr\times \RR, \LL\otimes\C)\) that sends a simple tensor
  \(a \otimes b\) to the section \(a \odot b\) of \(\LL\otimes\C\)
  given by \(a \odot b (\gamma,\nu)\defeq b(\nu)a(\gamma)\) for
  \((\gamma,\nu)\in \Gr\times \RR\).  In fact, since~\(\RR\) is
  discrete, this \Star{}homomorphism is surjective.  Indeed, every
  bisection \(U\subseteq \Gr\times\RR\) is a disjoint union of
  bisections \(U_{\nu}\times \{\nu\}\), \(\nu\in \RR\), where
  \(U_\nu\defeq \setgiven{\gamma }{(\gamma,\nu)\in U}\) is a bisection
  of~\(\Gr\).
  Hence the support of any section \(f\in \Contc( U, \LL\otimes\C)\)
  can be covered by some \(U_{\nu_i}\times \{\nu_i\}\) for some
  \(\nu_1,\dots \nu_{n}\in \RR\).
  Using a partition of unity subordinate to this cover, we may
  write~\(f\) as \(\sum_{i=1}^n f_i \odot 1_{\nu}\), where \(f_i\in
  \Contc(U_{\nu_i},\LL)\) for \(i=1,\dotsc, n\).
  Thus~\(f\) is in the range of the homomorphism and surjectivity
  follows.
  Therefore, we have a \Star{}isomorphism
  \[
    \Sect(\Gr\times  \RR , \LL\otimes\C)
    \cong \Sect(\Gr , \LL) \otimes \Sect(\RR).
  \]
  It extends to a \Star{}homomorphism
  \(\Psi\colon\Cst(\Gr\times \RR,\LL\otimes\C)\to \Cst(\Gr,\LL)\otimes
  \Comp\) because \(\Cst(\Gr\times \RR,\LL\otimes\C)\) is the
  completion of~$\Sect(\Gr\times \RR , \LL\otimes\C)$ in the maximal
  \(\Cst\)\nb-norm.  It also implies that
  \(\Cst(\Gr\times \RR,\LL\otimes\C)\) contains a completion of
  \(\Sect(\Gr , \LL)\) as a \(\Cst\)\nb-subalgebra, and in the
  commutant of this subalgebra there is a copy of
  \(\Comp\cong \overline{\Sect(\RR)}\) (see, for instance,
  \cite{Blackadar:Operator_algebras}*{II.9.2.1}).
  Thus there is a \Star{}homomorphism
  \[
    \Cst(\Gr,\LL)\otimes \Comp\to \Cst(\Gr\times \RR,\LL\otimes\C)
  \]
  which is inverse to~\(\Psi\).
  Accordingly, \(\Cst(\Gr\times \RR, \LL\otimes \C)\cong
  \Cst(\Gr,\LL)\otimes \Comp\).

  If~\(\Gr\) has a countable cover by bisections, then the above
  argument for reduced algebras works for essential algebras by
  replacing regular representations with their subrepresentations that
  we called essential on page~\pageref{page:essential_rep}.
\end{proof}

\begin{remark} Statements similar to those in Lemma \ref{lem:stabilisation_groupoids}
  hold for tensor products with matrix algebras \(M_n\) upon replacing \(\RR\) by a finite pair groupoid.
\end{remark}

Recall that a \emph{trace} on a \(\Cst\)\nb-algebra~\(D\) is an
additive function \(\tau\colon A^+\to[0,\infty]\) such that
\(\tau(\lambda a) = \lambda \tau(a)\) for all \(\lambda\in[0,\infty)\)
and satisfies \(\tau(a^*a)=\tau(aa^*)\)
for all \(a\in A\) (see, for instance,
\cite{Blackadar:Operator_algebras}*{II.6.7--8}).  We call~\(\tau\)
\emph{faithful} if \(\tau(a)=0\) implies \(a=0\) for all \(a\in A^+\).
If~\(\tau\) is finite, it is automatically continuous and extends to a
bounded linear functional on~\(A\) with the trace property
\(\tau(ab)=\tau(ba)\) for all \(a,b\in A\), and when normalised it is
a \emph{tracial state}.  A lower semicontinuous trace~\(\tau\) is
\emph{semifinite} if
\(\tau(a)=\sup {}\setgiven{\tau(b)}{\tau(b)<\infty \text{ and }b\le
  a}\) for all \(a\in A^+\).  There is a natural bijection between
lower semicontinuous traces~\(\tau\) on \(\Cont_0(X)\) and regular
Borel measures~\(\mu\) on~\(X\), defined by
\(\tau(f)=\int_X f\,\diff\mu\) for \(f\in \Cont_0(X)^+\).
A Borel
measure \(\mu\colon \B(X)\to [0,\infty]\) is said to be
\emph{regular} if it is both inner and
outer regular. Here, \emph{inner regular} means that
\(\mu(V)=\sup {}\setgiven{\mu(K)}{K \subseteq V,\ K \text{ compact}}\) for every open~\(V\)
and \emph{outer regular} means that \(\mu(E)=\inf {}\setgiven{\mu(V)}{E\subseteq V \text{ is open}}\) for
every \(E \in \B(X)\).
We will now describe how the bijection occurs. If~\(\tau\) is lower semicontinuous, it is determined by its
restriction to \(\Contc(X_{\tau})^+\subseteq \Contc(X)^+\) for the open
subset
\[
  X_{\tau}\defeq \setgiven{x\in X }{\tau(f)<\infty \text{ for some }f\in
    \Contc(X)^+\text{ with } f(x)>0}
\] in~\(X\).  The restriction
\(\tau|_{\Contc(X_{\tau})^+}\) is finite, and the classical Riesz's Theorem
(\cite{Rudin}*{Theorem~2.14}) associates it with a unique Radon
measure on~\(X_{\tau}\) such that \(\tau(f)=\int_{X_{\tau}} f\,\diff\mu\) for \(f\in \Cont_c(X_{\tau})^+\).
By defining \(\mu(E)=\infty\) for every ~\(E\in \B(X)\) that is not
contained in~\(X_{\tau}\) one gets the desired measure \(\mu\) on \(\B(X)\) associated
to~\(\tau\).

Any lower semicontinuous trace~\(\tau\) on \(\Cont_0(X)\) has a unique
extension to a \emph{normal} (preserving suprema of directed bounded
nets) trace~\(\overline{\tau}\) on the algebra of bounded Borel
functions~\(\Borel(X)\) by \(\overline{\tau}(f)=\int_X f\,\diff\mu\)
for \(f\in \Borel(X)^+\) and the corresponding measure \(\mu\).
Unfortunately, there is no canonical way of extending states from
\(\Cont_0(X)\) to \(\Borel(X)/\Meager(X)\).  Since
\(\Borel(X)/\Meager(X)\) is a monotone completion of \(\Cont_0(X)\),
one could expect that normal extensions exist.  However, except in
trivial cases, \(\Borel(X)/\Meager(X)\) is \emph{wild}, which means
that it does not admit any nonzero normal states (see
\cite{Saito_Wright}*{Theorem~4.2.17}).  Therefore, it seems that there
is no analogue of the next proposition for essential groupoid
\(\Cst\)\nb-algebras.

\begin{definition}
  A Borel measure~\(\mu\) on~\(X\) is called
  \emph{\(\Gr\)\nb-invariant} if \(\mu(\s(U))=\mu(\rg(U))\) for
  every open bisection \(U\in \Bis(\Gr)\) (see
  \cite{Renault:Groupoid_Cstar}*{I.3.12} and
  \cite{Li-Renault:Cartan_subalgebras}).
\end{definition}

\begin{lemma}\label{lem:G-invariant_measure}
  Let~\(\mu\) be a regular Borel measure on~\(X\).
  Fix a family
  \(\B\subseteq \Bis(\Gr)\) that covers~\(\Gr\).
  The following conditions are equivalent:
  \begin{enumerate}
  \item \label{enu:G-invariant_measure1}%
    the measure~\(\mu\) is \(\Gr\)\nb-invariant;
  \item \label{enu:G-invariant_measure2}%
    \(\int_{\rg(U)} f \,\diff \mu = \int_{\s(U)} f\circ \theta_U
    \,\diff\mu\) for every \(f\in\Contc(\rg(U))^+\) and \(U\in \B\),
    where \(\theta_U\colon \s(U)\to \rg(U)\) is the homeomorphism
    given by \(\theta_U(\s(\gamma))=\rg(\gamma)\), \(\gamma\in U\);
  \item\label{enu:G-invariant_measure3} for any Borel function
    \(h\colon \Gr \to [0,\infty)\) we have
    \begin{equation}\label{eq:invariant_measure3}
      \int \sum_{\s(\gamma) = x} h(\gamma) \,\diff\mu(x)
      = \int \sum_{\rg(\gamma) = x} h(\gamma) \,\diff\mu(x).
    \end{equation}
  \end{enumerate}

\end{lemma}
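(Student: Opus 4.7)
My plan is to establish the implications (3)~$\Rightarrow$~(1), (1)~$\Leftrightarrow$~(2), and (1)~$\Rightarrow$~(3). The easiest step is (3)~$\Rightarrow$~(1): plugging $h = \mathbf{1}_U$ into~\eqref{eq:invariant_measure3} for an open bisection $U$ makes the fibre sums collapse to $\mathbf{1}_{\s(U)}(x)$ and $\mathbf{1}_{\rg(U)}(x)$, by injectivity of $\s|_U$ and $\rg|_U$, so integration gives $\mu(\s(U)) = \mu(\rg(U))$.

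For (1)~$\Leftrightarrow$~(2), I would first extend (2) from $\B$ to all open bisections. Given an open bisection $W$ and $f \in \Contc(\rg(W))^+$, the collection $\{\rg(U \cap W) : U \in \B\}$ is an open cover of the compact set $\supp(f)$, so a finite subcover together with a subordinate partition of unity $\{\psi_i\}_{i=1}^n$ decomposes $f = \sum_i f\psi_i$ with each $f\psi_i$ compactly supported in $\rg(U_i \cap W)$ for some $U_i \in \B$. Applying~(2) to $U_i$ with $f\psi_i$ extended by zero to $\Contc(\rg(U_i))^+$, and using the identity $\theta_{U_i}|_{\s(U_i \cap W)} = \theta_W|_{\s(U_i \cap W)}$, sums to give~(2) for $W$. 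The implication (1)~$\Rightarrow$~(2) then follows from the Riesz representation theorem applied to the Radon functional $f \mapsto \int_{\s(U)} f \circ \theta_U \,\diff\mu$ on $\rg(U)$: this functional agrees with $\mu|_{\rg(U)}$ on open subsets of $\rg(U)$ (each such open $V \subseteq \rg(U)$ is the range of the open bisection $U \cap \rg^{-1}(V)$, so~(1) applies), and regular Borel measures agreeing on opens coincide. Conversely, (2)~$\Rightarrow$~(1) follows by approximating $\mathbf{1}_{\rg(U)}$ from below by $f \in \Contc(\rg(U))^+$ with $\|f\| \le 1$, using~\eqref{eq:measure_from_weight}.

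For (1)~$\Rightarrow$~(3), I reduce by monotone convergence and linearity to the case $h = \mathbf{1}_A$ for Borel $A \subseteq \Gr$. Since $\B$ covers $\Gr$, a countable disjoint refinement (valid under the natural hypothesis that $\Gr$ has a countable cover by bisections) decomposes $A$ as a countable disjoint union of Borel sets each contained in some $U \in \B$, and countable additivity then reduces the claim to $A \subseteq U$ for a single open bisection $U$. In that case the fibre sums in~\eqref{eq:invariant_measure3} collapse to $\mathbf{1}_{\s(A)}(x)$ and $\mathbf{1}_{\rg(A)}(x)$, and the claim becomes $\mu(\s(A)) = \mu(\rg(A))$. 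The pullbacks of $\mu|_{\s(U)}$ and $\mu|_{\rg(U)}$ to $U$ via the homeomorphisms $\s|_U$ and $\rg|_U$ are regular Borel measures on $U$ that agree on its open subsets by~(1), hence they agree on all Borel subsets of $U$ by the uniqueness clause of Theorem~\ref{thm:regular_measure_extension}.

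The main obstacle I anticipate is measurability of the fibre sums themselves: on a non-second-countable $\Gr$ the fibres of $\s$ and $\rg$ can be discrete but uncountable, so $x \mapsto \sum_{\s(\gamma)=x} h(\gamma)$ need not a~priori be Borel. This is handled by invoking the same standing hypothesis of a countable bisection cover -- the natural setting for the rest of the paper -- which allows writing any nonnegative Borel $h$ as a countable sum of Borel functions supported on bisections, turning each fibre sum into a genuine countable sum of Borel functions in $x$.
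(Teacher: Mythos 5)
Your implication structure ((3)\(\Rightarrow\)(1), (1)\(\Leftrightarrow\)(2), (1)\(\Rightarrow\)(3)) closes the loop, and most of the individual steps coincide with the paper's: the partition-of-unity argument extending~(2) from \(\B\) to all open bisections and the use of~\eqref{eq:measure_from_weight} to deduce \(\mu(\rg(U))=\mu(\s(U))\) are exactly how the paper proves (2)\(\Rightarrow\)(1), and your pullback-of-measures argument in the (1)\(\Rightarrow\)(3) step is the paper's key device for (1)\(\Rightarrow\)(2) as well.

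Two caveats. First, your Riesz-theorem route to (1)\(\Rightarrow\)(2) presupposes that \(f\mapsto\int_{\s(U)}f\circ\theta_U\,\diff\mu\) is a finite-valued positive linear functional, i.e.\ that \(\mu\) is finite on compact subsets of \(\s(U)\). The lemma allows regular Borel measures that are not locally finite, so this step does not literally apply in the stated generality. The fix is already in your own (1)\(\Rightarrow\)(3) argument: the pullbacks of \(\mu\) along \(\s|_U\) and \(\rg|_U\) are regular Borel measures on \(U\) that agree on open subsets by~(1), hence on all Borel subsets by outer regularity; this gives \(\mu(B)=\mu(\theta_U(B))\) for every Borel \(B\subseteq\s(U)\), and then~(2) follows from the abstract change-of-variables formula for \([0,\infty]\)-valued integrals. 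This is precisely the paper's proof of (1)\(\Rightarrow\)(2), and it makes the Riesz detour unnecessary. Second, for (1)\(\Rightarrow\)(3) you assume a countable cover of \(\Gr\) by bisections, which is not among the lemma's hypotheses. The paper instead chooses a possibly uncountable decomposition of \(\Gr\) into pairwise disjoint Borel bisections and writes \(h\) as the supremum of its finite partial sums, invoking linearity and monotone convergence; you are right that both the Borel measurability of the fibre sums and the interchange of an uncountable sum with the integral are delicate, and the paper's treatment of this point is terse. Your explicit countability hypothesis is an honest way to make the step rigorous, but it does narrow the stated generality; to match the lemma as written you would need either to justify the uncountable case or to observe that~\eqref{eq:invariant_measure3} only makes sense once the fibre sums are measurable, which is where the real content of this step lies.
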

\begin{proof}
  \ref{enu:G-invariant_measure1}\(\Rightarrow\)\ref{enu:G-invariant_measure2},\ref{enu:G-invariant_measure3}:
  Assume first that~\(\mu\) is \(\Gr\)\nb-invariant.
  Let \(U\in \Bis(\Gr)\).
  If \(V\subseteq \s(U)\) is open, then
  \(\mu(V)=\mu(\s(VU))=\mu(\rg(VU))=\mu(\theta_{U}(V))\).
  Since~\(\mu\) is regular, it follows that
  \(\mu(B)=\mu(\theta_{U}(B))\) for any Borel \(B\subseteq \s(U)\).
  Thus the standard change of variables gives
  \(\int_{\rg(U)} f \,\diff \mu = \int_{\s(U)} f\circ \theta_U
  \,\diff\mu\) for any positive Borel function
  \(f\in \Borel(r(U))^+\).
  This readily gives \ref{enu:G-invariant_measure2}.
  This also means that \eqref{eq:invariant_measure3} holds for all
  Borel functions~\(h\) vanishing outside~\(U\), as putting \(f\defeq
  h\circ \rg|_{U}^{-1}\in\Borel(r(U))^+\subseteq \Borel(X)\), we then
  get \(\sum_{\rg(\gamma) = x} h(\gamma)=f(x)\) and
  \(\sum_{\rg(\gamma) = x} h(\gamma)=f(\theta_{U}(x))\) for all \(x\in
  X\).
  This also obviously holds when~\(U\) is just a Borel
  subset of an open bisection.
  By choosing a decomposition \((U_i)_{i\in I}\) of~\(\Gr\) into
  pairwise disjoint Borel bisections for any Borel \(h\colon \Gr \to
  [0,\infty)\) we get \(h=\sum_{i\in I} 1_{U_i} h
  =\sup\setgiven{\sum_{i\in F} 1_{U_i}}{F\subseteq I\text{ finite}}\).
  Thus the previous step plus linearity (and monotone convergence)
  give~\eqref{eq:invariant_measure3} in general.

  \ref{enu:G-invariant_measure3}\(\Rightarrow\)\ref{enu:G-invariant_measure2}:
  For a given \(f\in\Contc(\rg(U))^+\) apply
  \eqref{eq:invariant_measure3} to~\(h\) given by
  \(h(\gamma)=f(r(\gamma))\) for \(\gamma\in U\) and zero elsewhere.

  \ref{enu:G-invariant_measure2}\(\Rightarrow\)\ref{enu:G-invariant_measure1}:
  We claim that
  \(\int_{\rg(U)} f \,\diff \mu = \int_{\s(U)} f\circ \theta_U
  \,\diff\mu\) for every \(f\in\Contc(\rg(U))^+\) and every
  \(U\in \Bis(\Gr)\) (not necessarily in \(\B\)).  Indeed, denote
  by~\(K\) the closed support of \(f\).  Then
  \(r|_{U}^{-1}(K)\subseteq U\) is compact and hence can be covered by
  a finite family \((V_i)_{i=1}^{n}\) of sets from the open
  cover~\(\B\).  Let \((g_i)_{i=1}^n\) be a partition of unity
  on~\(K\) subordinate to the cover \((r(V_i))_{i=1}^{n}\).  Then
  \(g_if\in \Contc(\rg(V_i))^+\cap \Contc(\rg(U))^+\) and
  \((g_if)\circ \theta_{V_i}=(g_if)\circ \theta_{U} \in
  \Contc(\s(V_i))^+\cap \Contc(\rg(U))^+\) for each~\(i\).
  Then~\ref{enu:G-invariant_measure2} implies
  \(\int_{\rg(U)} g_i f \,\diff\mu =\int_{\rg(V_i)} g_i f \,\diff \mu
  = \int_{\s(V_i)} (g_{i}f)\circ \theta_{V_i} \,\diff\mu=\int_{\s(U)}
  (g_{i}f)\circ \theta_{U} \,\diff\mu\).  Thus
  \[
    \int_{\rg(U)} f \,\diff \mu=\sum_{i=1}^n \int_{\rg(U)} g_i f
    \,\diff \mu
    = \sum_{i=1}^n \int_{\s(U)} (g_{i}f)\circ \theta_{U} \,\diff\mu
    =\int_{\s(U)} f\circ \theta_{U} \,\diff\mu.
  \]
  In other words, \(\tau(f)=\tau(f\circ \theta_{U})\).  This implies
  that \(\mu(\rg(U))=\mu(\s(U))\) since
  \begin{equation*}
    \mu(V)=\sup\setgiven{\tau(f)}{f\in \Contc(V)^+, \|f\|=1}.
  \end{equation*}
  for every open set \(V\subseteq X\).
\end{proof}

\begin{proposition}
  \label{prop:states_and_traces}
  Let \(\E\colon \Cst_\red(\Gr,\LL)\to \Borel(X)\) be the canonical
  generalised expectation on the reduced \(\Cst\)\nb-algebra of
  \((\Gr,\LL)\).  There is a natural bijection between lower
  semicontinuous traces~\(\tau\) on \(\Cst_\red(\Gr,\LL)\) that factor
  through~\(\E\), in the sense that \(\tau=\overline{\tau}\circ \E\),
  where \(\overline{\tau}\colon \Borel(X)^+\to [0,\infty]\) is the
  normal extension of \(\tau|_{\Cont_0(X)^+}\), and
  \(\Gr\)\nb-invariant regular Borel measures~\(\mu\) on~\(X\).

  For
  the corresponding objects, \(\tau\) is
  \textup{(}semi\textup{)}finite if and only if~\(\mu\) is
  \textup{(}locally\textup{)} finite; and if
  \(\Cst_\red(\Gr,\LL)=\Cst_\ess(\Gr,\LL)\), then~\(\tau\) is faithful
  if and only if~\(\mu\) has full support.
\end{proposition}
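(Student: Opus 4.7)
The plan is to construct mutually inverse assignments $\tau\leftrightarrow \mu$, and then to deduce the correspondences concerning (semi)finiteness and faithfulness.

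\emph{Trace to measure.} Restricting a factoring trace $\tau=\overline{\tau}\circ E$ to $\Cont_0(X)^+$ produces a lower semicontinuous trace on $\Cont_0(X)$, which corresponds via the Riesz theorem recalled above to a unique regular Borel measure $\mu$ on $X$. To verify that $\mu$ is $\Gr$\nb-invariant, I would apply Lemma~\ref{lem:G-invariant_measure} with $\B \defeq S(\LL)$, the wide inverse subsemigroup on which the twist trivialises. For $U\in S(\LL)$ and $f\in \Contc(\s(U))^+$, a trivialisation of $\LL|_U$ lets me pick $a\in \Contc(U,\LL)$ with $a^*a=f$ and $aa^*=f\circ \theta_U^{-1}$. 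The trace identity $\tau(a^*a)=\tau(aa^*)$ combined with the factoring assumption then forces $\int f\,\diff\mu = \int f\circ \theta_U^{-1}\,\diff\mu$, which is condition~\ref{enu:G-invariant_measure2} of that lemma.

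\emph{Measure to trace and the bijection.} Starting from a $\Gr$-invariant regular Borel measure $\mu$, I would set $\tau_0(f)\defeq \int f\,\diff\mu$ on $\Cont_0(X)^+$, extend to the normal weight $\overline{\tau_0}$ on $\Borel(X)^+$ by the same integral formula, and define $\tau\defeq \overline{\tau_0}\circ E$, initially as a lower semicontinuous weight on $\Cst(\Gr,\LL)^+$. The trace identity on the dense $*$-subalgebra $\Sect(\Gr,\LL)$ reduces directly to Lemma~\ref{lem:G-invariant_measure}.\ref{enu:G-invariant_measure3} applied to $h=\abs{a}^2$, thanks to the explicit formulas $E(a^*a)(x) = \sum_{\s(\gamma)=x}\abs{a(\gamma)}^2$ and $E(aa^*)(x) = \sum_{\rg(\gamma)=x}\abs{a(\gamma)}^2$. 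Since $\tau$ vanishes on $\Null_E$ by construction, it descends to a lower semicontinuous trace on $\Cst_\red(\Gr,\LL)$ factoring through~$E$, with the trace identity propagating from the dense subalgebra to the full algebra by the standard lower semicontinuity argument for weights. Mutual inverseness is then immediate, since $\mu$ is the Riesz measure of $\tau|_{\Cont_0(X)^+}$ and factoring traces are determined by that restriction. For (semi)finiteness, $\tau$ is finite iff $\tau|_{\Cont_0(X)}$ is iff $\mu(X)<\infty$; semifiniteness is equivalent to local finiteness of $\mu$ via the estimate $\tau(b^*b)\le \|b\|_\infty^2\, N\, \mu(\s(\supp b))<\infty$ for compactly supported $b\in \Sect(\Gr,\LL)$ (with $N$ bounding source-fibre cardinality over the compact set $\s(\supp b)$), coupled with restriction in the converse direction.

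\emph{Faithfulness and the main obstacle.} The easy direction is that $\tau$ faithful restricts to $\tau|_{\Cont_0(X)}$ faithful, equivalent to $\mu$ having full support. For the converse, assuming $\mu$ has full support and $\Cst_\red(\Gr,\LL) = \Cst_\ess(\Gr,\LL)$, the aim is to show that $E(a)$ is lower semicontinuous for every $a\in \Cst_\red(\Gr,\LL)^+$. Granted this, $\int E(a)\,\diff\mu=0$ forces the open set $\{E(a)>0\}$ to have $\mu$-measure zero, hence to be empty by full support, whence $E(a)=0$ and $a=0$ by faithfulness of $E$ on $\Cst_\red(\Gr,\LL)$. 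For $b^*b$ with $b\in \Sect(\Gr,\LL)$ the lower semicontinuity is clear from the bisection-wise description, and one hopes to extend to all of $\Cst_\red(\Gr,\LL)^+$ by uniform approximation using contractivity of~$E$; the hypothesis $\Cst_\red=\Cst_\ess$ is what underpins this approximation argument in the non-Hausdorff setting and is the crux of the proof.
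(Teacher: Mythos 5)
Your overall architecture (restrict to \(\Cont_0(X)\), apply Riesz, transfer \(\Gr\)\nb-invariance back and forth via Lemma~\ref{lem:G-invariant_measure}) matches the paper's, and the trace-to-measure direction is essentially the paper's argument. But two steps in the other directions have genuine gaps.

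First, for the trace identity of \(\tau=\overline{\tau_0}\circ\E\): verifying \(\tau(a^*a)=\tau(aa^*)\) on the dense \Star{}subalgebra \(\Sect(\Gr,\LL)\) and then invoking ``the standard lower semicontinuity argument for weights'' does not close the argument. Lower semicontinuity only yields \(\tau(a^*a)\le\liminf\tau(b_n^*b_n)\) and \(\tau(aa^*)\le\liminf\tau(b_nb_n^*)\) for \(b_n\to a\); there is no upper estimate, so equality on a norm-dense subalgebra does not propagate to the whole algebra for an unbounded weight. The paper avoids this entirely by using the contractive injection \(j\colon \Cst_\red(\Gr,\LL)\to\Borel(\Gr,\LL)\) into bounded Borel sections, which gives the fibrewise formulas \(\E(f^**f)(x)=\sum_{\s(\gamma)=x}\norm{j(f)(\gamma)}^2\) and \(\E(f*f^*)(x)=\sum_{\rg(\gamma)=x}\norm{j(f)(\gamma)}^2\) for \emph{every} \(f\in\Cst_\red(\Gr,\LL)\); the trace identity then follows for all elements at once from Lemma~\ref{lem:G-invariant_measure}.\ref{enu:G-invariant_measure3}. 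You need this (or some substitute) rather than a density argument.

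Second, the faithfulness step rests on a false premise: \(\E(a)\) is \emph{not} lower semicontinuous for general \(a\in\Cst_\red(\Gr,\LL)^+\) when \(\Gr\) is non-Hausdorff, not even for \(a=b^*b\) with \(b\in\Sect(\Gr,\LL)\), and not even under the hypothesis \(\Cst_\red(\Gr,\LL)=\Cst_\ess(\Gr,\LL)\). For instance, in the standard two-bisection non-Hausdorff group bundle one finds a projection \(h=\tfrac12(1_X+1_U)\in\Sect(\Gr)\) with \(\E(h)=\tfrac12 1_X+\tfrac12 1_{\{x_0\}}\), which fails to be lower semicontinuous at the non-isolated point \(x_0\), while the reduced and essential algebras still coincide. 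So both your base case (``clear from the bisection-wise description'') and the uniform-approximation extension collapse. What is actually true, and what the paper uses, is the weaker statement that for every nonzero \(a\in\Cst_\red(\Gr,\LL)^+=\Cst_\ess(\Gr,\LL)^+\) there is \(\varepsilon>0\) such that \(\setgiven{x\in X}{\E(a)(x)>\varepsilon}\) has nonempty \emph{interior} (\cite{Kwasniewski-Meyer:Essential}*{Proposition~7.18}); combined with full support of \(\mu\) this gives \(\tau(a)>0\). Your semifiniteness argument is fine in spirit, though the paper's route via an approximate unit \((e_i)\subseteq\Contc(X)^+\) and \(\tau(e_iae_i)\to\tau(a)\) is cleaner than bounding fibre cardinalities.
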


\begin{proof}
  Let~\(\tau\) be a lower semicontinuous trace
  on~\(\Cst_\red(\Gr,\LL)\) that factors through~\(\E\).  The
  restriction
  \(\tau|_{\Cont_0(X)^+} \colon \Cont_0(X)^+\to [0,\infty]\) is a
  positive lower semicontinuous trace and corresponds to a regular
  measure~\(\mu\) on~\(X\) by Riesz's Theorem.
  We must prove that~\(\mu\) is invariant.  Let \(U\in S(\LL)\) be a
  bisection such that the bundle~\(\LL|_U\) is trivial,
  see~\eqref{eq:SL_definition}.  So there is a continuous unitary
  section \(c\colon U\to \LL\) and
  \(\Contc(U)\ni g\mapsto g\cdot c\in \Contc(U,\LL)\subseteq
  \Cst_\red(\Gr,\LL)\) is an isometric isomorphism.  Take any
  \(f\in \Contc(\rg(U))^+\) and define
  \(g\in \Contc(U,\LL)\subseteq \Cst_\red(\Gr,\LL)\) by putting
  \(g(\gamma)\defeq \sqrt{f(\rg(\gamma))}\cdot c(\gamma)\).  Then
  \(g * g^*=f\) and \(g^* * g=f\circ \theta_U\).  Hence
  \[
    \int_{\rg(U)} f \,\diff\mu
    = \tau(f)
    = \tau(g * g^*)
    = \tau( g^* * g)
    = \tau(f\circ \theta_U)
    = \int_{\s(U)}f\circ \theta_U\,\diff\mu.
  \]
  As~\(S(\LL)\) covers~\(\Gr\), this implies that~\(\mu\) is
  \(\Gr\)\nb-invariant by its characterisation in
  Lemma~\ref{lem:G-invariant_measure}.\ref{enu:G-invariant_measure2}.

  Conversely, let~\(\mu\) be a \(\Gr\)\nb-invariant regular Borel
  measure on~\(X\).
  Let \(\tau\colon \Cont_0(X)^+ \to [0,\infty]\) be the corresponding
  lower semicontinuous weight and let \(\overline{\tau}\colon
  \Borel(X)^+ \to [0,\infty]\) be its normal extension.
  Then the composite \(\overline{\tau}\circ \E|_{\Cst_\red(\Gr,\LL)^+
  }\) is a lower semicontinuous extension of~\(\tau\) to
  \(\Cst_\red(\Gr,\LL)\), which we again denote by~\(\tau\).
  We need to show that~\(\tau\) is a trace.
  Consider the Banach space \(\Borel(\Gr,\LL)\) of bounded
  Borel sections of~\(\LL\), equipped with the supremum norm.
  The inclusion \(\Sect(\Gr , \LL) \subseteq \Borel(\Gr,\LL)\) extends
  to a contractive injective map \(j\colon \Cst_\red(\Gr,\LL)\to
  \Borel(\Gr,\LL)\), see
  \cite{Kwasniewski-Meyer:Essential}*{Proposition~7.10}, which turns
  the product and adjoint in \(\Cst_\red(\Gr,\LL)\) to the convolution
  and involution, see
  \cite{Bardadyn-Kwasniewski-McKee:Banach_algebras_simple_purely_infinite}*{Proposition~3.16}.
  Therefore, if \(f\in \Cst_\red(\Gr,\LL)\), then
  \[
    \E(f^* * f)(x) =
    \sum_{\s(\gamma)=x} j(f)(\gamma)^* j(f)(\gamma)
    = \sum_{\s(\gamma)=x} \norm{j(f)(\gamma)}^2,
  \]
  where \(\norm{j(f)(\gamma)}^2\) uses the norm on the fibre
  \(\LL_\gamma \cong\C\) (see also
  \cite{Kwasniewski-Meyer:Essential}*{Equation~(7.1)}).
  Similarly,
  \[
    \E(f * f^*)(x) =
    \sum_{\s(\gamma)=x} \norm{j(f)(\gamma^{-1})}^2
    = \sum_{\rg(\gamma)=x} \norm{j(f)(\gamma)}^2.
  \]
  The \(\mu\)\nb-integrals of these two functions are equal
  because of the characterisation in
  Lemma~\ref{lem:G-invariant_measure}.\ref{enu:G-invariant_measure3}.
  Thus \(\tau(f^* * f) = \overline{\tau}(\E(f * f^*)) =
  \overline{\tau}(\E(f * f^* ))= \tau(f^* * f)\) for all \(f\in
  \Cst_\red(\Gr,\LL)\).

  This proves the first part of the assertion.
  Now consider the corresponding \(\tau\) and~\(\mu\).
  It is immediate that~\(\tau\) is finite if and only if~\(\mu\) is
  finite, and that if~\(\tau\) is semifinite, then~\(\mu\) has to be
  locally finite.
  Conversely, if~\(\mu\) is locally finite, then taking any
  approximate unit~\((e_i)_i\) in \(\Contc(X)^+\) and any
  \(a\in\Cst_\red(\Gr,\LL)^+\), we compute \(\sqrt{a}e_i^2\sqrt{a}\le
  a\) and hence
  \[
    \tau(\sqrt{a}e_i^2\sqrt{a})
    = \tau(e_i a e_i )
    = \int_X \E(e_i a e_i)\,\diff\mu
    = \int_X e_i \E( a )e_i\,\diff\mu<\infty,
  \]
  where we used that~\(\tau\) is a trace, \(\E\) is a bimodule map
  and~\(\mu\) is a Radon measure.
  Since~\(\tau\) is lower semicontinuous,
  \(\tau(\sqrt{a}e_i^2\sqrt{a})=\tau(e_i a e_i )\to \tau(a)\).
  Hence~\(\tau\) is semifinite.

  If~\(\tau\) is faithful, then \(\tau|_{\Cont_0(X)}\) is faithful.
  This is equivalent to~\(\mu\) having full support.
  Conversely, assume that \(\Cst_\red(\Gr,\LL)=\Cst_\ess(\Gr,\LL)\)
  and that~\(\mu\) has full support.
  For any nonzero \(a\in\Cst_\red(\Gr,\LL)^+\), there is \(\varepsilon
  >0\) such that \(\setgiven*{x\in X}{\E(b)(x)>\varepsilon}\) has
  nonempty interior, see
  \cite{Kwasniewski-Meyer:Essential}*{Proposition~7.18}.
  Hence \(\tau(a)\ge \varepsilon \mu(\setgiven*{x\in
    X}{\E(b)(x)>\varepsilon})>0\).
  Thus~\(\tau\) is faithful.
\end{proof}

\begin{remark}
  The bijection in Proposition~\ref{prop:states_and_traces}
  restricts to a bijection between \(\Gr\)\nb-invariant regular
  Borel probability measures~\(\mu\) on~\(X\) and tracial
  states~\(\tau\) on \(\Cst_\red(\Gr,\LL)\) that factor
  through~\(\E\).  This is in essence proved in
  \cite{Li-Renault:Cartan_subalgebras}*{Lemmas 4.1 and~4.2}.  The
  literature gives at least two situations where all tracial states
  on \(\Cst_\red(\Gr,\LL)\) factor through~\(\E\).  Firstly, this
  happens if~\(\Gr\) is principal (see
  \cite{Li-Renault:Cartan_subalgebras}*{Lemma~4.3}, or
  \cite{Anderson:Extensions_states}*{Theorem~3.4} for a
  corresponding result for abstract \(\Cst\)\nb-inclusions).
  Secondly, this happens if~\(\Gr\) is ample and ``almost finite''
  (see
  \cite{Ara-Boenicke-Bosa-Lia:Comparison_almost_finite}*{Lemma~3.1}).
\end{remark}

\section{The type semigroup}
\label{sec:type_semigroup}

Let~\(\Gr\) be an \'etale groupoid, \(X\) its locally compact Hausdorff unit space and
\(\Bis(\Gr)\) its inverse semigroup of (open) bisections.  We define
a type semigroup for~\(\Gr\) that depends on an auxiliary choice,
which we call an \emph{inverse semigroup basis}.  We work in this
generality to unify definitions from \cites{Boenicke-Li:Ideal,
  Ma:Purely_infinite_groupoids, Rainone-Sims:Dichotomy}.  A
bisection \(W\subseteq \Gr\) with \(W^2=W\) is an open subset
of \(X\subseteq \Gr\).  The idempotent lattice
\(\setgiven{W\in\Bis(\Gr)}{W^2=W}\) coincides with the lattice of
open subsets of~\(X\).

\begin{definition}
  \label{def:isg_basis}
  An \emph{inverse semigroup basis} for~\(\Gr\) is a subset
  \(\B\subseteq \Bis(\Gr)\) such that
  \begin{itemize}
  \item \(\B\) is an inverse subsemigroup, that is, closed under
    multiplication and involution;
  \item \(\B\) is a basis for the topology of~\(\Gr\);
  \item
    \(\OO\defeq
    \setgiven{W\in\B}{W^2=W}=\setgiven{W\in\B}{W\subseteq X}\) is
    closed under finite unions.
  \end{itemize}
\end{definition}

The product of bisections contained in~\(X\) is just their
intersection.
Hence the above assumptions imply that~\(\OO\) is a lattice of sets
that generates the topology of~\(X\).

We are going to define a type semigroup for~\(\Gr\) that depends
on~\(\B\).  If \(\B=\Bis(\Gr)\), we recover the definition by
Ma~\cite{Ma:Purely_infinite_groupoids}.  If~\(X\) is not metrisable,
then it is useful for some results to let~\(\B\) be the set of all
\(\sigma\)\nb-compact open bisections, because the
\(\sigma\)\nb-compact open subsets are exactly the open supports of
\(\Cont_0\)\nb-functions.  Another natural choice for~\(\B\) is the
family of  bisections \(V\in \Bis(\Gr)\) that  are \emph{precompact}
in the sense that both \(\rg(V)\) and \(\s(V)\) are precompact subsets of~\(X\).
In other words, it is natural to assume that elements of the lattice
\(\OO= \B\cap 2^X\) are precompact \(\sigma\)\nb-compact subsets
of~\(X\).   When \((\Gr,\LL)\) is a twisted
groupoid, one may in addition assume that the bundle~\(\LL\) is
trivial on all bisections in~\(\B\).  If~\(\Gr\) is ample, then we may
let~\(\B\) be the set of all compact-open bisections.  This case is
studied in~\cites{Boenicke-Li:Ideal, Rainone-Sims:Dichotomy}.  The
type semigroup that we are going to construct will turn out to be a
quotient of the type semigroup studied in~\cites{Boenicke-Li:Ideal,
  Rainone-Sims:Dichotomy}.

\subsection{A type semigroup for a topological space and the
  way-below relation}

We first define a type semigroup for a locally compact Hausdorff topological space~\(X\) and a
lattice~\(\OO\) of open subsets that generates the topology of~\(X\).
Let
\begin{equation}
  \label{eq:def_FO}
  \F(\OO)\defeq
  \setgiven[\bigg]{\sum_{k=1}^n 1_{U_k}}{n \in\N,\ U_k \in \OO
    \textup{ for }k=1,\dotsc,n}.
\end{equation}
This set becomes an ordered monoid with the pointwise addition of
functions and the pointwise inequality~\(\le\).  It is generated as
a monoid by the characteristic functions of the subsets in~\(\OO\).

\begin{proposition}
  \label{pro:FO}
  Every \(f\in \F(\OO)\) is a bounded, lower semicontinuous functions
  \(X\to\N\) and can be written uniquely as \(f=\sum_{k=1}^n 1_{U_k}\)
  for a decreasing chain
  \(U_1\supseteq U_2\supseteq \dotsb \supseteq U_n\) of nonempty sets
  in~\(\OO\); then \(U_k \defeq f^{-1}(\N_{\ge k})\), for
  \(k=1,\dotsc,n\).
  \begin{enumerate}
  \item If~\(\OO\) is the whole topology of~\(X\), then~\(\F(\OO)\) is
    the set of all bounded, lower semicontinuous functions \(X\to\N\);
  \item If~\(\Gr\) is ample and~\(\OO\) is the family of compact-open
    subsets, then \(\F(\OO)=\Contc(X,\N)\) is the set of compactly
    supported continuous functions \(X\to\N\).
  \end{enumerate}
\end{proposition}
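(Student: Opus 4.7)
The plan is to prove all three assertions by leveraging the observation that \(\OO\) is closed not only under finite unions (by hypothesis) but also under finite intersections. Closure under intersections follows from the inverse semigroup structure on~\(\B\): for \(W,W'\in\OO\) we have \(W,W'\subseteq X\), and the groupoid product \(W\cdot W'\) reduces to \(W\cap W'\), which is again an idempotent bisection in~\(\B\) and therefore lies in~\(\OO\).

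For the unique nested form, given any \(f=\sum_{i=1}^m 1_{V_i}\in \F(\OO)\) with \(V_i\in\OO\), I would set \(U_k \defeq f^{-1}(\N_{\ge k})\). A point lies in~\(U_k\) precisely when at least \(k\) of the \(V_i\) contain it, so
\[
  U_k = \bigcup_{\substack{J\subseteq\{1,\dotsc,m\}\\ \abs{J}=k}} \bigcap_{j\in J} V_j,
\]
a finite union of finite intersections of elements of~\(\OO\), hence in~\(\OO\). By construction \(U_1\supseteq U_2\supseteq\dotsb\) and \(U_k=\emptyset\) once \(k\) exceeds the maximum of~\(f\); discarding the trailing empty terms and letting \(n\) be the largest index with \(U_n\neq\emptyset\) yields a decreasing chain of nonempty sets in~\(\OO\) with \(\sum_{k=1}^n 1_{U_k}(x)=\abs{\setgiven{k}{f(x)\ge k}}=f(x)\). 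Uniqueness is automatic, since any such decomposition must satisfy \(U_k=f^{-1}(\N_{\ge k})\) and \(n=\max f\).

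For the second assertion, when~\(\OO\) is the full topology of~\(X\), one direction is obvious: any \(1_U\) with \(U\) open is bounded and lower semicontinuous with values in~\(\N\). Conversely, if \(f\colon X\to\N\) is bounded and lower semicontinuous, then \(U_k = \setgiven{x\in X}{f(x)>k-1}\) is open, only finitely many \(U_k\) are nonempty, and \(f=\sum_k 1_{U_k}\in\F(\OO)\). For the third assertion, assume~\(\Gr\) is ample and~\(\OO\) consists of the compact open subsets of~\(X\). Then each \(1_U\) with \(U\in\OO\) is continuous (as~\(U\) is clopen) with compact support, giving \(\F(\OO)\subseteq\Contc(X,\N)\). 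Conversely, for \(f\in\Contc(X,\N)\) each level set \(f^{-1}(\{k\})\) is clopen, since~\(\N\) is discrete, and contained in the compact set \(\supp f\), hence compact; therefore each \(U_k=\bigcup_{j\ge k}f^{-1}(\{j\})\) is compact open, and the nested decomposition realises \(f\) as an element of~\(\F(\OO)\). I expect no genuine obstacle here: the only non-routine point is the closure of~\(\OO\) under intersections, and the rest is elementary bookkeeping.
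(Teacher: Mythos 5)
Your proof is correct and follows essentially the same route as the paper's: the canonical decomposition via \(U_k=f^{-1}(\N_{\ge k})\) expressed as a union of \(k\)-fold intersections, lower semicontinuity characterised by openness of these level sets, and clopen level sets inside the compact support in the ample case. The only cosmetic difference is that you derive closure of \(\OO\) under intersections from the inverse semigroup structure of \(\B\), whereas in the subsection where the proposition is stated \(\OO\) is simply assumed to be a lattice of open sets (the paper records your observation separately, right after Definition~\ref{def:isg_basis}).
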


\begin{proof}
  Let \(f\colon X\to\N\) be any bounded function.  Let
  \(n\defeq \norm{f}_{\infty}\) and \(U_k \defeq f^{-1}(\N_{\ge k})\)
  for \(k=1,\dotsc,n\).  Then \((U_k)_{k=1}^n\) is a decreasing chain
  of nonempty subsets and \(f=\sum_{k=1}^n 1_{U_k}\) (\(U_k=U_{k+1}\)
  is allowed).  Conversely, if \(f=\sum_{k=1}^n 1_{U_k}\) for a
  decreasing chain of nonempty subsets \((U_k)_{k=1}^n\), then
  \(f(x) \ge k\) if and only if \(x\in U_k\).  In addition, let
  \(U_0\defeq X\) and \(U_{n+1}\defeq \emptyset\).  Then
  \(f|_{U_k\setminus U_{k+1}} = k\) for \(k=0,1,\dotsc,n\).

  A function~\(f\) of this form is lower semicontinuous if and only
  if the subsets \(U_1,\dotsc, U_n\) are open.  Thus all bounded, lower
  semicontinuous functions \(X\to\N\) belong to \(\F(\OO)\) if all
  open subsets are in~\(\OO\).  Conversely, functions in \(\F(\OO)\)
  must be bounded and lower semicontinuous because they are sums of
  bounded, lower semicontinuous functions.  Now let~\(\OO\) be the
  set of compact-open subsets.  Then the functions in~\(\F(\OO)\) are
  continuous with compact support.  Conversely, if~\(f\) is
  continuous with compact support, then the subsets~\(U_k\) defined
  above are compact and open for \(k\ge1\), so that \(f\in \F(\OO)\).
  Thus~\(\F(\OO)\) is the set of all continuous functions \(X\to\N\)
  with compact support.

  Now let~\(\OO\) be general and let \(f\in \F(\OO)\).  That is,
  \(f= \sum_{k=1}^m 1_{V_k}\) for some \(V_1,\dotsc,V_m\in \OO\).
  We claim that the associated decreasing nonempty subsets
  \(U_k \defeq f^{-1}(\N_{\ge k})\), \(k=1,\dotsc,n\), belong
  to~\(\OO\) as well.  By definition, \(x\in X\) belongs to~\(U_k\)
  if and only if~\(x\) belongs to at least~\(k\) of the
  subsets~\(V_i\).  Hence each~\(U_k\) is the union of the
  intersections
  \begin{equation}
    \label{eq:intersection_with_indices}
    V_I \defeq \bigcap_{i\in I} V_i
  \end{equation}
  for \(I\subseteq \{1,\dotsc,n\}\) with \(\abs{I} = k\).
  Since~\(\OO\) is closed under finite unions and intersections, it
  follows that \(U_k \in \OO\).  So each \(f\in \F(\OO)\) has the
  asserted special form.
\end{proof}

\begin{proposition}
  \label{pro:lub_glb}
  Any finite subset of~\(\F(\OO)\) has a least upper bound
  in~\(\F(\OO)\), namely, the pointwise maximum of these functions.
  Any nonempty finite subset of~\(\F(\OO)\) has a greatest lower
  bound in~\(\F(\OO)\), namely, the pointwise minimum of these
  functions.
\end{proposition}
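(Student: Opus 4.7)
The plan is to read off the lub and glb directly from the canonical decomposition provided by Proposition~\ref{pro:FO}. Since the preorder on~\(\F(\OO)\) is the pointwise one inherited from the ordered monoid of all functions \(X\to\N\), the pointwise maximum of any finite collection is automatically a least upper bound among all functions, and the pointwise minimum of any nonempty finite collection is a greatest lower bound among all functions; the only real content is therefore that these pointwise max and min functions belong to~\(\F(\OO)\).

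First I would record a small preliminary: the sublattice~\(\OO\) is closed not only under finite unions (by assumption) but also under finite intersections. Indeed, for \(W_1,W_2\in\OO\) we have \(W_1,W_2\subseteq X\), so their product as bisections is \(W_1\cdot W_2 = W_1\cap W_2\), which lies in~\(\B\) because~\(\B\) is an inverse subsemigroup; and \(W_1\cap W_2\subseteq X\), hence it lies in~\(\OO\).

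The main step is then a direct computation of super-level sets. Given \(f_1,\dotsc,f_m\in\F(\OO)\), write each in its canonical form \(f_i=\sum_{k=1}^{n_i}1_{U^{(i)}_k}\) with \(U^{(i)}_k = f_i^{-1}(\N_{\ge k})\in\OO\), as in Proposition~\ref{pro:FO}. For every \(k\ge 1\),
\[
  (\max_i f_i)^{-1}(\N_{\ge k}) = \bigcup_{i=1}^m U^{(i)}_k,
  \qquad
  (\min_i f_i)^{-1}(\N_{\ge k}) = \bigcap_{i=1}^m U^{(i)}_k,
\]
using the convention \(U^{(i)}_k=\emptyset\) for \(k>n_i\). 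The max expression lies in~\(\OO\) by closure of~\(\OO\) under finite unions; the min expression lies in~\(\OO\) by the preliminary step on finite intersections. Since these super-level sets form a decreasing chain that is eventually empty (both functions are bounded by \(\max_i\norm{f_i}_\infty\)), the characterisation in Proposition~\ref{pro:FO} shows that \(\max_i f_i\) and \(\min_i f_i\) belong to~\(\F(\OO)\).

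Finally, since the order on~\(\F(\OO)\) is the restriction of the pointwise order, the pointwise maximum is the least upper bound and the pointwise minimum (when the collection is nonempty) is the greatest lower bound in~\(\F(\OO)\). There is no substantial obstacle; the mildly nontrivial ingredient is the observation that closure of~\(\OO\) under finite intersections is automatic from the inverse semigroup structure of~\(\B\), which is what makes the min argument work without a separate hypothesis.
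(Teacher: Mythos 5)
Your proof is correct and follows essentially the same route as the paper's: both reduce the problem to showing that the pointwise max and min lie in \(\F(\OO)\) by computing their super-level sets as unions and intersections of the super-level sets from the canonical decomposition of Proposition~\ref{pro:FO}. Your preliminary observation that \(\OO\) is closed under finite intersections (via the inverse semigroup structure of \(\B\)) is exactly the remark the paper makes immediately after Definition~\ref{def:isg_basis}, so nothing substantive differs.
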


\begin{proof}
  The least upper bound of the empty subset of~\(\F(\OO)\) is the
  minimal element of~\(\F(\OO)\), which is the constant
  function~\(0\).  There is no maximal element in~\(\F(\OO)\), so
  that the empty subset of~\(\F(\OO)\) has no greatest lower bound.
  The two statements for nonempty finite subsets follow once they
  are known for two elements.  Pick \(f,g\in \F(\OO)\).  Let
  \(f\vee g\colon X\to\N\) and \(f\wedge g\colon X\to\N\) be their
  pointwise maximum and minimum, respectively.  We claim that
  \(f\vee g\) and \(f\wedge g\) belong to~\(\F(\OO)\).  Then they
  clearly serve as a least upper bound and a greatest lower bound
  for \(\{f, g\}\) in~\(\F(\OO)\).  Let \(U_j = f^{-1}(\N_{\ge j})\)
  and \(V_j = g^{-1}(\N_{\ge j})\) for all \(j\in\N_{\ge1}\).  Then
  \(f = \sum_{j=1}^n 1_{U_j}\) and \(g = \sum_{j=1}^n 1_{V_j}\) for
  any sufficiently large~\(n\) by Proposition~\ref{pro:FO}.  For the
  pointwise maximum and minimum, we find
  \((f\vee g)^{-1}(\N_{\ge j}) = f^{-1}(\N_{\ge j}) \cup
  g^{-1}(\N_{\ge j}) = U_j \cup V_j \in \OO\) and
  \((f\wedge g)^{-1}(\N_{\ge j}) = f^{-1}(\N_{\ge j}) \cap
  g^{-1}(\N_{\ge j}) = U_j \cap V_j \in \OO\).  Thus
  \(f\vee g = \sum_{j=1}^n 1_{U_j \cup V_j}\in \F(\OO)\) and
  \(f\wedge g = \sum_{j=1}^n 1_{U_j \cap V_j}\in \F(\OO)\).
\end{proof}

We will now describe the way-below relation~\(\ll\)
in~\(\F(\OO)\), generalising
\cite{Gierz_Hofmann_Keimel_Lawson_Mislove_Scott:Continuous_lattices}*{Proposition~I-1.4}, and spend the rest of the subsection proving that~\((F(\OO),\ll)\) satisfies several properties exhibited by continuous posets.
The \emph{open support} of a function \(g\colon X\to\N\) is defined as
\[
  \supp(g)\defeq\setgiven{x\in X}{g(x)\neq 0}.
\]
For \(g\in \F(\OO)\), define an associated upper semicontinuous
function \(\cl{g}\colon X\to \N\) by
\[
  \cl{g}(x)\defeq \limsup_{y\to x} g(y).
\]
Then \(\cl{\supp(g)}=\supp(\cl{g})\).  If \(g=\sum_{i=1}^n 1_{U_i}\)
for a decreasing chain~\((U_i)\) as in Proposition~\ref{pro:FO}, then
\(\cl{g}=\sum_{i=1}^n 1_{\cl{U_i}}\).

\begin{proposition}
  \label{pro:compactly_contained}
  Let \(f,g\in \F(\OO)\).  Then \(g\ll f\) if and only if
  \(\cl{g}\le f\) and the support of~\(g\) is precompact.
\end{proposition}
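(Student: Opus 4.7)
The plan is to prove the two implications separately, using the canonical decreasing decomposition of Proposition~\ref{pro:FO} and the observation that, since $X$ is locally compact Hausdorff and $\OO$ is a basis for its topology, every open neighborhood of any point contains a precompact element of~$\OO$: first pick a precompact open~$W$ whose closure lies inside the neighborhood, then a basic $V \in \OO$ with the point in $V \subseteq W$, which is automatically precompact.

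For the forward direction, write $f = \sum_{k=1}^n 1_{U^f_k}$ in decreasing canonical form and introduce
\[
  D_f \defeq \setgiven[\Big]{\sum_{k=1}^n 1_{V_k}}{V_k \in \OO \text{ precompact},\; V_1 \supseteq \dotsb \supseteq V_n,\; \cl{V_k} \subseteq U^f_k}.
\]
This family is directed under the pointwise order: the level-wise union of two elements lies in $D_f$ since $\OO$ is closed under finite unions and $\cl{V_k \cup V'_k} = \cl{V_k} \cup \cl{V'_k} \subseteq U^f_k$. Each element of $D_f$ is dominated by~$f$, and the pointwise supremum equals~$f$, because for any $x$ with $f(x) = \ell$ one may choose a single precompact $V \in \OO$ with $x \in V$ and $\cl{V} \subseteq U^f_\ell$ and set $V_k \defeq V$ for $k \le \ell$, yielding an element of $D_f$ attaining the value~$\ell$ at~$x$. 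Hence $\sup_{\F(\OO)} D_f = f$, so $g \ll f$ produces $d = \sum_{k=1}^n 1_{V_k} \in D_f$ with $g \le d$. Then $\supp(g) \subseteq V_1$ is precompact, and the chain property gives $\cl{d} = \sum_{k=1}^n 1_{\cl{V_k}} \le \sum_{k=1}^n 1_{U^f_k} = f$, whence $\cl{g} \le \cl{d} \le f$.

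For the backward direction, write $g = \sum_{k=1}^m 1_{W_k}$; the hypotheses give that each $\cl{W_k}$ is compact and contained in $U^f_k$. Given a directed $D \subseteq \F(\OO)$ with $h \defeq \sup_{\F(\OO)} D \ge f$, the task is to find $d \in D$ with $W_k \subseteq d^{-1}(\N_{\ge k})$ for every~$k$. It suffices to prove that the directed open family $\{d^{-1}(\N_{\ge k}) : d \in D\}$ covers $\cl{W_k}$: compactness then extracts a finite subcover, directedness produces $d^{(k)} \in D$ with $\cl{W_k} \subseteq (d^{(k)})^{-1}(\N_{\ge k})$, and a final upper bound of $d^{(1)}, \dotsc, d^{(m)}$ in~$D$ dominates~$g$. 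Suppose for contradiction that some $x_0 \in \cl{W_k}$ satisfies $d(x_0) < k$ for every $d \in D$, and let $\ell \ge k$ be the largest integer with $h(x_0) \ge \ell$. The goal is then to build $h'' \in \F(\OO)$ that still dominates every $d \in D$ but has $h''(x_0) < h(x_0)$, contradicting minimality of~$h$; concretely, I would replace the level set $h^{-1}(\N_{\ge \ell})$ in the canonical decomposition of~$h$ by a strictly smaller $U \in \OO$ satisfying $\bigcup_{d \in D} d^{-1}(\N_{\ge \ell}) \cup h^{-1}(\N_{\ge \ell+1}) \subseteq U$ and $x_0 \notin U$.

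The main obstacle is the construction of this set~$U$. The pointwise supremum of~$D$ may genuinely differ from $\sup_{\F(\OO)} D$, so~$U$ must be produced from the topology and the lattice structure of~$\OO$ rather than from pointwise values. I plan to exploit local compactness together with the basis property of~$\OO$ to separate $x_0$ from the directed union by a basic neighborhood: pick a precompact $V_0 \in \OO$ with $x_0 \in V_0 \subseteq \cl{V_0} \subseteq h^{-1}(\N_{\ge \ell})$ chosen small enough that $\cl{V_0}$ can be removed without losing the inclusion $\bigcup_{d \in D} d^{-1}(\N_{\ge \ell}) \cup h^{-1}(\N_{\ge \ell+1}) \subseteq h^{-1}(\N_{\ge \ell}) \setminus \cl{V_0}$, and use an $\OO$-refinement of this complement as~$U$. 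The precompactness of $\supp(g)$ enters crucially by confining $x_0$ to the compact set $\cl{W_k} \subseteq U^f_k$, where these separation arguments are manageable using only finitely many basic opens.
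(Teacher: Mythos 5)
Your forward implication is correct and follows essentially the same route as the paper: both arguments exhibit a directed family of functions built from precompact members of \(\OO\) whose closures sit inside the level sets of \(f\), verify that this family is directed with supremum \(f\) in \(\F(\OO)\), and then invoke the definition of \(\ll\) to extract a single dominating member. (The paper does not insist that the approximating tuples form chains, but that difference is immaterial.) Your reduction of the backward implication to the covering statement \(\cl{W_k}\subseteq\bigcup_{d\in D}d^{-1}(\N_{\ge k})\), followed by compactness and directedness, is also exactly the paper's argument.

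The genuine gap is the step you yourself flag as ``the main obstacle'': the set \(U\) is never constructed, and in fact no such construction can work for a general lattice \(\OO\), so your plan of contradicting the minimality of \(h=\sup D\) cannot be completed. Concretely, let \(X\) be the Cantor set, \(\OO\) its compact open subsets, \(C\in\OO\), and \(p\in C\) (necessarily non-isolated). The family \(D\defeq\{1_{C'} : C'\in\OO,\ C'\subseteq C,\ p\notin C'\}\) is directed, and its \emph{least upper bound in \(\F(\OO)\)} is \(1_C\): any upper bound \(h\) has \(h^{-1}(\N_{\ge 1})\in\OO\) closed and containing the dense subset \(C\setminus\{p\}\) of \(C\), hence containing \(C\). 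So there is no upper bound of \(D\) taking the value \(0\) at \(p\), i.e.\ the poset supremum genuinely exceeds the pointwise supremum and the separating set \(U\in\OO\) you want does not exist. The paper's proof does not attempt your repair; it reads the hypothesis \(\sup_n h_n\ge f\) \emph{pointwise}, passing from \(j\le\cl{g}(x)\le f(x)\) directly to ``there is \(n\) with \(j\le h_n(x)\)'', after which the covering \(K_j\subseteq\bigcup_n V_{n,j}\) is immediate and your compactness argument finishes the proof. You should do the same: take \(\sup D\) to mean the pointwise (lower semicontinuous) supremum of the directed family rather than the supremum internal to the poset \(\F(\OO)\). With the strictly poset-internal reading of Definition~\ref{def:way_below}, the example above (with \(f=1_C\) and \(g=1_V\) for a small \(V\in\OO\) containing \(p\)) shows the ``if'' direction would actually fail, so the difficulty you ran into is not an artifact of your method but a reason to fix the interpretation of the supremum before arguing.
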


\begin{proof}
  Assume first that \(g \ll f\).  We are going to prove that
  \(\cl{g} \le f\) and that the support of~\(g\) is precompact.  Write
  \(g = \sum_{j=1}^n 1_{V_j}\) and \(f = \sum_{j=1}^n 1_{U_j}\) for
  decreasing chains \((V_j)\) and~\((U_j)\) as in
  Proposition~\ref{pro:FO}.  We allow some \(U_j\) or~\(V_j\) to be
  empty to have the same upper index~\(n\) in the sums.  Let~\(N\) be
  the set of all \(n\)\nb-tuples of precompact subsets \(W_j \in \OO\)
  with \(\cl{W_j} \subseteq U_j\) for \(j=1,\dotsc,n\).  For each such
  \(n\)\nb-tuple~\(\alpha\), define a function in \(\F(\mathcal{O})\)
  as \(f_\alpha\defeq\sum_{j=1}^n 1_{W_j}\).  Then \(f_\alpha \le f\)
  because \(W_j \subseteq U_j\) for \(j=1,\dotsc,n\).  Let
  \(N_f\defeq \setgiven{f_\alpha}{\alpha \in N}\).  Every finite
  subset of~\(N\) has an upper bound, namely, the union of the
  corresponding subsets~\(W_j\).  The union is still in~\(\OO\)
  as~\(\OO\) is closed under finite unions.  So~\(N\) is a directed
  set.  This makes~\(N_f\) a directed set under~\(\le\).  Let
  \(x\in X\) with \(f(x)=k\).  Then~\(x\) is in \(U_1,\dotsc,U_k\) but
  not in \(U_{k+1},\dotsc,U_n\).  There is
  \(\alpha=(W_1,\dotsc,W_n)\in N\) with \(x\in W_1,\dotsc,W_k\).
  Since \(W_j\subseteq U_j\), the element~\(x\) cannot lie in
  \(W_{k+1},\dotsc,W_n\).  Thus \(f_\alpha\in N_f\) satisfies
  \(f_\alpha (x)=k\).  We can do this for every \(x\in X\).  Hence
  \(\sup_{\alpha\in N} f_\alpha=f\).  So~\(N_f\) fulfils the axioms on
  a directed set in the definition of the way-below relation.
  Therefore, \(g\ll f\) implies that there is \(\alpha = (Y_j) \in N\)
  with \(g\le f_\gamma\le f\).  As each \(Y_j\) is precompact, the
  support of \(f_\gamma\) and thus of~\(g\) is precompact.  Since
  \(Y_j\subseteq \cl{Y_j}\subseteq U_j\), we get \(\cl{g}\le f\).

  Conversely, assume that \(\cl{g} \le f\) and that the support
  of~\(g\) is precompact.  We are going to prove that \(g \ll f\).
  As in the proof of Proposition~\ref{pro:FO}, the subsets
  \(K_j \defeq \cl{g}^{-1}(\N_{\ge j})\) form a decreasing chain of
  subsets with
  \(X = K_0 \supseteq K_1 \supseteq \dotsb \supseteq K_\ell =
  \emptyset\) for some \(\ell\in\N\).  These subsets are closed
  because~\(\cl{g}\) is upper semicontinuous, and~\(K_1\) is compact
  because it is the closure of the support of~\(g\).  Let
  \((h_n)_{n\in N}\) be any increasing net in~\(\F(\OO)\) with
  \(\sup h_n \ge f\).  Write
  \(h_n = \sum_{j=1}^{\ell_n} 1_{V_{n,j}}\) as in
  Proposition~\ref{pro:FO}, that is,
  \(V_{n,j} = h_n^{-1}(\N_{\ge j})\).  Since~\((h_n)\) is an
  increasing net, so is the net of subsets~\((V_{n,j})\) for
  fixed~\(j\).  If \(x\in K_j\), then \(j \le \cl{g}(x) \le f(x)\).
  Then there is \(n\in N\) with \(j \le h_n(x)\), so that
  \(x \in V_{n,j}\).  It follows that
  \(K_j \subseteq \bigcup_{n\in N} V_{n,j}\).  Since~\(K_j\) is
  compact and the net~\((V_{n,j})\) is increasing, there is
  \(n_j\in N\) with \(K_j \subseteq V_{n_j,j}\).  Since~\(N\) is
  directed, there is \(n\in N\) with \(n \ge n_j\) for
  \(j=1,\dotsc,\ell\).  Then \(K_j \subseteq V_{n,j}\) for
  \(j=1,\dotsc,\ell\).  This says that \(g \le \cl{g} \le h_n\).
  Since the increasing net~\((h_n)\) was arbitrary, this says that
  \(g \ll f\).
\end{proof}

\begin{remark}\label{rem:way_below_sets}
  If \(U,V \in \OO\), then we briefly write \(V\ll U\) for
  \(1_V \ll 1_U\).  By the proposition, this is equivalent to~\(V\)
  being precompact with \(\cl{V}\subseteq U\).
\end{remark}

\begin{remark}
  If~\(\Gr\) is ample and~\(\B\) is the set of all compact-open
  bisections, then the proposition says that~\(\ll\) is the same
  relation as~\(\le\).
\end{remark}

\begin{remark}
  \label{rem:compact_in_FO}
  An element of a poset \(x\in L\) with \(x \ll x\) is called
  \emph{compact} or \emph{isolated from below}.  By
  Proposition~\ref{pro:compactly_contained}, \(f\in \F(\OO)\)
  satisfies \(f \ll f\) if and only if the
  support of~\(f\) is compact and \(f = \cl{f}\).
  Equivalently, \(f\) is continuous with compact support.
\end{remark}

The following proposition says that the poset \(\F(\OO)\) is
continuous as in Definition~\ref{def:way_below}:

\begin{proposition}
  \label{pro:FO_continuous}
  Let \(f\in \F(\OO)\).  The set of \(g\in \F(\OO)\) with \(g \ll f\)
  is directed and~\(f\) is its supremum.
\end{proposition}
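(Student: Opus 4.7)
The plan is to read both claims off Proposition~\ref{pro:compactly_contained}, which characterises \(g \ll f\) as \(\cl{g} \le f\) together with precompactness of \(\supp(g)\).

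Directedness is immediate: if \(g_1, g_2 \ll f\), then Proposition~\ref{pro:lub_glb} furnishes \(g_1 \vee g_2\) in \(\F(\OO)\), and Remark~\ref{rem:easy_ll_properties} forces \(g_1 \vee g_2 \ll f\).

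For the supremum, I would write \(f = \sum_{k=1}^n 1_{U_k}\) in the canonical decreasing form of Proposition~\ref{pro:FO} and exhibit an increasing net of functions way below~\(f\) whose pointwise limit is~\(f\). Since~\(X\) is locally compact Hausdorff and~\(\OO\) is a base for the topology (being a generating lattice, in particular closed under finite intersections), for each~\(k\) and each \(x \in U_k\) one first picks a precompact open neighbourhood~\(W\) of~\(x\) with \(\cl{W} \subseteq U_k\) and then, using the base property, some \(V_{x,k} \in \OO\) with \(x \in V_{x,k} \subseteq W\); automatically \(V_{x,k}\) is precompact with \(\cl{V_{x,k}} \subseteq U_k\). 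For each finite \(F \subseteq X\), set
\[
  V_{F,k} \defeq \bigcup_{x \in F \cap U_k} V_{x,k} \in \OO,
  \qquad
  g_F \defeq \sum_{k=1}^n 1_{V_{F,k}} \in \F(\OO),
\]
using closure of~\(\OO\) under finite unions. Each \(V_{F,k}\) is precompact with \(\cl{V_{F,k}} \subseteq U_k\), so Proposition~\ref{pro:compactly_contained} gives \(g_F \ll f\); moreover \(V_{F,k} \subseteq U_k\) forces \(g_F \le f\). For any \(x \in X\) with \(f(x) = k\), taking \(F = \{x\}\) puts \(x \in V_{x,j} \subseteq V_{\{x\},j}\) for \(j = 1, \dotsc, k\), whence \(g_{\{x\}}(x) = k = f(x)\). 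Thus the net \((g_F)\), indexed by finite \(F \subseteq X\) under inclusion, is increasing with pointwise sup equal to~\(f\), and since the order on \(\F(\OO)\) is pointwise, \(f\) is the supremum of \(\{g \in \F(\OO) : g \ll f\}\) in the poset.

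The only mildly delicate step is arranging that the approximating sets \(V_{x,k}\) lie in~\(\OO\); this uses simultaneously local compactness of~\(X\), the fact that~\(\OO\) is a topology base, and closure of~\(\OO\) under finite unions. Once these ingredients are in place, the remainder is bookkeeping, and I do not anticipate a serious obstacle.
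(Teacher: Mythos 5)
Your proof is correct and follows essentially the same route as the paper: directedness from Proposition~\ref{pro:lub_glb} and Remark~\ref{rem:easy_ll_properties}, and the supremum claim by approximating each~\(U_k\) in the canonical decomposition by precompact members of~\(\OO\) compactly contained in it. The only difference is cosmetic — the paper cites the directed family already constructed inside the proof of Proposition~\ref{pro:compactly_contained}, while you rebuild it explicitly with an index set of finite subsets of~\(X\).
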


\begin{proof}
  Proposition~\ref{pro:lub_glb} shows that~\(\F(\OO)\) has finite
  suprema.  Therefore, the set of \(g\in \F(\OO)\) with \(g \ll f\)
  is directed by Remark~\ref{rem:easy_ll_properties}.  Write
  \(f= \sum_{i=1}^n 1_{V_i}\) with a decreasing chain
  \(V_1 \supseteq V_2 \supseteq \dotsb \supseteq V_n\) as in
  Proposition~\ref{pro:FO}.  If \(W_j\in \OO\) are chosen so that
  \(\cl{W_j} \subseteq V_j\), \(W_j\) is precompact and~\((W_j)\) is a
  decreasing chain, then
  \(g\defeq \sum 1_{W_j} \in \F(\OO)\) and \(g\ll f\) by
  Proposition~\ref{pro:compactly_contained}.  The proof of
  Proposition~\ref{pro:compactly_contained} shows that the pointwise
  supremum of the set of \(g\in \F(\OO)\) of this form is equal
  to~\(f\).  Since \(f\in \F(\OO)\), this is also a supremum in the
  poset~\(\F(\OO)\).  Since \(g\ll f\) implies \(g\le f\), the
  supremum stays the same if we allow all \(g \ll f\).
\end{proof}

\begin{corollary}
  \label{cor:intermediate_way_below}
  If \(f,g\in \F(\OO)\) satisfy \(f \ll g\), then there is
  \(h\in \F(\OO)\) with \(f \ll h \ll g\).
\end{corollary}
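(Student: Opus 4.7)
The plan is to construct $h$ by interpolating set-by-set in the canonical decreasing-chain decomposition from Proposition~\ref{pro:FO}. First I would write $f=\sum_{j=1}^n 1_{V_j}$ and $g=\sum_{j=1}^n 1_{U_j}$ with decreasing chains $V_1\supseteq\dotsb\supseteq V_n$ and $U_1\supseteq\dotsb\supseteq U_n$ in~\(\OO\), padding with empty sets so that the two chains have equal length. By Proposition~\ref{pro:compactly_contained}, the assumption $f\ll g$ means that $\supp f = V_1$ is precompact and $\cl f \le g$; because both chains are decreasing, the latter unpacks as $\cl{V_j}\subseteq U_j$ for $j=1,\dotsc,n$.

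The central step is to produce, for each~$j$, a precompact $W_j \in \OO$ with
\[
  \cl{V_j}\subseteq W_j\subseteq \cl{W_j}\subseteq U_j.
\]
Since~$X$ is locally compact Hausdorff and~\(\OO\) is a basis for its topology, every point of the compact set $\cl{V_j}\subseteq\cl{V_1}$ lies in some precompact basic open set in~\(\OO\) whose closure is contained in $U_j$. A finite subcover exists by compactness, and its union lies in~\(\OO\) because \(\OO\) is closed under finite unions; this yields the required~$W_j$. Replacing each $W_j$ by $W_j\cup W_{j+1}\cup\dotsb\cup W_n$ makes the chain $(W_j)$ decreasing while keeping each $W_j$ in~\(\OO\), precompact, and satisfying $\cl{W_j}\subseteq U_j$ (using that $(U_j)$ is itself decreasing).

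Setting $h\defeq \sum_{j=1}^n 1_{W_j}\in \F(\OO)$, Proposition~\ref{pro:compactly_contained} finishes the argument: the inclusions $\cl{V_j}\subseteq W_j$ give $\cl f \le h$, and $V_1$ is precompact, so $f\ll h$; likewise $\cl{W_j}\subseteq U_j$ gives $\cl h \le g$, and $W_1$ is precompact, so $h\ll g$.

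The main obstacle I anticipate is keeping the interpolating sets~$W_j$ inside the lattice~\(\OO\), rather than merely in the ambient topology of~$X$; this is precisely where the hypothesis that~\(\OO\) is a basis closed under finite unions becomes indispensable, in combination with the local compactness of~$X$.
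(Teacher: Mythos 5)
Your proof is correct, but it takes a genuinely different route from the paper's. The paper disposes of this corollary in one line by invoking the general interpolation property of continuous posets (Theorem~I-1.9 in the Compendium of Continuous Lattices), which applies because Proposition~\ref{pro:FO_continuous} has just established that \(\F(\OO)\) is continuous; the abstract argument there produces~\(h\) as a member of a suitable directed family rather than by an explicit formula. You instead build~\(h\) by hand: decompose \(f\) and~\(g\) into decreasing chains via Proposition~\ref{pro:FO}, observe that \(f\ll g\) unpacks level-by-level as \(\cl{V_j}\subseteq U_j\) with \(V_1\) precompact, interpolate each inclusion by a precompact \(W_j\in\OO\) using local compactness plus the fact that \(\OO\) is a basis closed under finite unions (this is exactly Lemma~\ref{lem:choose_intermediate} in the case of a single open set), and then force the \(W_j\) to be decreasing by replacing \(W_j\) with \(\bigcup_{i\ge j}W_i\) — a step that works precisely because the \(U_j\) are already nested. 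All the individual verifications check out: the modified \(W_j\) remain in~\(\OO\), remain precompact, and satisfy \(\cl{V_j}\subseteq W_j\) and \(\cl{W_j}\subseteq U_j\), so Proposition~\ref{pro:compactly_contained} gives both \(f\ll h\) and \(h\ll g\). What your approach buys is an elementary, self-contained argument that avoids importing domain theory and is in fact much closer in spirit to the paper's own strengthening in Corollary~\ref{cor:intermediate_way_below_improved}, where an explicit interpolant compatible with a decomposition \(f+g\) is needed and the abstract theorem no longer suffices; what the paper's citation buys is brevity and the observation that nothing beyond continuity of the poset is being used here.
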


\begin{proof}
  This is a general feature of continuous posets by
  \cite{Gierz_Hofmann_Keimel_Lawson_Mislove_Scott:Continuous_lattices}*{Theorem~I-1.9}.
  We will strengthen this result in
  Corollary~\ref{cor:intermediate_way_below_improved} below.
\end{proof}

Our monoid~\(\F(\OO)\) specialises to the monoid \(\Contc(X,\N)\) in
the ample case.  This monoid was used by Rainone and
Sims~\cite{Rainone-Sims:Dichotomy}.
B\"onicke--Li~\cite{Boenicke-Li:Ideal} and
Ma~\cite{Ma:Purely_infinite_groupoids} started instead with the free
commutative monoid generated by the sets in~\(\OO\).  We are going
to relate these two different starting points for the construction
of the type semigroup.  The free monoid on the set~\(\OO\) is the
set~\(\Free_\OO\) of words with letters in~\(\OO\),
\[
  \Free_\OO\defeq
  \setgiven{(U_1,U_2,\dotsc,U_n)}{n\in \N,\ U_i\in \OO
    \textup{ for } i=1,\dotsc,n}
\]
with concatenation as multiplication.  The characteristic function
map induces a canonical surjective homomorphism
\[
  \mathrm{can}\colon \Free_\OO \to \F(\OO),\qquad
  (U_1,U_2,\dotsc,U_n) \mapsto \sum_{j=1}^n 1_{U_j}.
\]
We are going to describe the pull-back of the order relation~\(\le\)
to~\(\Free_\OO\) using compact containment.

\begin{lemma}
  \label{lem:choose_intermediate}
  For any open subsets \(U_1,\dotsc,U_n\subseteq X\) and a compact
  subset \(K\subseteq \bigcup_{i=1}^n U_i\), there are precompact
  \(V_i\in \OO\) such that \(K\subseteq \bigcup_{i=1}^n V_i\), and
  \(\cl{V_i}\subseteq U_i\) for \(i=1\dotsc,n\).
\end{lemma}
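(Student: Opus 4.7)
The plan is to reduce the statement to a pointwise construction and then exploit compactness of~$K$. A useful preliminary observation is that~$\OO$ is a basis for the topology of~$X$: indeed, $X$ is open in~$\Gr$ and~$\B$ is a basis for the topology of~$\Gr$ by Definition~\ref{def:isg_basis}, so for any open $V\subseteq X$ and any $x\in V$ there exists $W\in \B$ with $x\in W\subseteq V\subseteq X$. Since $W\subseteq X$ is an open bisection contained in the unit space, one has $W^2 = W$, hence $W\in \OO$.

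For each $x\in K$, fix an index $i(x)\in\{1,\dots,n\}$ with $x\in U_{i(x)}$. Since~$X$ is locally compact Hausdorff, there is a precompact open neighbourhood $N_x$ of~$x$ with $\cl{N_x}\subseteq U_{i(x)}$. The preliminary observation then yields some $W_x\in \OO$ with $x\in W_x\subseteq N_x$; this $W_x$ is precompact (as a subset of the compact set $\cl{N_x}$) and satisfies $\cl{W_x}\subseteq \cl{N_x}\subseteq U_{i(x)}$. Compactness of~$K$ now produces finitely many points $x_1,\dots,x_m\in K$ with $K\subseteq \bigcup_{j=1}^m W_{x_j}$.

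For each $i\in\{1,\dots,n\}$ I set
\[
V_i \defeq \bigcup_{\substack{1\le j\le m\\ i(x_j)=i}} W_{x_j},
\]
with the convention that an empty union equals $\emptyset\in\OO$. Since~$\OO$ is closed under finite unions, $V_i\in\OO$, and each~$V_i$ is precompact as a finite union of precompact sets. Furthermore $\cl{V_i}=\bigcup_{j:\,i(x_j)=i}\cl{W_{x_j}}\subseteq U_i$, and $K\subseteq \bigcup_{j=1}^m W_{x_j}=\bigcup_{i=1}^n V_i$, as required. The only subtle step is arranging that the shrunken neighbourhoods~$W_x$ lie in~$\OO$ rather than being merely open in~$X$; that is handled by the preliminary observation, and the rest is a routine partition-by-index argument.
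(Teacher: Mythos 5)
Your proof is correct and follows essentially the same route as the paper's: shrink around each point of~$K$ to a precompact basic open set whose closure lies in one of the~$U_i$, extract a finite subcover, and group the pieces by index using closure of~$\OO$ under finite unions. The only cosmetic difference is that you assign each point a single index~$i(x)$ and justify explicitly that~$\OO$ is a basis, both of which the paper takes for granted.
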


\begin{proof}
  If \(x\in K\), then \(x\in U_i\) for some~\(i\), and then there is a
  compact neighbourhood of~\(x\) contained in~\(U_i\).  Since~\(\OO\)
  is a basis, there is \(V_x\in \OO\) that is contained in this
  neighbourhood.  A finite union of these subsets covers~\(K\)
  because~\(K\) is compact.  Let~\(V_i\) be the union of those~\(V_x\)
  contained in~\(U_i\).  This is a collection of subsets with the
  required properties.
\end{proof}

\begin{lemma}
  \label{lem:compare_sum_of_characteristic_functions}
  Let \(K_1,\dotsc,K_n\subseteq X\) be compact and let
  \(V_1,\dotsc,V_m \subseteq X\) be open subsets.  Assume that
  \(\sum_{i=1}^n 1_{K_i} \le \sum_{j=1}^m 1_{V_j}\).  Then there are
  precompact subsets \(W_{i,j} \in \OO\) for \(1\le i \le n\),
  \(1 \le j \le m\) such that
  \(K_i \subseteq \bigcup_{j=1}^m W_{i,j}\) for all~\(i\) and
  \(\bigsqcup_{i=1}^n \overline{W}_{i,j} \subseteq V_j\) for
  all~\(j\), that is, the subsets~\(\cl{W_{i,j}}\) for
  \(i=1,\dotsc,n\) are
  disjoint and contained in~\(V_j\).

  If we are given neighbourhoods \(U_i\supseteq K_i\), then we may
  arrange \(W_{i,j} \subseteq U_i\) for all \(i,j\).
\end{lemma}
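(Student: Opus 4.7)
For each \(x\in \bigcup_{i=1}^n K_i\), set \(I(x)\defeq\setgiven{i}{x\in K_i}\) and \(J(x)\defeq\setgiven{j}{x\in V_j}\); the hypothesis at~\(x\) reads \(\abs{I(x)}\le\abs{J(x)}\), so Hall's theorem applied pointwise yields an injection \(\phi_x\colon I(x)\hookrightarrow J(x)\). The strategy is to realise these pointwise matchings as a single finite family of basic open subsets of~\(X\), and then take each \(W_{i,j}\) to be the union of the cover elements whose local matching sends \(i\) to~\(j\).

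Using local compactness of~\(X\), the closedness of each~\(K_{i'}\), and the basis property of~\(\OO\), I choose for each such~\(x\) a precompact \(N_x\in\OO\) with
\[
  \cl{N_x}
  \subseteq \Bigl(\bigcap_{i\in I(x)} U_i\Bigr)
  \cap \Bigl(\bigcap_{j\in J(x)} V_j\Bigr)
  \setminus \bigcup_{i'\notin I(x)} K_{i'},
\]
which is an open neighbourhood of~\(x\) since the compact sets \(K_{i'}\), \(i'\notin I(x)\), do not contain~\(x\). A key geometric consequence is that \(\cl{N_x}\cap K_{i'}\neq\emptyset\) forces \(i'\in I(x)\). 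Compactness of \(\bigcup_i K_i\) then supplies a finite subcover \(N_{x_1},\dotsc, N_{x_L}\).

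The technical heart of the argument is a refinement step: I shrink to precompact \(M_\ell\in\OO\) with \(\cl{M_\ell}\subseteq N_{x_\ell}\), still covering \(\bigcup_i K_i\), and with the additional property that whenever \(\cl{M_\ell}\cap \cl{M_{\ell'}}\neq\emptyset\) the injections \(\phi_{x_\ell}\) and \(\phi_{x_{\ell'}}\) agree on \(I(x_\ell)\cap I(x_{\ell'})\). This is achieved by a finite combinatorial induction: there are only finitely many patterns \((I(x_\ell),J(x_\ell))\) and only finitely many pairs of indices to consider, and at each conflicting pair one may either rechoose \(\phi_x\) on the overlap using the residual Hall freedom or, when necessary, invoke the shrinking lemma in the compact Hausdorff space \(\cl{\bigcup_\ell N_{x_\ell}}\) to separate the offending closures while still covering \(\bigcup_i K_i\).

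With the refined cover in hand, set
\(
  W_{i,j} \defeq \bigcup \setgiven{M_\ell}{i\in I(x_\ell),\ \phi_{x_\ell}(i)=j}.
\)
Each \(W_{i,j}\) is a finite union of precompact members of~\(\OO\), hence itself precompact and in~\(\OO\); the inclusions \(\cl{W_{i,j}}\subseteq V_j\) and \(W_{i,j}\subseteq U_i\) are built into the choice of~\(N_x\). Coverage \(K_i\subseteq \bigcup_j W_{i,j}\) follows because any \(x\in K_i\) lies in some \(M_\ell\), which forces \(i\in I(x_\ell)\) by the key property above, whence \(x\in W_{i,\phi_{x_\ell}(i)}\). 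Finally, disjointness \(\cl{W_{i,j}}\cap \cl{W_{i',j}}=\emptyset\) for \(i\neq i'\) is verified by contradiction: any common point would lie in some \(\cl{M_\ell}\cap\cl{M_{\ell'}}\) with \(\phi_{x_\ell}(i)=j=\phi_{x_{\ell'}}(i')\); the same-\(\ell\) case contradicts injectivity of \(\phi_{x_\ell}\), and the cross-\(\ell\) case is excluded by the compatibility enforced in the refinement together with injectivity. The main obstacle is precisely the refinement step — balancing compatibility of the local injections against the continued coverage of \(\bigcup_i K_i\) — and it is the only step beyond elementary point-set topology and pointwise Hall matching.
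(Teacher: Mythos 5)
Your reduction to a pointwise matching problem is natural, but the ``refinement step'' that you identify as the main obstacle is not a technical hurdle that can be deferred --- it is the entire content of the lemma, and the two mechanisms you propose for it both fail. First, the compatibility condition itself (agreement of $\phi_{x_\ell}$ and $\phi_{x_{\ell'}}$ on $I(x_\ell)\cap I(x_{\ell'})$ whenever $\cl{M_\ell}\cap\cl{M_{\ell'}}\neq\emptyset$) can be unachievable: take $n=1$, $K_1=[0,1]$, $V_1=(-0.1,0.6)$, $V_2=(0.4,1.1)$. Every patch meeting $[0,0.4]$ is forced to use $1\mapsto 1$ and every patch meeting $[0.6,1]$ is forced to use $1\mapsto 2$; since $[0,1]$ is connected, any finite open cover contains a chain of patches with pairwise intersecting closures joining the two ends, so some adjacent pair must disagree on $I\cap I'=\{1\}$. ``Rechoosing on the overlap'' cannot help (the choices at the ends are forced), and ``separating the offending closures'' breaks the chain and hence the coverage of $[0,1]$. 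Second, even where the compatibility \emph{can} be arranged, it is too weak for your disjointness verification: in the cross-$\ell$ case you must also exclude $i\in I(x_\ell)\setminus I(x_{\ell'})$ and $i'\in I(x_{\ell'})\setminus I(x_\ell)$ with $\phi_{x_\ell}(i)=\phi_{x_{\ell'}}(i')$, about which agreement on $I(x_\ell)\cap I(x_{\ell'})$ says nothing (e.g.\ two disjoint compacta $K_1,K_2$ whose patches overlap in the gap between them and both select the same $V_j$). What you actually need is the much stronger condition $\phi_{x_\ell}(i)\neq\phi_{x_{\ell'}}(i')$ for all $i\neq i'$ across overlapping patches, and reconciling that with coverage is a genuinely global constraint-satisfaction problem.

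The paper resolves exactly this global problem by a different route: induction on $m$, organised around the closed set $E$ where $\sum 1_{V_j}-\sum 1_{K_i}$ vanishes. On $E$ the function $\sum 1_{V_j}=\sum 1_{K_i}$ is simultaneously lower and upper semicontinuous, hence locally constant, so $E$ decomposes into \emph{disjoint closed} pieces $E_k\cap K_I\cap V_J$ with $\abs{I}=\abs{J}=k$ on which a single bijection $\sigma_{I,J}\colon I\to J$ can be chosen consistently; these pieces are then thickened to disjoint open neighbourhoods, one copy of $V_m$ is peeled off, and the slack available off $E$ (where the inequality is strict) is what makes the inductive step go through. If you want to salvage a covering-and-matching argument, you would need to isolate this tight set and make your local injections constant on its components; without that, the proposal does not close.
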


\begin{proof}
  We prove this by induction on~\(m\).  The case \(m=0\) is clear:
  then \(\sum 1_{K_i} \le \sum 1_{V_j}\) says that all the
  subsets~\(K_i\) are empty.  We are going to prove the induction step
  to~\(m\) subsets~\(V_j\), assuming the assertion for \(m-1\) open
  subsets~\(V_j\).

  The function \(\sum 1_{V_j} - \sum 1_{K_i}\) is nonnegative and
  lower semicontinuous.  Therefore, its zero set~\(E\) is the preimage
  of \((-\infty,0]\), and this is a closed subset.  The functions
  \(f\defeq\sum 1_{V_j}\) and \(g\defeq\sum 1_{K_i}\) restrict to the
  same function on~\(E\), and they are lower and upper semicontinuous
  functions to~\(\N\), respectively.  Therefore, their common
  restriction to~\(E\) is continuous.  Thus \(f|_E=g|_E\) is locally
  constant.  Now let~\(E_k\) be the set of points in~\(E\) that belong
  to exactly~\(k\) of the subsets~\(V_j\).  These subsets are closed,
  and~\(E\) is their disjoint union: \(E = \bigsqcup_{k=0}^m E_k\).

  For subsets \(I\subseteq \{1,\dotsc,n\}\) and
  \(J\subseteq \{1,\dotsc,m\}\), define
  \(K_I\defeq \bigcap_{i\in I}K_i\) and
  \(V_J\defeq\bigcap_{j\in J}V_j\) as
  in~\eqref{eq:intersection_with_indices}.  Any point in~\(E_k\)
  belongs to \(E_k\cap K_I \cap V_J\) for some subsets \(I,J\) with
  exactly~\(k\) elements.  These subsets are disjoint because if
  \(x\in E_k \cap K_I \cap V_J\) and
  \(x\in E_k \cap K_{I'} \cap V_{J'}\), then
  \(x\in E_k \cap K_{I\cup I'} \cap V_{J\cup J'}\), and this is only
  nonempty if \(I\cup I'\) and \(J\cup J'\) again have exactly~\(k\)
  elements, so that \(I=I'\) and \(J=J'\).  Therefore,
  \[
    E = \bigsqcup_{\mathstrut k=0}^m \;
    \bigsqcup_{\mathstrut \abs{I}=\abs{J}=k} E_k \cap K_I \cap V_J.
  \]
  All these disjoint subsets are closed.  More specifically,
  \(\bigcup_{i=1}^n K_i\subseteq \bigcup_{j=1}^m V_j\) implies
  \(E_0\cap K_{\emptyset}\cap V_{\emptyset}=E_0=X\setminus
  \bigcup_{j=1}^m V_j\), and this set is closed as a complement of
  an open subset.  For \(k>0\) and \(I\subseteq \{1,\dotsc,n\}\),
  \(J\subseteq \{1,\dotsc,m\}\) with \(\abs{I}=\abs{J}=k\),
  \(E_k \cap K_I \cap V_J\) is equal to
  \(E_k \cap K_I\setminus\bigcup_{j\notin J} V_j\), and this set is
  compact as a closed subset of the compact set \(E_k \cap K_I\).

  These disjoint closed subsets may be enlarged to open
  neighbourhoods~\(W_{k,I,J}\) whose closures remain disjoint; here
  we use Urysohn's Lemma (on an appropriate compact neighbourhood
  of~\(W_{k,I,J}\), which may be obtained using
  Lemma~\ref{lem:choose_intermediate}) and that for \(k>0\), the
  subsets \(E_k \cap K_I \cap V_J\) are compact.
  Since the subsets~\(V_J\) are open, we may also arrange \(W_{k,I,J}
  \subseteq V_J\).
  For \(k>0\), Lemma~\ref{lem:choose_intermediate} allows us to
  arrange also that~\(W_{k,I,J}\) is precompact and belongs
  to~\(\OO\).
  If some open subsets \(U_i\supseteq K_i\) are given, then we may
  also arrange that \(W_{k,I,J} \subseteq U_I\defeq \bigcap_{i\in
    I}U_i\) for all \(k>0\).

  For each pair \((I,J)\) as above with \(\abs{I} = \abs{J}\), we
  fix a bijection \(\sigma_{I,J} \colon I \congto J\).  Now for
  \(i=1,\dotsc,n\), let
  \[
    W_{i,m} \defeq \bigsqcup {}
    \setgiven{W_{k,I,J}}{\abs{I}=\abs{J}=k>0,\
      i \in I,\ \sigma_{I,J}(i) = m}.
  \]
  Then \(W_{i,m} \subseteq V_m\) because \(\sigma_{I,J}(i) = m\)
  forces \(m\in J\) and then
  \(E_k \cap K_I \cap V_J \subseteq V_m\).  The subsets~\(W_{i,m}\)
  for different~\(i\) are disjoint because each~\(W_{k,I,J}\) may
  occur for at most one~\(i\).  Let
  \(K_i' \defeq K_i \setminus W_{i,1}\) for \(i=1\dotsc,n\).  These
  are still compact subsets.  We claim that
  \(\sum_{i=1}^n 1_{K_i'} \le \sum_{j=1}^{m-1} 1_{V_j}\).  If
  \(x\notin E\), then
  \[
    \sum_{i=1}^n 1_{K_i'}(x)
    \le \sum_{i=1}^n 1_{K_i}(x)
    \le \sum_{j=1}^{m-1} 1_{V_j}(x) + (1_{V_m}(x) -1)
    \le \sum_{j=1}^{m-1} 1_{V_j}(x).
  \]
  If \(x\in E\), then \(x\in E_k \cap V_I \cap W_J\) for some
  \(I,J\) with \(\abs{I} = \abs{J}=k\).  If \(m\notin J\), then we
  are outside~\(V_m\), so that
  \(\sum_{j=1}^{m-1} 1_{V_j}(x) = \sum_{j=1}^m 1_{V_j}(x)\), and our
  inequality follows.  If \(m\in J\), then \(m= \sigma_{I,J}(i)\)
  for a unique \(i\in I\).  Then \(x\in K_i \cap W_{1,i}\) for
  exactly this~\(i\), and we compute
  \[
    \sum_{i=1}^n 1_{K_i'}(x)
    = \sum_{i=1}^n 1_{K_i}(x) -1
    = \sum_{j=1}^{m-1} 1_{V_j}(x) + (1_{V_m}(x) -1)
    = \sum_{j=1}^{m-1} 1_{V_j}(x).
  \]
  This proves the claim in all cases.  Now we apply the induction
  hypothesis to the subsets \(K_1',\dotsc,K_n'\) and
  \(V_1,\dotsc,V_{m-1}\).  It gives us open subsets
  \(W_{i,j}\in\OO\) for \(1\le i \le n\), \(1\le j\le m-1\) with
  \(K_i' \subseteq \bigcup_{j=1}^{m-1} W_{i,j}\) for all~\(i\) and
  \(\bigsqcup_{i=1}^n W_{i,j} \subseteq V_j\) for all~\(j\).  The
  first inclusion implies \(K_i \subseteq \bigcup_{j=1}^m W_{i,j}\)
  as needed.
\end{proof}

\begin{corollary}
  \label{cor:intermediate_way_below_improved}
  If \(k, f,g\in \F(\OO)\) satisfy \(k \ll f+g\), then there are
  \(k_1,k_2\in \F(\OO)\) with \(k_1\ll f\), \(k_2\ll g\) and
  \(k \ll k_1+k_2 \ll f+g\).
\end{corollary}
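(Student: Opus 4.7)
The plan is to reduce the statement to a direct application of Lemma \ref{lem:compare_sum_of_characteristic_functions}. Using Proposition \ref{pro:compactly_contained}, the hypothesis $k \ll f+g$ translates into the inequality $\cl{k} \le f+g$ together with precompactness of $\supp(k)$; what I need to produce is a splitting $k_1 + k_2$ into elements of $\F(\OO)$ each enjoying the analogous compact-containment with respect to $f$ and $g$ individually.

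First I would put $k$, $f$, $g$ in the canonical decreasing-chain form of Proposition \ref{pro:FO}:
\[
  k = \sum_{i=1}^n 1_{U_i},
  \qquad
  f = \sum_{\ell=1}^p 1_{A_\ell},
  \qquad
  g = \sum_{\ell=1}^q 1_{B_\ell}.
\]
Set $K_i \defeq \cl{U_i}$; these are compact, because $U_1 = \supp(k)$ is precompact by the hypothesis and Proposition \ref{pro:compactly_contained}. The same proposition yields $\sum_{i=1}^n 1_{K_i} = \cl{k} \le f+g$. Concatenating the $A_\ell$'s and $B_\ell$'s into a single list of $p+q$ open sets then allows me to invoke Lemma \ref{lem:compare_sum_of_characteristic_functions}: it produces precompact $W_{i,\ell} \in \OO$ for $1 \le i \le n$ and $1 \le \ell \le p+q$ with $K_i \subseteq \bigcup_\ell W_{i,\ell}$ for each $i$, and with $\bigsqcup_{i=1}^n \cl{W_{i,\ell}}$ contained in $A_\ell$ for $\ell \le p$ and in $B_{\ell-p}$ for $\ell > p$.

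I would then define
\[
  k_1 \defeq \sum_{i=1}^n \sum_{\ell=1}^p 1_{W_{i,\ell}},
  \qquad
  k_2 \defeq \sum_{i=1}^n \sum_{\ell=p+1}^{p+q} 1_{W_{i,\ell}},
\]
both of which lie in $\F(\OO)$. For each fixed $\ell \le p$, the closures $\cl{W_{i,\ell}}$ are pairwise disjoint, so the upper-semicontinuous envelope of $\sum_i 1_{W_{i,\ell}}$ equals $1_{\bigsqcup_i \cl{W_{i,\ell}}} \le 1_{A_\ell}$; summing over $\ell \le p$ and using subadditivity of $\cl{(\cdot)}$ (immediate from the $\limsup$ definition) yields $\cl{k_1} \le f$. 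Since $\supp(k_1)$ is a finite union of precompact sets, Proposition \ref{pro:compactly_contained} gives $k_1 \ll f$, and the analogous argument gives $k_2 \ll g$, whence $k_1 + k_2 \ll f+g$.

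It remains to check $k \ll k_1 + k_2$, once more via Proposition \ref{pro:compactly_contained}: $\supp(k) = U_1$ is precompact, and at any $x \in X$ the value $\cl{k}(x) = \abs{\{i : x \in K_i\}}$ is bounded by $(k_1+k_2)(x) = \sum_{i,\ell} 1_{W_{i,\ell}}(x)$, since for each $i$ with $x \in K_i$ the inclusion $K_i \subseteq \bigcup_\ell W_{i,\ell}$ supplies at least one $\ell$ with $x \in W_{i,\ell}$, and contributions for distinct $i$ are distinct terms in the sum. I do not expect a real obstacle here—the whole argument is a careful reorganisation of Lemma \ref{lem:compare_sum_of_characteristic_functions}—the only point demanding some attention is to use the disjointness in $i$ (to bound $\cl{k_1}$ by $f$) and the covering relation in $\ell$ (to bound $\cl{k}$ by $k_1 + k_2$) at the appropriate places.
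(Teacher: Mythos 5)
Your proof is correct and follows essentially the same route as the paper's: decompose $k$, $f$, $g$ into sums of indicator functions, translate $k\ll f+g$ into $\cl{k}\le f+g$ via Proposition~\ref{pro:compactly_contained}, apply Lemma~\ref{lem:compare_sum_of_characteristic_functions} to the concatenated list of open sets, and group the resulting $W_{i,\ell}$ by whether $\ell$ indexes a piece of $f$ or of $g$. The paper leaves the final verifications to "again by Proposition~\ref{pro:compactly_contained}", whereas you spell them out; the details you supply (disjointness of the $\cl{W_{i,\ell}}$ in $i$ for the bound $\cl{k_1}\le f$, and the covering in $\ell$ for $\cl{k}\le k_1+k_2$) are exactly the right ones.
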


\begin{proof}
  Write \(k=\sum_{i=1}^n 1_{K_i}\), \(f=\sum_{i=1}^l 1_{V_i}\) and
  \(g=\sum_{i=l+1}^m1_{V_i}\) with \(K_i, V_i\in\OO\) and~\((K_i)\) a
  decreasing chain.  By Proposition~\ref{pro:compactly_contained},
  \(k \ll f+g\) means that
  \(\sum_{i=1}^n 1_{\overline{K_i}} \le \sum_{j=1}^m 1_{V_j}\).  By
  Lemma~\ref{lem:compare_sum_of_characteristic_functions}, this
  implies that there are precompact subsets \(W_{i,j}\in \OO\) with
  \(\overline{K_i} \subseteq \bigcup_{j=1}^m W_{i,j}\) for all~\(i\)
  and \(\bigsqcup_{i=1}^n \overline{W}_{i,j} \subseteq V_j\) for
  all~\(j\).  Then \(k_1\defeq \sum_{j=1}^l\sum_{i=1}^n 1_{W_{i,j}}\)
  and \(k_2\defeq \sum_{j=l+1}^m\sum_{i=1}^n 1_{W_{i,j}}\) are
  elements in~\(\F(\OO)\) with the desired properties, again by
  Proposition~\ref{pro:compactly_contained}.
\end{proof}

\begin{remark}
  \label{rem:Lsc_Cuntz_semigroup}
  By \cite{Elliott-Im:Colored}*{Proposition 2.13}, the set
  \(\text{Lsc}_{\sigma}(X,\overline{\N})\) of lower semicontinuous
  extended positive integer-valued functions~\(f\) such that
  \(f^{-1}(k,\infty]\) is \(\sigma\)\nb-compact for each \(k\in \N\),
  is an abstract Cuntz semigroup.  Thus
  \(\text{Lsc}_{\sigma}(X,\overline{\N})\) has the properties
  described in Proposition~\ref{pro:FO_continuous} and
  Corollary~\ref{cor:intermediate_way_below_improved}, which are
  axioms of an abstract Cuntz semigroup.  In particular, if we
  choose~\(\OO\) to consist of all \(\sigma\)\nb-compact open subsets
  of~\(X\), then \(F(\OO)\) is a subsemigroup of all bounded
  functions in \(\text{Lsc}_{\sigma}(X,\overline{\N})\), and hence
  inherits the afforementioned properties from
  \(\text{Lsc}_{\sigma}(X,\overline{\N})\).
\end{remark}

\subsection{Definition of the type semigroup for a groupoid}

Throughout this subsection, let~\(\Gr\) be an \'etale
groupoid with a locally compact Hausdorff unit space~\(X\), let \(\B\subseteq \Bis(\Gr)\)
be an inverse semigroup basis for~\(\Gr\), and let
\(\OO\defeq \setgiven{U\in\B}{U\subseteq X}\) as in
Definition~\ref{def:isg_basis}.  We are going to define a
preorder~\(\precsim_\B\) on~\(\F(\OO)\) that takes into
account~\(\B\).  To this end, we let
\[
  \F(\B)\defeq \setgiven[\bigg]{\sum_{k=1}^n 1_{W_k}}
  {n \in\N,\  W_k \in \B
    \textup{ for }k=1,\dotsc,n}.
\]
Notice that, unlike~\(\OO\), the set~\(\B\) is not closed under
finite unions.  We cannot arrange for this because unions of
bisections may fail to be bisections.  The source and range maps
\(\s,\rg\colon \Gr\rightrightarrows X\) induce maps
\(\B \rightrightarrows \OO\) because if \(W\in\B\), then
\(\s(W) = W^* W\) and \(\rg(W) = W W^*\) are idempotents in~\(\B\),
so that they belong to~\(\OO\).  Therefore, if
\(b=\sum_{k=1}^n 1_{W_k} \in \F(\B)\), then
\(\s_*(b) \defeq \sum_{k=1}^n 1_{\s(W_k)}\) and
\(\rg_*(b) \defeq \sum_{k=1}^n 1_{\rg(W_k)}\) belong to~\(\F(\OO)\).
This gives well-defined homomorphisms
\(\s_*, \rg_*\colon \F(\B)\rightrightarrows \F(\OO)\) because
\[
  (\s_*(b))(x) = \sum_{\s(\gamma)=x} b(\gamma),\qquad
  (\rg_*(b))(x)= \sum_{\rg(\gamma)=x} b(\gamma).
\]

\begin{definition}
  \label{def:preorder_relation}
  For \(f,g\in \F(\OO)\), we write \(f\precsim_\B g\) or just
  \(f\precsim g\) if, for all \(k\in \F(\OO)\) with \(k \ll f\),
  there is \(b\in \F(\B)\) with \(k \le \s_*(b)\) and
  \(\rg_*(b) \le g\).
\end{definition}

\begin{remark}
  \label{rem:compactly_below_relation2}
  By construction, the relation~\(\precsim\) is ``regular'' in the
  sense that \(f\precsim g\) holds if and only if \(k\precsim g\)
  for all \(k\ll f\).  Proposition~\ref{pro:precsim_relation} below
  shows that we get the same relation~\(\precsim\) if we require
  \(k \ll \s_*(b)\) and \(\rg_*(b) \ll g\) instead.  It is easier,
  however, to work with the definition above.
\end{remark}

The relation~\(\precsim_\B\) is compatible with the semigroup law
in \(\F(\OO)\):

\begin{lemma}
  \label{lem:additive_monoid}
  Let \(f,g, f',g'\in \F(\OO)\).  If \(f\precsim_\B f'\) and
  \(g\precsim_\B g'\), then \(f+g\precsim_\B f'+g'\).
\end{lemma}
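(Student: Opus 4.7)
The plan is to unravel the definition of $\precsim_\B$ and use the splitting property given by Corollary~\ref{cor:intermediate_way_below_improved} to decompose any element way below $f+g$ into a sum of elements way below $f$ and $g$ separately. This turns the problem into two instances of the hypothesis, whose witnesses can then be combined.

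First I would fix an arbitrary $k\in\F(\OO)$ with $k\ll f+g$ and aim to produce $b\in\F(\B)$ with $k\le\s_*(b)$ and $\rg_*(b)\le f'+g'$. Applying Corollary~\ref{cor:intermediate_way_below_improved} to $k\ll f+g$, I obtain $k_1,k_2\in\F(\OO)$ with $k_1\ll f$, $k_2\ll g$, and in particular $k\le k_1+k_2$.

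Next I would invoke the two hypotheses: since $k_1\ll f$ and $f\precsim_\B f'$, there exists $b_1\in\F(\B)$ with $k_1\le\s_*(b_1)$ and $\rg_*(b_1)\le f'$; similarly there exists $b_2\in\F(\B)$ with $k_2\le\s_*(b_2)$ and $\rg_*(b_2)\le g'$. Setting $b\defeq b_1+b_2\in\F(\B)$, the additivity of $\s_*$ and $\rg_*$ yields
\[
  \s_*(b)=\s_*(b_1)+\s_*(b_2)\ge k_1+k_2\ge k,\qquad
  \rg_*(b)=\rg_*(b_1)+\rg_*(b_2)\le f'+g',
\]
so $b$ is the required witness and hence $f+g\precsim_\B f'+g'$.

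There is essentially no obstacle here beyond citing the correct splitting lemma; the main conceptual point is that Definition~\ref{def:preorder_relation} tests against arbitrary $k\ll f+g$, and without a way to break such $k$ into pieces way below $f$ and $g$ individually the argument would not close. Corollary~\ref{cor:intermediate_way_below_improved} is precisely tailored to supply this decomposition, after which additivity of the source and range push-forward maps finishes the proof.
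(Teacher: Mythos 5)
Your proof is correct and follows essentially the same route as the paper: both apply Corollary~\ref{cor:intermediate_way_below_improved} to split $k\ll f+g$ into $k_1\ll f$ and $k_2\ll g$, obtain witnesses $b_1,b_2$ from the hypotheses, and take $b=b_1+b_2$. No issues.
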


\begin{proof}
  Take any \(k\in \F(\OO)\) with \(k \ll f+g\).  By
  Corollary~\ref{cor:intermediate_way_below_improved}, there are
  \(k_1,k_2\in \F(\OO)\) with \(k_1\ll f\), \(k_2\ll g\) and
  \(k \ll k_1+k_2 \ll f+g\).  Since \(f\precsim f'\) and
  \(g\precsim g'\), there are \(b_1,b_2\in \F(\B)\) with
  \(k_1 \le \s_*(b_1)\), \(\rg_*(b_1) \le f'\), and
  \(k_2 \le \s_*(b_2)\), \(\rg_*(b_2) \le g'\).  Let
  \(b\defeq b_1+b_2\).  Then
  \[
    k\le k_1+k_2\le \s_*(b_1)+ \s_*(b_2)=\s_*(b)
    \quad \text{and}\quad
    \rg_*(b)=\rg_*(b_1)+\rg_*(b_2)\le f'+g'.
  \]
  This shows that \(f+g\precsim f'+g'\).
\end{proof}

The following proposition relates~\(\precsim_\B\) to the relation
introduced in \cite{Ma:Purely_infinite_groupoids}*{Definitions~4.3}.

\begin{proposition}
  \label{pro:precsim_relation}
  Let \(f,g\in \F(\OO)\) and write \(f = \sum_{i=1}^n 1_{U_i}\) and
  \(g = \sum_{j=1}^m 1_{V_j}\) for \(U_i,V_j \in \OO\).  Then
  \(f\precsim_\B g\) if and only if for any compact subsets
  \(K_i\subseteq U_i\) for \(i=1,\dotsc,n\), there are a finite
  set~\(A\), maps \(\alpha\colon A \to \{1,\dotsc,n\}\) and
  \(\beta\colon A \to \{1,\dotsc,m\}\), and bisections \(B_a\in\B\)
  for \(a\in A\) such that
  \(K_i \subseteq \bigcup_{\alpha(a)=i} \s(B_a)\) for all~\(i\) and
  \(\bigsqcup_{\beta(a)=j} \rg(B_a) \subseteq V_j\) for all~\(j\).

  In addition, we may arrange that
  \(\bigsqcup_{\beta(a)=j} \cl{\rg(B_a)} \subseteq V_j\) and
  \(\bigsqcup_{\beta(a)=j} \cl{\rg(B_a)}\) is compact for all~\(j\).
\end{proposition}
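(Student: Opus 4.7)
The plan is to translate the two inequalities in the definition of \(\precsim_\B\) into combinatorial containments of bisections, using Lemma~\ref{lem:compare_sum_of_characteristic_functions} as the main workhorse.  The bridge between bisections and subsets of~\(X\) is the multiplicative structure of~\(\B\): for \(B\in\B\) and \(W\in\OO\) with \(W\subseteq \s(B)\), the product \(B\cdot W\) lies in~\(\B\) with source exactly~\(W\); analogously \(W\cdot B\in\B\) has range~\(W\) when \(W\subseteq \rg(B)\).

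For the implication \(f\precsim_\B g\Rightarrow{}\) the compact-subset condition, given compact \(K_i\subseteq U_i\), I would first pick precompact \(V_i'\in\OO\) with \(K_i\subseteq V_i'\subseteq \cl{V_i'}\subseteq U_i\), so that \(k\defeq\sum_i 1_{V_i'}\ll f\) by Proposition~\ref{pro:compactly_contained}, and the hypothesis yields \(b=\sum_c 1_{B_c}\in\F(\B)\) with \(\sum_i 1_{K_i}\le k\le \s_*(b)\) and \(\rg_*(b)\le g\).  Applying Lemma~\ref{lem:compare_sum_of_characteristic_functions} to \(\sum_i 1_{K_i}\le \sum_c 1_{\s(B_c)}\) produces precompact \(W_{i,c}\in\OO\) with \(K_i\subseteq\bigcup_c W_{i,c}\) and \(\bigsqcup_i \cl{W_{i,c}}\subseteq \s(B_c)\).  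Refining to \(B_{i,c}\defeq B_c\cdot W_{i,c}\in\B\), the set \(\cl{\rg(B_{i,c})}=\theta_{B_c}(\cl{W_{i,c}})\) is compact and contained in~\(\rg(B_c)\), whence \(\sum_{i,c} 1_{\cl{\rg(B_{i,c})}}\le \sum_c 1_{\rg(B_c)}\le \sum_j 1_{V_j}\).  A second application of Lemma~\ref{lem:compare_sum_of_characteristic_functions}, with the neighbourhoods~\(\rg(B_c)\) prescribed, yields precompact \(Y_{(i,c),j}\in\OO\) with \(\cl{\rg(B_{i,c})}\subseteq\bigcup_j Y_{(i,c),j}\), \(Y_{(i,c),j}\subseteq\rg(B_c)\), and \(\bigsqcup_{i,c}\cl{Y_{(i,c),j}}\subseteq V_j\).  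Setting \(B'_{i,c,j}\defeq Y_{(i,c),j}\cdot B_{i,c}\in\B\), \(\alpha(i,c,j)\defeq i\) and \(\beta(i,c,j)\defeq j\), the covering \(K_i\subseteq\bigcup_{c,j}\s(B'_{i,c,j})\) follows from \(\bigcup_j Y_{(i,c),j}\supseteq\rg(B_{i,c})\) and bijectivity of~\(\theta_{B_c}\); the ranges \(\rg(B'_{i,c,j})\subseteq Y_{(i,c),j}\) for fixed~\(j\) have pairwise disjoint closures inside~\(V_j\), which simultaneously delivers the compactness addendum for \(\bigsqcup_{\beta(a)=j}\cl{\rg(B_a)}\).

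For the converse, given \(k\ll f\), I would write \(k=\sum_l 1_{W_l}\); by Proposition~\ref{pro:compactly_contained}, \(\sum_l 1_{\cl{W_l}}\le f\).  Lemma~\ref{lem:compare_sum_of_characteristic_functions} then produces precompact \(W'_{l,i}\in\OO\) with \(\cl{W_l}\subseteq \bigcup_i W'_{l,i}\) and \(\bigsqcup_l \cl{W'_{l,i}}\subseteq U_i\).  Set \(K_i\defeq \bigsqcup_l \cl{W'_{l,i}}\), which is compact and contained in~\(U_i\).  The pointwise inequality \(k\le\sum_i 1_{K_i}\) holds because, at each \(x\in X\), every \(l\) with \(x\in W_l\) picks some \(i=i(l)\) with \(x\in W'_{l,i}\subseteq K_i\), and the disjointness of the \(\cl{W'_{l,i}}\) in~\(l\) for fixed~\(i\) makes the map \(l\mapsto i(l)\) injective.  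Feeding the \(K_i\) into the hypothesis yields \((A,\alpha,\beta,(B_a))\); setting \(b\defeq\sum_a 1_{B_a}\) we read off \(\s_*(b)\ge \sum_i 1_{K_i}\ge k\) and \(\rg_*(b)\le g\) (the latter from the \(\bigsqcup\)-disjointness of \(\rg(B_a)\) for fixed \(j=\beta(a)\)), whence \(f\precsim_\B g\).

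The main obstacle will be the double refinement in the forward direction: after the second application of Lemma~\ref{lem:compare_sum_of_characteristic_functions}, one must verify that the source sets \(\s(B'_{i,c,j})\) still cover each~\(K_i\).  This relies on transferring the covering \(\cl{\rg(B_{i,c})}\subseteq\bigcup_j Y_{(i,c),j}\) back through the homeomorphism \(\theta_{B_c}\colon \s(B_c)\to \rg(B_c)\), and on keeping the neighbourhood constraint \(Y_{(i,c),j}\subseteq\rg(B_c)\) available from Lemma~\ref{lem:compare_sum_of_characteristic_functions}.  Every other step is bookkeeping that the disjointness and compact-containment clauses in Lemma~\ref{lem:compare_sum_of_characteristic_functions} package neatly.
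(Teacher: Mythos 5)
Your proof is correct and follows essentially the same route as the paper: both directions rest on Lemma~\ref{lem:compare_sum_of_characteristic_functions} together with refining bisections by multiplying with sets from~\(\OO\), and your converse even makes explicit the injectivity argument behind \(k\le\sum_i 1_{K_i}\) that the paper leaves implicit. The only (harmless) divergence is cosmetic: by invoking the disjoint-closures clause of Lemma~\ref{lem:compare_sum_of_characteristic_functions} at full strength you avoid the paper's intermediate shrinking steps (the passage from \(W^0_{i,p}\) to \(W^1_{i,p}\) and the auxiliary \(V'_j\)), obtaining the compactness addendum directly from precompactness of the \(Y_{(i,c),j}\).
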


\begin{proof}
  Assume first that \(f\precsim_\B g\).  Choose compact subsets
  \(K_i \subseteq U_i\).  Lemma~\ref{lem:choose_intermediate} gives
  us precompact \(L_i,L_i' \in \OO\) with
  \(K_i \subseteq L_i \subseteq \cl{L_i} \subseteq L_i' \subseteq
  \cl{L_i'} \subseteq U_i\).  Then
  \(k \defeq \sum_{i=1}^n 1_{L_i'} \in \F(\OO)\) satisfies
  \(k \ll f\).  So \(f\precsim_\B g\) gives
  \(b = \sum_{p=1}^\ell 1_{B_p} \in \F(\B)\) with \(k \le \s_*(b)\)
  and \(\rg_*(b) \le g\).  The first inequality implies
  \(\sum_{i=1}^n 1_{\cl{L_i}} \le \sum_{p=1}^\ell 1_{\s(B_p)}\).
  Then Lemma~\ref{lem:compare_sum_of_characteristic_functions} gives
  precompact sets \(W^0_{i,p} \in \OO\) for \(1\le i \le n\),
  \(1 \le p \le \ell\) such that
  \(\cl{L_i} \subseteq \bigcup_{p=1}^\ell W^0_{i,p}\) for all~\(i\)
  and \(\bigsqcup_{i=1}^n W^0_{i,p} \subseteq \s(B_p)\) for
  all~\(p\).  Lemma~\ref{lem:choose_intermediate} gives us
  precompact \(W^1_{i,p} \in \OO\) with
  \(\cl{W^1_{i,p}} \subseteq W^0_{i,p}\) and
  \(K_i \subseteq \bigcup_{p=1}^\ell W^1_{i,p}\) for all~\(i\).  Now
  \(\rg_*(b) \le g\) implies
  \(\sum_{i=1}^n \sum_{p=1}^\ell 1_{\cl{B_p(W^1_{i,p}})} \le
  \sum_{j=1}^m 1_{V_j}\); here \(B_p(W^1_{i,p})\) is the set of
  \(\rg(g)\) for \(g\in B_p\) with \(\s(g) \in W^1_{i,p}\), which is
  precompact.  By
  Lemma~\ref{lem:compare_sum_of_characteristic_functions} or by a
  much simpler argument, there are precompact \(V'_i\in\OO\) with
  \(\cl{V'_i} \subseteq V_i\) and
  \(\sum_{i=1}^n \sum_{p=1}^\ell 1_{\cl{B_p(W^1_{i,p}})} \le
  \sum_{j=1}^m 1_{V'_j}\).  We may apply
  Lemma~\ref{lem:compare_sum_of_characteristic_functions} again to
  find precompact \(W^2_{i,p,j} \in \OO\) such that
  \(\overline{B_p(W^1_{i,p})} \subseteq \bigcup_{j=1}^m
  \overline{W^2_{i,p,j}}\),
  \(\bigsqcup_{i=1}^n \bigsqcup_{p=1}^\ell W^2_{i,p,j} \subseteq
  V'_j\), and \(\cl{W^2_{i,p,j}} \subseteq \rg(B_p)\) for all
  \((i,p,j)\).

  Let~\(A\) be the set of triples \((i,p,j)\) with \(1 \le i
  \le n\), \(1\le p \le \ell\), \(1\le j \le m\), let
  \(\alpha(i,p,j) = i\), \(\beta(i,p,j) = j\), and let
  \[
    B_{(i,p,j)}
    = W^2_{i,p,j} \cdot B_p
    \defeq \setgiven{g\in B_p}{\rg(g) \in W^2_{i,p,j}}
    \subseteq B_p.
  \]
  Then \(\rg(B_{(i,p,j)}) = W^2_{i,p,j}\), so that
  \[
    \bigsqcup_{\beta(i,p,j')=j} \cl{\rg(B_{(i,p,j')})}
    = \bigsqcup_{i,p} \cl{W^2_{i,p,j}}
    \subseteq V'_j \subseteq \cl{V'_j} \subseteq V_i
  \]
  for all~\(j\).  And
  \[
    \bigcup_{\alpha(i',p,j)=i} \s(B_{(i',p,j)})
    \supseteq \bigcup_{p=1}^\ell W^1_{i,p}
    \supseteq K_i.
  \]
  Since \(\cl{V'_j} \subseteq V_j\) is compact, so is
  \(\bigsqcup_{\beta(i,p,j')=j} \cl{\rg(B_{(i,p,j')})} \subseteq
  V_j\).

  Now we assume the criterion in the proposition and prove,
  conversely, that~\(f\precsim_\B g\).  So we pick \(k \ll f\).
  Proposition~\ref{pro:compactly_contained} gives a decreasing
  chain~\((X_p)\) of precompact sets in~\(\OO\) with
  \(k = \sum_{p=1}^\ell 1_{X_p}\) and
  \(\sum_{p=1}^\ell 1_{\cl{X_p}} \le \sum_{i=1}^n 1_{U_i}\).
  Lemma~\ref{lem:compare_sum_of_characteristic_functions} gives us
  precompact subsets \(W_{p,i}\in\OO\) with
  \(\cl{X_p} \subseteq \bigcup_i W_{p,i}\) and
  \(\bigsqcup W_{p,i} \subseteq U_i\).
  Lemma~\ref{lem:choose_intermediate} gives precompact
  \(W'_{p,i}\in\OO\) with \(\cl{W'_{p,i}} \subseteq W_{p,i}\) and
  \(\cl{X_p} \subseteq \bigcup_i W'_{p,i}\).  Let
  \(K_i \defeq \bigsqcup W'_{p,i}\).  Then
  \(k \le \sum 1_{K_i} \ll f\).  Now our criterion applied to
  \(\cl{K_i} \subseteq U_i\) gives us a family of bisections
  \(B_a\in \B\).  Putting \(b \defeq \sum_{a\in A} 1_{B_a}\) we get
  \(\s_*(b) \ge \sum_{i=1}^n 1_{\cl{K_i}}\) and
  \(\rg_*(b) \le \sum_{j=1}^m 1_{V_j}\).  The first inequality implies
  \(k \ll \s_*(b)\).  So~\(b\) witnesses that indeed
  \(f\precsim_\B g\).
\end{proof}

Roughly speaking, the last proposition says that
\(f\precsim_\B g\) holds if and only if any~\(k\) that is way
below~\(f\) may be decomposed into finitely many, possibly
overlapping pieces which may then be moved around by suitable
bisections so as to assemble into something that is below~\(g\).

\begin{lemma}
  The relation~\(\precsim_\B\) is a preorder.
\end{lemma}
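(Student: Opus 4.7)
The plan is to verify the two preorder axioms---reflexivity and transitivity---in turn, the first being essentially immediate and the second best handled via the combinatorial criterion developed in Proposition~\ref{pro:precsim_relation}.

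For reflexivity, given $f = \sum_{i=1}^n 1_{U_i} \in \F(\OO)$, I take $b \defeq \sum_i 1_{U_i} \in \F(\B)$, which is legitimate because $\OO \subseteq \B$. Since each $U_i$ is an idempotent bisection with $\s(U_i) = \rg(U_i) = U_i$, one obtains $\s_*(b) = \rg_*(b) = f$, so for any $k \ll f$ both $k \le \s_*(b)$ and $\rg_*(b) \le f$ hold trivially; hence $f \precsim_\B f$.

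For transitivity, suppose $f \precsim_\B g$ and $g \precsim_\B h$ with $f = \sum_i 1_{U_i}$, $g = \sum_j 1_{V_j}$, $h = \sum_l 1_{Y_l}$, and fix arbitrary compact $K_i \subseteq U_i$. Applying the combinatorial criterion of Proposition~\ref{pro:precsim_relation} (with its ``moreover'' clause) to $f \precsim_\B g$ yields finite data $(A_1, \alpha_1, \beta_1, (B_a^{(1)})_{a \in A_1})$ such that $K_i \subseteq \bigcup_{\alpha_1(a)=i} \s(B_a^{(1)})$, and such that each $L_j \defeq \bigsqcup_{\beta_1(a)=j} \cl{\rg(B_a^{(1)})}$ is a \emph{compact} subset of $V_j$. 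Feeding the compact sets $L_j$ into the criterion for $g \precsim_\B h$ then produces data $(A_2, \alpha_2, \beta_2, (B_b^{(2)})_{b \in A_2})$ with $L_j \subseteq \bigcup_{\alpha_2(b)=j} \s(B_b^{(2)})$ and $\bigsqcup_{\beta_2(b)=l} \rg(B_b^{(2)}) \subseteq Y_l$. I then compose: for pairs $(a,b) \in A_1 \times A_2$ with $\beta_1(a) = \alpha_2(b)$, set $D_{a,b} \defeq B_b^{(2)} \cdot B_a^{(1)} \in \B$, with labels $\alpha(a,b) \defeq \alpha_1(a)$ and $\beta(a,b) \defeq \beta_2(b)$. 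A direct chase gives $K_i \subseteq \bigcup_{\alpha(a,b)=i} \s(D_{a,b})$: any $x \in K_i$ lies in some $\s(B_a^{(1)})$ with $\alpha_1(a)=i$, and then $\theta_{B_a^{(1)}}(x) \in L_{\beta_1(a)}$ lies in some $\s(B_b^{(2)})$, so $x \in \s(D_{a,b})$.

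The main obstacle is the range-side disjointness $\bigsqcup_{\beta(a,b)=l} \rg(D_{a,b}) \subseteq Y_l$. Naively, if two distinct $a$'s had ranges overlapping inside a common $\s(B_b^{(2)})$, then after applying the same $\theta_{B_b^{(2)}}$ their images $\rg(D_{a,b})$ would overlap, spoiling disjointness. This is precisely where the ``moreover'' clause earns its keep: for each fixed $j$, the closures $\cl{\rg(B_a^{(1)})}$ with $\beta_1(a)=j$ are disjoint. Consequently, for each fixed $b$ with $\alpha_2(b)=j$, the intersections $\rg(B_a^{(1)}) \cap \s(B_b^{(2)})$ for varying such $a$ are disjoint, and their images under the injective homeomorphism $\theta_{B_b^{(2)}}$---which are exactly the $\rg(D_{a,b})$---remain disjoint within $\rg(B_b^{(2)})$. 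Combined with the disjointness of the $\rg(B_b^{(2)})$'s for $\beta_2(b)=l$, this yields the desired disjoint inclusion into $Y_l$ and completes the verification of the criterion for $f \precsim_\B h$.
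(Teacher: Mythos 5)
Your proof is correct and follows essentially the same route as the paper: reflexivity via the idempotent bisections in $\OO\subseteq\B$, and transitivity by applying the criterion of Proposition~\ref{pro:precsim_relation} twice (crucially invoking its ``moreover'' clause to feed the compact disjoint unions of range closures into the second application) and then composing the two families of bisections. Your explicit justification of the range-side disjointness via injectivity of $\theta_{B_b^{(2)}}$ is the same argument the paper gives, just spelled out in slightly more detail.
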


\begin{proof}
  It is clear that~\(\precsim\) is reflexive, using a unit bisection
  for~\(b\).  To prove that~\(\precsim\) is transitive, we use the
  characterisation of \(f \precsim g\) in
  Proposition~\ref{pro:precsim_relation}.  Let \(f,g,h\in \F(\OO)\)
  satisfy \(f\precsim g\) and \(g \precsim h\).  Write
  \(f= \sum_{i=1}^n 1_{U_i}\), \(g= \sum_{j=1}^m 1_{V_j}\),
  \(h= \sum_{k=1}^\ell 1_{W_k}\).  Choose compact subsets
  \(K_i \subseteq U_i\).  Then
  Proposition~\ref{pro:precsim_relation} gives us a family of
  bisections \((B_a)_{a\in A}\) in~\(\B\) and maps
  \(\alpha\colon A \to \{1,\dotsc,n\}\),
  \(\beta\colon A\to \{1,\dotsc,m\}\) such that
  \(K_i \subseteq \bigcup_{\alpha(a) = i} \s(B_a)\) and
  \(\bigsqcup_{\beta(a) = j} \cl{\rg(B_a)} \subseteq V_j\).
  Applying Proposition~\ref{pro:precsim_relation} to the compact
  subsets \(\bigsqcup_{\beta(a) = j} \cl{\rg(B_a)}\) for
  \(1\le j \le m\) gives a further family of bisections
  \((C_d)_{d\in D}\) in~\(\B\) with maps
  \(\gamma\colon D \to \{1,\dotsc,m\}\),
  \(\delta\colon D\to \{1,\dotsc,\ell\}\) such that
  \(\bigsqcup_{\beta(a) = j} \cl{\rg(B_a)} \subseteq
  \bigcup_{\gamma(d) = j} \s(C_d)\) for all~\(j\) and
  \(\bigsqcup_{\delta(d) = k} \cl{\rg(C_d)} \subseteq W_k\) for
  all~\(k\).  Now we form the bisections \(C_d\cdot B_a\) for all
  \((a,d)\in A\times D\) with \(\beta(a) = \gamma(d)\).  When we
  fix~\(a\), then the sources of~\(C_d\) for
  \(\gamma(d) = \beta(a)\) cover
  \(\bigsqcup_{\beta(a') = j} \cl{\rg(B_{a'})}\), which
  contains~\(\rg(B_a)\).  Therefore, the union of the sources of
  \(C_d\cdot B_a\) for all such~\(d\) contains \(\s(B_a)\).
  Letting~\(a\) run through \(\alpha^{-1}(i)\), these sources cover
  all of~\(K_i\).  The range of~\(C_d\) is contained
  in~\(W_{\delta(d)}\), and these ranges for different~\(d\) with
  fixed~\(\delta(d)\) are disjoint.  Since the ranges of~\(B_a\) for
  different~\(a\) with \(\beta(a) = \gamma(d)\) are disjoint as
  well, it follows that all \(C_d\cdot B_a\) with fixed
  \(\delta(d)\) and \(\beta(a) = \gamma(d)\) have disjoint ranges.
  Therefore, the family of bisections \(C_d\cdot B_a\) witnesses
  that \(f\precsim h\).
\end{proof}

\begin{definition}
  Let~\(\approx_\B\) denote the equivalence relation
  on~\(\F(\OO)\) defined by the preorder~\(\precsim_\B\), that is,
  \(f \approx_\B g\) for \(f, g\in \F(\OO)\) if and only if
  \(f\precsim_\B g\) and \(g\precsim_\B f\).  The \emph{type
    semigroup} of~\(\Gr\) with respect to the inverse semigroup
  basis~\(\B\) is defined as the quotient
  \[
    S_\B(\Gr)\defeq \F(\OO)/{\approx_\B}
  \]
  with the partial order \([f]\precsim_\B [g]\) if \(f\precsim_\B g\)
  and the addition \([f]+[g]\defeq [f+g]\) for \(f, g\in \F(\OO)\).
  The type semigroup is a well-defined, partially ordered abelian
  monoid by Lemma~\ref{lem:additive_monoid}.
\end{definition}

\begin{remark}
  When \(\Gr=X\), then \(S_\B(\Gr)=\F(\OO)\) and \(\precsim_\B\)
  is \(\le\).
\end{remark}

\begin{remark}
  Let~\(\B\) consist of all bisections of~\(\Gr\).  The relation
  described in Proposition~\ref{pro:precsim_relation} is exactly the
  one used by Ma in \cite{Ma:Purely_infinite_groupoids}*{Definitions
    4.3 and~4.4} to define the groupoid semigroup of~\(\Gr\).
  Therefore, our type semigroup specialises to Ma's ordered
  semigroup \(\mathcal{W}(\Gr)\) in this case.
\end{remark}

\begin{remark}\label{rem:ample_case_type_semigroup}
  Let~\(\Gr\) be ample and let~\(\B\) be the family of compact open
  bisections.  In that case, the criterion in
  Proposition~\ref{pro:precsim_relation} simplifies because we may
  take \(K_i = U_i\).  Even more, we may shrink the bisections so
  that their sources become disjoint as well.  With the equivalence
  relation~\(\approx\) defined by Rainone and Sims in
  \cite{Rainone-Sims:Dichotomy}*{Definition~5.1}, this says that
  \(f\precsim g\) holds if and only if there are \(f_2,h\in \F(\OO)\)
  with \(f \approx f_2\) and \(f_2+h = g\).  There is a
  difference between the two type semigroups, however, because we
  identify \(f,g\) if \(f \precsim g\) and \(g \precsim f\), while
  Rainone and Sims use the potentially finer relation~\(\approx\).
\end{remark}

In the ample
case, if we insist to use only compact open bisections, then the
type semigroup does not depend on the choice of~\(\B\) any more:
\begin{lemma}
  \label{lem:ample_change_B}
  Let~\(\Gr\) be an ample groupoid.  Let \(\B_0\subseteq \Bis(\Gr)\)
  be the inverse semigroup basis for~\(\Gr\) that consists of all
  compact open bisections, and let \(\B\subseteq\B_0\) be another
  inverse semigroup basis for~\(\Gr\).  Then
  \(S_{\B_0}(\Gr)\cong S_\B(\Gr)\).
\end{lemma}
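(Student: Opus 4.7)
The plan is to show that $\F(\OO)=\F(\OO_0)$ and that the preorders $\precsim_\B$ and $\precsim_{\B_0}$ coincide on this common monoid, so that $S_\B(\Gr)= S_{\B_0}(\Gr)$ on the nose. First I would verify that the idempotent lattice $\OO\defeq\B\cap 2^X$ equals $\OO_0\defeq\B_0\cap 2^X$, which in the ample case is the family of all compact open subsets of~$X$. The inclusion $\OO\subseteq \OO_0$ is immediate from $\B\subseteq \B_0$. Conversely, if $U\subseteq X$ is compact and open, then since~$\B$ is a basis for the topology of~$\Gr$, $U$ is a union of bisections in~$\B$ contained in $U\subseteq X$, hence of elements of~$\OO$; compactness gives a finite subcover, and as $\OO$ is closed under finite unions, $U\in \OO$. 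Thus $\F(\OO)=\F(\OO_0)$ as ordered monoids.

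The key step is the following decomposition lemma: every $B\in \B_0$ admits a partition $B=E_1\sqcup\dotsb\sqcup E_m$ into bisections $E_l\in \B$. To produce it, cover~$B$ by sets in~$\B$ and use compactness of~$B$ to get a finite subcover $B=D_1\cup\dotsb\cup D_k$ with $D_i\in \B$, $D_i\subseteq B$. Since $\OO=\OO_0$ is the (generalised) Boolean algebra of all compact open subsets of~$X$, the sets $\s(D_1),\dotsc,\s(D_k)$ generate a finite partition of $\s(B)$ into atoms $W_1,\dotsc,W_m\in \OO$. For each atom $W_l$ pick any index $i(l)$ with $W_l\subseteq \s(D_{i(l)})$ and set $E_l\defeq D_{i(l)}\cdot W_l$; this lies in~$\B$ because $\B$ is closed under multiplication and $W_l\in \OO\subseteq \B$. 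Injectivity of the source map on the bisection~$B$ implies that any $\beta\in B$ with $\s(\beta)\in W_l$ must lie in every~$D_i$ with $W_l\subseteq \s(D_i)$; in particular $\beta\in D_{i(l)}$, so $E_l$ equals the restriction of~$B$ to source in~$W_l$ and is independent of the choice of~$i(l)$. Hence the $E_l$ partition~$B$, and $\s_*\bigl(\sum_l 1_{E_l}\bigr)=1_{\s(B)}$, $\rg_*\bigl(\sum_l 1_{E_l}\bigr)=1_{\rg(B)}$.

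Granted the decomposition lemma, coincidence of the two preorders is straightforward. The implication $\precsim_\B\Rightarrow \precsim_{\B_0}$ is trivial since $\F(\B)\subseteq \F(\B_0)$. For the converse, if $f\precsim_{\B_0} g$ and $k\ll f$, take a witness $b_0=\sum_i 1_{B_i}\in \F(\B_0)$ with $k\le \s_*(b_0)$ and $\rg_*(b_0)\le g$, and replace each~$B_i$ by its $\B$-decomposition from the lemma to obtain $b\in \F(\B)$ with $\s_*(b)=\s_*(b_0)\ge k$ and $\rg_*(b)=\rg_*(b_0)\le g$, so that~$b$ witnesses $f\precsim_\B g$. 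Thus the two preorders agree on $\F(\OO)=\F(\OO_0)$, their associated equivalence relations agree, and $S_{\B_0}(\Gr)\cong S_\B(\Gr)$. The main obstacle is the decomposition lemma, which depends essentially on ampleness of~$\Gr$ both for the compactness of basic bisections (permitting finite subcovers) and for the Boolean algebra structure of~$\OO$.
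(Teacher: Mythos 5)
Your proof is correct and follows essentially the same route as the paper: both first identify $\OO$ with the family of all compact open subsets of $X$, then decompose an arbitrary compact open bisection into a disjoint union of members of $\B$ (the paper disjointifies a finite cover directly using that $\B$ is closed under passing to compact open subsets via $V=U\cdot\s(V)$, while you achieve the same via atoms of the sources — the same mechanism), and finally conclude that the preorders $\precsim_\B$ and $\precsim_{\B_0}$ coincide.
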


\begin{proof}
  The family~\(\OO\) of subsets of~\(X\) belonging to~\(\B\) is a basis
  for the topology and consists of compact open sets by
  assumption.  Therefore, any compact open subset of~\(X\) is a finite union of
  sets in~\(\OO\).  Since~\(\OO\) is closed under finite
  unions, this means that~\(\OO\)  consists of all compact open subsets of \(X\).
  If \(U\in\B\),
  then any compact open subset \(V\subseteq U\) is of the form
  \(U\cdot \s(V)\).  Here \(\s(V)\in\OO\) because it is compact and
  open.  So \(V = U\cdot \s(V) \in \B\). Hence~\(\B\) is closed under taking compact open subsets.

  Next, we claim that any \(U\in \B_0\) is a disjoint union of
  bisections in~\(\B\). Indeed, since~\(\B\) is a basis and~\(U\) is
  compact, \(U\) is a finite union of bisections in~\(\B\), and we can
  refine this to a disjoint union because~\(\B\) is closed under
  compact open subsets.  By this claim, \(\F(\B)=\F(\B_0)\).  This
  implies that the relations \(\precsim_\B\) and~\(\precsim_{\B_0}\)
  are the same.  Then so are the type semigroups defined by \(\B\)
  and~\(\B_0\).
\end{proof}

The following lemma generalises
\cite{Ma:Purely_infinite_groupoids}*{Proposition~6.1} to twisted
non-Hausdorff groupoids.  This is stated
in~\cite{Ma:Purely_infinite_groupoids} without a proof.

\begin{lemma}
  \label{lem:from_type_semigroup_to_Cuntz_semigroup}
  Let \(D=\Cst(\Gr,\LL)\) be the \(\Cst\)\nb-algebra for the twisted
  groupoid \((\Gr,\LL)\) and let
  \(\B\subseteq S(\LL)\subseteq \Bis(\Gr)\) be an inverse semigroup
  basis for~\(\Gr\) that consists of bisections that trivialise the
  bundle~\(\LL\).  Suppose that the open supports of
  \((a_i)_{i=1}^n\), \((b_j)_{j=1}^m\subseteq \Cont_0(X)^+\) belong
  to~\(\OO\).  If
  \(\sum_{i=1}^n [1_{\supp(a_i)}] \precsim \sum_{j=1}^m
  [1_{\supp(b_j)}]\) in \(S_\B(\Gr)\), then
  \(\sum_{i=1}^n [a_i] \precsim \sum_{j=1}^m [b_j]\) in
  \(W(D)\subseteq \Cu(D)\).
\end{lemma}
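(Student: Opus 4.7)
The plan is to show, for each \(\varepsilon>0\), that \(A_\varepsilon \defeq \bigoplus_{i=1}^n (a_i-\varepsilon)_+\) is Cuntz subequivalent to \(B \defeq \bigoplus_{j=1}^m b_j\) in \(M_{\max(m,n)}(D)\subseteq D\otimes\Comp\); the conclusion then follows from the identity \([a]=\sup_{\varepsilon>0}[(a-\varepsilon)_+]\) in \(\Cu(D)\). By~\eqref{eq:algebraic_Cuntz_comparison}, it suffices to produce \(V \in M_{m,n}(D)\) with \(V^*V = A_\varepsilon\) and \(VV^* \in \overline{B\cdot M_m(D)\cdot B}\).

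Write \(U_i\defeq\supp(a_i)\) and \(V_j\defeq\supp(b_j)\). Since \(\{a_i\ge\varepsilon\}\) is compact in \(U_i\), Lemma~\ref{lem:choose_intermediate} yields precompact \(W_i\in\OO\) with \(\{a_i\ge\varepsilon\}\subseteq W_i\) and \(\cl{W_i}\subseteq U_i\). Then \(\sum_i 1_{W_i} \ll \sum_i 1_{U_i}\) by Proposition~\ref{pro:compactly_contained}, and the hypothesis combined with the regularity of \(\precsim_\B\) (Remark~\ref{rem:compactly_below_relation2}) produces \(\sum_p 1_{B_p}\in \F(\B)\) with \(\sum_i 1_{W_i}\le \sum_p 1_{\s(B_p)}\) and \(\sum_p 1_{\rg(B_p)}\le \sum_j 1_{V_j}\). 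Two applications of Lemma~\ref{lem:compare_sum_of_characteristic_functions} give precompact sets \(Y_{i,p},\,R_{i,p,j}\in\OO\) with
\[
\cl{W_i}\subseteq \bigcup_p Y_{i,p},\ \ \bigsqcup_i \cl{Y_{i,p}}\subseteq \s(B_p),\ \ \theta_{B_p}(\cl{Y_{i,p}})\subseteq \bigcup_j R_{i,p,j},\ \ \bigsqcup_{(i,p)}\cl{R_{i,p,j}}\subseteq V_j;
\]
the crucial point is the disjointness \(\cl{R_{i,p,j}}\cap \cl{R_{i',p',j}}=\emptyset\) for \((i,p)\ne (i',p')\). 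Fix trivialising unitary sections \(c_p\colon B_p\to\LL\) (available because \(B_p\in\B\subseteq S(\LL)\)) and partitions of unity \((\psi_{i,p})_p\) on \(\cl{W_i}\) subordinate to \((Y_{i,p})_p\) and \((\chi_{i,p,j})_j\) on \(\theta_{B_p}(\cl{Y_{i,p}})\) subordinate to \((R_{i,p,j})_j\). For each triple define the compactly supported continuous section
\[
w_{i,p,j}(\gamma) \defeq \sqrt{\psi_{i,p}(\s(\gamma))\,(a_i-\varepsilon)_+(\s(\gamma))\,\chi_{i,p,j}(\rg(\gamma))}\, c_p(\gamma)\quad (\gamma\in B_p),
\]
zero elsewhere.

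Set \(V_{j,i}\defeq \sum_p w_{i,p,j}\). A standard convolution computation on the bisection \(B_p\) gives \(w_{i,p,j}^* * w_{i,p,j}=\psi_{i,p}\,(a_i-\varepsilon)_+\cdot(\chi_{i,p,j}\circ\theta_{B_p})\), while for \((i,p)\ne(i',p')\) the cross convolution \(w_{i,p,j}^* * w_{i',p',j}\) vanishes identically, because its nonzero support would require an arrow \(\alpha\) with \(\s(\alpha)\) lying in both \(R_{i,p,j}\) and \(R_{i',p',j}\). Summing over \(j,p\) and using \(\sum_j\chi_{i,p,j}=1\) on \(\theta_{B_p}(\cl{Y_{i,p}})\) and \(\sum_p\psi_{i,p}=1\) on \(\cl{W_i}\supseteq\supp((a_i-\varepsilon)_+)\) yields \((V^*V)_{i,i'}=\delta_{i,i'}(a_i-\varepsilon)_+\), hence \(V^*V=A_\varepsilon\). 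Each contribution \(w_{i,p,j} * w_{i',p',j'}^*\) to \((VV^*)_{j,j'}\) is a section with range support compactly contained in \(R_{i,p,j}\subseteq V_j\) and source support compactly contained in \(R_{i',p',j'}\subseteq V_{j'}\); since \(b_j\) and \(b_{j'}\) are bounded below on these compact sets, continuous functional calculus places these sections in \(b_j D b_{j'}\). Thus \(VV^*\in \overline{B\cdot M_m(D)\cdot B}\), and~\eqref{eq:algebraic_Cuntz_comparison} yields \(A_\varepsilon\precsim B\).

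The main obstacle is the refinement step above, which produces sets \(R_{i,p,j}\) with closures pairwise disjoint across \((i,p)\) inside each \(V_j\). This disjointness forces the cross terms in \(V^*V\) to vanish \emph{identically}, so that \(V^*V=A_\varepsilon\) holds exactly; without it, one would obtain only an approximate equality and would have to run a more delicate limit argument.
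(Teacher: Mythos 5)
Your construction is correct, and its core is the same as the paper's: cover the closed support of the \(\varepsilon\)\nb-cutoffs by sources of bisections from~\(\B\) whose ranges sit disjointly inside the supports of the~\(b_j\), trivialise~\(\LL\) on each bisection, and use partitions of unity to build sections whose convolution square recovers the cutoff on one side and lands in the hereditary subalgebra of the~\(b_j\) on the other. Where you genuinely differ is in the packaging: the paper first passes to the stabilised groupoid \((\Gr\times\RR,\LL\otimes\C)\) and to \(D\otimes\Comp\) (Lemma~\ref{lem:stabilisation_groupoids} together with Proposition~\ref{prop:stabilisation_vs_type_semigroup}) to reduce to \(n=m=1\), and then reads off bisections \(W_1,\dotsc,W_N\) with \(K\subseteq\bigcup\s(W_k)\) and \(\bigsqcup\rg(W_k)\subseteq\supp(b)\) from the criterion of Proposition~\ref{pro:precsim_relation}. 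You keep general \(n,m\), build an explicit matrix \(V\in M_{m,n}(D)\), and re-derive the disjoint-range refinement by two applications of Lemma~\ref{lem:compare_sum_of_characteristic_functions}; that refinement is precisely the content of Proposition~\ref{pro:precsim_relation}, which you could have cited to shorten the middle of your argument. Your route avoids the stabilisation machinery at the cost of heavier index bookkeeping; the paper's route is shorter but leans on the Morita/stabilisation results established earlier.

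One small repair is needed. The inclusion \(\cl{W_i}\subseteq\bigcup_p Y_{i,p}\) cannot be extracted from Lemma~\ref{lem:compare_sum_of_characteristic_functions} as you set things up: Definition~\ref{def:preorder_relation} only yields \(\sum_i 1_{W_i}\le\sum_p 1_{\s(B_p)}\) with the \emph{open} sets~\(W_i\), and this does not imply \(\sum_i 1_{\cl{W_i}}\le\sum_p 1_{\s(B_p)}\). Apply the lemma to the compact sets \(K_i\defeq\{a_i\ge\varepsilon\}\subseteq W_i\) instead (or interpose a second layer \(W_i\ll W_i'\ll U_i\) and take \(k=\sum_i 1_{W_i'}\), as is done in the proof of Proposition~\ref{pro:precsim_relation}). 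Since your partitions of unity only need to sum to~\(1\) on \(\cl{\supp((a_i-\varepsilon)_+)}\subseteq\{a_i\ge\varepsilon\}\), nothing downstream changes. The rest is sound: the cross terms in \(V^*V\) vanish identically because a nonzero contribution would need a point of \(\supp(\chi_{i,p,j})\cap\supp(\chi_{i',p',j})\), which is empty by the disjointness of the \(\cl{R_{i,p,j}}\) over \((i,p)\), and each entry of \(VV^*\) has compact range and source supports inside \(\supp(b_j)\) and \(\supp(b_{j'})\), so it lies in \(b_j D b_{j'}\).
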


\begin{proof}
  We reduce to the case \(n=m=1\) by passing to the stabilised
  groupoid \((\Gr\times \RR,\LL\otimes \C)\) and the stabilised
  algebra \(D\otimes\Comp\) (see Lemma
  \ref{lem:stabilisation_groupoids}
  and Remark~\ref{remark:stabilisation_dynamical}).  Thus let us assume
  that \(a, b\subseteq \Cont_0(X)^+\) are such that
  \(\supp(a),\supp(b)\in \OO\) and \(\supp(a) \precsim \supp(b)\)
  in~\(\F(\OO)\).  Let \(\varepsilon >0\) and put
  \(K\defeq \cl{\supp(a-\varepsilon)_+}\).  There are bisections
  \(W_1,\dotsc,W_N\in\B\) such that
  \(K\subseteq \bigcup_{k=1}^N \s(W_k)\) and
  \(\bigsqcup_{k=1}^N \rg(W_k) \subseteq \supp(b)\).  Let
  \((w_k)_{k=1}^N\subseteq \Contc(X)\) be a partition of unity
  subordinate to the open covering
  \(K\subseteq \bigcup_{k=1}^N \s(W_k)\).  Put
  \(z_k\defeq (a-\varepsilon)_+^{\frac{1}{2}}\cdot
  w_k^{\frac{1}{2}}\circ \s|_{W_i}^{-1}\in \Contc(W_i)\), for
  \(k=1,\dotsc, N\).  Since~\(W_i\) trivialises the bundle~\(\LL\),
  we may identify \(\Contc(W_i)\) with
  \(\Contc(\LL|_{W_i})\subseteq D\).  Then
  \(z\defeq \sum_{k=1}^N z_k\in D\).  Since
  \(\rg(W_i)\cap \rg(W_j)=\emptyset\) we get \(z_i^*z_j=0\) in~\(D\)
  for \(i\neq j\).  Therefore,
  \[
    z^* z
    = \sum_{k=1}^N z_k^*z_k
    = \sum_{k=1}^N (a-\varepsilon)_+\cdot w_k
    = (a-\varepsilon)_+.
  \]
  Since \(\bigsqcup_{k=1}^N \rg(W_k) \subseteq \supp(b)\), we get
  \(zz^* \in b Bb\).  Thus \(a\precsim b\) in~\(D\)
  by~\eqref{eq:algebraic_Cuntz_comparison}.
\end{proof}

\begin{corollary}
  \label{cor:from_type_semigroup_to_Cuntz_semigroup}
  Let~\(D\) be an exotic \(\Cst\)\nb-algebra for the twisted
  groupoid \((\Gr,\LL)\) and let \(\B\subseteq\Bis(\Gr)\) be an
  inverse semigroup basis for~\(\Gr\).  Assume that the bisections
  in~\(\B\) are \(\sigma\)\nb-compact and trivialise the
  bundle~\(\LL\).  There is an order-preserving homomorphism
  \(\Psi\colon S_\B(\Gr)\to W(D)\) with \(\Psi([1_U])=[a]\) for any
  \(a\in \Cont_0(X)^+\) with \(\supp(a)=U\).
\end{corollary}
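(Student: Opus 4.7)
The plan is to construct $\Psi$ in two stages: first I will define a monoid homomorphism $\Psi_0 \colon \F(\OO) \to W(D)$ from the auxiliary monoid of lower semicontinuous step functions, and then argue that it descends to $S_\B(\Gr)$ as an order-preserving map by invoking Lemma~\ref{lem:from_type_semigroup_to_Cuntz_semigroup}.

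For the first stage, I will use that $\OO \subseteq \B$ consists of $\sigma$-compact open subsets of $X$, so each $U \in \OO$ equals the open support of some $a_U \in \Cont_0(X)^+$. Any two such functions are Cuntz equivalent in $\Cont_0(X) \subseteq D$, so $[a_U] \in W(D)$ is unambiguous. For $f \in \F(\OO)$ written as $f = \sum_{i=1}^n 1_{U_i}$, I will set $\Psi_0(f) \defeq \sum_{i=1}^n [a_{U_i}]$, concretely realised by $\operatorname{diag}(a_{U_1}, \dotsc, a_{U_n}) \in M_n(\Cont_0(X))^+ \subseteq M_n(D)^+$. This formula is manifestly additive in the word representation of $f$; the content lies in showing independence from the chosen decomposition into characteristic functions.

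For the second stage, I will invoke the surjection $\pi \colon \Cst(\Gr,\LL) \onto D$, whose restriction to $\Cont_0(X)$ is the identity and which induces a map $\pi_* \colon W(\Cst(\Gr,\LL)) \to W(D)$ preserving Cuntz subequivalence and sending $[a_U]$ to $[a_U]$. Given any $f, g \in \F(\OO)$ with $f \precsim_\B g$, Lemma~\ref{lem:from_type_semigroup_to_Cuntz_semigroup} applied to $\Cst(\Gr,\LL)$ yields $\sum [a_{U_i}] \le \sum [a_{V_j}]$ in $W(\Cst(\Gr,\LL))$, and pushing through $\pi_*$ transfers the inequality to $W(D)$. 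Specialising this to two decompositions of a single $f$ (which lie in the relation $\precsim_\B$ in both directions, since they are even pointwise equal) shows that $\Psi_0$ is well defined on $\F(\OO)$; specialising to $f \approx_\B g$ shows that $\Psi_0$ descends to a well defined, order-preserving monoid homomorphism $\Psi \colon S_\B(\Gr) \to W(D)$ with $\Psi([1_U]) = [a_U]$, which is the desired conclusion.

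I do not expect any serious obstacle here: the corollary is essentially a repackaging of Lemma~\ref{lem:from_type_semigroup_to_Cuntz_semigroup} with the functoriality of the Cuntz semigroup under the quotient $\Cst(\Gr,\LL) \onto D$. The only mildly subtle point is independence of $\Psi_0(f)$ from the choice of decomposition of $f$ as a sum of characteristic functions, but this drops out of the order-preservation statement applied symmetrically, so no separate argument is needed.
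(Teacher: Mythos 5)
Your proof is correct and follows essentially the same route the paper intends: the corollary is stated as an immediate consequence of Lemma~\ref{lem:from_type_semigroup_to_Cuntz_semigroup}, with well-definedness, additivity and order-preservation all extracted from that lemma (applied symmetrically for well-definedness) and the exotic case handled by functoriality of $W(\cdot)$ under the quotient $\Cst(\Gr,\LL)\onto D$, exactly as you do. Your observation that $\sigma$-compactness of the sets in $\OO$ is what guarantees each $U$ is the open support of some element of $\Cont_0(X)^+$ is also the right reading of the hypotheses.
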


\begin{remark}
  For an ample groupoid, the line bundle~\(\LL\) is trivialisable on
  any $\sigma$\nb-compact Hausdorff subset~\(Y\) of~\(\Gr\) --~and
  so the assumption in
  Corollary~\ref{cor:from_type_semigroup_to_Cuntz_semigroup} that
  the $\sigma$\nb-compact bisections in~\(\B\) trivialise the
  bundle~\(\LL\) is empty.  Indeed, since~\(\LL\) is locally
  trivial, we may cover~\(Y\) by open subsets on which~\(\LL\) is
  trivialisable.  We may refine this cover to one consisting of
  compact open subsets because the latter form a basis for the
  topology.  Since~\(Y\) is $\sigma$\nb-compact, this cover has a
  countable subcover \((V_n)_{n=1}^\infty\).  The compact open
  subsets~\(V_n\) are closed and open because~\(Y\) is Hausdorff.
  Hence the subsets
  \(U_n\defeq V_n\setminus\bigcup_{k=1}^{n-1} V_k\) form an open
  cover \((U_n)_{n=1}^\infty\) of~\(Y\) consisting of pairwise
  disjoint subsets.  Gluing together trivialisiations of~\(\LL\) on
  each piece~\(U_n\) gives a trivialisaion of~\(\LL\) on~\(Y\).

  As a consequence, if~\(\Gr\) itself is ample, Hausdorff and
  \(\sigma\)\nb-compact, then~\(\LL\) is trivial globally and so the
  twist must come from a groupoid cocycle.
\end{remark}

\begin{remark}
  Assume that the twist~\(\LL\) restricts to a nontrivial line bundle
  on a bisection \(W\subseteq \Gr\).
  Then there is no reason to expect the relation \(1_{\s(W)}\simeq
  1_{\rg(W)}\) in the Cuntz semigroup of \(\Cst(\Gr,\LL)\).
  Instead, we explain how this leads to an equivalence
  between~\(1_{\s(W)}\) and another element that does not come from the
  type semigroup of~\(\Gr\), namely, the class of the line
  bundle~\(\LL|_{\rg(W)}\).
  Any section in \(\Cont_0(W,\LL)\) must have zeros because
  otherwise~\(\LL\) would be trivial.
  There is, however, usually an embedding of~\(\LL\) into a trivial
  bundle of rank~\(k\).
  Multiply it with a scalar-valued function \(\varphi\in\Cont_0(W)\)
  without zeros to get~\(k\) sections \(a_1,\dotsc,a_k\in\Cont_0(W,\LL)\) so that
  the column matrix \(a=(a_1,\dotsc,a_k)^t \in \mathbb{M}_m(\Cont_0(W,\LL))
  \subseteq \mathbb{M}_m(\Cst(\Gr,\LL))\) has no zeros in~\(W\).
  Then~\(a\) provides an equivalence between the class of
  \(1_{\s(W)}\) and the class of the line bundle~\(\LL\) viewed as a
  vector bundle over~\(\rg(W)\).
\end{remark}

\subsection{A definition using stabilisation and Morita equivalence}

We will discuss another construction of the type semigroup, which
for ample groupoids appeared (somewhat implicitly) in
\cites{Rainone-Sims:Dichotomy, Ara_Bonicke_Bosa_Li:type_semigroup}.
One consequence is that after stabilising~\(\Gr\), we may arrange
that \(S_\B(\Gr) =\setgiven{1_U}{U\in \OO}/{\approx}\), so the type
semigroup consists only of generators~\(\OO\).

Let~\(\RR\) be the full equivalence relation
\(\RR\defeq \N\times \N\) regarded as a principal discrete groupoid
with unit space \(\RR^{(0)}\defeq \setgiven{(n,n)}{n\in \N}\)
identified with~\(\N\).  The \emph{stabilisation of the
  groupoid}~\(\Gr\) is the product groupoid
\(\K(\Gr) \defeq \Gr\times\RR\).  For our fixed basis~\(\B\) and its
sources~\(\OO\), let \(\K(\OO)\) be the set of finite unions of sets
\(U\times\{n\}\), \(U\in \OO\), \(n\times \N\), and let
\[
  \K(\B)\defeq\B\times \setgiven{\{(n,m)\} }{(n,m)\in \RR}\cup \K(\OO).
\]
Then \(\K(\B)\) is an inverse semigroup basis for \(\K(\Gr)\)
whose lattice of idempotents is \(\K(\OO)\).  The following is an
analogue of \cite{Rainone-Sims:Dichotomy}*{Proposition~5.7}:

\begin{proposition}
  \label{prop:stabilisation_vs_type_semigroup}
  The map \(S_\B(\Gr)\to S_{\K(\B)}(\Gr \times \RR)\),
  \([f]\mapsto [f\times 1_{(0,0)}]\), is an isomorphism of ordered
  monoids, and so
  \[
    S_\B(\Gr)
    \cong S_{\K(\B)}(\Gr \times \RR)
    = \setgiven{[1_U]}{U\in \K(\OO)}/{\approx}_{\K(\B)}.
  \]
  Let \(p\colon X\times \N\to X\) be the canonical projection.  The
  inverse of the isomorphism above sends~\([\tilde{f}]\) for
  \(\tilde{f}\in \F(\K(\OO))\) to \([p_* \tilde{f}]\) where
  \((p_* \tilde{f})(x) \defeq \sum_{n\in \N} \tilde{f}(x,n)\) for
  all \(x\in X=\Gr^{(0)}\).
\end{proposition}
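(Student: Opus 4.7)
The plan is to introduce $p_*\colon \F(\K(\OO))\to \F(\OO)$, $(p_*\tilde f)(x)\defeq \sum_{n\in \N}\tilde f(x,n)$, and show that $\Phi\colon f\mapsto f\times 1_{(0,0)}$ and $p_*$ descend to mutually inverse order isomorphisms $S_\B(\Gr)\cong S_{\K(\B)}(\K(\Gr))$. Since $p_*\circ \Phi=\mathrm{id}$ on $\F(\OO)$ already at the function level, three things need checking: \textup{(a)}~$\Phi$ is a well-defined order-preserving monoid homomorphism; \textup{(b)}~every $\tilde f\in \F(\K(\OO))$ satisfies $\tilde f\approx_{\K(\B)}\Phi(p_*\tilde f)$; \textup{(c)}~$\Phi$ is order-reflecting.

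For~\textup{(a)}, Proposition~\ref{pro:compactly_contained} implies that any $\tilde k\ll f\times 1_{(0,0)}$ is supported in $X\times\{0\}$ and hence has the form $k\times 1_{(0,0)}$ with $k\ll f$ in $\F(\OO)$. A witness $b=\sum_\ell 1_{B_\ell}\in \F(\B)$ to $f\precsim_\B g$ against this~$k$ then yields the witness $\sum_\ell 1_{B_\ell\times\{(0,0)\}}\in \F(\K(\B))$ against $\tilde k$. For~\textup{(b)}, I use a key geometric construction: writing $\tilde f=\sum_i 1_{U_i\times\{n_i\}}$ with $U_i\in \OO$ and $n_i\in \N$, the ``translation'' element $b\defeq\sum_i 1_{U_i\times\{(0,n_i)\}}\in \F(\K(\B))$ has $\s_* b=\sum_i 1_{U_i\times\{n_i\}}=\tilde f$ and $\rg_* b=\sum_i 1_{U_i\times\{0\}}=\Phi(p_*\tilde f)$, witnessing $\tilde f\precsim_{\K(\B)}\Phi(p_*\tilde f)$; the pointwise inverse $\sum_i 1_{U_i\times\{(n_i,0)\}}$ witnesses the reverse comparison.

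The most delicate step is~\textup{(c)}. Here I introduce a projection $q_*\colon \F(\K(\B))\to \F(\B)$ that sends $1_{B\times\{(n,m)\}}$ to $1_B$ for $B\in\B$ and agrees with $p_*$ on the idempotents coming from $\K(\OO)\subseteq \K(\B)$. A case-by-case check on these two kinds of generators gives the intertwining identities $p_*\circ \s_*=\s_*\circ q_*$ and $p_*\circ \rg_*=\rg_*\circ q_*$. Now assume $\Phi(f)\precsim_{\K(\B)}\Phi(g)$ and pick $k\ll f$. Then $k\times 1_{(0,0)}\ll \Phi(f)$, so there is $\tilde b\in \F(\K(\B))$ with $k\times 1_{(0,0)}\le \s_*\tilde b$ and $\rg_*\tilde b\le g\times 1_{(0,0)}$; applying $p_*$ and the intertwining identities gives $k\le \s_*(q_*\tilde b)$ and $\rg_*(q_*\tilde b)\le g$, so $q_*\tilde b$ witnesses $f\precsim_\B g$. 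The main obstacle is the bookkeeping in~\textup{(c)}: one must handle the two structurally different kinds of generators of $\K(\B)$ --- translation bisections $B\times\{(n,m)\}$ and idempotents from $\K(\OO)$ --- uniformly when verifying that $q_*$ intertwines source and range with~$p_*$.
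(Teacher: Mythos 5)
Your proof is correct, and its overall architecture matches the paper's: both arguments rest on the observation (via Proposition~\ref{pro:compactly_contained}) that anything way below \(f\times 1_{(0,0)}\) lives on \(X\times\{0\}\), and both use the translation bisections \(U_i\times\{(0,n_i)\}\in\K(\B)\) to show that every class in \(S_{\K(\B)}(\Gr\times\RR)\) is hit by \(\Phi\), with \(p_*\) inducing the inverse. The one place where you genuinely diverge is order-reflection. The paper notes that \(\rg_*\tilde b\le g\times 1_{(0,0)}\) forces \(\tilde b(\gamma,(n,m))=0\) for \(n\neq 0\) and then simply restricts the witness to the \((0,0)\)-component; you instead push the whole witness down by the fibrewise sum \(q_*\) and use the intertwining relations \(p_*\circ\s_*=\s_*\circ q_*\), \(p_*\circ\rg_*=\rg_*\circ q_*\) together with monotonicity of \(p_*\). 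Your route avoids the small case analysis needed to see that discarding the components with \(m\neq 0\) does not spoil the source inequality, at the price of having to set up \(q_*\); to make it airtight you should either define \(q_*\) by the representation-independent formula \((q_*\tilde b)(\gamma)\defeq\sum_{\nu\in\RR}\tilde b(\gamma,\nu)\) (so that it is well defined on \(\F(\K(\B))\) and lands in \(\F(\B)\)), or simply fix one presentation \(\tilde b=\sum_k 1_{W_k}\) and work with that — either repair is immediate, so this is a presentational point rather than a gap.
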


\begin{proof}
  Assume first that \(f\precsim g\) in~\(\F(\OO)\).
  If \(\tilde{k}\in \F(\K(\OO))\) satisfies \(\tilde{k}\ll (f\times 1_{(0,0)})\),
  then \(\tilde{k}=k\times 1_{(0,0)}\) for some \(k\in \F(\OO)\)
  with \(k\ll f\).
  Thus there is \(b\in \F(\B)\) with \(k \ll \s_* (b)\) and \(\rg_*(b)
  \ll g\).
  This implies \(\tilde{k} \ll \s_*(b \times 1_{(0,0)})\) and
  \(\rg_*(b \times 1_{(0,0)}) \ll (g \times 1_{(0,0)})\).
  Hence \(f\times 1_{(0,0)}\precsim g\times 1_{(0,0)}\) in
  \(\F(\K(\OO))\).
  Conversely, if \(f\times 1_{(0,0)}\precsim g\times 1_{(0,0)}\) in
  \(\F(\K(\OO))\) and \(k\ll f\), then there are
  \(\tilde{b}\in\F(\K(\B))\) with \(k\times 1_{(0,0)}\ll \s_*
  (\tilde{b})\) and \(\rg_*(\tilde{b}) \ll g\times 1_{(0,0)}\).
  The relation \(\rg_*(\tilde{b}) \ll g\times 1_{(0,0)}\) implies that
  \(\tilde{b}(\gamma \times (n,m))=0\) whenever \(n\neq 0\).
  Thus putting \(b(\gamma)\defeq \tilde{b}(\gamma \times (0,0))\) we
  get \(b\in \F(\B)\) satisfying \(k \ll \s_* (b)\) and \(\rg_*(b) \ll
  g\).
  Hence \(f\precsim g\) in~\(\F(\OO)\).

  This implies that the map
  \(\F(\OO)\ni f \mapsto f\times 1_{(0,0)} \in \F(\K(\OO))\) factors
  through an injective homomorphism of ordered monoids
  \(\Psi\colon S_\B(\Gr)\to S_{\K(\B)}(\Gr \times \N^2)\).
  Moreover, if \(U\in \OO\) and \(n\in \N\), then
  \(1_U\times 1_{(0,0)}=1_{\s(U\times \{(n,0)\})}\approx
  1_{\rg(U\times \{(n,0)\})}=1_U\times 1_{(n,n)}\).  By additivity,
  this equivalence extends to
  \[
    \sum_{k=0}^N f_k\times 1_{(k,k)}
    \approx \left(\sum_{k=0}^N f_k\right)\times 1_{(0,0)}
    = p_*\left(\sum_{k=0}^N f_k\times 1_{(k,k)}\right) \times 1_{(0,0)},
  \]
  for any \(f_k\in \F(\OO)\), \(k=1,\dotsc, N\), \(N\in \N\).  As
  every element in \(\F(\K(\OO))\) is of the form
  \(\sum_{k=0}^N f_k\times 1_{(k,k)}\), the map~\(\Psi\) is
  surjective and its inverse is induced by~\(p_*\).

  In particular, it follows that
  \([1_U\times 1_{(n,n)}] +[1_V\times 1_{(m,m)}]= [1_U\times
  1_{(n,n)}+1_V\times 1_{(n+m,n+m)}]\) for all \(U,V\in \OO\),
  \(n,m\in \N\).  This implies
  \(S_{\K(\B)}(\Gr \times \N^2)=\setgiven{[1_U]}{U\in \K(\OO)}\).
\end{proof}

\begin{remark}
  \label{remark:stabilisation_dynamical}
  It follows that one may define \(S_\B(\Gr)\) by introducing the
  preorder relation on~\(\K(\OO)\) where \(U\precsim V\) if and only
  if for every \(K\in \K(\OO)\) with \(K\ll U\) there are bisections
  \(W_1,\dotsc,W_N\in\K(\B)\), such that
  \(K\subseteq \bigcup_{k=1}^N \s(W_k)\) and
  \(\bigsqcup_{k=1}^N \rg(W_k) \subseteq V\).  Passing to the
  quotient by the equivalence relation defined by~\(\precsim\) we
  get an ordered abelian monoid \(\K(\OO)/{\approx}\), where
  \[
    [U] + [V]\defeq [U\oplus V],
  \]
  and
  \(\sum_{i=1}^n U_i\times \{i\} \oplus\sum_{j=1}^m V_j\times \{j\}
  \defeq \sum_{i=1}^n U_i\times \{i\} +\sum_{i=n+1}^m V_{i-n}\times
  \{i\}\).  Proposition~\ref{prop:stabilisation_vs_type_semigroup}
  implies an isomorphism \(S_\B(\Gr)\cong \K(\OO)/{\approx}\) as
  ordered monoids.
\end{remark}

\begin{remark}
  \label{remark:stabilisation_dynamical2}
  We chose to define \(\K(\B)\) as above, as in a sense it is the
  smallest inverse semigroup basis for \(\K(\Gr)\) for which the
  natural isomorphism \(S_\B(\Gr) \cong S_{\K(\B)}(\Gr \times \RR)\)
  holds.  Another good choice is
  \[
    \widetilde{\K}(\B)\defeq \setgiven{U\times V\in\B\times \Bis(\RR)}{V\text{ is finite}}\cup \K(\OO).
  \]
  The proof of Proposition~\ref{prop:stabilisation_vs_type_semigroup}
  also works for that and shows that the map
  \([f]\mapsto [f\times 1_{(0,0)}]\) yields an isomorphism
  \(S_\B(\Gr) \cong S_{\widetilde{\K}(\B)}(\Gr \times \RR)\).
\end{remark}

We get the following version of
\cite{Rainone-Sims:Dichotomy}*{Corollary~5.8}, which holds for not
necessarily Hausdorff groupoids, and with our slightly different
definition of the type semigroup.

\begin{corollary}
  \label{cor:equivalence_implies_the_same_type_semigroups}
  Let \(\Gr_1\) and~\(\Gr_2\) be ample groupoids with
  \(\sigma\)\nb-compact unit spaces, and let \(\B_1\), \(\B_2\) be the
  bases of compact open subsets in \(\Gr_1\) and~\(\Gr_2\),
  respectively.  If \(\Gr_1\) and~\(\Gr_2\) are groupoid equivalent,
  then the type semigroups \(S_{\B_1}(\Gr_1)\) and~\(S_{\B_2}(\Gr_2)\)
  are order isomorphic.
\end{corollary}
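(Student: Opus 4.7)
The plan is to reduce the statement to an isomorphism of stabilised groupoids and then transport the type semigroup across that isomorphism. Two ingredients from the text are crucial. First, Proposition~\ref{prop:stabilisation_vs_type_semigroup} identifies each $S_{\B_i}(\Gr_i)$ with $S_{\K(\B_i)}(\Gr_i\times\RR)$. Second, Lemma~\ref{lem:ample_change_B} shows that, for an ample groupoid, the type semigroup does not depend on the particular choice of inverse semigroup basis contained in the basis of all compact open bisections. Combining these, $S_{\B_i}(\Gr_i)$ is canonically isomorphic to the type semigroup of $\Gr_i\times\RR$ computed with respect to the full basis of compact open bisections; in particular, $S_{\B_i}(\Gr_i)$ depends only on the topological groupoid $\Gr_i\times\RR$.

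The main step is then to invoke a stable isomorphism theorem for ample groupoids: if $\Gr_1$ and $\Gr_2$ are groupoid equivalent ample groupoids with $\sigma$-compact unit spaces, then there exists an isomorphism of topological groupoids $\Phi\colon\Gr_1\times\RR\congto\Gr_2\times\RR$. In the Hausdorff ample setting this is a groupoid analogue of Brown's stabilisation theorem, appearing, for instance, in work of Matui and of Carlsen--Ruiz--Sims--Tomforde. In the possibly non-Hausdorff case treated here, one proceeds by hand from a linking groupoid construction: a groupoid equivalence produces a free and proper $(\Gr_1,\Gr_2)$-bispace $Z$, and $\sigma$-compactness together with ampleness lets us choose a countable pairwise disjoint cover of $Z$ by compact open bisections, which can be indexed by $\N$ to assemble the desired isomorphism of stabilisations.

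Given such a $\Phi$, the rest is formal. As a homeomorphism of topological groupoids, $\Phi$ carries compact open bisections bijectively to compact open bisections and respects source, range, multiplication and inversion; consequently it also preserves the way-below relation on the sublattices of open subsets of the unit spaces. Hence $\Phi$ descends to an order isomorphism of the type semigroups computed with respect to the full basis of compact open bisections of each stabilisation. Chaining the identifications of the first paragraph then yields the required order isomorphism $S_{\B_1}(\Gr_1)\cong S_{\B_2}(\Gr_2)$.

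The main obstacle I expect is establishing the stable isomorphism input in the possibly non-Hausdorff case. The standard references for the Brown-type theorem for ample groupoids typically assume Hausdorffness, whereas here we must handle arbitrary ample groupoids. The remedy is to perform the countable decomposition of the linking bispace directly using compact open bisections, which are available in the ample setting regardless of Hausdorffness and which allow one to bypass any appeal to separation axioms in the stabilisation argument.
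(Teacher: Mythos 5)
Your reductions are fine: $S_{\B_i}(\Gr_i)\cong S_{\K(\B_i)}(\Gr_i\times\RR)$ by Proposition~\ref{prop:stabilisation_vs_type_semigroup}, and since $\K(\B_i)$ is an inverse semigroup basis contained in the family of all compact open bisections of the ample groupoid $\Gr_i\times\RR$, Lemma~\ref{lem:ample_change_B} lets you pass to the full compact open basis, so that the type semigroup becomes an invariant of the topological groupoid $\Gr_i\times\RR$. But note that this is a genuinely different (and more ambitious) route than the paper's. The paper simply follows the proof of Rainone--Sims' Corollary~5.8: one forms the linking groupoid $L$ of the equivalence, observes that each $\Gr_i$ is the restriction of $L$ to a full clopen subset of $L^{(0)}$, and constructs the order isomorphism $S(\Gr_i)\cong S(L)$ directly by covering $L^{(0)}$ with ranges of countably many compact open bisections whose sources lie in $\Gr_i^{(0)}$; all disjointifications there happen inside the Hausdorff unit space, so nothing about Hausdorffness of $L$ or of the equivalence bispace ever enters. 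Your route instead requires an honest isomorphism of topological groupoids $\Gr_1\times\RR\cong\Gr_2\times\RR$, i.e.\ a non-Hausdorff version of the Carlsen--Ruiz--Sims--Tomforde stabilisation theorem, which is a strictly stronger statement than what is needed and is not available off the shelf in this generality.

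The concrete problem is in your proposed fix for the non-Hausdorff case: you cannot in general ``choose a countable pairwise disjoint cover of $Z$ by compact open bisections.'' In a non-Hausdorff (locally Hausdorff) space, compact open subsets need not be closed, so the usual disjointification $V_n\setminus\bigcup_{k<n}V_k$ need not be open, and the refinement you describe may simply not exist inside $Z$. (This is exactly the disjointification that works in the \emph{ample Hausdorff} setting, where compact open sets are clopen.) The repair is to disjointify downstairs: given a countable cover of $Z$ by compact open bisections $W_j$, the sets $\rho(W_j)\subseteq\Gr_1^{(0)}$ are compact open, hence clopen in the Hausdorff unit space, so one may shrink $W_j$ to $W_j\cdot\bigl(\rho(W_j)\setminus\bigcup_{k<j}\rho(W_k)\bigr)$ and obtain bisections with pairwise disjoint ranges covering $\Gr_1^{(0)}$, which is what the stabilisation argument actually consumes. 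With that correction your strategy can be carried out, but as written the key step fails, and in any case the paper's linking-groupoid argument reaches the conclusion without ever needing the stable isomorphism of groupoids.
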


\begin{proof}
  Follow the proof of \cite{Rainone-Sims:Dichotomy}*{Corollary~5.8}.
\end{proof}

So in the ample case the type semigroup is invariant under Morita
equivalence.  The following example shows, however, that this fails
for non-ample groupoids, at least if we require an isomorphism of
type semigroups that preserves the canonical map~\(\Sigma\) defined
below.  Here it does not matter which inverse semigroup basis we
choose for our groupoids.

Let \(\Gr^0/\Gr\) be the orbit space for the canonical
\(\Gr\)\nb-action on~\(\Gr^0\), that is, for the equivalence
relation defined by \(\s(g) \sim \rg(g)\) for all \(g\in\Gr\).  If
\(f\in \F(\OO)\), then we define a function on \(\Gr^0/\Gr\) by
summing over the \(\Gr\)\nb-orbits in~\(\Gr^0\):
\[
  \Sigma f\colon \Gr^0/\Gr\to\N \cup \{\infty\},\qquad
  (\Sigma f)([x]) = \sum_{\s(g)=x} f(\rg(g)).
\]

\begin{lemma}
  \label{lem:sum_over_orbits}
  The map~\(\Sigma\) descends to a well-defined order-preserving map
  on \(S_\B(\Gr)\), that is, \(\Sigma f \le \Sigma g\) if
  \(f \precsim g\).
\end{lemma}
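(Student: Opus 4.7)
The plan is to reduce the order-preservation claim to two ingredients: an invariance identity $\Sigma(\s_*b)=\Sigma(\rg_*b)$ for every $b\in\F(\B)$, and the fact that $f$ is the pointwise supremum of the elements $k\ll f$. Well-definedness on the quotient $S_\B(\Gr)$ will then follow by applying order preservation in both directions to elements with $f\approx_\B g$.

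First I would verify that $(\Sigma f)([x])$ is independent of the chosen representative $x\in[x]$: for any $h\in\Gr$ with $\s(h)=x$, $\rg(h)=y$, the map $g\mapsto g\cdot h^{-1}$ is a bijection $\s^{-1}(x)\to\s^{-1}(y)$ that preserves~$\rg$, so the defining sum is unchanged. Next, I would prove $\Sigma(\s_*b)=\Sigma(\rg_*b)$ for $b\in\F(\B)$. By additivity it suffices to treat $b=1_W$ for a single bisection $W\in\B$. Fixing an orbit $[x]$, the assignment $g\mapsto \gamma\cdot g$, where $\gamma$ is the unique element of $W$ with $\s(\gamma)=\rg(g)$, provides a bijection
\[
  \{g\in\s^{-1}(x):\rg(g)\in\s(W)\}\;\longrightarrow\;
  \{g'\in\s^{-1}(x):\rg(g')\in\rg(W)\},
\]
so the two cardinalities counting $(\Sigma 1_{\s(W)})([x])$ and $(\Sigma 1_{\rg(W)})([x])$ agree.

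Finally I would combine these. Assume $f\precsim_\B g$ and fix an orbit $[x]$. For any $k\in\F(\OO)$ with $k\ll f$, the definition of $\precsim_\B$ supplies $b\in\F(\B)$ satisfying $k\le \s_*b$ and $\rg_*b\le g$; hence $(\Sigma k)([x])\le (\Sigma \s_*b)([x])=(\Sigma \rg_*b)([x])\le (\Sigma g)([x])$. It remains to pass to the supremum over $k\ll f$. By Proposition~\ref{pro:FO_continuous} the set $\{k\in\F(\OO):k\ll f\}$ is directed with pointwise supremum~$f$, and because $f$ takes values in~$\N$ and~$X$ is locally compact, for any finite $F\subseteq \s^{-1}(x)$ one can choose $k\ll f$ with $k(\rg(g))=f(\rg(g))$ for every $g\in F$, using Proposition~\ref{pro:compactly_contained}. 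This gives $(\Sigma k)([x])\ge \sum_{g\in F} f(\rg(g))$, and letting~$F$ exhaust $\s^{-1}(x)$ yields $(\Sigma f)([x])\le (\Sigma g)([x])$. Well-definedness on $S_\B(\Gr)$ follows from order preservation applied to both directions of $f\approx_\B g$.

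The main obstacle is the exchange of supremum with the (possibly infinite) fiber sum defining~$\Sigma f([x])$; this is where nontrivial isotropy at~$x$ could cause trouble. The issue is resolved by combining directedness of $\{k:k\ll f\}$ with the fact that $\N$-valued $f$ can be realized exactly by some $k\ll f$ on any prescribed finite set of points, so that the interchange reduces to the straightforward monotone convergence for directed nets of non-negative integers.
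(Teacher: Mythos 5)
Your proposal is correct and follows essentially the same route as the paper's proof: establish the invariance \(\Sigma \s_*(b) = \Sigma \rg_*(b)\), deduce \(\Sigma k \le \Sigma g\) for all \(k \ll f\), and conclude by showing \(\Sigma\) commutes with the supremum of \(\setgiven{k}{k\ll f}\). You merely supply more detail than the paper does, namely the explicit bisection bijection behind \(\Sigma 1_{\s(W)} = \Sigma 1_{\rg(W)}\) and the finite-subset argument justifying the interchange of supremum and fibre sum, both of which are sound.
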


\begin{proof}
  If \(U\in\B\), then \(\Sigma 1_{\s(U)} = \Sigma 1_{\rg(U)}\).  This
  implies \(\Sigma \s_*(b) = \Sigma \rg_*(b)\) for all \(b\in\B\).
  Therefore, if \(f \precsim g\), then \(\Sigma k \le \Sigma g\) for
  all \(k \ll f\).  Since~\(f\) is the supremum of \(k \ll f\) by
  Proposition~\ref{pro:FO_continuous} and~\(\Sigma\) preserves suprema
  for the order relation~\(\le\) on \(\F(\OO)\), \(\Sigma f\) is the
  supremum of~\(\Sigma k\) for \(k \ll f\).  Therefore,
  \(\Sigma f \le \Sigma g\).  Then \(\Sigma f = \Sigma g\) follows if
  both \(f \precsim g\) and \(g \precsim f\).  That is, \(\Sigma\) is
  well defined on \(S_\B(\Gr)\).
\end{proof}

\begin{example}
  \label{exa:counterexample_Morita}
  Let \(\Gr=\Sphere^1\) be the circle, viewed as an \'etale groupoid
  with only identity arrows.
  Let \(\GrH= \R\times\Z\) be the transformation groupoid for the
  action of~\(\Z\) on~\(\R\) by translations.
  Since this action is free and proper and its orbit space is
  identified with~\(\Sphere^1\), these two groupoids are Morita
  equivalent.
  We claim that there cannot be any isomorphism between their type
  semigroups that intertwines the maps~\(\Sigma\) to functions on the
  orbit spaces.
  Here it does not matter which inverse semigroup bases we pick for
  \(\Gr\) and~\(\GrH\).
  For~\(\Gr\), the relation~\(\precsim\) simplifies to~\(\le\) because
  \(\s_*(b) = \rg_*(b)\) for all \(b\in\B\).
  Therefore, \(S_\B(\Gr) \cong \F(\OO_\Gr)\), and this consists of
  lower semicontinuous functions \(\Sphere^1 \to \N\) by
  Proposition~\ref{pro:FO}.
  Clearly, the map~\(\Sigma\) for~\(\Gr\) is just the identity map.
  The constant function~\(1\) on the circle belongs to~\(\OO_\Gr\) for
  any choice of basis because~\(\Sphere^1\) is compact.

  We claim, however, that the range of the map~\(\Sigma\) for the
  groupoid \(\R\rtimes\Z\) does not contain the constant
  function~\(1\).
  Assume that there were \(f\in \F(\OO_\R)\) with \(\Sigma f = 1\).
  If \(f(x) \ge 2\) for some \(x\in\R\), then also \(\Sigma f([x]) \ge
  2\), so that \(\Sigma f \neq 1\).
  Therefore, \(0 \le f \le 1\).
  This makes~\(f\) the characteristic function of some open subset
  \(U\subseteq \R\).
  Since \(\Sigma f(x)=1\) for all \(x\in \Sphere^1\), this open subset
  contains exactly one point from each orbit.
  So the orbit space projection \(\R\to\Sphere^1\) restricts to a
  bijection \(U\to \Sphere^1\).
  This bijection is also a local homeomorphism, hence a homeomorphism.
  So its inverse is a continuous section for the covering map
  \(\R\to\Sphere^1\).
  But this does not exist.
  So our function~\(f\) cannot exist.
\end{example}

This example suggests to look for another definition for a type
semigroup for general \'etale groupoids, which would fix the problem
in Example~\ref{exa:counterexample_Morita}.  We could, of course,
replace open subsets by locally closed subsets, allowing half-open
intervals.  This is not a good choice, however, because it would
destroy the connection to the Cuntz semigroup of the groupoid
\(\Cst\)\nb-algebra, which is the main motivation to study the type
semigroup.

\section{Regular ideals and regular states}
\label{sec:ideals_states}

Throughout this section, \(\B\subseteq \Bis(\Gr)\) is an inverse
semigroup basis for an \'etale groupoid~\(\Gr\) with locally compact
Hausdorff unit space~\(X\), and
\(\OO=\setgiven{V\in \B}{V\subseteq X}\) as in
Definition~\ref{def:isg_basis}.  We study analogues of closed ideals
and lower semicontinuous states for \((S_\B(\Gr),{\precsim})\) (see
Definition~\ref{def:closed_lower_semicontinuous}).  A technical issue
here is that \((S_\B(\Gr),{\precsim})\) need not be continuous.  In
concrete examples (like Example~\ref{Ex:irrational_rotation}) the map
from \((\F(\OO),{\le})\) to \((S_\B(\Gr),{\precsim})\) preserves
directed suprema, which relates the way-below relations on these
monoids.  We do not know whether this always holds.  Since we want to
use nice descriptions of closed ideals and lower semicontinuous states
for the continuous monoid \((\F(\OO),{\le})\), we use the way-below
relation~\(\ll\) in \(\F(\OO)\) instead of working intrinsically in
\((S_\B(\Gr),{\precsim})\).

\subsection{Regular ideals}

\begin{definition}
  \label{def:regular_ideal}
  An ideal~\(I\) in the type semigroup \(S_\B(\Gr)\) is
  \emph{regular} if \([f]\in S_\B(\Gr)\) belongs to~\(I\) whenever
  $[k]\in I$ for all $k\in \F(\OO)$ with \(k\ll f\).
\end{definition}

\begin{remark}
  \label{rem:regular_ideal}
  When~\(\OO\) is the set of compact open subsets in an ample groupoid,
  then every ideal is regular because \(f \ll f\) (see
  Remark~\ref{rem:compact_in_FO}).  When \(\Gr=X\), then
  \(S_\B(\Gr)=\F(\OO)\) and regular ideals are the same as closed
  ideals.
\end{remark}

\begin{example}
  Let \(X=\N\) and let~\(\OO\) consist of all finite subsets and the
  whole space~\(\N\).  The subset of~\(\F(\OO)\) spanned by
  all~\(1_U\) for \(U\subseteq\N\) finite is an ideal which is not
  regular.  Its ``closure'' is the whole \(\F(\OO)\).
\end{example}

If the condition in Definition~\ref{def:regular_ideal} holds for one
representative of~\([f]\), then it holds for all:

\begin{lemma}
  \label{prop:regular_ideal_characterisations}
  Let~\(I\) be an ideal in \(S_\B(\Gr)\) and let \(x\in S_\B(\Gr)\).
  The following are equivalent:
  \begin{enumerate}
  \item \label{prop:regular ideal_characterisations_1}%
    there is \(f\in \F(\OO)\) with \(x=[f]\) such that if
    \(k\ll f\), then \([k]\in I\).
  \item \label{prop:regular ideal_characterisations_2}%
    for all \(f\in \F(\OO)\) with \(x=[f]\), if \(k\ll f\), then
    \([k]\in I\).
  \end{enumerate}
\end{lemma}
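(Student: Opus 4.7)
The direction \ref{prop:regular ideal_characterisations_2}$\Rightarrow$\ref{prop:regular ideal_characterisations_1} is immediate since $x$ has at least one representative. The content lies entirely in the converse, and the plan is to show that if $f$ satisfies \ref{prop:regular ideal_characterisations_1} and $g\in\F(\OO)$ is any other representative of $x$, then every $k\in\F(\OO)$ with $k\ll g$ satisfies $[k]\in I$.

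The key auxiliary observation I will use is that for any $b=\sum_{i=1}^n 1_{W_i}\in\F(\B)$, one has $[\s_*(b)]=[\rg_*(b)]$ in $S_\B(\Gr)$. Indeed, $b$ itself witnesses $\s_*(b)\precsim_\B \rg_*(b)$ via Definition~\ref{def:preorder_relation} by taking the bisection sum to be $b$, while the involution $b^*\defeq\sum_{i=1}^n 1_{W_i^{-1}}\in\F(\B)$ (which lies in $\F(\B)$ because $\B$ is closed under involution) witnesses the reverse inequality, since $\s(W_i^{-1})=\rg(W_i)$ and $\rg(W_i^{-1})=\s(W_i)$.

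With this in hand, the argument proceeds as follows. Given $k\ll g$, the equality $[f]=[g]$ forces $g\precsim_\B f$. Applying the strengthened form of Definition~\ref{def:preorder_relation} recorded in Remark~\ref{rem:compactly_below_relation2}, namely that one may demand $\rg_*(b)\ll f$ in place of merely $\rg_*(b)\le f$, I obtain $b\in\F(\B)$ with $k\le \s_*(b)$ and $\rg_*(b)\ll f$. The hypothesis \ref{prop:regular ideal_characterisations_1} on $f$ then gives $[\rg_*(b)]\in I$. By the auxiliary observation, $[\s_*(b)]=[\rg_*(b)]\in I$, and since $I$ is an ideal and $[k]\le[\s_*(b)]$, we conclude $[k]\in I$.

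The crux is the strengthened reformulation of $\precsim_\B$ in Remark~\ref{rem:compactly_below_relation2}: without upgrading $\rg_*(b)\le f$ to $\rg_*(b)\ll f$ the argument cannot invoke the hypothesis on $f$, which only controls elements compactly contained in $f$. Everything else is a routine unwinding of the definitions, together with the elementary fact that bisection sums always yield equivalent source and range types.
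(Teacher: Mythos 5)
Your proof is correct and follows essentially the same route as the paper: both arguments hinge on using Remark~\ref{rem:compactly_below_relation2} to upgrade the witness to $\rg_*(b)\ll f$, applying the hypothesis on $f$ to $\rg_*(b)$, and then descending back to $[k]$ via the downward closedness of the ideal. The only cosmetic difference is that the paper concludes $k\precsim \rg_*(b)$ directly from the witnessing element $b$, whereas you route through the explicit (and correctly justified) identity $[\s_*(b)]=[\rg_*(b)]$.
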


\begin{proof}
  We only need to show that \ref{prop:regular
    ideal_characterisations_1} implies \ref{prop:regular
    ideal_characterisations_2}, as the converse implication is
  obvious.  So assume that \(x=[f]=[f']\) and that \([k]\in I\)
  whenever \(k\ll f\).  Let \(k'\ll f'\).  We want to show that
  \([k']\in I\).  Since \(k'\ll f'\precsim f\), there is
  \(b\in \F(\B)\) such that \(k' \ll \s_*(b)\) and
  \(\rg_*(b) \ll f\) (see
  Remark~\ref{rem:compactly_below_relation2}).  Let
  \(k\defeq \rg_*(b)\).  Then \(k'\precsim k \ll f\).  By
  assumption, \([k]\in I\).  Since~\(I\) is \(\precsim\)\nb-downward
  directed, this implies \([k']\in I\).
\end{proof}

When~\(\OO\) is the set of compact open subsets in an ample groupoid,
the following lemma reduces to
\cite{Ara_Bonicke_Bosa_Li:type_semigroup}*{Lemma~2.3}.

\begin{lemma}
  \label{lem:order_ideals}
  There is a bijection between regular ideals in~\(S_\B(\Gr)\) and
  open \(\Gr\)\nb-\alb{}invariant subsets in~\(X\).  It maps a regular
  ideal \(I\subseteq S_\B(\Gr)\) to
  \(U_I\defeq \bigcup_{f\in \F(\OO), [f]\in I} {} \supp(f)\), and it
  maps an open \(\Gr\)\nb-invariant subset \(U\subseteq X\) to the
  submonoid~\(I_U\) of \(S_\B(\Gr)\) generated by the image of
  \(\OO_U\defeq \setgiven{V\in \OO}{V\subseteq U}\).  Moreover,
  \(I_U\) is isomorphic to \(S_{\B_U}(\Gr_U)\) for
  \(\B_U\defeq\setgiven{V\in \B}{V\subseteq \Gr_U}\).
\end{lemma}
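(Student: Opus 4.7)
The plan is to show the bijection by characterising $I_U$ concretely as $\setgiven{[g] \in S_\B(\Gr)}{\supp(g)\subseteq U}$, then checking both maps are well-defined and mutually inverse.

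First I would establish the concrete description of $I_U$. The generating elements $[1_V]$ with $V\in \OO_U$ yield finite sums $[\sum_{i=1}^n 1_{V_i}]$ whose supports are in $U$. Conversely, if $g\in \F(\OO)$ has $\supp(g)\subseteq U$, then writing $g=\sum_{k=1}^n 1_{V_k}$ with $V_k=g^{-1}(\N_{\ge k})$ as in Proposition~\ref{pro:FO} forces $V_k\subseteq V_1=\supp(g)\subseteq U$, so $[g]$ lies in the generated submonoid. To verify $I_U$ is downward closed under $\precsim_\B$, I would use the key consequence of $\Gr$-invariance: if $g\precsim_\B f$ with $\supp(f)\subseteq U$, then for any $k\ll g$ there is $b\in \F(\B)$ with $k\le \s_*(b)$ and $\rg_*(b)\le f$, so each bisection $W$ appearing in $b$ has $\rg(W)\subseteq U$, and $\Gr$-invariance gives $\s(W)\subseteq U$, hence $\supp(k)\subseteq U$; taking the supremum over $k\ll g$ via Proposition~\ref{pro:FO_continuous} yields $\supp(g)\subseteq U$. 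Regularity of $I_U$ is then immediate, since $\supp(g)=\bigcup_{k\ll g}\supp(k)$.

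Next I would verify that $U_I$ is $\Gr$-invariant. Given $x\in \supp(f)$ with $[f]\in I$ and $\gamma\in \Gr$ with $\s(\gamma)=x$, choose $W\in \B$ containing $\gamma$, and shrink to arrange $\s(W)\subseteq \supp(f)$ using that $\B$ is a basis and that $\OO$ covers $\supp(f)$. Shrinking further to $W'\defeq W\cdot V$ for some $V\in\OO$ with $x\in V\subseteq \s(W)$, the bisections $W'$ and $W'^{-1}$ witness that $[1_{\s(W')}]\approx_\B [1_{\rg(W')}]$; since $1_{\s(W')}\le f$ and $I$ is an ideal, $[1_{\rg(W')}]\in I$, so $\rg(\gamma)\in \rg(W')\subseteq U_I$.

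The equality $U_{I_U}=U$ is immediate because $V\in\OO_U$ gives $[1_V]\in I_U$ with $\supp(1_V)=V$. For $I_{U_I}=I$, the inclusion $I\subseteq I_{U_I}$ follows from the first paragraph, while the reverse inclusion is where I expect the main obstacle: given $[g]\in I_{U_I}$, I must use regularity of $I$ and show $[k]\in I$ for every $k\ll g$. The point is that $\cl{\supp(k)}$ is compact and contained in $U_I=\bigcup_{[f]\in I}\supp(f)$; by compactness, finitely many $f_1,\dots,f_n\in \F(\OO)$ with $[f_i]\in I$ cover $\cl{\supp(k)}$, so for $M\defeq \lVert k\rVert_\infty$ and $\phi\defeq M\sum_{i=1}^n f_i$ one has $\cl{k}\le \phi$ and $\supp(k)$ precompact, hence $k\ll \phi$ by Proposition~\ref{pro:compactly_contained}; in particular $[k]\precsim_\B [\phi]\in I$, so $[k]\in I$, and regularity of $I$ yields $[g]\in I$. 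Finally, the isomorphism $I_U\cong S_{\B_U}(\Gr_U)$ follows because $\F(\OO_U)$ identifies with the set of $g\in \F(\OO)$ supported in $U$, and the relation $\precsim_{\B_U}$ agrees with the restriction of $\precsim_\B$: one direction is obvious as $\B_U\subseteq \B$, while for the other, any $b\in \F(\B)$ arising in a comparison of such $g$'s can be replaced by its restriction to $\Gr_U$ using $\Gr$-invariance, exactly as in the downward-closure argument above.
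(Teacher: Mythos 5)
Your proposal is correct and follows essentially the same route as the paper's proof: establish that $I_U$ consists exactly of the classes supported in $U$ (using $\Gr$-invariance to pull bisection sources back into $U$ for downward closure), verify invariance of $U_I$ via a shrunken bisection through a given arrow, and use regularity plus a compactness/covering argument to show $I_{U_I}=I$. The only difference is that you spell out the details of the step the paper dismisses as ``easy to see'' (that $I_U$ is a well-defined regular ideal), which is a welcome addition but not a different argument.
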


\begin{proof}
  Let~\(U\) be a \(\Gr\)\nb-invariant open subset.  Let~\(I\) be the
  submonoid of~\(S_\B(\Gr)\) generated by~\(\OO_U\).  It consists of
  all the classes of \(\sum_{i=1}^n 1_{V_i}\) with
  \(V_i \subseteq U\).  It is easy to see that this is a regular
  ideal and that \(U=\bigcup_{f\in \F(\OO), [f]\in I}\supp(f)\).
  The identical inclusion \(\F(\OO_U)\subseteq \F(\OO)\) induces an
  order isomorphism \(I\cong S_{\B_U}(\Gr_U)\) because the
  \(\precsim\)\nb-relation among elements of~\(I\) that holds
  in~\(\Gr\) is already implemented using bisections in~\(\Gr_U\).

  Conversely, let~\(I\) be any regular ideal in \(S_\B(\Gr)\) and put
  \[
    U_I\defeq \bigcup_{f\in \F(\OO),[f]\in I}\supp(f).
  \]
  We show that~\(U_I\) is \(\Gr\)\nb-invariant.
  Let \(\gamma\in \Gr\) with \(\s(\gamma)\in U\).
  Take any \([f]\in I\) with \(\s(\gamma)\in \supp(f)\).
  Write \(f=\sum_{k=1}^n 1_{U_k}\) with \(U_k \in \OO\).
  Since~\(I\) is an ideal, it follows that \([1_{U_k}]\in I\) for
  \(k=1,\dotsc,n\).
  There is an index~\(k_0\) with \(\s(\gamma)\in U_{k_0}\).
  Using our assumptions on~\(\B\), we get a bisection \(V\in \B\)
  containing~\(\gamma\) with \(\s(V) \subseteq U_{k_0}\).
  Then \([1_{\rg(V)}]=[1_{\s(V)}]\le [1_{U_{k_0}}]\in I\).
  So \([1_{\rg(V)}]\in I\).
  Consequently, \(\rg(\gamma)\in \rg(V)\subseteq U_I\).
  Thus~\(U_I\) is \(\Gr\)\nb-invariant.
  Now let \(f\in \F(\OO)\) be any element such that \(\supp(f)
  \subseteq U_I\).
  Take any \(k\in \F(\OO)\) with \(k\ll f\).
  Then \(\cl{\supp(k)}\) is compact and contained in~\(U_I\).
  Thus there are \([f_1], \dotsc, [f_n]\in I\) with
  \(\supp(k)\subseteq \bigcup_{i=1}^n\supp(f_i)\).
  Then \(k\le \sum_{i=1}^n m f_i\) for sufficiently large~\(m\), and
  this implies \([k]\in I\).
  Since~\(I\) is regular, this implies \([f]\in I\).
  Thus the whole submonoid generated by~\(\OO_{U_I}\) is also
  contained in~\(I\).
  Since any~\(f\) with \([f]\in I\) is supported in~\(U_I\), the
  ideal~\(I\) cannot be bigger than that either.
  This proves the bijection.
\end{proof}

\begin{corollary}
  \label{cor:continuous_ideal}
  There is a bijection between closed ideals in~\(\F(\OO)\) and open
  subsets in~\(X\).  It maps a closed ideal~\(I\) in~\(\F(\OO)\) to the
  open subset \(U_I\defeq \bigcup_{f\in I}\supp(f)\) and an open subset
  \(U\subseteq X\) to the closed ideal
  \(I_U\defeq \setgiven{f\in \F(\OO)}{\supp(f)\subseteq U}\).
\end{corollary}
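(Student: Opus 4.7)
The plan is to derive Corollary~\ref{cor:continuous_ideal} as a specialisation of Lemma~\ref{lem:order_ideals} to the trivial groupoid case. I would view~\(X\) itself as an \'etale groupoid~\(\Gr\) consisting only of identity arrows, with~\(\B \defeq \OO\) as inverse semigroup basis; this is legitimate because~\(\OO\) is a basis for the topology of~\(X\) that is closed under finite unions (and intersections).

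Under this choice every bisection is an idempotent, so \(\s_* = \rg_* = \mathrm{id}\) on \(\F(\B) = \F(\OO)\), and the preorder~\(\precsim_\B\) reduces to the pointwise order~\(\le\). Hence \(S_\B(\Gr) = \F(\OO)\) as ordered monoids, as already recorded in the remark following the definition of the type semigroup. Under this identification, Remark~\ref{rem:regular_ideal} ensures that regular ideals in the sense of Definition~\ref{def:regular_ideal} coincide with closed ideals in the sense of Definition~\ref{def:closed_lower_semicontinuous}. Moreover, every open subset of~\(X\) is trivially \(\Gr\)\nb-invariant, since~\(\Gr\) has only identity arrows.

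Lemma~\ref{lem:order_ideals} then directly yields a bijection between closed ideals in~\(\F(\OO)\) and open subsets of~\(X\), with \(I \mapsto U_I = \bigcup_{f \in I} \supp(f)\) after identifying \([f]\) with~\(f\). In the reverse direction, the lemma assigns to an open \(U \subseteq X\) the submonoid of~\(\F(\OO)\) generated by \(\setgiven{1_V}{V \in \OO,\ V \subseteq U}\), and I would check that this submonoid coincides with \(\setgiven{f \in \F(\OO)}{\supp(f) \subseteq U}\): the forward inclusion is immediate, and the reverse follows from the canonical decomposition of Proposition~\ref{pro:FO}, which writes any~\(f\) with \(\supp(f) \subseteq U\) as \(\sum_{k=1}^n 1_{U_k}\) with \(U_k = f^{-1}(\N_{\ge k}) \in \OO\) and \(U_k \subseteq \supp(f) \subseteq U\). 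Since the corollary is a formal reduction to the trivial-groupoid case of a result already proven, there is no substantive obstacle; everything amounts to tracing through the definitions and invoking Proposition~\ref{pro:FO} to match the two formulations of~\(I_U\).
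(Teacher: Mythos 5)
Your proposal is correct and is exactly the paper's own argument: the paper proves Corollary~\ref{cor:continuous_ideal} in one line by applying Lemma~\ref{lem:order_ideals} to \(\Gr=X\). You have simply spelled out the details of that specialisation (the identification \(S_\B(\Gr)=\F(\OO)\), regular~\(=\)~closed via Remark~\ref{rem:regular_ideal}, and the matching of the two descriptions of~\(I_U\) via Proposition~\ref{pro:FO}), all of which are accurate.
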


\begin{proof}
  Apply Lemma~\ref{lem:order_ideals} to \(\Gr=X\).
\end{proof}

\begin{corollary}
  \label{cor:minimal_implies_simple}
  Assume that~\(\OO\) consists of precompact subsets of~\(X\).  The
  following are equivalent:
  \begin{enumerate}
  \item\label{enu:minimal_implies_simple1}%
    \(S_\B(\Gr)\) is simple;
  \item\label{enu:minimal_implies_simple2}%
    \(S_\B(\Gr)\) has no nontrivial regular ideals;
  \item\label{enu:minimal_implies_simple3}%
    \(\Gr\)~is minimal.
  \end{enumerate}
\end{corollary}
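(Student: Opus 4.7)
The equivalence \ref{enu:minimal_implies_simple2}$\Leftrightarrow$\ref{enu:minimal_implies_simple3} is immediate from Lemma~\ref{lem:order_ideals}: the stated bijection sends \(\Gr\)\nb-invariant open subsets to regular ideals, and it restricts to a bijection between nontrivial ones. The implication \ref{enu:minimal_implies_simple1}$\Rightarrow$\ref{enu:minimal_implies_simple2} is trivial, so the only real work is \ref{enu:minimal_implies_simple3}$\Rightarrow$\ref{enu:minimal_implies_simple1}: assuming~\(\Gr\) minimal, every \emph{ideal} (not merely regular) in \(S_\B(\Gr)\) is trivial.

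The plan is as follows. Let \(I\subseteq S_\B(\Gr)\) be a nonzero ideal and pick \([f]\in I\) with \(f\neq 0\). Writing \(f=\sum_{k=1}^n 1_{U_k}\) as in Proposition~\ref{pro:FO}, I get some nonempty \(U\defeq U_k\in\OO\) with \([1_U]\in I\) (since \(1_{U_k}\le f\)). To conclude \(I=S_\B(\Gr)\), it suffices to show that \([1_V]\in I\) for every \(V\in\OO\), because every element of \(S_\B(\Gr)\) is a finite sum of such generators and \(I\) is downward closed. So fix \(V\in \OO\); I will show \([1_V]\precsim_\B N\cdot[1_U]\) in \(S_\B(\Gr)\) for some \(N\in\N\), which gives \([1_V]\in I\).

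By Definition~\ref{def:preorder_relation}, I must check, for every \(k'\ll 1_V\), that there is \(b\in\F(\B)\) with \(k'\le \s_*(b)\) and \(\rg_*(b)\le N\cdot 1_U\). Proposition~\ref{pro:compactly_contained} forces \(k'=1_{V'}\) for a precompact \(V'\in\OO\) with \(\cl{V'}\subseteq V\). Here is the main step, which uses minimality and the precompactness assumption on~\(\B\): for each \(x\in\cl{V'}\) minimality yields \(\gamma_x\in\Gr\) with \(\s(\gamma_x)=x\) and \(\rg(\gamma_x)\in U\); since \(\B\) is a basis, I can pick \(B_x\in\B\) containing~\(\gamma_x\), and then replace it by \(B_x\cdot (\s(B_x)\cap \rg|_{B_x}^{-1}(U))\in\B\) so that \(\rg(B_x)\subseteq U\). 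The open sets \(\s(B_x)\) cover the compact set \(\cl{V'}\), so finitely many \(B_1,\dotsc,B_N\in\B\) suffice, and then \(b\defeq\sum_{i=1}^N 1_{B_i}\) satisfies \(\s_*(b)\ge 1_{\cl{V'}}\ge k'\) and \(\rg_*(b)\le N\cdot 1_U\), as required.

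I expect the only subtle point to be the correct direction of the transport (sources vs.\ ranges) in Definition~\ref{def:preorder_relation} and making sure the bisection is refined to live entirely over~\(U\) on the range side; the precompactness hypothesis on~\(\B\) is used exactly to pass from the pointwise cover of \(\cl{V'}\) to a \emph{finite} subcover via compactness of \(\cl{V'}\). Everything else is bookkeeping through Lemma~\ref{lem:order_ideals} and Proposition~\ref{pro:compactly_contained}.
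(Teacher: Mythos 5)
There is a genuine gap, and it sits exactly at the point you flagged as "the only subtle point" but then diagnosed incorrectly. To get \([1_V]\in I\) for a general (not necessarily regular) ideal \(I\), you need \([1_V]\precsim_\B N\,[1_U]\) for a \emph{single} \(N\) that is fixed before \(k'\) is chosen: in Definition~\ref{def:preorder_relation} the right-hand side \(g=N\cdot 1_U\) is fixed and the quantifier over \(k'\ll 1_V\) comes afterwards. Your construction covers \(\cl{V'}=\cl{\supp(k')}\) by finitely many \(\s(B_i)\), so your \(N\) is the size of that subcover and depends on \(k'\). What you actually prove is: for every \(k'\ll 1_V\) there is \(N(k')\) with \([k']\precsim N(k')[1_U]\), hence \([k']\in I\) for all \(k'\ll 1_V\). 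That only forces \([1_V]\in I\) when \(I\) is \emph{regular}, i.e.\ it reproves \ref{enu:minimal_implies_simple3}\(\Rightarrow\)\ref{enu:minimal_implies_simple2}, not \ref{enu:minimal_implies_simple3}\(\Rightarrow\)\ref{enu:minimal_implies_simple1}. Note that your argument nowhere uses the hypothesis that \(\OO\) consists of precompact sets (the compactness of \(\cl{V'}\) is automatic from \(k'\ll 1_V\) by Proposition~\ref{pro:compactly_contained}, whatever \(\OO\) is), so if it were complete it would prove that minimality implies simplicity for arbitrary \(\OO\) --- and Example~\ref{exm:precompactness_important} (\(\Gr=\N\times\N\), the ideal spanned by \([1_U]\) with \(U\) finite) shows that this is false.

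The repair is the paper's argument: use the precompactness of \(V\in\OO\) itself. Since \(\cl{V}\) is compact, minimality gives finitely many \(B_1,\dotsc,B_N\in\B\) with \(\cl{V}\subseteq\bigcup_{i=1}^N\s(B_i)\) and \(\rg(B_i)\subseteq U\); then the single element \(b=\sum_{i=1}^N 1_{B_i}\) satisfies \(\s_*(b)\ge 1_{\cl V}\ge k'\) for \emph{every} \(k'\ll 1_V\) simultaneously, and \(\rg_*(b)\le N\cdot 1_U\) with \(N\) now independent of \(k'\). Two smaller points: the hypothesis is precompactness of \(\OO\), not of \(\B\); and the range-restriction of \(B_x\) should be written \(U\cdot B_x\in\B\) (left multiplication by the idempotent \(U\in\OO\subseteq\B\) restricts the range), whereas \(B_x\cdot(\s(B_x)\cap\rg|_{B_x}^{-1}(U))\) does not parse --- right multiplication by a subset of the unit space restricts the source, and that subset need not lie in \(\OO\) anyway.
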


\begin{proof}
  The implication
  \ref{enu:minimal_implies_simple1}\(\Rightarrow\)\ref{enu:minimal_implies_simple2}
  is obvious, and \ref{enu:minimal_implies_simple2}
  and~\ref{enu:minimal_implies_simple3} are equivalent by
  Lemma~\ref{lem:order_ideals}.
  So it suffices to show
  \ref{enu:minimal_implies_simple3}\(\Rightarrow\)\ref{enu:minimal_implies_simple1}.
  Assume that~\(\Gr\) is minimal.
  Let \(u,\vartheta\in S_\B(\Gr)\setminus\{0\}\).
  We need to show that \(\vartheta\precsim n\cdot u\) for some \(n\in
  \N\).
  Assume first that \(u=[1_U]\) and \(\vartheta=[1_V]\) for some
  \(U,V\in \OO\).
  As~\(\Gr\) is minimal and~\(\B\) is a basis for~\(\Gr\), we have
  \(\bigcup_{W\in \B} \s(U W)=X\).
  As~\(\overline{V}\) is compact, there are bisections
  \(W_1,\dotsc,W_n\in \B\) such that \(\overline{V}\subseteq
  \bigcup_{i=1}^n \s(W_i)\) and \(\rg(W_i)\subseteq U\) for
  \(i=1,\dotsc,n\).
  Thus \(\vartheta \precsim n [1_U]=n u\).
  If \(\vartheta \in S_\B(\Gr)\) is arbitrary, then
  \(\vartheta=\sum_{j=1}^m c_j [1_{V_j}]\) for some \(c_j\in \N\) and
  \(V_j\in \OO\), \(j=1,\dotsc,m\).
  By the above argument, for every \(j=1,\dotsc,m\) there is \(n_j\in
  \N\) with \([1_{V_j}] \precsim n_j u\).
  This implies that \(\vartheta \precsim \sum_{j=1}^m c_j n_j u\).
  Now, let \(u \in S_\B(\Gr)\setminus\{0\}\) and \(\vartheta \in
  S_\B(\Gr)\) be arbitrary.
  Then \(u=\sum_{j=1}^m c_j [1_{U_j}]\) for some
  \(c_j\ge 1\) and \(U_j\in\OO \setminus \{\emptyset\}\) for
  \(j=1,\dotsc,m\).
  By the above, there is \(n_1\in\N\) with \(\vartheta \precsim n_1
  [1_{U_1}] \le n\cdot u\).
\end{proof}

\begin{example}
  \label{exm:precompactness_important}
  The precompactness assumption in
  Corollary~\ref{cor:minimal_implies_simple} is important.  For
  instance, the equivalence relation \(\RR= \N\times \N\) is a
  minimal discrete groupoid with the unit space \(X\cong \N\).  Let
  \(\B=\Bis(\RR)\) contain all bisections.  The span of~\([1_U]\)
  where \(U\subseteq \N\) is finite yields a nontrivial (and
  irregular) ideal in \(S_\B(\RR)\), as it does not
  contain~\(1_{[\N]}\).
\end{example}

For any open \(\Gr\)\nb-invariant subset \(U\subseteq X\), there are
restricted groupoids \(\Gr_U\) and~\(\Gr_{X\setminus U}\) and
inverse semigroups
\[
  \B_U\defeq\setgiven{V\in \B}{V\subseteq \Gr_U}\qquad
  \B_{X\setminus U}\defeq\setgiven{V\setminus \Gr_U}{V\in \B}
\]
of bisections of \(\Gr_U\) and~\(\Gr_{X\setminus U}\), respectively.
Here \(\B_U\) and~\(\B_{X\setminus U}\) satisfy analogues of our
standing assumption on~\(\B\).  We already related~\(\B_U\) with
ideals in the type semigroup \(S_\B(\Gr)\) in Lemma \ref{lem:order_ideals}.    Now we turn to
quotients.

\begin{lemma}
  \label{lem:restriction_to_closed_sets_quotients}
  Let \(C\subseteq X\) be a closed \(\Gr\)\nb-invariant subset.  Let
  \(f, g\in \F(\OO)\) and
  \(\B_C \defeq \setgiven{V\cap \Gr_C}{V\in \B}\).  Put
  \(\OO_C\defeq \setgiven{V\cap C}{V\in \OO}\).  The following are
  equivalent:
  \begin{enumerate}
  \item \label{enu:restriction_to_closed_sets_quotients1}%
    \(f|_C\precsim_{\B_C} g|_C\) in \(\F(\OO_C)\) ;
  \item \label{enu:restriction_to_closed_sets_quotients2}%
    for any \(k\ll f\) there is \(h\in \F(\OO_{X\setminus C})\) with
    \(k\precsim_\B g + h\).
  \end{enumerate}
  If~\(\OO\) is the set of all compact open subsets or if
  \(X\setminus C\) is a multiplier of~\(\OO\), that is, if
  \(V\in \OO\) implies \(V\setminus C\in \OO\), then the above are
  further equivalent to
  \begin{enumerate}[resume]
  \item \label{enu:restriction_to_closed_sets_quotients3}%
    there is \(h\in \F(\OO_{X\setminus C})\) with
    \(f\precsim_\B g + h\).
  \end{enumerate}
\end{lemma}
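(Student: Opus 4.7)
My plan is to prove (1) $\Leftrightarrow$ (2) in general and (2) $\Leftrightarrow$ (3) under the extra hypothesis, using the closure characterisation of $\ll$ from Proposition \ref{pro:compactly_contained}, the compact-subset criterion for $\precsim_\B$ from Proposition \ref{pro:precsim_relation}, and the interpolation in Corollary \ref{cor:intermediate_way_below}. For (2) $\Rightarrow$ (1), given $\tilde k \in \F(\OO_C)$ with $\tilde k \ll f|_C$, I first interpolate $\tilde k \ll \tilde k_1 \ll f|_C$ in $\F(\OO_C)$ by Corollary \ref{cor:intermediate_way_below}. I then lift $\tilde k_1$ to some $k \in \F(\OO)$ with $k|_C \ge \tilde k_1$ and $k \ll f$: writing $\tilde k_1 = \sum_m 1_{W_m \cap C}$ with $W_m \in \OO$, Lemma \ref{lem:choose_intermediate} lets me shrink each $W_m$ so that its closure lies inside the corresponding piece of $f$'s decomposition while preserving $W_m \cap C$. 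Applying (2) to this $k$ yields $h \in \F(\OO_{X \setminus C})$ and, by the definition of $\precsim$, a witness in $\F(\B)$ for any chosen $k'' \ll k$. The margin $\tilde k \ll \tilde k_1 \le k|_C$ permits picking $k'' \ll k$ with $k''|_C \ge \tilde k$; restricting each generating bisection of the witness to $\Gr_C$ gives a $\F(\B_C)$-witness, and $h|_C = 0$ ensures that it establishes $\tilde k \precsim_{\B_C} g|_C$, proving (1).

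For (1) $\Rightarrow$ (2), given $k \ll f$, pick $k_1$ with $k \ll k_1 \ll f$. Since $C$ is closed, $k_1|_C \ll f|_C$ in $\F(\OO_C)$ by Proposition \ref{pro:compactly_contained}. Applying (1) yields $b' = \sum_l 1_{B_l \cap \Gr_C} \in \F(\B_C)$ with $k_1|_C \le \s_*(b')$ and $\rg_*(b') \le g|_C$, where each $B_l \in \B$. The essential step is to lift $b'$ to $\F(\B)$ while controlling behaviour off $C$: shrink each $B_l$ to $\hat B_l \defeq B_l \cdot W_l$ for an open precompact $W_l \in \OO$ with $W_l \cap C = \s(B_l) \cap C$, choosing the $W_l$ so that $\rg(\hat B_l) \setminus C$ lies in any prescribed open neighbourhood of $C$. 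By $\Gr$-invariance of $C$, $\hat B_l \cap \Gr_C = B_l \cap \Gr_C$, so the witness on $C$ is preserved. Using compactness of $\cl{\supp k}$, pick a precompact $V \in \OO$ with $V \subseteq X \setminus C$ containing both $\cl{\supp k} \setminus C$ and $\bigcup_l \rg(\hat B_l) \setminus C$. Setting $h \defeq M \cdot 1_V$ for a sufficiently large $M$, and augmenting the lifted bisections with $M$ copies of the unit bisection $V \subseteq X$, the resulting total witness covers $k$ on the source side and stays within $g + h$ on the range side. The main obstacle here is arranging the shrinking to achieve uniform off-$C$ control while still preserving coverage of $k_1|_C$; this relies on precompactness of $\cl{\supp k}$ and on the basis property of $\B$.

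For (3) $\Leftrightarrow$ (2), the implication (3) $\Rightarrow$ (2) is immediate. For (2) $\Rightarrow$ (3), in the ample case $f \ll f$ by Remark \ref{rem:compact_in_FO}, so applying (2) with $k = f$ produces (3). Under the multiplier hypothesis, writing $f = \sum_i 1_{U_i}$ places $f|_{X \setminus C} \defeq \sum_i 1_{U_i \setminus C}$ in $\F(\OO_{X \setminus C})$, and a large enough multiple of this serves as a uniform $h$ valid for all $k \ll f$: the witness for $f \precsim g + h$ is built by combining the (1)-based $C$-witness (via the shrinking trick from the previous paragraph) with unit bisections on the sets $U_i \setminus C$, which remain in $\OO$ by the multiplier hypothesis.
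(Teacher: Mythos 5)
Your direction \ref{enu:restriction_to_closed_sets_quotients2}$\Rightarrow$\ref{enu:restriction_to_closed_sets_quotients1} and the ample case of \ref{enu:restriction_to_closed_sets_quotients2}$\Rightarrow$\ref{enu:restriction_to_closed_sets_quotients3} are essentially the paper's arguments. The genuine gap is in \ref{enu:restriction_to_closed_sets_quotients1}$\Rightarrow$\ref{enu:restriction_to_closed_sets_quotients2}, at the step where you ``pick a precompact $V\in\OO$ with $V\subseteq X\setminus C$ containing both $\cl{\supp k}\setminus C$ and $\bigcup_l\rg(\hat B_l)\setminus C$''. Neither of these sets is compact: each is the trace on the open set $X\setminus C$ of a set whose closure typically meets $C$, so its own closure meets $C$. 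Since $h$ must lie in $\F(\OO_{X\setminus C})$, its support is a \emph{finite} union of members of $\OO$ contained in $X\setminus C$, and no such finite union need contain these sets. For instance, with $X=[0,1]$, $C$ the Cantor set, $\OO$ the finite unions of rational open intervals, and $\supp(k)\supseteq X\setminus C$, the set $\cl{\supp k}\setminus C\supseteq[0,1]\setminus C$ is an infinite disjoint union of intervals and is contained in no finite union of basic sets avoiding $C$. Your preliminary shrinking of the $B_l$ so that $\rg(\hat B_l)\setminus C$ clusters near $C$ makes this worse, not better: the closer those ranges lie to $C$, the harder it is to trap them inside a member of $\OO$ disjoint from $C$.

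The repair is to observe that only two \emph{compact} sets disjoint from $C$ actually need attention. Lift the $\B_C$-witness to $b\in\F(\B)$ with precompact support and with the closures of its ranges compact (Proposition~\ref{pro:precsim_relation}). The range overshoot $D\defeq(g-\cl{\rg_*b})^{-1}(\Z_{<0})$ is a closed subset of the compact set $\supp(\cl{\rg_*b})$ and is disjoint from $C$ because the witness already gives $\cl{\rg_*b}|_C\le g|_C$; one then \emph{removes} the arrows of $b$ with range in a precompact $\OO$-neighbourhood of $D$ contained in $X\setminus C$, rather than absorbing them into $h$ (by $\Gr$-invariance of $C$ this does not change $\s_*b$ on $C$). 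Afterwards the source deficiency $E\defeq(\s_*b-\cl{k})^{-1}(\Z_{<0})$ is a closed subset of the compact set $\supp(\cl{k})$, again disjoint from $C$, so Lemma~\ref{lem:choose_intermediate} provides a precompact $V\in\OO$ with $E\subseteq V\subseteq X\setminus C$, and $h\defeq n1_V$ with $n\ge\max(\cl{k}-\s_*b)$ works. Your ``enclose everything off $C$'' strategy is legitimate only under the multiplier hypothesis, where $V\defeq\supp(f)\setminus C$ lies in $\OO$ by assumption; that is precisely how the uniform $h$ for~\ref{enu:restriction_to_closed_sets_quotients3} is obtained.
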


\begin{proof}
  Clearly, \ref{enu:restriction_to_closed_sets_quotients3} always
  implies~\ref{enu:restriction_to_closed_sets_quotients2}.  If~\(\OO\)
  consists of compact open subsets,
  then~\ref{enu:restriction_to_closed_sets_quotients2}
  implies~\ref{enu:restriction_to_closed_sets_quotients3} because then
  we may take \(k=f\).  To prove
  that~\ref{enu:restriction_to_closed_sets_quotients2}
  implies~\ref{enu:restriction_to_closed_sets_quotients1} let
  \(l\in \F(\OO_C)\) be such that \(l\ll f|_C\).  Then \(\cl{l}\le f\)
  and we may find \(k\in \F(\OO)\) with a precompact support and such
  that \(\cl{l}\le k \le \cl{k}\le f\).  In particular, \(k\ll f\).
  Then \ref{enu:restriction_to_closed_sets_quotients2} gives
  \(h\in \F(\OO_{X\setminus C})\) with \(k\precsim_\B g + h\).  Hence
  \(k|_C\precsim_{\B_C} g|_C\), and so there is \(b\in \F(\B_C)\) with
  \(\cl{l} \le \s_* (b)\) and \(\cl{\rg_*(b)} \le g|_C\).  This proves
  that \ref{enu:restriction_to_closed_sets_quotients2}
  implies~\ref{enu:restriction_to_closed_sets_quotients1}.

  Now assume~\ref{enu:restriction_to_closed_sets_quotients1}.  Take
  any \(k\ll f\).  Then \(k|_C\ll g|_C\), so there is
  \(b\in \F(\B)\) such that \(\cl{k}|_C \le \s_* (b)\) and
  \(\cl{\rg_*(b)}|_C \le g\), and the support of~\(b\) is precompact.
  The function \(g - \cl{\rg_*(b)}\) is lower continuous, nonnegative
  on~\(C\) and possibly negative on the compact subset
  \(\supp(\cl{\rg_*(b)})\).  Hence
  \(D\defeq (g - \cl{\rg_*(b)})^{-1}(\Z_{<0})\) is a compact subset
  disjoint from~\(C\).  Pick any precompact subset \(U\in \OO\) with
  \(D\subseteq U\subseteq \cl{U}\subseteq X\setminus C\).  Let
  \(\tilde{b}\defeq 1_U \cdot b=b|_{\rg^{-1}(U)}\in \F(\B)\).  Then
  \(\cl{k}|_C \le \s_* (b)|_C= \s_*(\tilde{b})|_C\) and
  \(\cl{\rg_*(\tilde{b})} \le g\).  Hence, replacing~\(b\)
  by~\(\tilde{b}\), we have arranged that \(\cl{k}|_C \le \s_* (b)\) and
  \(\cl{\rg_*(b)} \le g\).

  The function \(\s_*(b)-\cl{k}\) is lower semicontinuous, and it takes
  nonnegative values on~\(C\) and no negative values outside
  \(\supp(\cl{k})\).  Hence \(E\defeq (\s_*(b)-\cl{k})^{-1}(\Z_{<0})\)
  is a compact subset disjoint from~\(C\).
  Lemma~\ref{lem:choose_intermediate} gives a precompact
  \(V\in \OO\) so that
  \(E\subseteq V\subseteq \cl{V}\subseteq X\setminus C\).  Let~\(n\)
  be any number not smaller than the maximum of \(\cl{k} -\s_*(b)\)
  and replace~\(b\) by \(b+n 1_U\).  Then \(\cl{k} \le \s_* (b)\) and
  \(\cl{\rg_*(b)} \le g +n 1_V\).  Thus \(h\defeq n 1_V\) satisfies
  \(h\in \F(\OO_{X\setminus C})\) and \(k\precsim_\B g + h\).  This
  proves~\ref{enu:restriction_to_closed_sets_quotients2}.

  Now let \(X\setminus C\) be a multiplier of~\(\OO\).  Then
  \(\supp(f) \setminus C \in \OO\) and so we may change the last
  step above by taking \(V\defeq \supp(f)\setminus C\) and
  letting~\(n\) be the maximum of~\(f\).  The function
  \(h=n 1_V\in \F(\OO_{X\setminus C})\) does not depend on~\(k\),
  and our proof shows \(f\precsim_\B g + h\).  That is, in this
  case~\ref{enu:restriction_to_closed_sets_quotients1}
  implies~\ref{enu:restriction_to_closed_sets_quotients3}.
\end{proof}

\begin{corollary}
  \label{cor:quotient_monoids}
  Let \(U\subseteq X\) be an open, \(\Gr\)\nb-invariant subset and let
  \(I\defeq I_U\) be the corresponding regular ideal
  in~\(S_\B(\Gr)\).  The restriction of functions induces an order
  preserving surjective homomorphism
  \(S_\B(\Gr)/I\onto S_{\B_{X\setminus U}}(\Gr_{X\setminus U})\).
  This is an isomorphism if~\(\OO\) is the set of compact-open
  subsets or if~\(U\) is a multiplier of~\(\OO\).  In particular,
  this happens if~\(\OO\) is the whole topology or if~\(\OO\) is the
  set of all precompact subsets.
\end{corollary}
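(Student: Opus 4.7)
Set $C\defeq X\setminus U$, a closed $\Gr$\nb-invariant subset. The plan is to show that the pointwise restriction $f\mapsto f|_C$ on $\F(\OO)$ descends to a well-defined, order-preserving, surjective monoid homomorphism $\Phi\colon S_\B(\Gr)/I\to S_{\B_C}(\Gr_C)$, and then to obtain injectivity from the stronger implication $(1)\Rightarrow(3)$ in Lemma~\ref{lem:restriction_to_closed_sets_quotients} under the additional hypothesis on~$\OO$ or~$U$. Additivity of~$\Phi$ is immediate from $(f+g)|_C=f|_C+g|_C$, so the content lies entirely in the order interaction.

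For well-definedness and order-preservation, suppose $[f]\le[g]$ in $S_\B(\Gr)/I$. By the definition of the quotient order and the description of~$I$ in Lemma~\ref{lem:order_ideals}, this provides $a',b'\in \F(\OO_U)$ with $f+a'\precsim_\B g+b'$. For every $k\ll f+a'$ one then has $k\precsim_\B g+b'$, which is exactly condition~\ref{enu:restriction_to_closed_sets_quotients2} of Lemma~\ref{lem:restriction_to_closed_sets_quotients} applied to the pair $(f+a',g)$ with $h\defeq b'\in\F(\OO_{X\setminus C})$. The implication $(2)\Rightarrow(1)$ of that lemma yields $(f+a')|_C\precsim_{\B_C} g|_C$, and since $a'$ is supported in~$U$ and therefore vanishes on~$C$, this reads $f|_C\precsim_{\B_C} g|_C$; applying the argument to both $\precsim$-directions shows that equivalences are also preserved, so $\Phi$ is well-defined and order-preserving. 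For surjectivity, by Proposition~\ref{pro:FO} each $\tilde f\in\F(\OO_C)$ has the form $\sum_{k=1}^n 1_{V_k\cap C}$ for some $V_k\in\OO$, and then $f\defeq\sum_{k=1}^n 1_{V_k}\in\F(\OO)$ satisfies $\Phi([f])=[\tilde f]$.

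Now assume in addition that either $\OO$ consists of compact-open subsets or $U$ is a multiplier of~$\OO$; then the implication $(1)\Rightarrow(3)$ of Lemma~\ref{lem:restriction_to_closed_sets_quotients} is available. If $\Phi([f])=\Phi([g])$, then $f|_C\approx_{\B_C} g|_C$, and $(1)\Rightarrow(3)$ applied in both directions produces $h_1,h_2\in\F(\OO_{X\setminus C})=\F(\OO_U)$ with $f\precsim_\B g+h_2$ and $g\precsim_\B f+h_1$. Since $[h_1],[h_2]\in I$, this gives $[f]\le[g]$ and $[g]\le[f]$ in $S_\B(\Gr)/I$, so $\Phi$ is injective. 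The ``in particular'' statement is then immediate: if $\OO$ is the whole topology of~$X$, then $V\cap U\in\OO$ for every $V\in\OO$, while if $\OO$ consists of all precompact open subsets, then $V\cap U$ is open and precompact as a subset of the precompact set~$V$; in both cases $U$ is a multiplier of~$\OO$. The only point requiring care is the order-preservation step, where one must apply Lemma~\ref{lem:restriction_to_closed_sets_quotients} to $f+a'$ rather than to~$f$ in order to absorb the auxiliary element coming from the quotient relation; beyond that, the corollary is a direct translation of Lemma~\ref{lem:restriction_to_closed_sets_quotients}.
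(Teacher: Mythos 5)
Your proof is correct and takes essentially the same route as the paper: both reduce the isomorphism claim to Lemma~\ref{lem:restriction_to_closed_sets_quotients}, invoking its implication \ref{enu:restriction_to_closed_sets_quotients1}\(\Rightarrow\)\ref{enu:restriction_to_closed_sets_quotients3} under the extra hypothesis that \(\OO\) consists of compact open sets or that \(U\) is a multiplier of~\(\OO\). The only cosmetic difference is that the paper first notes directly that \(f\precsim_\B g\) implies \(f|_{X\setminus U}\precsim_{\B_{X\setminus U}} g|_{X\setminus U}\) and then factors the resulting surjection through the quotient, whereas you verify well-definedness on the quotient order via the lemma's implication \ref{enu:restriction_to_closed_sets_quotients2}\(\Rightarrow\)\ref{enu:restriction_to_closed_sets_quotients1} applied to \(f+a'\); both arguments are sound.
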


\begin{proof}
  Clearly, \(f\precsim_\B g\) implies
  \(f|_{X\setminus U}\precsim_{\B_{X\setminus U}} g|_{X\setminus
    U}\).  Hence the restriction of functions induces a well-defined
  order-preserving, surjective homomorphism
  \(S_\B(\Gr)\onto S_{\B_{X\setminus U}}(\Gr_{X\setminus U})\).
  Its kernel contains the congruence defined by the ideal
  \(I\cong S_{\B_U}(\Gr_U)\).  Therefore, it factors through a
  surjective homomorphism
  \(S_\B(\Gr)/I\onto S_{\B_{X\setminus U}}(\Gr_{X\setminus U})\).
  This is an isomorphism if and only if these two congruences
  coincide.  By
  Lemma~\ref{lem:restriction_to_closed_sets_quotients}, this always
  happens when~\(\OO\) consists of compact open subsets or when~\(U\) is
  a multiplier of~\(\OO\).
\end{proof}

\subsection{States}

We define analogues of lower semicontinuous states for the type
semigroup \(S_\B(\Gr)\), generalising
\cite{Ma:Purely_infinite_groupoids}*{Definition~4.7}.  We relate
these with traces on groupoid \(\Cst\)\nb-algebras.

\begin{definition}
  \label{def:regular_state}
  A state~\(\nu\) on \(S_\B(\Gr)\) is \emph{regular} if
  \(\nu([f])=\sup {}\setgiven{\nu([k])}{k \ll f }\) for
  all \(f\in \F(\OO)\).
\end{definition}

\begin{remark}
  \label{rem:regular_states}
  When~\(\OO\) is the set of compact open subsets, then every state is
  regular.  When \(\Gr=X\), then \(S_\B(\Gr)=\F(\OO)\) and regular
  states are the same as lower semicontinuous states.
\end{remark}

\begin{lemma}
  \label{lem:regular_state}
  Let~\(\nu\) be a state on \(S_\B(\Gr)\).  The following are
  equivalent:
  \begin{enumerate}
  \item \label{en:regular_state_1}%
    the state~\(\nu\) is regular;
  \item \label{en:regular_state_2}%
    \(\nu([1_U]) = \sup {}\setgiven{\nu([1_V])}{V\in \OO \text{ is
        precompact and }\cl{V} \subseteq U }\) for all \(U\in \OO\);
  \item \label{en:regular_state_3}%
    \(\nu([1_{\bigcup U_\alpha}]) = \sup \nu(1_{U_\alpha})\) for any
    increasing net of open subsets \(U_\alpha\in\OO\) with
    \(\bigcup U_\alpha \in \OO\).
  \end{enumerate}
\end{lemma}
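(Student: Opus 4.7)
The plan is to prove the cycle of implications \ref{en:regular_state_1}$\Rightarrow$\ref{en:regular_state_2}$\Rightarrow$\ref{en:regular_state_3}$\Rightarrow$\ref{en:regular_state_1}, with the last implication being the substantive one.

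For \ref{en:regular_state_1}$\Rightarrow$\ref{en:regular_state_2}, I will specialise the definition of regularity to \(f=1_U\).  Any \(k\in\F(\OO)\) with \(k\ll 1_U\) satisfies \(k\le 1_U\), so \(k\) takes values in \(\{0,1\}\).  By the unique-chain form of Proposition~\ref{pro:FO}, this forces \(k=1_V\) for a unique \(V\in\OO\) with \(V\subseteq U\), and by Proposition~\ref{pro:compactly_contained}, the condition \(k\ll 1_U\) translates exactly into \(V\) being precompact with \(\cl V\subseteq U\).  Thus the supremum in \ref{en:regular_state_1} is literally the supremum in \ref{en:regular_state_2}.

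For \ref{en:regular_state_2}$\Rightarrow$\ref{en:regular_state_3}, take an increasing net \(U_\alpha\in\OO\) with \(U\defeq\bigcup U_\alpha\in\OO\).  Monotonicity gives \(\nu([1_U])\ge \sup_\alpha \nu([1_{U_\alpha}])\).  For the reverse inequality, pick any precompact \(V\in\OO\) with \(\cl V\subseteq U\); then \(\cl V\) is compact, the \(U_\alpha\) cover \(\cl V\), and directedness of the net gives a single index~\(\beta\) with \(\cl V\subseteq U_\beta\).  Hence \(\nu([1_V])\le\nu([1_{U_\beta}])\le\sup_\alpha\nu([1_{U_\alpha}])\), and taking the supremum over~\(V\) together with~\ref{en:regular_state_2} yields the inequality.

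For \ref{en:regular_state_3}$\Rightarrow$\ref{en:regular_state_1}, fix \(f\in \F(\OO)\) and decompose it canonically by Proposition~\ref{pro:FO} as \(f=\sum_{j=1}^n 1_{U_j}\) with \(U_1\supseteq\dotsb\supseteq U_n\) in~\(\OO\).  For each~\(j\), let \(\Lambda_j\) be the collection of precompact \(V\in\OO\) with \(\cl V\subseteq U_j\), directed by inclusion (note that \(\Lambda_j\) is directed because~\(\OO\) is closed under finite unions and unions of precompact sets are precompact, and it covers~\(U_j\) by local compactness combined with~\(\OO\) being a basis).  Index \(\Lambda\defeq\prod_j\Lambda_j\) with componentwise order, and for \(\alpha=(V_1,\dotsc,V_n)\in\Lambda\) put \(k_\alpha\defeq\sum_{j=1}^n 1_{V_j}\); then \(k_\alpha\in\F(\OO)\) and \(k_\alpha\ll f\) by Proposition~\ref{pro:compactly_contained}.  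Using additivity of~\(\nu\) and applying~\ref{en:regular_state_3} coordinatewise to the increasing nets \((V_j)_{V_j\in\Lambda_j}\) inside each~\(U_j\) yields
\[
  \sup_{\alpha\in\Lambda}\nu([k_\alpha])
  = \sum_{j=1}^n \sup_{V_j\in\Lambda_j}\nu([1_{V_j}])
  = \sum_{j=1}^n \nu([1_{U_j}])
  = \nu([f]).
\]
Combined with the trivial inequality \(\nu([k])\le\nu([f])\) for every \(k\ll f\), this proves~\ref{en:regular_state_1}.

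The main obstacle is the last implication: condition~\ref{en:regular_state_3} only controls \(\nu\) on characteristic functions, whereas~\ref{en:regular_state_1} asks for approximation of \(\nu([f])\) for arbitrary \(f\in\F(\OO)\).  The key technical device is the chain decomposition of~\(f\) from Proposition~\ref{pro:FO}, which converts this into a coordinatewise statement about the individual level sets~\(U_j\); once the index set is switched from \(\prod_j\Lambda_j\) to each \(\Lambda_j\) separately, the additivity of \(\nu\) and the characterisation of \(\ll\) in Proposition~\ref{pro:compactly_contained} do the rest.
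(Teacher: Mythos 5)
Your proof is correct and follows essentially the same route as the paper: the equivalence of \ref{en:regular_state_1} and \ref{en:regular_state_2} via the characterisation of \(1_V\ll 1_U\) in Proposition~\ref{pro:compactly_contained}, the chain decomposition of Proposition~\ref{pro:FO} to reduce general \(f\) to its level sets, and the compactness argument for \ref{en:regular_state_2}\(\Rightarrow\)\ref{en:regular_state_3}. The only difference is cosmetic: you close the cycle via \ref{en:regular_state_3}\(\Rightarrow\)\ref{en:regular_state_1} directly, spelling out with the product-indexed net the details that the paper compresses into its terse \ref{en:regular_state_2}\(\Rightarrow\)\ref{en:regular_state_1} step.
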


\begin{proof}
  By Proposition~\ref{pro:compactly_contained}, \(1_V \ll 1_U\)
  holds if and only if~\(V\) is precompact and
  \(\cl{V} \subseteq U\).  Therefore, \ref{en:regular_state_1}
  implies~\ref{en:regular_state_2}.  The converse also holds because
  of Proposition~\ref{pro:FO} and the description of~\(\ll\) in
  Proposition~\ref{pro:compactly_contained}.  The approximation
  property in Proposition~\ref{pro:FO_continuous} shows that~\(1_U\)
  for any \(U\in \F(\OO)\) is the supremum of the directed net of
  \(1_V\ll 1_U\).  Since this supremum is just the union,
  \ref{en:regular_state_3} implies~\ref{en:regular_state_2}.
  Conversely, assume~\ref{en:regular_state_2} and let~\((U_\alpha)\)
  be a net as in~\ref{en:regular_state_3}.  Let \(x < \nu[1_U]\).
  First, \ref{en:regular_state_2} gives \(V\ll U\) with
  \(\nu[1_V] >x\).  Secondly, the definition of~\(\ll\)
  gives~\(\alpha\) with \(V \subseteq U_\alpha\) and hence
  \(\nu[1_{U_\alpha}] \ge \nu[1_V] > x\).
  Thus~\ref{en:regular_state_2} implies~\ref{en:regular_state_3}.
\end{proof}

When~\(\OO\) is the whole topology of \(X\), Ma established a
bijection between regular states on \(S_\B(\Gr)\) and what he called
\emph{groupoid dimension functions} on~\(X\) in
\cite{Ma:Purely_infinite_groupoids}*{Theorem~4.11}.
Here, instead of ``dimension function'' we prefer to use the term
``content'' as it already exists in measure theory, see
\cite{Halmos:Measure}*{p.~231}.

We will generalise the result of Ma by allowing arbitrary~\(\OO\).
More importantly, we show that regular contents always extend to
regular measures, which is a missing step
in~\cite{Ma:Purely_infinite_groupoids}.
This allows us to connect regular states on \(S_\B(\Gr)\) with traces
on the reduced groupoid \(\Cst\)\nb-algebra.
To extend contents to measures we use a result on deficient
topological measures
from~\cite{Butler:Deficient_topological_measures}, which was
communicated to us by a referee.\footnote{In the initial version of
  this article we gave a self-contained elementary proof of this based
  on an adaptation of the proof of Riesz' Theorem in \cite{Rudin}, see
  Appendix~B in arXiv:2502.17190\,v2.}

\begin{definition}
  \label{def:dimension_function}
  Let~\(\OO\) be a basis for the topology of~\(X\), which is closed
  under finite unions and contains~\(\emptyset\) (so it is a
  \(\cup\)\nb-semilattice with zero).  A \emph{content}
  on~\(\OO\) is a map \(\lambda\colon \OO\to [0,\infty]\), such that
  for all \(U,V\in \OO\)
  \begin{enumerate}
  \item \label{it:dim_function1}%
    \(\lambda(\emptyset)=0\);
  \item \label{it:dim_function2}%
    \(\lambda(U)\le \lambda(V)\) if \(U\subseteq V\);
  \item \label{it:dim_function3}%
    \(\lambda(U\cup V)\le \lambda(U)+\lambda(V)\), with equality if
    \(U\cap V=\emptyset\).
  \end{enumerate}
  We call~\(\lambda\) \emph{regular},
  if \(\lambda(V)= \sup {}\setgiven{\lambda(U)}{ U\ll V,\, U\in \OO }\) for
  every \(V\in \OO\). We say ~\(\lambda\) is \emph{\(\Gr\)-invariant} if \(\lambda(\s(U))=\lambda(\rg(U))\) for
  every open bisection \(U\in \Bis(\Gr)\) with \(\s(U)\), \(\rg(U)\in \OO\).
\end{definition}

We do not assume that regular contents are finite on precompact sets as this is the right notion to relate them to regular measures, which by our definition also need not be finite on precompact sets.

\begin{lemma}
  \label{lem:regular_measure_extension}
  Let~\(X\) be a locally compact Hausdorff space and let~\(\OO\) be
  a basis for the topology of~\(X\) closed under finite unions.
  Restriction gives a bijection between regular Borel measures
  on~\(X\) and regular contents on~\(\OO\).
\end{lemma}

\begin{proof}
  A regular Borel measure \(\mu\colon \B(X)\to [0,\infty]\)
  clearly restricts to a content    \(\lambda\colon\OO\to[0,\infty]\).
  Moreover,  for
  every  \(V\subseteq X\) open and compact \(K\subseteq V\)
  there is \(U\in \OO\) with \(K\subseteq U\ll V\) (see
  Lemma~\ref{lem:choose_intermediate}).  Thus
  \(\mu(V)=\sup {}\setgiven{\mu(U)}{U\ll V,\ U\in \OO}\) by inner regularity of~\(\mu\).  This
  implies both that~\(\lambda=\mu|_{\OO}\) is regular and that it
  determines~\(\mu\).

  Thus it suffices to show that a regular content \(\lambda\colon \OO\to [0,\infty]\) extends
  to a regular Borel measure \(\mu\) on \(X\).
  We first extend \(\lambda\)  to the whole topology \(\OO(X)\) of \(X\) by putting
  ~\(\mu_o(V)\defeq\sup\{\lambda(V):U\ll V, \, U\in \OO\}\) for
  any~\(V\in \OO(X)\).
  This is again a regular content. Indeed, regularity, monotonicity
  and additivity are straightforward.
  To see subadditivity, let \(V_1, V_2\in \OO(X)\)  and
  \(U\in \OO\) be such that \(U\ll V_1\cup V_2\). There is a
  finite cover \(\{W_i\}_{i=1}^n \subseteq \OO\) of~\(\overline{U}\)
  such that each~\(U_i\) is compactly contained either in~\(V_1\) or
  in~\(V_2\).
  Then \(U_1\defeq \bigcup_{W_i \subseteq V_1} W_i\)
  and \(U_2\defeq \bigcup_{W_i \subseteq V_2} W_i\) are elements
  of~\(\OO\) such that \(U\ll U_1\cup U_2\) and \(U_1\ll V_1\) and
  \(U_2\ll V_2\).
  Thus
  \(
  \mu(U)\le \mu(U_1\cup U_2)
  \le \mu(U_1) + \mu(U_2)
  \le\mu_o(V_1)+\mu_o(V_2).
  \)
  This implies
  \(\mu_o(V_1\cup V_2) \le
  \mu_o(V_1)+\mu_o(V_2)\), as claimed.
  Now denote by \(\mathcal{C}(X)\)  the family of all closed sets in \(X\) and
  define~\(\mu_c(F)\defeq\inf\{\mu_o(U):F\subseteq U\in \OO(X)\}\)
  for~\(F\in \mathcal{C}(X)\).
  Then \(\mu_o\) and~\(\mu_c\) agree on clopen subsets of~\(X\), and
  the map \(\mu_o\cup\mu_c\colon \OO(X)\cup  \mathcal{C}(X)\to
  [0,\infty]\) is a deficient topological measure in the sense of
  \cite{Butler:Deficient_topological_measures}*{Definition 3.1}, which
  extends~\(\lambda\).
  Since~\(\mu_o\) is subadditive, \(\mu_o\cup\mu_c\) extends to a
  regular Borel measure on~\(X\) by
  \cite{Butler:Deficient_topological_measures}*{Theorem 4.7}.
\end{proof}

\begin{theorem}
  \label{thm:Riesz_for_monoids}
  Let \(\E\colon \Cst_\red(\Gr,\LL)\to \Borel(X)\) be the canonical
  generalised expectation for the reduced \(\Cst\)\nb-algebra of a
  twisted \'etale groupoid \((\Gr,\LL)\).  Let~\(\B\) be any inverse
  semigroup basis for~\(\Gr\) and let \(\OO\defeq \B\cap 2^X\).  There are bijections between the
  sets of
  \begin{enumerate}
  \item \label{it:thm_traces}%
    lower semicontinuous traces~\(\tau\) on~\(\Cst_\red(\Gr,\LL)\)
    that factor through~\(\E\) \textup{(}see Proposition
    \textup{\ref{prop:states_and_traces})};
  \item \label{it:thm_measures}%
    \(\Gr\)\nb-invariant regular Borel measures~\(\mu\) on~\(X\);
  \item \label{it:thm_contents}%
    \(\Gr\)\nb-invariant regular contents~\(\lambda\) on~\(\OO\);

  \item \label{it:thm_states}%
    regular states~\(\nu\) on~\(S_\B(\Gr)\).
  \end{enumerate}
  The bijections are characterised by the conditions
  \(\nu([1_U])=\mu(U)\) for \(U\in \OO\),
  \(\tau(f)=\int_X \E(f) \,\diff\mu\) for
  \(f\in \Cst_\red(\Gr,\LL)^+\), and $\lambda=\mu|_{\OO}$.
  For the corresponding objects
  \begin{enumerate}[label=\textup{(\alph*)}]
  \item \label{it:a}%
    \(\tau\) is a tracial state if and only if~\(\mu\) is a
    probability measure if and only if
    \(\sup {}\setgiven{\nu([1_U])}{U\in \OO}=1\);
  \item\label{it:b}%
    \(\tau\) is semifinite if and only if~\(\mu\) is locally finite
    \textup{(}a Radon measure\textup{)} if and only if
    \(\nu([1_U])<\infty\) for every precompact \(U\in \OO\);
  \item\label{it:c}%
    if \(\Cst_\red(\Gr,\LL)=\Cst_\ess(\Gr,\LL)\), then~\(\tau\) is
    faithful if and only if~\(\mu\) has full support if and only
    if~\(\nu\) is faithful.
  \end{enumerate}
\end{theorem}

\begin{proof}
  The correspondence between the objects in \ref{it:thm_traces}
  and~\ref{it:thm_measures} is proved in
  Proposition~\ref{prop:states_and_traces}.
  Lemma \ref{lem:regular_measure_extension} establishes a bijection
  between regular Borel measures~\(\mu\) and regular
  contents~\(\lambda\).
  If~\(\mu\) is \(\Gr\)\nb-invariant, then so is~\(\lambda\).
  The converse follows from Lemma~\ref{lem:G-invariant_measure}
  because~\(\B\) covers~\(\Gr\).
  This explains the bijection between the objects in
  \ref{it:thm_measures} and~\ref{it:thm_contents}.
  We show now that a regular Borel measure~\(\mu\) defines a regular
  state on the type semigroup.
  Let \(f=\sum_{i=1}^n 1_{U_i}\) and
  \(g=\sum_{j=1}^m 1_{V_j}\in \F(\OO)\) with \(f\precsim_\B g\).
  Let \(k=\sum_{i=1}^n 1_{K_i}\) with compact \(K_i\subseteq U_i\)
  for \(i=1,\dotsc,n\).
  Then there are bisections \(W_1,\dotsc,W_N\in\B\) such that \(k \le
  \sum_{i=1}^N 1_{\s(W_i)}\) and \(\sum_{i=1}^N 1_{\rg(W_i)} \le g\).
  Since~\(\mu\) is \(\Gr\)\nb-invariant,
  \[
    \int k \,\diff\mu
    \le \sum_{i=1}^N \mu(\s(W_i))
    = \sum_{i=1}^N \mu(\rg(W_i))
    \le \int g \,\diff\mu.
  \]
  This implies \(\int f \,\diff\mu \le \int g\,\diff\mu\)
  because~\(\mu\) is regular.
  This shows that the formula \(\nu([f])\defeq \int f \,\diff\mu\)
  gives a well-defined, order-preserving map \(\nu\colon S_\B(\Gr)\to
  [0,+\infty]\).
  So~\(\nu\) is a state that satisfies \(\mu(U)=\nu([1_U])\) for all
  \(U\in \OO\).
  Since~\(\mu\) is regular, so is~\(\nu\) by
  Lemma~\ref{lem:regular_state}.

  Conversely, let~\(\nu\) be a state on \(S_\B(\Gr)\).
  Define \(\lambda\colon \OO\to [0,\infty]\) by \(\lambda(U)\defeq\nu([1_U])\) for
  \(U\in\OO\).
  Clearly, \(\lambda(\emptyset)=\nu([0])=0\) and \(\lambda(U_1)\le
  \lambda(U_2)\) if \(U_1\subseteq U_2\), because then \(1_{U_1}\le
  1_{U_2}\), so that \(1_{U_1}\precsim_\B 1_{U_2}\).
  Similarly, \(1_{U_1\cup U_2}\le 1_{U_1}+1_{U_2}\) implies
  \(\lambda(U_1\cup U_2)\le \lambda(U_1)+\lambda(U_2)\), and both of
  these are equalities when \(U_1\cap U_2=\emptyset\).
  Hence~\(\lambda\) is a content.
  Since~\(\nu\) is regular as a state, \(\lambda\) is regular as a
  content.
  Let~\(\mu\) be the corresponding regular measure on~\(X\).
  If \(V\in \B\), then \(1_{\s(V)} \approx_\B 1_{\rg(V)}\) and thus
  \(\mu(\s(V))=\mu(\rg(V))\).
  Since~\(\B\) covers~\(\Gr\), the measure~\(\mu\) is
  \(\Gr\)\nb-invariant by Lemma~\ref{lem:G-invariant_measure}.

  This proves the main part of the assertion.
  The statements \ref{it:a}--\ref{it:c} follow from
  Proposition~\ref{prop:states_and_traces}.
\end{proof}

\begin{remark}
  \label{rem:Riesz_for_monoids}
  Theorem~\ref{thm:Riesz_for_monoids} applied to \(\Gr=X\) gives a
  bijection between lower semicontinuous weights on \(\Cont_0(X)\),
  regular Borel measures on~\(X\), and regular (or, equivalently,
  lower semicontinuous) states on~\(\F(\OO)\).
\end{remark}

\begin{corollary}
  \label{cor:non_purely_infinite}
  Assume that~\(S_\B(\Gr)\) admits a nontrivial regular state and
  that~\(\OO\) consists of \(\sigma\)\nb-compact subsets.  Then
  there is a nonzero hereditary subalgebra of
  \(\Cst_\red(\Gr,\LL)\) that has a tracial state.  If
  \(\Cst_\red(\Gr,\LL)\) is simple, it has a faithful,
  semifinite, lower semicontinuous trace.
  If~\(X\) is compact and \(\Cst_\red(\Gr,\LL)\) is simple, then
  \(\Cst_\red(\Gr,\LL)\) has a faithful tracial state.
\end{corollary}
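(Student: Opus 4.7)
The plan is to use Theorem~\ref{thm:Riesz_for_monoids} to transport the given regular state $\nu$ to a $\Gr$\nb-invariant regular Borel measure $\mu$ on $X$ and a lower semicontinuous trace $\tau$ on $A\defeq\Cst_\red(\Gr,\LL)$ satisfying $\tau(b)=\int_X \E(b)\,\diff\mu$ for $b\in A^+$ and $\mu(U)=\nu([1_U])$ for $U\in\OO$. Because every element of $S_\B(\Gr)$ is a sum of classes $[1_U]$, nontriviality of $\nu$ forces some $U\in\OO$ to satisfy $0<\nu([1_U])<\infty$. Regularity of $\nu$ combined with Proposition~\ref{pro:compactly_contained} then produces a precompact $W\in\OO$ with $\cl{W}\subseteq U$ and $\nu([1_W])>0$, so that $0<\mu(W)$ and $\mu(\cl{W})\le\mu(U)<\infty$. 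Since $W$ is $\sigma$\nb-compact I may pick $f\in\Cont_0(X)^+$ with open support exactly $W$ and set $B\defeq\overline{fAf}$, a nonzero hereditary \(\Cst\)\nb-subalgebra of~$A$.

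For the first assertion I will show that $\tau|_B$ is a nonzero bounded trace; normalising then gives the tracial state. Nonvanishing follows from $\tau(f^2)=\int f^2\,\diff\mu>0$, and $f^2\in B$ because $(f-\varepsilon)_+^2=f g_\varepsilon(f)^2 f\in fAf$ for a suitable continuous function $g_\varepsilon$ vanishing at~$0$ and $(f-\varepsilon)_+^2\to f^2$ in norm. Boundedness will come from the $\Cont_0(X)$\nb-bimodularity of~$\E$: each $b\in B^+$ is a norm limit of elements $fcf$ with $c\in A$, and $\E(fcf)=f\E(c)f$ vanishes off $\cl{W}$, so $\E(b)\in\Borel(X)^+$ is supported in $\cl{W}$ with $\|\E(b)\|_\infty\le\|b\|$, giving $\tau(b)\le\|b\|\,\mu(\cl{W})<\infty$.

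For the second assertion, once $A$ is simple, Lemma~\ref{lem:hereditary_subalgebras_essential} and Remark~\ref{rem:hereditary_subalgebras_reduced} force $\Gr$ to be minimal. I will use this to upgrade positivity and finiteness of $\mu$ near $W$ to local finiteness everywhere: given $x\in X$, minimality provides $\gamma\in\Gr$ with $\s(\gamma)=x$ and $\rg(\gamma)\in W$, and shrinking a bisection in $\B$ containing $\gamma$ yields $V\in\B$ with $x\in\s(V)$ and $\rg(V)\subseteq W$, so $\mu(\s(V))=\mu(\rg(V))\le\mu(W)<\infty$. Local finiteness of $\mu$ then delivers semifiniteness of $\tau$ by Theorem~\ref{thm:Riesz_for_monoids}. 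For faithfulness I note that $\NN\defeq\{a\in A:\tau(a^*a)=0\}$ is closed by lower semicontinuity of $\tau$ and is a two-sided ideal in view of the trace identity together with $a^*b^*ba\le\|b\|^2 a^*a$; hence $\NN\in\{\{0\},A\}$, and $\tau(f^2)>0$ excludes $\NN=A$.

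For the third assertion, compactness of $X$ makes every element of $\OO$ precompact and forces $X$ itself to lie in $\OO$ (a finite subcover of $X$ by sets from $\OO$ is in $\OO$ by closure under finite unions), so Corollary~\ref{cor:minimal_implies_simple} applies and minimality makes $S_\B(\Gr)$ simple. Lemma~\ref{lem:simplicity_implies_faithfulness_of_states} then forces $\nu$ to be faithful and finite, whence $\mu(X)=\nu([1_X])<\infty$. Since $1_X$ is the unit of $A$, the lower semicontinuous trace $\tau$ becomes finite and faithful, and $\tau/\mu(X)$ is the required faithful tracial state. The main technical point throughout is the initial choice of $W$ with $0<\mu(W)\le\mu(\cl{W})<\infty$; once this is secured, the three claims follow by standard trace and ideal manipulations.
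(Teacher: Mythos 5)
Your proof is correct, but it routes through Theorem~\ref{thm:Riesz_for_monoids} differently from the paper. The paper localises first: it normalises $\nu$ at some $U$ with $\nu([1_U])=1$, restricts the state to the submonoid $S_{\B_U}(\Gr_U)$, and applies part~\ref{it:a} of Theorem~\ref{thm:Riesz_for_monoids} to the restricted twisted groupoid $(\Gr_U,\LL_U)$ --- whose reduced algebra is the hereditary subalgebra $f_U\Cst_\red(\Gr,\LL)f_U$ by Lemma~\ref{lem:hereditary_subalgebras_essential} --- so that the tracial state drops out of the equivalence between tracial states and the condition $\sup_V\nu([1_V])=1$. In the simple case it then transfers this tracial state to a faithful semifinite lower semicontinuous trace on the whole algebra via the Morita--Rieffel equivalence between a full hereditary subalgebra and the ambient simple algebra, citing Combes--Zettl. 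You instead apply Theorem~\ref{thm:Riesz_for_monoids} globally once, obtain $\tau=\overline{\tau}\circ\E$ on all of $\Cst_\red(\Gr,\LL)$, and do the rest by hand: the boundedness estimate $\tau(b)\le\norm{b}\,\mu(\cl{W})$ on your hereditary subalgebra via bimodularity and contractivity of~$\E$ (this is where your choice of a precompact $W$ with $\cl{W}\subseteq U$ and $\nu([1_W])>0$, extracted from regularity of~$\nu$, earns its keep --- the paper's route does not need it); local finiteness of~$\mu$ from minimality to get semifiniteness; and the closed two-sided ideal $\setgiven{a}{\tau(a^*a)=0}$ to get faithfulness. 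Your treatment of the compact case is also more explicit than the paper's closing remark that the trace is finite in the unital case: you verify $X\in\OO$, invoke Corollary~\ref{cor:minimal_implies_simple} and Lemma~\ref{lem:simplicity_implies_faithfulness_of_states} to get $\mu(X)<\infty$, and normalise. The trade-off is that your argument is longer and more computational, but it is self-contained (no appeal to Morita invariance of traces) and it exhibits the faithful trace on the whole algebra in the explicit form $b\mapsto\int_X\E(b)\,\diff\mu$ rather than obtaining it by transfer.
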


\begin{proof}
  Let~\(\nu\) be a nontrivial regular state on~\(S_\B(\Gr)\).  Then
  there is \(U\in\OO\) with \(\nu(1_U)\in(0,\infty)\).
  Dividing~\(\nu\) by \(\nu(1_U)\), we may assume that
  \(\nu(1_U)=1\).  There is \(f_U\in \Cont_0(X)^+\) with open
  support~\(U\) because~\(U\) is \(\sigma\)\nb-compact.  By
  Lemma~\ref{lem:hereditary_subalgebras_essential},
  \(f_U\Cst_\red(\Gr,\LL)f_U\cong \Cst_\red(\Gr_U,\LL_U)\) where
  \((\Gr_U,\LL_U)\) is the twisted groupoid restricted to~\(U\).
  Since \(U\in \OO\subseteq \B\) and~\(\B\) is an inverse semigroup,
  \(\B_U\defeq \setgiven{W\in \B}{W\subseteq \Gr_U}\) is equal to
  \(\setgiven{UWU}{W\in\B}\).  Thus~\(\B_U\) is an inverse semigroup
  basis for~\(\Gr_U\).  Its set of idempotents is
  \(\OO_U\defeq\setgiven{V\in \OO }{V\subseteq U}\).  We have a
  natural isomorphism of ordered monoids from \(S_{\B_U}(\Gr_U)\)
  onto the submonoid of \(S_\B(\Gr)\) generated by~\(\OO_U\) (this
  is the isomorphism from Lemma~\ref{lem:order_ideals} if~\(U\) is
  \(\Gr\)\nb-invariant).  Assuming the identification
  \(S_{\B_U}(\Gr_U)\subseteq S_\B(\Gr)\), \(\nu\) restricts to a
  regular state on \(S_{\B_U}(\Gr_U)\) with
  \(\sup {}\{\nu(1_V) | V\in\OO_U \}=\nu(1_U)=1\).  Now
  Theorem~\ref{thm:Riesz_for_monoids} gives a tracial state on
  \(\Cst_\red(\Gr_U,\LL_U)\cong f_U \Cst_\red(\Gr,\LL)f_U\).  If
  \(\Cst_\red(\Gr,\LL)\) is simple, then
  \(f_U\Cst_\red(\Gr,\LL)f_U\) is Morita--Rieffel equivalent
  to~\(\Cst_\red(\Gr,\LL)\).  So the tracial state on
  \(f_U \Cst_\red(\Gr,\LL)f_U\) is necessarily faithful and it
  transfers to a faithful semifinite lower semicontinuous trace
  on~\(\Cst_\red(\Gr,\LL)\) (see~\cite{Combes-Zettl:Order_traces}).
  The latter trace is finite if \(\Cst_\red(\Gr,\LL)\) is unital.
\end{proof}

Next we describe a ``regularisation procedure'' that extends
\cite{Ma:Purely_infinite_groupoids}*{Proposition~4.8}.

\begin{proposition}
  \label{prop:regularization_of_states}
  Every state~\(\nu\) on the monoid \(S_\B(\Gr)\) induces a regular
  state determined by
  \[
    \cl{\nu}([1_U])=\sup {}\setgiven{\nu([1_V])}{V\in \OO
      \text{ is precompact and }\cl{V} \subseteq U },
  \]
  for all \(U\in \OO\).  In particular, \(\nu\) is regular if and
  only if \(\nu=\cl{\nu}\).
\end{proposition}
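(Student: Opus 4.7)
The plan is to define $\cl{\nu}$ on all of $\F(\OO)$ by setting
\[
  \cl{\nu}([f]) \defeq \sup {}\setgiven{\nu([k])}{k\in\F(\OO),\ k\ll f},
\]
then check that this descends to a state on $S_\B(\Gr)$, that it is regular, and that on generators $[1_U]$ it reduces to the formula in the statement (which follows from Proposition~\ref{pro:compactly_contained}, since $1_V\ll 1_U$ precisely when $V$ is precompact with $\cl{V}\subseteq U$). The final sentence is then immediate, since the defining formula for $\cl{\nu}$ agrees verbatim with Definition~\ref{def:regular_state} applied to $\nu$.

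First I would show that $\cl{\nu}$ is well defined on $S_\B(\Gr)$ and order-preserving. Suppose $f\precsim_\B g$ and pick $k\ll f$. By Remark~\ref{rem:compactly_below_relation2} there is $b\in\F(\B)$ with $k\ll \s_*(b)$ and $\rg_*(b)\ll g$. By the definition of the equivalence relation, $[\s_*(b)]=[\rg_*(b)]$ in $S_\B(\Gr)$, and $k\le \s_*(b)$ gives $\nu([k])\le \nu([\s_*(b)])=\nu([\rg_*(b)])\le \cl{\nu}([g])$, so taking the supremum over $k$ yields $\cl{\nu}([f])\le \cl{\nu}([g])$. Applying this with $f$ and $g$ interchanged gives well-definedness.

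Next I would verify additivity $\cl{\nu}([f+g])=\cl{\nu}([f])+\cl{\nu}([g])$. For the inequality $\le$, given $k\ll f+g$, Corollary~\ref{cor:intermediate_way_below_improved} produces $k_1\ll f$ and $k_2\ll g$ with $k\le k_1+k_2$, so $\nu([k])\le \nu([k_1])+\nu([k_2])\le \cl{\nu}([f])+\cl{\nu}([g])$. For the reverse inequality, note that if $k_1\ll f$ and $k_2\ll g$ then Proposition~\ref{pro:compactly_contained} gives $\cl{k_1+k_2}=\cl{k_1}+\cl{k_2}\le f+g$ with precompact support, so $k_1+k_2\ll f+g$; hence $\nu([k_1])+\nu([k_2])=\nu([k_1+k_2])\le \cl{\nu}([f+g])$, and the supremum over $(k_1,k_2)$ gives the claim. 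Since $\cl{\nu}(0)=0$, this makes $\cl{\nu}$ a state.

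Finally I would prove regularity of $\cl{\nu}$. Monotonicity gives $\sup{}\setgiven{\cl{\nu}([k])}{k\ll f}\le \cl{\nu}([f])$ directly. For the reverse inequality, given $k\ll f$, Corollary~\ref{cor:intermediate_way_below} supplies $k'\in\F(\OO)$ with $k\ll k'\ll f$; since $k\ll k'$ witnesses $\nu([k])\le \cl{\nu}([k'])$, taking the supremum over $k\ll f$ yields $\cl{\nu}([f])\le \sup{}\setgiven{\cl{\nu}([k'])}{k'\ll f}$. The main obstacle is really the first step above, where one must manoeuvre between the extrinsic way-below relation $\ll$ on $\F(\OO)$ and the intrinsic preorder $\precsim_\B$ on $S_\B(\Gr)$; the key trick is to replace $\s_*(b)$ by $\rg_*(b)$, which lies strictly below $g$ in the way-below sense and is identified with $\s_*(b)$ in $S_\B(\Gr)$.
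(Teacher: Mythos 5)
Your proposal is correct and follows essentially the same route as the paper's proof: the same definition \(\cl{\nu}([f])=\sup\setgiven{\nu([k])}{k\ll f}\), Corollary~\ref{cor:intermediate_way_below_improved} for additivity, the replacement of \(\s_*(b)\) by \(\rg_*(b)\) (using \(\s_*(b)\approx_\B\rg_*(b)\)) for monotonicity, and the interpolation \(k\ll k'\ll f\) from Corollary~\ref{cor:intermediate_way_below} for regularity. The only cosmetic difference is that you verify regularity directly for all \(f\in\F(\OO)\) rather than on generators \(1_U\) via Lemma~\ref{lem:regular_state}, and you spell out the direction of additivity the paper calls ``clear''.
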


\begin{proof}
  We define the map \(\cl{\nu}\colon S_\B(\Gr) \to [0,+\infty]\) by
  the formula
  \[
    \cl{\nu}([f])
    \defeq \sup {}\setgiven{\nu([k ])}{ k\in \F(\OO) \text{ and } k\ll  f }.
  \]
  Clearly, \(\cl{\nu}([f])+ \cl{\nu}([g]) \le \cl{\nu}([f]+[g])\)
  for any \(f,g\in \F(\OO)\).  To show the reverse inequality take
  any \(k\ll f+g\).  By
  Corollary~\ref{cor:intermediate_way_below_improved} there are
  \(k_1\ll f\) and \(k_2\ll g\) such that \(k\ll k_1 +k_2\)
  in~\(\F(\OO)\).  Then
  \[
    \nu([k])\le \nu([k_1 +k_2])
    = \nu([k_1]) +\nu([k_2])
    \le \cl{\nu}([f])+ \cl{\nu}([g]).
  \]
  Therefore, \(\cl{\nu}([f]+[g])\le \cl{\nu}([f])+ \cl{\nu}([g])\).
  This proves that~\(\cl{\nu}\) is additive.

  Let \(f\precsim g\).  Then for any \(k\ll f\) there is
  \(b\in \F(\B)\) with \(k \ll \s_* (b)\) and \(\rg_*(b) \ll g\).  Then
  \[
    \nu([k])
    \le \nu([\s_* (b)])
    = \nu([\rg_* (b)])
    \le \cl{\nu}([g]).
  \]
  Hence \(\cl{\nu}([f])\le \cl{\nu}([g])\).  That is, \(\cl{\nu}\)
  is monotone (order-preserving).  So~\(\cl{\nu}\) is a state.  To
  see that~\(\cl{\nu}\) is regular, take any \(U\in\OO\).  For every
  \(V\in\OO\) with \(V\ll U\) there is \(W\in\OO\) with
  \(V\ll W\ll U\) by Corollary~\ref{cor:intermediate_way_below}.
  Then \(\nu(V)\le \cl{\nu}(1_W)\), and hence
  \(\cl{\nu}([1_U])=\sup_{V\ll U}\nu([1_V])\le \sup_{W\ll U}
  \cl{\nu}([1_W])\).  Thus
  \(\cl{\nu}([1_V])=\sup_{W\ll U}\cl{\nu}([1_W])\) and
  so~\(\cl{\nu}\) is regular by Lemma~\ref{lem:regular_state}.
\end{proof}

The induced state~\(\cl{\nu}\) may become trivial even if~\(\nu\) is
not.  The next two examples show that this may happen even when~\(X\)
is compact or when~\(\Gr\) is minimal.  The next lemma says, however,
that it cannot happen if we have both of these properties.

\begin{example}
  \label{exa:trivial_regular_state}
  Let \(\Gr=X=\mathbb{T}\) and put
  \[
    \OO\defeq
    \setgiven{U\subseteq\mathbb{T} \text{ open}}
    {\cl{U}\subset \mathbb{T}\setminus\{1\} \text{ or }
      U=\mathbb{T}\setminus\{1\} \text{ or } 1\in U}.
  \]
  Define
  \(\nu([1_U])\) to be~\(0\) if
  \(\cl{U}\subseteq\mathbb{T}\setminus\{1\}\), \(1\) if
  \(U=\mathbb{T}\setminus\{1\}\), and~\(\infty\) otherwise.
  Then~\(\cl{\nu}\) is~\(0\) if
  \(\cl{U}\subset\mathbb{T}\setminus\{1\}\) and~\(\infty\)
  otherwise.  Hence~\(\overline{\nu}\) is trivial, although~\(\nu\)
  is not.  Since all open subsets in~\(X\) are \(\Gr\)\nb-invariant,
  there are infinitely many of them.
\end{example}

\begin{example}
  Let \(\Gr=\RR= \N\times \N\) be the full equivalence relation (a
  minimal principal discrete groupoid) and let~\(\B\) consist of all
  finite bisections and the unit space \(X\cong \N\).  Let~\(\nu\)
  be the (finite) state on \(S_\B(\RR)\) defined by \(\nu([1_X])=1\)
  and \(\nu([1_U])=0\) when \(U\subseteq \N\) is finite.  Then
  \(\overline{\nu}\equiv 0\).
\end{example}

\begin{lemma}
  \label{lem:nontrivial_state_finitely_many_invariant}
  Assume that~\(\OO\) consists of precompact subsets and that~\(X\)
  has only finitely many \(\Gr\)\nb-invariant open subsets.  Then the
  regular state associated to a nontrivial state on~\(S_\B(\Gr)\) as
  in Proposition~\textup{\ref{prop:regularization_of_states}} is again
  nontrivial.
\end{lemma}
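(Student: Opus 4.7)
The plan is to prove the lemma by strong induction on the number $n$ of $\Gr$-invariant open subsets of $X$. The base case is $n=2$, i.e., $\Gr$ minimal. Combined with precompactness of $\OO$, Corollary~\ref{cor:minimal_implies_simple} forces $S_\B(\Gr)$ to be simple, and Lemma~\ref{lem:simplicity_implies_faithfulness_of_states} then gives that the nontrivial state $\nu$ is faithful and finite. For any nonempty $U\in\OO$, Lemma~\ref{lem:choose_intermediate} produces a nonempty $V\in\OO$ with $\cl{V}\subseteq U$; the sandwich $0<\nu([1_V])\le\cl{\nu}([1_U])\le\nu([1_U])<\infty$ yields that $\cl{\nu}$ is nontrivial.

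For the inductive step, I suppose the result for groupoids with fewer invariant opens and argue by contradiction: assume $\nu$ is nontrivial but $\cl{\nu}$ is trivial. Setting $Z\defeq\bigcup\{V\in\OO:\nu([1_V])=0\}$, this is open and $\Gr$\nobreakdash-invariant (bisections in $\B$ transport $\OO$\nobreakdash-elements to $\OO$\nobreakdash-elements with the same $\nu$\nobreakdash-value, since $[1_{\s(W)}]=[1_{\rg(W)}]$). A finite covering argument using precompactness of $\OO$ shows that $Z=X$ forces $\nu\equiv 0$, contradicting nontriviality; the same argument applied to each $V\ll U$ shows that $\cl{\nu}\equiv 0$ forces $\nu\equiv 0$. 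Hence $Z\subsetneq X$ and $\cl{\nu}$ takes the value $\infty$ somewhere. Any witness $V_0\in\OO$ with $\nu([1_{V_0}])\in(0,\infty)$ satisfies $\cl{\nu}([1_{V_0}])=0$ by triviality and finiteness, and Lemma~\ref{lem:choose_intermediate} then forces $V_0\subseteq Z$.

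Now let $Y^*$ be a $\subseteq$\nobreakdash-minimal invariant open with $\nu|_{I_{Y^*}}$ nontrivial; this exists by finiteness of the lattice, since $X$ itself qualifies. If $Y^*=X$, then $\nu|_{I_{Y'}}$ is trivial for every proper invariant open $Y'\subsetneq X$; in particular $\nu|_{I_Z}$ is trivial, which directly contradicts $V_0\subseteq Z$ with $\nu([1_{V_0}])\in(0,\infty)$. So $Y^*\subsetneq X$, and I would apply the induction hypothesis to $\Gr_{Y^*}$ with the refined inverse semigroup basis
\[
  \B|_{Y^*}\defeq\{W\in\B:\cl{\s(W)}\subseteq Y^* \text{ and } \cl{\rg(W)}\subseteq Y^*\},
\]
whose idempotents $\OO|_{Y^*}=\{V\in\OO:\cl{V}\subseteq Y^*\}$ form a basis for $Y^*$ by Lemma~\ref{lem:choose_intermediate}, consist of subsets precompact in $Y^*$, and are closed under finite unions. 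Since the invariant opens of $\Gr_{Y^*}$ are exactly those of $\Gr$ contained in $Y^*$, this groupoid has strictly fewer invariant opens. A direct check that $\ll$ in $\F(\OO|_{Y^*})$ matches $\ll$ in $\F(\OO)$ on $\OO|_{Y^*}$ gives $\cl{\tilde{\nu}}([1_U])=\cl{\nu}([1_U])$ for $U\in\OO|_{Y^*}$, so that a nontrivial $\cl{\tilde{\nu}}$ produced by the induction hypothesis supplies an element $U\in\OO|_{Y^*}$ with $\cl{\nu}([1_U])\in(0,\infty)$, contradicting triviality of $\cl{\nu}$.

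The hardest step is verifying that the restricted state $\tilde{\nu}$ on $S_{\B|_{Y^*}}(\Gr_{Y^*})$ is itself nontrivial so that the induction hypothesis applies. A natural witness $V_1\subseteq Y^*$ with $\nu([1_{V_1}])\in(0,\infty)$ is supplied by nontriviality of $\nu|_{I_{Y^*}}$, but the analysis above (applied inside $Y^*$) forces $\cl{V_1}$ to stretch outside $Y^*$, so $V_1\notin\OO|_{Y^*}$. I expect to handle this by combining a bisection-translate argument, using the $\Gr$\nobreakdash-invariance $[1_{\s(W)}]=[1_{\rg(W)}]$ to move $V_1$ to an equivalent open set inside its orbit $\Gr V_1\subseteq Y^*$, with the minimality of $Y^*$: the latter forces $\Gr V_1$ to equal $Y^*$, which provides enough room to choose a translate whose closure is compactly contained in $Y^*$ and thereby furnishes the required witness in $\OO|_{Y^*}$.
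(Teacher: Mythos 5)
Your base case and the preliminary reductions are sound and parallel the paper's own argument, which likewise inducts on the finite lattice of invariant open subsets and settles the minimal case via Corollary~\ref{cor:minimal_implies_simple} and Lemma~\ref{lem:simplicity_implies_faithfulness_of_states}. The inductive step, however, breaks down at exactly the point you flag as hardest, and not merely because the translate argument is unfinished: the witness in \(\OO|_{Y^*}\) that you need \emph{provably does not exist} under your standing contradiction hypothesis. Indeed, the sets \(V'\in\OO\) with \(V'\ll V_1\) form a directed family (by closure of \(\OO\) under finite unions) whose union is \(V_1\), so the invariant open sets \(\rg(\Gr V')\) form a directed family with union \(\rg(\Gr V_1)=Y^*\); since there are only finitely many invariant opens, a single \(V'\ll V_1\) already satisfies \(\rg(\Gr V')=Y^*\). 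Triviality of \(\cl{\nu}\) together with \(\nu([1_{V_1}])<\infty\) forces \(\cl{\nu}([1_{V_1}])=0\), hence \(\nu([1_{V'}])=0\). Now for \emph{any} \(U\in\OO\) with \(\cl{U}\subseteq Y^*\) compact, covering \(\cl{U}\subseteq\rg(\Gr V')\) by finitely many sources \(\s(W_1),\dotsc,\s(W_N)\) of bisections \(W_j\in\B\) with \(\rg(W_j)\subseteq V'\) gives \(1_U\precsim_\B N\cdot 1_{V'}\), so \(\nu([1_U])\le N\nu([1_{V'}])=0\). Thus the restricted state on \(S_{\B|_{Y^*}}(\Gr_{Y^*})\) is identically zero, the induction hypothesis cannot be invoked, and no amount of moving \(V_1\) around by bisections can help, because every admissible target already has \(\nu\)-value \(0\).

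The paper avoids this trap by never imposing the closure condition on the restricted basis. It fixes a maximal chain \(\emptyset=X_0\subseteq\dotsb\subseteq X_n=X\) of invariant opens and restricts \(\nu\) to the ideal generated by \(\OO_{n-1}\defeq\setgiven{U\in\OO}{U\subseteq X_{n-1}}\), identified with \(S_{\B_{X_{n-1}}}(\Gr_{X_{n-1}})\) by Lemma~\ref{lem:order_ideals}. If \(\nu\) is nontrivial there, it recurses; otherwise the union of all \(\nu\)-null sets in \(\OO\) is a proper (by the precompact covering argument you also use) invariant open containing \(X_{n-1}\), hence equal to \(X_{n-1}\) by maximality of the chain, so that \(\nu([1_U])>0\) precisely when \(U\not\subseteq X_{n-1}\). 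Any \(U\) with \(\nu([1_U])\in(0,\infty)\) then contains a point outside \(X_{n-1}\), Lemma~\ref{lem:choose_intermediate} produces \(V\ll U\) through that point, and \(0<\nu([1_V])\le\cl{\nu}([1_U])\le\nu([1_U])<\infty\). To salvage your scheme you would need to replace \(\OO|_{Y^*}\) by the full family \(\setgiven{V\in\OO}{V\subseteq Y^*}\) and build this dichotomy --- either the null sets saturate to a strictly smaller invariant open, or some positive-mass set sticks out far enough that a compactly contained piece of it retains positive mass --- into the induction along a maximal chain.
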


\begin{proof}
  Let~\(\nu\) be a nontrivial state for~\(S_\B(\Gr)\) and
  let~\(\cl{\nu}\) be the regular state it induces.  By the
  assumption there is an ascending sequence
  \(\emptyset=X_0\subseteq X_1\subseteq X_2\subseteq \dotsb
  \subseteq X_n=X\) of \(\Gr\)\nb-invariant open subsets of~\(X\)
  such that for each \(k=1,\dotsc,n-1\) there are no open
  \(\Gr\)\nb-invariant subsets between \(X_k\) and~\(X_{k+1}\);
  equivalently, the restriction of~\(\Gr\) to
  \(X_{k+1}\setminus X_k\) is minimal.  The proof is by induction
  on~\(n\).

  For \(n=1\), the groupoid~\(\Gr\) is minimal, and then
  \(S_\B(\Gr)\) is simple by
  Corollary~\ref{cor:minimal_implies_simple}.  Thus~\(\nu\) is
  faithful and finite by
  Lemma~\ref{lem:simplicity_implies_faithfulness_of_states}.  This
  readily implies that~\(\cl{\nu}\) is
  nontrivial.

  Now let \(n>1\) and assume that the assertion holds for \(n-1\).
  Put \(\OO_0\defeq\setgiven{U\in \OO}{\nu([1_U])=0}\) and
  \(\OO_{n-1}\defeq\setgiven{U\in \OO}{U\subseteq X_{n-1}}\).  If
  \(\OO_{n-1}\not\subseteq \OO_0\), then~\(\cl{\nu}\) is nontrivial
  by the induction hypotheses applied to \(\Gr_{X_{n-1}}\)
  and~\(\OO_{n-1}\) (we may identify
  \(S_{\B_{X_{n-1}}}(\Gr_{X_{n-1}})\) with an ideal in \(S_\B(\Gr)\)
  by Lemma~\ref{lem:order_ideals}).  Thus we may assume that
  \(\OO_{n-1}\subseteq \OO_0\).  Then \(X_{n-1}=\bigcup \OO_0\)
  because \(\bigcup \OO_0\) is a nontrivial open
  \(\Gr\)\nb-invariant subset containing~\(X_{n-1}\) and~\(X_{n-1}\)
  is a maximal proper subset with these properties.  Hence
  \(\OO_{n-1}=\OO_0\).  Accordingly, for each \(U\in \OO\) we get
  \[
    U\not\subseteq X_{n-1} \quad \Longleftrightarrow\quad 0<\nu([U]).
  \]
  Now take any \(U\in \OO\) with \(0<\nu([U])<\infty\).  By the
  above equivalence, there is a compact subset \(K\subseteq U\) with
  \(K\not\subseteq X_{n-1}\), and hence also a precompact
  \(V\in \OO\) such that
  \(K\subseteq V \subseteq \cl{V}\subseteq U\) (see
  Lemma~\ref{lem:choose_intermediate}).  Then
  \(0<\nu([1_V])\le \cl{\nu}([1_U]) \le \nu([1_U])<\infty\).
  Hence~\(\cl{\nu}\) is nontrivial.
\end{proof}

\begin{proposition}[Tarski's Theorem for regular states]
  \label{prop:regular_Tarski}
  Assume that the type semigroup \(S_\B(\Gr)\) is almost
  unperforated.  An element \([f]\in S\) is not paradoxical if and
  only if there is a regular state
  \(\nu\colon S_\B(\Gr)\to [0,\infty]\) with \(\nu([f])=1\).
\end{proposition}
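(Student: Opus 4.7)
The plan is to separate the easy and the hard implications. For the easy direction, if $\nu$ is a regular state with $\nu([f])=1$, then $[f]$ cannot be paradoxical: otherwise $(n+1)[f]\le n[f]$ for some $n\ge 1$, and applying $\nu$ would yield $n+1\le n$, a contradiction. The nontrivial content is the converse.

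Assume now that $[f]$ is not paradoxical. Since $S_\B(\Gr)$ is almost unperforated, Remark~\ref{rem:paradoxical_elements} says $[f]$ is not properly infinite. The first step is an approximation claim: there exists $k_0\in\F(\OO)$ with $k_0\ll f$ and $[k_0]$ not properly infinite. If not, then for any $k_1,k_2\ll f$ one has $k_1\vee k_2\ll f$ (by Proposition~\ref{pro:lub_glb} and Remark~\ref{rem:easy_ll_properties}) with $2[k_1\vee k_2]\le[k_1\vee k_2]$; the pointwise estimate $k_1+k_2\le 2(k_1\vee k_2)$ gives $[k_1]+[k_2]\le[f]$, and applying Corollary~\ref{cor:intermediate_way_below_improved} to any $h\ll f+f$ yields $[h]\le[f]$. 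By Remark~\ref{rem:compactly_below_relation2} this forces $f+f\precsim_\B f$, making $[f]$ properly infinite, a contradiction.

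With $k_0$ in hand, the main task is to build a regular state with value $1$ at $[f]$. By Tarski's Theorem (Theorem~\ref{thm:Handelman's_tarski} with $x=y=[f]$), the set $\mathcal{S}\defeq\{\nu : \nu\text{ is a state on }S_\B(\Gr)\text{ with }\nu([f])=1\}$ is nonempty, and Proposition~\ref{prop:regularization_of_states} attaches to each $\nu_0\in\mathcal{S}$ a regular state $\cl{\nu_0}$ with $\cl{\nu_0}([f])\le 1$. It suffices to find $\nu_0\in\mathcal{S}$ for which equality holds. Suppose for contradiction that $c\defeq\sup_{\nu_0\in\mathcal{S}}\cl{\nu_0}([f])<1$. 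Then $\nu_0([k])\le c$ for every $\nu_0\in\mathcal{S}$ and every $k\ll f$; choose $n\in\N$ with $c(n+1)<n$. The R\o{}rdam--Tarski Theorem (Theorem~\ref{thm:Handelman's_tarski}) together with the almost unperforation of $S_\B(\Gr)$ then gives $(n+1)[k]\le n[f]$ uniformly for $k\ll f$. Taking the supremum over $\{k\ll f\}$ and using that $[f]$ is this supremum in $S_\B(\Gr)$ via Remark~\ref{rem:compactly_below_relation2}, with multiplication by $n+1$ commuting with this directed supremum (a consequence of Corollary~\ref{cor:intermediate_way_below_improved}), one obtains $(n+1)[f]\le n[f]$, contradicting the non-paradoxicality of $[f]$. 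Hence $c=1$.

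The remaining, most delicate task is to promote $c=1$ from a supremum to an attained value. I would take a net $\nu_\alpha\in\mathcal{S}$ with $\cl{\nu_\alpha}([f])\to 1$ and extract a convergent subnet of $(\cl{\nu_\alpha})$ in the compact product topology on $[0,\infty]^{S_\B(\Gr)}$; its limit $\mu$ is a state with $\mu([f])=1$, and a further regularisation $\cl{\mu}$ should give the desired regular state. This final passage is the main obstacle, since the regularisation map is not continuous with respect to pointwise limits, so one must carefully combine the directedness of $\{k\ll f\}$ with the compactness to guarantee that $\cl{\mu}([f])=1$ and not merely $\cl{\mu}([f])<1$.
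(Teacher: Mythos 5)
Your argument up to the point where you establish \(c=1\) is sound, though more elaborate than necessary --- the auxiliary element \(k_0\) produced in your first step is never used afterwards and can be dropped. The proof is nevertheless incomplete: as you yourself acknowledge, the final passage from \(\sup_{\nu_0\in\mathcal S}\cl{\nu_0}([f])=1\) to an actual regular state taking the value \(1\) at \([f]\) is left open, and the compactness route you sketch is genuinely problematic for exactly the reason you name: \(\nu\mapsto\cl{\nu}\) is not continuous for pointwise convergence, so regularising a limit of the net \((\nu_\alpha)\) may again push the value at \([f]\) strictly below \(1\), or even to \(0\). As it stands, the hard implication is not proved.

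The missing observation is that you never needed the supremum to be attained, nor even to equal \(1\): it suffices to produce \emph{one} state \(\nu\) with \(\nu([f])=1\) and \(\cl{\nu}([f])>0\). Since \(\cl{\nu}([f])\le\nu([f])=1<\infty\) automatically, you may then simply replace \(\cl{\nu}\) by \(\cl{\nu}/\cl{\nu}([f])\); dividing a regular state by a finite positive constant is again a regular state, and this one takes the value \(1\) at \([f]\). Your own computation already gives \(c=1>0\), hence such a \(\nu\) exists, so the gap closes in one line. This is in fact how the paper argues, and it does so more directly: if \(\nu(2[k])<\nu([f])\) for \emph{all} \(k\ll f\) and \emph{all} states \(\nu\) with \(\nu([f])=1\), then Theorem~\ref{thm:Handelman's_tarski} and almost unperforation give \(2k\precsim f\) for every \(k\ll f\), whence \(2f\precsim f\) by Remark~\ref{rem:compactly_below_relation2} and \([f]\) is paradoxical; contrapositively, non-paradoxicality yields a single \(k\ll f\) and a state \(\nu\) with \(\nu([f])=1\) and \(\nu([k])\ge 1/2\), so \(\cl{\nu}([f])\in[1/2,1]\), and normalisation finishes. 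Your \(c=1\) argument is essentially the same mechanism run uniformly over all \(n\), which is more than is needed.
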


\begin{proof}
  We first show a sufficient condition for an element to be
  paradoxical.  Assume that for all \(k\ll f\) and all
  states~\(\nu\) on \(S_\B(\Gr)\) with \(\nu([f])=1\) we have
  \(\nu(2[k])<\nu([f])\).  Then by
  Theorem~\ref{thm:Handelman's_tarski}, for each \(k\ll f\) there is
  \(n\in \N\) such that \((n+1)2 [k]\precsim n [f]\).  This implies
  \(2k\precsim f\) because \(S_\B(\Gr)\) is almost unperforated.
  This implies that \(2f\precsim f\) (see
  Remark~\ref{rem:compactly_below_relation2}), so~\(f\) is
  paradoxical.

  Thus if \([f]\in S_\B(\Gr)\) is not paradoxical, then there is
  \(k\ll f\) and a state~\(\nu\) with \(\nu([f])=1\) such that
  \(\nu([f])\le 2 \nu([k])\).  Since \(k\precsim f\), this implies
  that \(0<\nu([f])/2 \le \nu([k])\le \nu([f])=1\).  Therefore,
  the regular state~\(\cl{\nu}\) defined in
  Proposition~\ref{prop:regularization_of_states} satisfies
  \(0< \cl{\nu}([f]) <\infty\).  Thus normalising~\(\cl{\nu}\)
  in~\([f]\) gives the desired state.
\end{proof}

\begin{corollary}
  \label{cor:regular_states_for_unperforated}
  If the type semigroup \(S_\B(\Gr)\) is almost unperforated, then
  \(S_\B(\Gr)\) admits a nontrivial state if and only if it admits a
  regular nontrivial state.
\end{corollary}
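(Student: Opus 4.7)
The forward implication is immediate, since any regular state is in particular a state. For the converse, suppose \(\nu\) is a nontrivial state on \(S_\B(\Gr)\). By nontriviality, there exists \(x = [f] \in S_\B(\Gr)\) with \(\nu(x) \in (0,\infty)\). My plan is to show that such an \(x\) cannot be paradoxical, and then to invoke the regular version of Tarski's Theorem (Proposition~\ref{prop:regular_Tarski}) to produce a regular state that is nonzero and finite at \(x\).

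First I would verify that \([f]\) is not paradoxical. Indeed, if \([f]\) were paradoxical, then since \(S_\B(\Gr)\) is almost unperforated, Remark~\ref{rem:paradoxical_elements} shows that \([f]\) would be properly infinite, that is, \(2[f] \le [f]\). Applying the state \(\nu\) yields \(2\nu([f]) \le \nu([f])\), which forces \(\nu([f]) \in \{0,\infty\}\), contradicting \(\nu(x) \in (0,\infty)\). Hence \([f]\) is not paradoxical.

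Next, I would apply Proposition~\ref{prop:regular_Tarski} to the non-paradoxical element \([f]\): this furnishes a regular state \(\nu' \colon S_\B(\Gr) \to [0,\infty]\) with \(\nu'([f]) = 1\). Since \(\nu'\) takes the finite, positive value \(1\) on \([f]\), it is nontrivial, which completes the proof.

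There is essentially no obstacle here once Proposition~\ref{prop:regular_Tarski} is available: the corollary is a direct packaging of that proposition together with the elementary observation that states of nonzero, non-infinite value detect non-paradoxicality under almost unperforation.
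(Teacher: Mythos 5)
Your proof is correct and follows essentially the same route as the paper, which simply combines the easy direction of Tarski's Theorem (a state with value in \((0,\infty)\) at \(x\) forces \(x\) to be non-paradoxical) with Proposition~\ref{prop:regular_Tarski}. The only cosmetic difference is that you pass through proper infiniteness via almost unperforation to rule out paradoxicality, whereas \((n+1)x\le nx\) already gives \((n+1)\nu(x)\le n\nu(x)\) and hence \(\nu(x)\in\{0,\infty\}\) directly; almost unperforation is genuinely needed only for the appeal to Proposition~\ref{prop:regular_Tarski}.
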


\begin{proof}
  Combine Corollary~\ref{cor:original_Tarski} and
  Proposition~\ref{prop:regular_Tarski}.
\end{proof}

\begin{corollary}
  \label{cor:pure_infinite_implies_paradoxical}
  Assume that~\(\OO\) consists of \(\sigma\)\nb-compact subsets and
  that \(S_\B(\Gr)\) is almost unperforated.
  If for some twist~\(\LL\) the algebra \(\Cst_\red(\Gr,\LL)\) is
  purely infinite, then \(S_\B(\Gr)\) is purely infinite.
\end{corollary}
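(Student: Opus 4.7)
The plan is to argue by contradiction using Tarski's theorem for regular states (Proposition~\ref{prop:regular_Tarski}) together with the trace-extraction result of Corollary~\ref{cor:non_purely_infinite}. Suppose that $S_\B(\Gr)$ is not purely infinite. Then there is a nonzero element $[f] \in S_\B(\Gr)$ that is not properly infinite; since $S_\B(\Gr)$ is almost unperforated, Remark~\ref{rem:paradoxical_elements} shows that $[f]$ is also not paradoxical. Proposition~\ref{prop:regular_Tarski} then produces a regular state $\nu$ on $S_\B(\Gr)$ with $\nu([f])=1$, and in particular $S_\B(\Gr)$ admits a nontrivial regular state.

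The standing assumption that $\OO$ consists of $\sigma$\nb-compact subsets puts us in the setting of Corollary~\ref{cor:non_purely_infinite}. That corollary produces an open set $U \in \OO$ and a tracial state on the hereditary subalgebra $f_U \Cst_\red(\Gr,\LL) f_U \cong \Cst_\red(\Gr_U,\LL_U)$ for some $f_U \in \Cont_0(X)^+$ with open support $U$. On the other hand, pure infiniteness of $\Cst_\red(\Gr,\LL)$ is inherited by hereditary subalgebras by a standard Kirchberg--R\o{}rdam result, so this subalgebra is itself purely infinite. But a nonzero purely infinite $\Cst$\nb-algebra cannot carry a tracial state: every nonzero positive element $a$ is properly infinite, so $2[a]\le[a]$ in the Cuntz semigroup, which lifts to $2\tau(a)\le\tau(a)$ and forces $\tau(a)\in\{0,\infty\}$, contradicting normalisation of a state. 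This is the desired contradiction.

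The argument is essentially a direct corollary of tools already in place, so no individual step is a serious obstacle. The crucial hypotheses are almost unperforation, without which Tarski's theorem yields only a plain state and the refined regular version in Proposition~\ref{prop:regular_Tarski} is unavailable; and $\sigma$\nb-compactness of the sets in $\OO$, without which one cannot promote a regular state on $S_\B(\Gr)$ to an honest trace on a hereditary subalgebra of $\Cst_\red(\Gr,\LL)$ via Corollary~\ref{cor:non_purely_infinite}. The pure-infiniteness hypothesis on $\Cst_\red(\Gr,\LL)$ is used only for the fixed twist appearing in Corollary~\ref{cor:non_purely_infinite}, which explains the indifference to the choice of $\LL$ in the statement.
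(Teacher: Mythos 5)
Your proof is correct and follows essentially the same route as the paper's: almost unperforation converts ``not properly infinite'' into ``not paradoxical,'' Proposition~\ref{prop:regular_Tarski} yields a nontrivial regular state, Corollary~\ref{cor:non_purely_infinite} promotes it to a tracial state on a hereditary subalgebra, and that contradicts pure infiniteness of \(\Cst_\red(\Gr,\LL)\). The only difference is that you spell out the Kirchberg--R\o{}rdam hereditary-subalgebra step, which the paper leaves implicit here (it cites it elsewhere).
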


\begin{proof}
  Assume \(S_\B(\Gr)\) is not purely infinite.  By almost
  unperforation, \(S_\B(\Gr)\setminus\{0\}\) contains an element
  that is not paradoxical.  Thus by
  Proposition~\ref{prop:regular_Tarski}, \(S_\B(\Gr)\) has a
  nontrivial state.  Hence by
  Corollary~\ref{cor:non_purely_infinite} a hereditary subalgebra of
  \(\Cst_\red(\Gr,\LL)\) has a tracial state, which implies that
  \(\Cst_\red(\Gr,\LL)\) is not purely infinite.
\end{proof}

\begin{corollary}
  Let~\(\B\) be an inverse semigroup basis for~\(\Gr\) such
  that~\(\OO\) consists of compact open subsets or~\(\OO\) is the whole
  topology or~\(\OO\) consists of all precompact subsets.  An element
  \([f]\in S_\B(\Gr)\) is properly infinite if and only
  if~\([f|_C]\) is infinite in \(S_{\B_C}(\Gr_C)\) for every closed
  \(\Gr\)\nb-invariant \(C\subseteq X\) with
  \(\supp(f)\cap C \neq \emptyset\).
\end{corollary}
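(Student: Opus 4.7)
The plan is to combine the residual characterization of properly infinite elements from Lemma~\ref{lem:residually_infinite} with the correspondence between regular ideals in \(S_\B(\Gr)\) and closed \(\Gr\)\nb-invariant subsets of~\(X\) afforded by Lemma~\ref{lem:order_ideals} and Corollary~\ref{cor:quotient_monoids}. For the forward implication, if \([f]\) is properly infinite and \(C\subseteq X\) is closed and \(\Gr\)\nb-invariant, the surjection \(S_\B(\Gr)\onto S_\B(\Gr)/I_{X\setminus C}\cong S_{\B_C}(\Gr_C)\) sends \([f]\) to \([f|_C]\), so \(2[f]\precsim_\B[f]\) gives \(2[f|_C]\precsim_{\B_C}[f|_C]\). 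It remains to observe that \([f|_C]\neq 0\) whenever \(\supp(f)\cap C\neq\emptyset\): given \(x\in\supp(f)\cap C\), lower semicontinuity of \(f\) and the fact that \(\OO_C\) is a basis of precompact sets (in each of the three cases for~\(\OO\)) produce a nonempty \(W_0\in\OO_C\) with \(1_{W_0}\ll f|_C\), so \([f|_C]\neq 0\) and hence \([f|_C]\) is infinite.

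For the converse I argue by contrapositive. Assume \([f]\) is not properly infinite, and form the ideal \(I([f])\defeq\setgiven{[h]\in S_\B(\Gr)}{[h]+[f]\precsim_\B[f]}\) of Lemma~\ref{lem:infiniteness_ideal}; then \([f]\notin I([f])\) and the image of \([f]\) in \(S_\B(\Gr)/I([f])\) is finite. Once I show that \(I([f])\) is a \emph{regular} ideal, Lemma~\ref{lem:order_ideals} will give \(I([f])=I_U\) for some open \(\Gr\)\nb-invariant \(U\subseteq X\). Setting \(C\defeq X\setminus U\), the condition \([f]\notin I_U\) translates into \(\supp(f)\cap C\neq\emptyset\), and Corollary~\ref{cor:quotient_monoids} (applicable thanks to the standing hypothesis on~\(\OO\)) identifies \(S_\B(\Gr)/I([f])\cong S_{\B_C}(\Gr_C)\) via \([f]\mapsto[f|_C]\). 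Finiteness of this image then contradicts the standing hypothesis that \([f|_C]\) is infinite, completing the argument.

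The principal obstacle is therefore the regularity of \(I([f])\). My plan is as follows: let \([h]\in S_\B(\Gr)\) satisfy \([k]\in I([f])\) for every \(k\in\F(\OO)\) with \(k\ll h\); I wish to deduce \([h+f]\precsim_\B[f]\). Given any \(m\in\F(\OO)\) with \(m\ll h+f\), Corollary~\ref{cor:intermediate_way_below_improved} provides \(m_1,m_2\in\F(\OO)\) with \(m_1\ll h\), \(m_2\ll f\), and \(m\ll m_1+m_2\ll h+f\). The hypothesis on~\(h\) then yields \([m_1]+[f]\precsim_\B[f]\), while the pointwise inequality \(m_2\le f\) gives \([m_2]\precsim_\B[f]\) directly from Definition~\ref{def:preorder_relation}. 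Compatibility of addition with the preorder (Lemma~\ref{lem:additive_monoid}) produces the chain \([m_1+m_2]=[m_1]+[m_2]\precsim_\B[m_1]+[f]\precsim_\B[f]\); unpacking \([m_1+m_2]\precsim_\B[f]\) for the element \(m\ll m_1+m_2\) supplies a bisection witness \(b\in\F(\B)\) with \(m\le\s_*(b)\) and \(\rg_*(b)\le f\), which shows \([h]\in I([f])\). The subtlety is that the preorder \(\precsim_\B\) is itself defined through the way-below relation on~\(\F(\OO)\), so the decomposition result of Corollary~\ref{cor:intermediate_way_below_improved} is indispensable and must be invoked carefully at the correct step.
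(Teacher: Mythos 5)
Your proof is correct and follows essentially the same route as the paper: the forward direction via the surjection \(S_\B(\Gr)\onto S_\B(\Gr)/I_{X\setminus C}\onto S_{\B_C}(\Gr_C)\), and the converse by contrapositive through the ideal \(I([f])\), Lemma~\ref{lem:order_ideals} and Corollary~\ref{cor:quotient_monoids}. The one place you go beyond the paper is in actually verifying that \(I([f])\) is a regular ideal via Corollary~\ref{cor:intermediate_way_below_improved} — the paper merely asserts this follows from the definition — and your argument for that step is correct.
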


\begin{proof}
  Let \(I\defeq I_{X\setminus C}\) be the ideal in \(S_\B(\Gr)\)
  corresponding to the open \(\Gr\)\nb-invariant subset
  \(X\setminus C\).  Thus \([f]\not\in I\) if and only if
  \(\supp(f)\cap C \neq \emptyset\).
  Corollary~\ref{cor:quotient_monoids} provides a surjective
  homomorphism
  \(S_\B(\Gr)/I\onto S_{\B_C}(\Gr_C)\).  So if
  \(\supp(f)\cap C \neq \emptyset\) and~\([f]\) is properly infinite
  in \(S_\B(\Gr)\), then its image in \(S_{\B_C}(\Gr_C)\) is
  properly infinite.  Conversely, if \([f]\) is not properly
  infinite, then by Lemma~\ref{lem:infiniteness_ideal},
  \([f]\not\in I([f])\) and the image of~\([f]\) is finite in
  \(S_\B(\Gr)/I([f])\).  It follows from the definition of
  \(I([f])\) that this is a regular ideal.  Hence
  \(I([f])\cong S_{\B_U}(\Gr_U)\) for an open \(\Gr\)\nb-invariant
  subset \(U\subseteq X\) by Lemma~\ref{lem:order_ideals}.  Put
  \(C\defeq X\setminus U\).  Then \(\supp(f)\cap C \neq \emptyset\)
  because \([f]\not\in I([f])\), and~\([f|_C]\) is finite in
  \(S_{\B_C}(\Gr_C)\) because
  \(S_\B(\Gr)/I([f])\cong S_{\B_C}(\Gr_C)\) by
  Corollary~\ref{cor:quotient_monoids}.
\end{proof}

\subsection{Type semigroups for ample groupoids}

For a while, we restrict attention to the case where~\(\Gr\) is a
(not necessarily Hausdorff) \emph{ample groupoid}.  Equivalently,
\(\Gr\) is an \'etale groupoid with totally disconnected unit space
\(X\defeq \Gr^0\).  We also assume the inverse semigroup
basis~\(\B\) to be the family of all compact open bisections.  Thus
\[
  \OO\defeq\setgiven{U\subseteq X}{U\text{ is compact open}}
\]
forms a basis for topology of~\(X\) (and a ring of sets).  We are
going to relate our type semigroup \(S_\B(\Gr)\) to the type semigroup
\(S(\Gr)\) introduced in \cites{Boenicke-Li:Ideal,
  Rainone-Sims:Dichotomy} (where~\(\Gr\) is needlessly assumed
Hausdorff).
We note that this specific ``ample'' inverse semigroup~\(\B\) is an
instance of a Boolean inverse semigroup, and the type semigroup
\(S(\Gr)\) coincides with the type monoid associated to the
semigroup~\(\B\), see~\cite{Wehrung:Monoids_Boolean}.

In~\cite{Boenicke-Li:Ideal}, \(S(\Gr)\) is defined as the quotient of
the free semigroup \(\Free_\OO\) by the equivalence relation~\(\sim\),
where \((U_1,U_2,\dotsc,U_n) \sim (V_1,\dotsc,V_m)\) if there is a
collection of compact open bisections \(W_1,\dotsc, W_l\) in~\(\Gr\),
\(l\in \N\), and natural numbers \(n_1,\dotsc, n_l\),
\(m_1,\dotsc, m_l\) such that
\[
  \bigsqcup_{j=1}^n U_i\times \{j\}
  = \bigsqcup_{i=1}^l\s(W_i)\times \{n_i\}
  \text{ and }
  \bigsqcup_{i=1}^m V_i\times \{i\}
  = \bigsqcup_{i=1}^nr(W_i)\times \{m_i\}.
\]
This is a congruence relation on~\(\Free_\OO\), and the quotient
semigroup \(S(\Gr)\defeq \Free_\OO/{\sim}\) is an abelian monoid
called the type semigroup of~\(\Gr\) in
\cite{Boenicke-Li:Ideal}*{Definition~5.1}.  An equivalent structure
was defined in~\cite{Rainone-Sims:Dichotomy} as the quotient of the
abelian monoid \(\F(\OO)=\Contc(X,\Z)^+\) by the equivalence
relation~\(\sim_\Gr\), where
\begin{equation}
  \label{eq:Rainone_Sims_equiv}
  f\sim_\Gr g \quad \Longleftrightarrow\quad \exists_{b\in \F(\B)}\qquad  f= \s_* (b), \qquad    \rg_*(b) =g.
\end{equation}
It is shown in~\cite{Rainone-Sims:Dichotomy} that this is a
congruence relation on the monoid \(\Contc(X,\Z)^+\) and that
\[
  S(\Gr)\cong \Contc(X,\Z)^+/{\sim_\Gr}.
\]
with the isomorphism sending the equivalence class of~\(1_U\) to the
equivalence class of~\(U\), for every \(U\in \OO\).
Write~\([f]\) for the \(\sim_\Gr\)\nb-equivalence class of
\(f\in \Contc(X,\Z)^+\).  Then the addition on~\(S(\Gr)\) is defined
by \([f]+[g] \defeq [f+g]\).  The monoid structure induces an
algebraic preorder: write \([f] \le [g]\) if there is
\(h \in \Contc(X,\N)\) with \([f]+[h]=[g]\).

Example~\ref{ex:type_semigroup_for_Cuntz_algebras} shows that
\(S_\B(\Gr)\) and~\(S(\Gr)\) may fail to be isomorphic.  Nevertheless,
they are closely related:

\begin{proposition}
  \label{prop:type_vs_groupoid_semigroup}
  Assume that~\(\Gr\) is ample and let~\(\B\) be the family of
  compact open bisections.
  \begin{enumerate}
  \item \label{item:type_vs_groupoid_semigroup1}%
    The identity map on \(\Contc(X,\Z)^+\) induces a surjective
    monoid homomorphism \(\Psi\colon S(\Gr)\onto S_\B(\Gr)\).
    In fact, \(S_\B(\Gr)\cong \widetilde{S(\Gr)}\) and~\(\Psi\) is the
    quotient map \(S(\Gr)\onto \widetilde{S(\Gr)}\).
    In particular, \([f]\le [g]\) if and only if \(\Psi([f])\precsim
    \Psi([g])\) for all \(f,g\in \Contc(X,\Z)^+\).
  \item \label{item:type_vs_groupoid_semigroup1.33}%
    \(S(\Gr)\) is (almost) unperforated or has plain paradoxes if
    and only if
    \(S_\B(\Gr)\) has this property.  Moreover, \([f]\) is
    paradoxical, infinite or properly infinite in~\(S(\Gr)\) if and
    only if \(\Psi([f])\) has this property in~\(S_\B(\Gr)\).
  \item \label{item:type_vs_groupoid_semigroup1.66}%
    There are natural bijections between ideals in~\(S(\Gr)\) and
    in~\(S_\B(\Gr)\).
  \item \label{item:type_vs_groupoid_semigroup2}%
    There are natural bijections between states on \(S(\Gr)\)
    and~\(S_\B(\Gr)\) and regular \(\Gr\)\nb-invariant Borel
    measures on~\(X\).
  \item \label{item:type_vs_groupoid_semigroup3}%
    \(S(\Gr)\) admits a nontrivial, faithful or finite state,
    respectively, if and only if \(S_\B(\Gr)\) admits such a state,
    if and only if there is a nontrivial, fully supported or locally
    finite regular \(\Gr\)\nb-invariant Borel measure on~\(X\).

  \item \label{item:type_vs_groupoid_semigroup4}%
    If there is a \(\Gr\)\nb-invariant Radon measure on~\(X\) with
    full support \textup{(}equivalently, \(S(\Gr)\) or \(S_\B(\Gr)\)
    admits a faithul finite state\textup{)}, then~\(\Psi\) is an
    isomorphism: \(S(\Gr)\cong S_\B(\Gr)\).
  \end{enumerate}
\end{proposition}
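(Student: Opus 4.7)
The strategy is to prove (1) by showing that $\precsim_\B$ on $\F(\OO)$ descends exactly to the algebraic preorder on $S(\Gr)$, then to deduce (2)--(5) as formal consequences of the general properties of the antisymmetrisation $S \mapsto \widetilde{S}$ collected in Section~\ref{sec:preordered_abelian}, and finally to extract (6) from Remark~\ref{rem:faithful_states_algebraic_order}. The decisive simplification in the ample case is that $\F(\OO) = \Contc(X,\N)$ by Proposition~\ref{pro:FO}, and that $\ll$ coincides with $\le$ on $\F(\OO)$ by Proposition~\ref{pro:compactly_contained} and the subsequent remark. Consequently, Definition~\ref{def:preorder_relation} reduces to: $f \precsim_\B g$ iff there exists $b \in \F(\B)$ with $f \le \s_*(b)$ and $\rg_*(b) \le g$, while \eqref{eq:Rainone_Sims_equiv} demands equalities in place of inequalities.

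For~(1), well-definedness and surjectivity of $\Psi$ are immediate from the fact that $\sim_\Gr$ refines $\approx_\B$. The main point is to show $f \precsim_\B g$ iff $[f] \le [g]$ in $S(\Gr)$. The ``if'' direction uses only that $\sim_\Gr$ refines $\approx_\B$ and that $h \ge 0$ gives $f \precsim_\B f+h$. For the converse, suppose $f \precsim_\B g$ is witnessed by $b \in \F(\B)$. Put $h \defeq \s_*(b) - f$ and $h' \defeq g - \rg_*(b)$; both lie in $\Contc(X,\N) = \F(\OO)$ because they are nonnegative integer-valued with compact support. By construction $\s_*(b) = f+h$ and $g = \rg_*(b) + h'$, and $\s_*(b) \sim_\Gr \rg_*(b)$ tautologically; hence $[f] + [h] + [h'] = [g]$ in $S(\Gr)$. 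This identifies $S_\B(\Gr)$ with the antisymmetrisation $\widetilde{S(\Gr)}$ and~$\Psi$ with the quotient map.

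For~(2)--(5), first note that $S(\Gr)$ is conical: $f+g \sim_\Gr 0$ forces any witnessing $b \in \F(\B)$ to satisfy $\rg_*(b) = 0$ and thus $b=0$, so $f=g=0$. Item~(2) then follows from Remarks~\ref{rem:about_infnite_elements} and~\ref{rem:almost_unperforation_for_ordered_quotient}, which transfer infiniteness, proper infiniteness, paradoxicality, almost unperforation and plain paradoxes between a conical monoid and its antisymmetrisation. Item~(3) is a special case of the bijection between ideals of $S$ and $\widetilde{S}$ encoded in Remark~\ref{rem:trvial_states_and_ideals}. For~(4) and~(5), Remark~\ref{rem:regular_states} shows that every state on $S_\B(\Gr)$ is automatically regular, so Theorem~\ref{thm:Riesz_for_monoids} supplies the bijection between states on $S_\B(\Gr)$ and $\Gr$\nb-invariant regular Borel measures on $X$; the bijection with states on $S(\Gr)$ is provided by Remarks~\ref{rem:trvial_states_and_ideals} and~\ref{rem:faithful_implies_conical} applied to the conical monoid~$S(\Gr)$, and preservation of nontriviality, faithfulness and finiteness (the last corresponding to Radon measures since~$\OO$ consists of compact open sets) is routine.

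Finally, (6) is precisely where the preordered-versus-ordered distinction matters. The hypothesis gives a $\Gr$-invariant Radon measure of full support, which via~(5) corresponds to a faithful finite state on $S(\Gr)$. Since $S(\Gr)$ carries its algebraic preorder, Remark~\ref{rem:faithful_states_algebraic_order} forces $S(\Gr) = \widetilde{S(\Gr)}$, so the surjection $\Psi\colon S(\Gr) \onto \widetilde{S(\Gr)} \cong S_\B(\Gr)$ is also injective, hence an isomorphism. The chief obstacle across the whole argument is the bookkeeping step in~(1) that exhibits $\precsim_\B$ as the algebraic preorder on~$S(\Gr)$; once this is in place, (2)--(6) are formal consequences of the machinery from Section~\ref{sec:preordered_abelian}.
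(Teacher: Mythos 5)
Your proof is correct. For items (2)--(6) it follows the same route as the paper: conicality of \(S(\Gr)\) plus the general transfer facts for the antisymmetrisation \(S\mapsto\widetilde{S}\) from Remarks \ref{rem:about_infnite_elements}, \ref{rem:trvial_states_and_ideals} and~\ref{rem:almost_unperforation_for_ordered_quotient}, the observation from Remark~\ref{rem:regular_states} that ``regular'' is vacuous in the ample case so that Theorem~\ref{thm:Riesz_for_monoids} applies to all states, and Remark~\ref{rem:faithful_states_algebraic_order} for~(6). The one genuine difference is in~(1): for the central equivalence \([f]\le[g]\iff\Psi([f])\precsim\Psi([g])\) the paper simply cites \cite{Ma:Purely_infinite_groupoids}*{Proposition~5.11}, whereas you prove it directly. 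Your argument --- that in the ample case \(\ll\) coincides with \(\le\) on \(\F(\OO)=\Contc(X,\N)\), so Definition~\ref{def:preorder_relation} reduces to the existence of a single witness \(b\in\F(\B)\) with \(f\le\s_*(b)\) and \(\rg_*(b)\le g\), and that the defects \(h=\s_*(b)-f\) and \(h'=g-\rg_*(b)\) again lie in \(\Contc(X,\N)\) and yield \([f]+[h]+[h']=[g]\) via the tautological relation \(\s_*(b)\sim_\Gr\rg_*(b)\) --- is sound and makes the proposition self-contained; it is essentially the content of Remark~\ref{rem:ample_case_type_semigroup}, and it shows that the disjointification of sources mentioned there is not needed to identify \(\precsim_\B\) with the algebraic preorder. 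Your explicit verification that \(S(\Gr)\) is conical (a witness for \(f+h\sim_\Gr 0\) must have \(\rg_*(b)=0\), hence \(b=0\)), which the paper asserts without proof, is also correct.
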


\begin{proof}
  \ref{item:type_vs_groupoid_semigroup1}: Clearly, \(f\sim_\Gr g\)
  implies \(f\approx_\B g\) for \(f, g\in \Contc(X,\Z)^+\).  This
  gives the surjective homomorphism \(\Psi\colon S(\Gr)\onto
  S_\B(\Gr)\).  The
  equivalence of \([f]\le [g]\) and \(\Psi([f])\precsim \Psi([g])\)
  is proved in
  \cite{Ma:Purely_infinite_groupoids}*{Proposition~5.11}.  This
  implies \(S_\B(\Gr)\cong \widetilde{S(\Gr)}\).

  \ref{item:type_vs_groupoid_semigroup1.33}: This readily follows
  from~\ref{item:type_vs_groupoid_semigroup1}, see Remarks
  \ref{rem:about_infnite_elements}
  and~\ref{rem:almost_unperforation_for_ordered_quotient} and recall
  that \(S(\Gr)\) is conical.

  \ref{item:type_vs_groupoid_semigroup1.66}: This follows from
  \(S_\B(\Gr)\cong \widetilde{S(\Gr)}\) and a general fact, namely,
  Remark~\ref{rem:trvial_states_and_ideals}.

  \ref{item:type_vs_groupoid_semigroup2}: The bijection between the
  states on \(S_\B(\Gr)\) and regular \(\Gr\)\nb-invariant Borel
  measures on~\(X\) is given by Theorem~\ref{thm:Riesz_for_monoids}.
  The bijection between states on \(S(\Gr)\) and~\(S_\B(\Gr)\) is
  described in Remark~\ref{rem:trvial_states_and_ideals}.

  \ref{item:type_vs_groupoid_semigroup3}: The correspondence between
  nontrivial and finite states readily follows
  from~\ref{item:type_vs_groupoid_semigroup2}.  When it comes to
  faithful states, one also needs to use the second part
  of~\ref{item:type_vs_groupoid_semigroup1} and that~\(S(\Gr)\) is
  conical (see Remark~\ref{rem:faithful_implies_conical}).

  \ref{item:type_vs_groupoid_semigroup4}: This follows from
  Remark~\ref{rem:faithful_states_algebraic_order}, as
  by~\ref{item:type_vs_groupoid_semigroup3} the algebraically ordered
  monoid \(S(\Gr)\) admits a faithful finite state.
\end{proof}

\begin{remark}
  It follows that in the ample case one may replace the semigroup
  \(S_\B(\Gr)\) with~\(S(\Gr)\) in many results.  This may be useful
  because~\(S(\Gr)\) is always a conical refinement monoid (see
  \cite{Wehrung:Monoids_Boolean}*{Corollary 4.1.4}
  and~\cite{Ara_Bonicke_Bosa_Li:type_semigroup}).  There is a vast
  literature on such monoids.  In particular, it is known that all
  finitely generated conical refinement monoids are of the form
  \(S(\Gr)\) where~\(\Gr\) is the transformation groupoid of a
  discrete group action on a totally disconnected space (see
  \cite{Wehrung:Monoids_Boolean}*{Theorem 4.8.9}), or where~\(\Gr\) is
  the groupoid of an adaptable separated graph (see
  \cite{Ara-Bosa-Pardo-Sims:Separated_graphs}*{Corollary 7.6}).
  We do not know whether \(S_\B(\Gr)\) is a refinement monoid in general.
\end{remark}

Finally, we comment on the groupoid version of the strict comparison
mentioned in Remark~\ref{rem:dynamical_comparison_for_semigroups}.
The regular ideal in \(S_\B(\Gr)\) generated by~\([g]\) is the ideal
corresponding to the smallest open \(\Gr\)\nb-invariant subset
containing \(\supp(g)\).  This set is \(\rg(\Gr\supp(g))\).
So~\([f]\) is in this ideal if and only if
\(\supp(f)\subseteq \rg(\Gr\supp(g))\).  We generalise the
comparison property defined for ample groupoids
in~\cite{Ara_Bonicke_Bosa_Li:type_semigroup} (also called
\emph{groupoid comparison} in~\cite{Ma:Purely_infinite_groupoids},
which works well only in the minimal case) to general \'etale
groupoids as follows:

\begin{definition}
  \label{def:regular_comparison}
  We say that the groupoid~\(\Gr\) has \emph{dynamical comparison}
  if the following happens for all open subsets \(U,V\subseteq X\):
  if \(U\subseteq \rg(\Gr V)\) and \(\mu(U)<\mu(V)\) for all regular
  \(\Gr\)\nb-invariant measures~\(\mu\) on~\(X\) with
  \(\mu(V)< \infty\), then \(1_U\precsim 1_V\), that is, for any
  \(K\ll U\) there are open bisections \(W_1,\dotsc,W_N\), such that
  \(K\subseteq \bigcup_{k=1}^N \s(W_k)\) and
  \(\bigsqcup_{k=1}^N \rg(W_k) \subseteq V\).  We say that~\(\Gr\)
  has \emph{stable dynamical comparison} if its stabilisation
  \(\K(\Gr)=\Gr\times \N\times \N\) has dynamical comparison.
\end{definition}

\begin{remark}
  \label{rem:dynamical_comparison_almost_unperforated}
  By the preceding discussion and
  Theorem~\ref{thm:Riesz_for_monoids}, \(\Gr\) has dynamical
  comparison if and only if the following happens for any
  \([1_U],[1_V]\in S_\B(\Gr)\): if \([1_U]\) is contained in the
  regular ideal generated by \([1_V]\) and \(\nu([1_U])<\nu([1_V])\)
  for all regular states~\(\nu\) on~\(S_\B(\Gr)\) with
  \(\nu([1_V])<\infty\), then \([1_U]\precsim [1_V]\).  Since
  \(S_\B(\Gr)\cong S_{\K(\B)}(\Gr \times \N^2)\) (by
  Proposition~\ref{prop:stabilisation_vs_type_semigroup}), stable
  dynamical comparison for~\(\Gr\) is equivalent to the above
  condition for all elements in~\(S_\B(\Gr)\).  So this is a
  ``regular almost unperforation''.
\end{remark}

\begin{proposition}
  \label{prop:dynamical_comparison_ample}
  Assume~\(\Gr\) is ample and let~\(\B\) be the family of all
  compact open bisections.  The following are equivalent:
  \begin{enumerate}
  \item\label{enu:dynamical_comparison_ample1}%
    the groupoid \(\Gr\) has stable dynamical comparison;
  \item\label{enu:dynamical_comparison_ample2}%
    the monoid \(S_\B(\Gr)\) is almost unperforated;
  \item\label{enu:dynamical_comparison_ample3}%
    the monoid \(S(\Gr)\) is almost unperforated.
  \end{enumerate}
  If~\(\Gr\) is minimal and \(\sigma\)\nb-compact, then the above
  are further equivalent to
  \begin{enumerate}[resume]
  \item\label{enu:dynamical_comparison_ample4}%
    the groupoid \(\Gr\) has dynamical comparison.
  \end{enumerate}
\end{proposition}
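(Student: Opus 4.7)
The equivalence (2) \(\Leftrightarrow\) (3) is immediate from Proposition \ref{prop:type_vs_groupoid_semigroup}.\ref{item:type_vs_groupoid_semigroup1.33}: the isomorphism \(S_\B(\Gr) \cong \widetilde{S(\Gr)}\) of ordered monoids transfers almost unperforation in both directions (compare Remark \ref{rem:almost_unperforation_for_ordered_quotient}).

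For (1) \(\Leftrightarrow\) (2) my plan is to invoke the reformulation of stable dynamical comparison provided by Remark \ref{rem:dynamical_comparison_almost_unperforated}. In the present ample setting with \(\B\) the compact open bisections, Proposition \ref{pro:compactly_contained} makes the way-below relation \(\ll\) on \(\F(\OO)\) coincide with \(\le\); consequently, by Remarks \ref{rem:regular_ideal} and \ref{rem:regular_states}, every ideal and every state of \(S_\B(\Gr)\) is automatically regular. Remark \ref{rem:dynamical_comparison_almost_unperforated} then says that stable dynamical comparison is equivalent to the following property of \(S_\B(\Gr)\): for all \([f],[g]\in S_\B(\Gr)\), if \([f]\in\langle[g]\rangle\) and \(\nu([f])<\nu([g])\) for every state \(\nu\) with \(\nu([g])<\infty\), then \([f]\precsim[g]\). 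By the R\o{}rdam--Tarski Theorem (Theorem \ref{thm:Handelman's_tarski}) this hypothesis is equivalent to \([f]<_\s[g]\): states with \(\nu([g])\in(0,\infty)\) can be normalised by replacing \(\nu\) with \(\nu/\nu([g])\), matching R\o{}rdam--Tarski, while the remaining states satisfy \(\nu([g])=0\), and then \([f]\in\langle[g]\rangle\) forces \(\nu([f])=0\) so that the strict inequality test contributes nothing. Hence (1) and (2) express exactly the same implication on \(S_\B(\Gr)\).

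Now assume \(\Gr\) is minimal and \(\sigma\)\nb-compact. Then (1) \(\Rightarrow\) (4) is immediate from Definition \ref{def:regular_comparison}. For (4) \(\Rightarrow\) (1), minimality makes \(S_\B(\Gr)\) simple by Corollary \ref{cor:minimal_implies_simple}, so every nonzero element is an order unit. Given \(f=\sum_{i=1}^n 1_{U_i}\in\F(\OO)\) with \(U_i\) compact open, a standard spreading-out argument using minimality together with \(\sigma\)\nb-compactness of \(X\) produces compact open bisections \(W_i\in\B\) with \(\s(W_i)=U_i\) and with pairwise disjoint ranges \(\rg(W_1),\dotsc,\rg(W_n)\); then \([f]=[1_V]\) in \(S_\B(\Gr)\) for \(V\defeq\bigsqcup_{i=1}^n \rg(W_i)\). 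Consequently every class in \(S_\B(\Gr)\) is represented by a single characteristic function, so the property from the previous paragraph specialises from arbitrary elements of \(S_\B(\Gr)\) down to characteristic functions of open subsets of \(X\), recovering exactly dynamical comparison as in Definition \ref{def:regular_comparison}.

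The main obstacle lies in the spreading-out step: the bisections \(W_i\) must lie in \(\B\) (compact and open), and one has to coordinate minimality (providing arrows moving a given point outside a prescribed compact set) with a \(\sigma\)\nb-compact exhaustion of \(X\) (to spread all of \(\bigcup U_i\) consistently), as well as with the non-Hausdorff subtleties of the ample groupoid. A secondary subtlety, flagged above, is confirming that trivial states with \(\nu([g])=0\) cause no discrepancy between the dynamical comparison hypothesis and the R\o{}rdam--Tarski characterisation of \(<_\s\).
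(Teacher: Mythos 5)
Your treatment of \ref{enu:dynamical_comparison_ample1}\(\Leftrightarrow\)\ref{enu:dynamical_comparison_ample2}\(\Leftrightarrow\)\ref{enu:dynamical_comparison_ample3} and of \ref{enu:dynamical_comparison_ample1}\(\Rightarrow\)\ref{enu:dynamical_comparison_ample4} matches the paper: in the ample case ``regular'' is vacuous, so Remark~\ref{rem:dynamical_comparison_almost_unperforated} together with the R\o{}rdam--Tarski reformulation of almost unperforation (Remark~\ref{rem:dynamical_comparison_for_semigroups}) gives \ref{enu:dynamical_comparison_ample1}\(\Leftrightarrow\)\ref{enu:dynamical_comparison_ample2}, and Proposition~\ref{prop:type_vs_groupoid_semigroup} gives \ref{enu:dynamical_comparison_ample2}\(\Leftrightarrow\)\ref{enu:dynamical_comparison_ample3}. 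Your normalisation remark about states is fine (the quantification in Definition~\ref{def:regular_comparison} is of course meant to exclude measures vanishing on \(V\), exactly as in the \(\nu(y)=1\) normalisation of Theorem~\ref{thm:Handelman's_tarski}).

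The implication \ref{enu:dynamical_comparison_ample4}\(\Rightarrow\)\ref{enu:dynamical_comparison_ample1} is where your argument breaks down. The spreading-out claim --- that for \(f=\sum_{i=1}^n 1_{U_i}\) minimality produces compact open bisections \(W_i\) with \(\s(W_i)=U_i\) and pairwise disjoint ranges, so that every class in \(S_\B(\Gr)\) is represented by a single characteristic function --- is false. Take any minimal ample \(\Gr\) with compact unit space \(X\) admitting an invariant probability measure \(\mu\) (a minimal Cantor system, say) and \(f=2\cdot 1_X\). If \([f]=[1_V]\) for some open \(V\subseteq X\), then the state \(\nu\) induced by \(\mu\) via Theorem~\ref{thm:Riesz_for_monoids} would give \(2=\nu(2[1_X])=\nu([1_V])=\mu(V)\le\mu(X)=1\), a contradiction; concretely, two bisections with source \(X\) cannot have disjoint ranges inside \(X\) because \(\mu(\rg(W_1))+\mu(\rg(W_2))=2>\mu(X)\). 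This is precisely why stable dynamical comparison is a priori strictly stronger than dynamical comparison: single characteristic functions only exhaust the type semigroup after stabilising, i.e.\ in \(\K(\OO)\) over \(X\times\N\) (Proposition~\ref{prop:stabilisation_vs_type_semigroup}), not in \(\OO\) itself, and there is no room in \(X\) alone to separate the summands.

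The paper closes this implication quite differently: for compact \(X\) it invokes \cite{Ara_Bonicke_Bosa_Li:type_semigroup}*{Proposition~3.10}, which is a genuinely nontrivial argument that comparison of single compact open sets forces almost unperforation of \(S(\Gr)\) for minimal groupoids; for non-compact \(X\) it restricts \(\Gr\) to a nonempty compact open \(K\subseteq X\), observes that \(\Gr_K\) is minimal, \(\sigma\)\nb-compact and still has dynamical comparison, and transports the conclusion back via the groupoid equivalence \(\Gr\sim\Gr_K\) and Corollary~\ref{cor:equivalence_implies_the_same_type_semigroups} (this is where \(\sigma\)\nb-compactness is actually used). If you want a self-contained proof you would need to reproduce the Ara--B\"onicke--Bosa--Li argument; the disjointification shortcut cannot work.
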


\begin{proof}
  For ample groupoids, the adjective ``regular'' is vacuous.  Thus
  Remark~\ref{rem:dynamical_comparison_almost_unperforated} implies
  \ref{enu:dynamical_comparison_ample1}\(\Leftrightarrow\)\ref{enu:dynamical_comparison_ample2}
  and Proposition
  \ref{prop:type_vs_groupoid_semigroup}.\ref{item:type_vs_groupoid_semigroup1.33}
  implies
  \ref{enu:dynamical_comparison_ample2}\(\Leftrightarrow\)\ref{enu:dynamical_comparison_ample3}.
  The implication
  \ref{enu:dynamical_comparison_ample1}\(\Rightarrow\)\ref{enu:dynamical_comparison_ample4}
  is obvious.

  Assume now that~\(\Gr\) is minimal and \(\sigma\)\nb-compact.  If
  the unit space~\(X\) is compact, the implication
  \ref{enu:dynamical_comparison_ample4}\(\Rightarrow\)\ref{enu:dynamical_comparison_ample1}
  is proved in
  \cite{Ara_Bonicke_Bosa_Li:type_semigroup}*{Proposition~3.10}
  (assuming the groupoids to be Hausdorff, but this assumption is not
  used to prove this proposition).  We now explain why the compactness
  assumption is not needed.  So
  assume~\ref{enu:dynamical_comparison_ample4}, where~\(X\) need not
  be compact.  Then we may pick any nonempty compact open subset
  \(K\subseteq X\).  The restriction~\(\Gr_K\) is minimal,
  \(\sigma\)\nb-compact itself and still has dynamical comparison.
  Thus all conditions
  \ref{enu:dynamical_comparison_ample1}--\ref{enu:dynamical_comparison_ample4}
  hold for~\(\Gr_K\).  However, since~\(\Gr\) is minimal, the
  groupoids \(\Gr\) and~\(\Gr_K\) are equivalent.  Hence the type
  semigroups for \(\Gr_K\) and~\(\Gr\) are isomorphic by
  Corollary~\ref{cor:equivalence_implies_the_same_type_semigroups}.
  Thus all conditions
  \ref{enu:dynamical_comparison_ample1}--\ref{enu:dynamical_comparison_ample4}
  hold also for~\(\Gr\).
\end{proof}

\begin{remark}
  The equivalence
  \ref{enu:dynamical_comparison_ample3}\(\Leftrightarrow\)\ref{enu:dynamical_comparison_ample4}
  in the minimal and second countable case is
  \cite{Ara_Bonicke_Bosa_Li:type_semigroup}*{Theorem~A}.  It is
  an analogue of a celebrated result by R\o{}rdam \cite{Rordam:stable_and_real_rank} on the equivalence
  between strict comparison and almost unperforation of the Cuntz
  semigroup for unital simple separable exact \(\Cst\)\nb-algebras.
  By \cite{Ara_Bonicke_Bosa_Li:type_semigroup}*{Theorem~C}, if every
  restriction of~\(\Gr\) to a closed invariant subset is almost
  finite, then the equivalent conditions
  \ref{enu:dynamical_comparison_ample1}--\ref{enu:dynamical_comparison_ample4}
  hold.
\end{remark}

\section{Pure infiniteness and stable finiteness}
\label{sec:pure_infiniteness}

We first rephrase the pure infiniteness criteria for
\(\Cst_\ess(\Gr,\LL)\) obtained
in~\cite{Kwasniewski-Meyer:Pure_infiniteness} in terms of the type
semigroup:

\begin{theorem}
  \label{the:purely_infinite_semigroup}
  Let~\(\Gr\) be an \'etale, residually topologically free locally
  compact groupoid with a locally compact Hausdorff unit space~\(X\)
  and let \((\Gr,\LL)\) be an essentially exact twist over~\(\Gr\).
  Let \(\B\subseteq \Bis(\Gr)\) be an inverse semigroup basis
  for~\(\Gr\) that consists of \(\sigma\)\nb-compact bisections that
  trivialise the twist~\(\LL\) and let
  \(\OO \defeq\setgiven{\s(W)}{W\in \B}\).  Assume one of the
  following conditions:
  \begin{enumerate}[label=\textup{(\roman*)}]
  \item \label{enu:pure_infiniteness_groupoid1}%
    there are only finitely many \(\Gr\)\nb-invariant open subsets
    of~\(X\);
  \item \label{enu:pure_infiniteness_groupoid2}%
    \(\OO\)~consists of compact open subsets;
  \item \label{enu:pure_infiniteness_groupoid3}%
    the compact open subsets in~\(\OO\) separate the
    \(\Gr\)\nb-invariant open subsets of~\(X\).
  \end{enumerate}
  If the monoid \(S_\B(\Gr)\) is purely infinite, then the
  \(\Cst\)\nb-algebra \(\Cst_\ess(\Gr,\LL)\) is purely infinite and
  has the ideal property.
\end{theorem}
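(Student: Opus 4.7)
The plan is to unwind the hypothesis that $S_\B(\Gr)$ is purely infinite into concrete paradoxical decompositions of open subsets of $X$, feed these into the pure infiniteness machinery of \cite{Kwasniewski-Meyer:Pure_infiniteness}, and verify the ideal property under each of (i), (ii), (iii).

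First, I would unwind the hypothesis. By Proposition \ref{pro:precsim_relation}, proper infiniteness of $[1_V]$ in $S_\B(\Gr)$ for each nonempty $V \in \OO$ supplies, for every compact $K \subseteq V$, bisections $W_1, \dotsc, W_n \in \B$ and a partition $\{1, \dotsc, n\} = I_1 \sqcup I_2$ with $\bigcup_{i \in I_j} \s(W_i) \supseteq K$ for both $j$ while $\bigsqcup_{i=1}^n \cl{\rg(W_i)} \subseteq V$. By Corollary \ref{cor:quotient_monoids}, the semigroup $S_{\B_C}(\Gr_C)$ is again purely infinite for every closed $\Gr$\nb-invariant $C \subseteq X$, so these paradoxical decompositions persist in every restriction of $\Gr$.

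Next, I would transfer this to the Cuntz semigroup via Corollary \ref{cor:from_type_semigroup_to_Cuntz_semigroup}: the induced map $S_\B(\Gr) \to W(\Cst_\ess(\Gr, \LL))$ sends $[1_V]$ to $[f_V]$ for any $f_V \in \Cont_0(X)^+$ with open support $V \in \OO$, making each such $[f_V]$ properly infinite, with the analogous statement in every quotient by a $\Gr$\nb-invariant ideal thanks to essential exactness. For an arbitrary $a \in \Cst_\ess(\Gr, \LL)^+$, residual topological freeness together with \cite{Kwasniewski-Meyer:Essential}*{Proposition 7.18} provides $\varepsilon > 0$ so that $\{x : \EL(a)(x) > \varepsilon\}$ has nonempty interior; picking $V \in \OO$ inside this interior yields $f_V \precsim a$ in the Cuntz order. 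Applying the Kirchberg--R\o{}rdam pure infiniteness criterion on each quotient then promotes proper infiniteness of $[f_V]$ to that of every $[a]$. The assumption that bisections in $\B$ trivialise $\LL$ is essential here: it ensures that the comparison map from $S_\B(\Gr)$ to $W(\Cst_\ess(\Gr, \LL))$ remains honest in the twisted and possibly non-Hausdorff setting.

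Finally, for the ideal property, I would invoke the bijection between regular ideals in $S_\B(\Gr)$, open $\Gr$\nb-invariant subsets of $X$, and ideals in $\Cst_\ess(\Gr, \LL)$, given by Lemma \ref{lem:order_ideals} and Proposition \ref{prop:ideals_twisted_groupoids}. Under (i) there are only finitely many ideals, and pure infiniteness of a \(\Cst\)\nb-algebra with finitely many ideals forces the ideal property. Under (ii) and (iii), the compact-open subsets in $\OO \cap 2^X$ supply projections in $\Cont_0(X) \subseteq \Cst_\ess(\Gr, \LL)$ that separate the ideals directly. The main obstacle is the bootstrap in the middle paragraph: transferring proper infiniteness from $\Cont_0(X)$\nb-level elements to arbitrary positive elements of $\Cst_\ess(\Gr, \LL)$ relies on the precise interplay between residual topological freeness, essential exactness, and the paradoxical decomposition criterion of \cite{Kwasniewski-Meyer:Pure_infiniteness}.
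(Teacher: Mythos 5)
Your overall route is the same as the paper's: produce a function $f_U\in\Cont_0(X)^+$ with support $U$ for each $U\in\OO$, use Corollary~\ref{cor:from_type_semigroup_to_Cuntz_semigroup} to transfer proper infiniteness of $[1_U]$ in $S_\B(\Gr)$ to proper infiniteness of $f_U$ in $\Cst_\ess(\Gr,\LL)$, and then invoke the machinery of \cite{Kwasniewski-Meyer:Pure_infiniteness}. The paper does exactly this, observing that the family $\mathcal{F}=(f_U)_U$ \emph{fills} $\Cont_0(X)$ and then citing \cite{Kwasniewski-Meyer:Pure_infiniteness}*{Theorem~1.(3)} (equivalently Theorems~2.37 and~5.8 there), which delivers both pure infiniteness and the ideal property in one stroke under hypotheses (i)--(iii); your treatment of the ideal property under (i)--(iii) is consistent with this.

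The one place where your expanded sketch is not yet a proof is the bootstrap in the middle paragraph. From residual topological freeness and \cite{Kwasniewski-Meyer:Essential}*{Proposition~7.18} you extract $V\in\OO$ with $f_V\precsim a$, and you then claim this ``promotes'' proper infiniteness of $[f_V]$ to $[a]$. But Cuntz-dominating a properly infinite element does not make an element properly infinite: to get $2a\precsim a$ one needs, in addition, the reverse comparison $a\precsim n\,f_V$ (residually, i.e.\ in every quotient by an ideal not containing $a$), so that one can chain $a\oplus a\precsim 2n\,f_V\precsim f_V\precsim a$. Supplying that ``from above'' domination of arbitrary positive elements by elements of $\Cont_0(X)$ is precisely what the \emph{filling} condition and the residual support hypotheses in \cite{Kwasniewski-Meyer:Pure_infiniteness} are for, and it is where residual topological freeness and essential exactness really enter. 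You flag this as the main obstacle and defer to the cited paper, which is exactly what the authors do; just be aware that the step as written (one-sided comparison plus ``Kirchberg--R\o{}rdam criterion on each quotient'') does not close the argument on its own.
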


\begin{proof}
  For each \(\sigma\)\nb-compact subset \(U\in \OO\), there is
  \(f_U \in \Cont_0(X)^+\) with open support~\(U\).  The family
  \(\mathcal{F}= (f_U)_U\subseteq \Cont_0(X)^+\) fills \(\Cont_0(X)\)
  by \cite{Kwasniewski-Meyer:Pure_infiniteness}*{Example~2.30}.  If
  \(U\in \OO\) is \(\sigma\)\nb-compact and~\([1_U]\) is properly
  infinite in \(S_\B(\Gr)\), then~\(f_U\) is properly infinite in
  \(\Cst_\ess(\Gr,\LL)\) by
  Corollary~\ref{cor:from_type_semigroup_to_Cuntz_semigroup}.  A
  similar conclusion can be inferred from
  \cite{Kwasniewski-Meyer:Pure_infiniteness}*{Lemmas 5.19 and~5.26}.
  So every element in~\(\mathcal{F}\) is properly infinite in
  \(\Cst_\ess(\Gr,\LL)\).  Thus the assertion follows from
  \cite{Kwasniewski-Meyer:Pure_infiniteness}*{Theorem~1.(3)} (see also
  \cite{Kwasniewski-Meyer:Pure_infiniteness}*{Theorems 2.37 and~5.8}).
\end{proof}

\begin{theorem}
  \label{the:purely_infinite_semigroup_dichotomy}
  Let \((\Gr,\LL)\) be an exact, twisted groupoid such that~\(\Gr\)
  is residually topologically free and
  \(\Cst_\red(\Gr,\LL)=\Cst_\ess(\Gr,\LL)\) \textup{(}see
  Proposition~\textup{\ref{prop:essential_reduced_coincide})}.
  Suppose that~\(\B\) satisfies the same assumptions as in
  Theorem~\textup{\ref{the:purely_infinite_semigroup}}.
  If~\(S_\B(\Gr)\) is almost unperforated \textup{(}under the
  assumptions \ref{enu:pure_infiniteness_groupoid1},
  \ref{enu:pure_infiniteness_groupoid2}, it suffices for~\(S_\B(\Gr)\)
  to have plain paradoxes\textup{)}, then the following conditions are
  equivalent:
  \begin{enumerate}
  \item \label{item:pure_infiniteness_groupoid2}%
    the \(\Cst\)\nb-algebra \(\Cst_\red(\Gr,\LL)\) is purely
    infinite;
  \item \label{item:pure_infiniteness_groupoid2b}%
    the \(\Cst\)\nb-algebra \(\Cst_\red(\Gr,\LL)\) is purely
    infinite and has the ideal property;
  \item \label{item:pure_infiniteness_groupoid3}%
    no hereditary \(\Cst\)\nb-subalgebra of \(\Cst_\red(\Gr,\LL)\)
    admits a tracial state;
  \item \label{item:pure_infiniteness_groupoid4}%
    there are no nontrivial regular \(\Gr\)\nb-invariant Borel measures
    on~\(X\);
  \item \label{item:pure_infiniteness_groupoid5}%
    the semigroup~\(S_\B(\Gr)\) admits no nontrivial regular state;
  \item \label{item:pure_infiniteness_groupoid6}%
    the semigroup~\(S_\B(\Gr)\) admits no nontrivial state;
  \item \label{item:pure_infiniteness_groupoid7}%
    every nonzero element in~\(S_\B(\Gr)\) is paradoxical;
  \item \label{item:pure_infiniteness_groupoid1}%
    the monoid \(S_\B(\Gr)\) is purely infinite.
  \end{enumerate}
\end{theorem}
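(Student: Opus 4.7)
The plan is to establish all equivalences via a cycle $(2)\Rightarrow(1)\Rightarrow(3)\Rightarrow(4)\Leftrightarrow(5)\Rightarrow(6)\Rightarrow(7)\Rightarrow(8)\Rightarrow(2)$. The implication $(2)\Rightarrow(1)$ is trivial. For $(1)\Rightarrow(3)$, we will use the standard fact that hereditary C*-subalgebras of purely infinite C*-algebras are themselves purely infinite and hence admit no tracial states. The implication $(8)\Rightarrow(2)$ is Theorem~\ref{the:purely_infinite_semigroup}, which applies because exactness together with $\Cst_\red(\Gr,\LL)=\Cst_\ess(\Gr,\LL)$ yields the essential exactness required there.

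For $(3)\Rightarrow(4)$, we argue by contrapositive: a nontrivial $\Gr$-invariant regular Borel measure corresponds through Theorem~\ref{thm:Riesz_for_monoids} to a nontrivial regular state on $S_\B(\Gr)$, and then Corollary~\ref{cor:non_purely_infinite} produces a tracial state on some hereditary subalgebra of $\Cst_\red(\Gr,\LL)$. The equivalence $(4)\Leftrightarrow(5)$ is precisely Theorem~\ref{thm:Riesz_for_monoids}. The implication $(6)\Rightarrow(7)$ will follow from Tarski's Theorem, the Corollary following Theorem~\ref{thm:Handelman's_tarski}: if $y\in S_\B(\Gr)\setminus\{0\}$ is not paradoxical, there exists a state $\nu$ with $\nu(y)=1$, which is necessarily nontrivial. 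Finally, $(7)\Rightarrow(8)$ is immediate from the plain paradoxes property, which holds under either hypothesis of the theorem by Remark~\ref{rem:paradoxical_elements}.

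The main obstacle is closing the cycle via $(5)\Rightarrow(6)$, where the case hypotheses intervene explicitly. In case~(ii), every state on $S_\B(\Gr)$ is automatically regular by Remark~\ref{rem:regular_states}, so there is nothing to show. In case~(i), Lemma~\ref{lem:nontrivial_state_finitely_many_invariant} produces a nontrivial regular state from any nontrivial state, exploiting the ascending chain of invariant open sets. In case~(iii), where neither of these devices is available, the stronger assumption of almost unperforation enters through Corollary~\ref{cor:regular_states_for_unperforated}, whose proof uses the regularisation $\nu\mapsto\cl{\nu}$ of Proposition~\ref{prop:regularization_of_states} controlled by Tarski's Theorem for regular states (Proposition~\ref{prop:regular_Tarski}). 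This case split is precisely why almost unperforation, rather than merely plain paradoxes, is required in case~(iii).
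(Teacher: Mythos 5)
Your proposal is correct and follows essentially the same route as the paper's own proof: the same cycle of implications, closed by Theorem~\ref{the:purely_infinite_semigroup} for $(8)\Rightarrow(2)$, Corollary~\ref{cor:non_purely_infinite} and Theorem~\ref{thm:Riesz_for_monoids} for the trace/measure/state links, Tarski's Theorem for $(6)\Leftrightarrow(7)$, and the identical three-way case split (Remark~\ref{rem:regular_states}, Lemma~\ref{lem:nontrivial_state_finitely_many_invariant}, Corollary~\ref{cor:regular_states_for_unperforated}) for passing from arbitrary to regular states. The only cosmetic difference is that you route $(3)\Rightarrow(4)$ directly by composing the Riesz correspondence with Corollary~\ref{cor:non_purely_infinite}, where the paper proves $(3)\Rightarrow(5)$ and then invokes $(4)\Leftrightarrow(5)$; the content is identical.
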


\begin{proof}
  Hereditary subalgebras of purely infinite algebras are purely
  infinite by
  \cite{Kirchberg-Rordam:Non-simple_pi}*{Proposition~4.17}, and
  hence traceless.  Thus
  \ref{item:pure_infiniteness_groupoid2}\(\Rightarrow\)%
  \ref{item:pure_infiniteness_groupoid3}.  The implication
  \ref{item:pure_infiniteness_groupoid3}\(\Rightarrow\)%
  \ref{item:pure_infiniteness_groupoid5} follows from
  Corollary~\ref{cor:non_purely_infinite}, and
  \ref{item:pure_infiniteness_groupoid4}\(\Leftrightarrow\)%
  \ref{item:pure_infiniteness_groupoid5} by
  Theorem~\ref{thm:Riesz_for_monoids}.  If~\(S_\B(\Gr)\) is almost
  unperforated, then the equivalence
  \ref{item:pure_infiniteness_groupoid5}\(\Leftrightarrow\)%
  \ref{item:pure_infiniteness_groupoid6} is
  Corollary~\ref{cor:regular_states_for_unperforated}.  If
  \ref{enu:pure_infiniteness_groupoid2} holds, the equivalence
  \ref{item:pure_infiniteness_groupoid5}\(\Leftrightarrow\)%
  \ref{item:pure_infiniteness_groupoid6} is trivial and
  if~\ref{enu:pure_infiniteness_groupoid1} holds, this follows from
  Lemma~\ref{lem:nontrivial_state_finitely_many_invariant}.  This
  covers this equivalence under the assumptions in the theorem.  The
  equivalence
  \ref{item:pure_infiniteness_groupoid6}\(\Leftrightarrow\)%
  \ref{item:pure_infiniteness_groupoid7} is Tarski's Theorem.
  And~\ref{item:pure_infiniteness_groupoid7}
  implies~\ref{item:pure_infiniteness_groupoid1} if and only if \(S_\B(\Gr)\)
  has plain paradoxes, which we assume.  Finally,
  Theorem~\ref{the:purely_infinite_semigroup} gives
  \ref{item:pure_infiniteness_groupoid1}\(\Rightarrow\)%
  \ref{item:pure_infiniteness_groupoid2b}, and this
  implies~\ref{item:pure_infiniteness_groupoid2}.
\end{proof}

\begin{remark}
  If~\(\Gr\) is ample, then
  Theorem~\ref{the:purely_infinite_semigroup_dichotomy} holds with
  \(S_\B(\Gr)\) replaced by the refinement monoid~\(S(\Gr)\).  Then
  one can replace the assumption of almost unperforation with some
  weaker notions described in
  Remark~\ref{rem:assumptions_needed_for_dichotomy}.
  In~\cite{Pask-Sierakowski-Sims:Quasitraces_finite_infinite}, a
  number of cases were established when the type semigroup
  \(S(\Gr_\Lambda)\) of the groupoid coming from a higher-rank
  graph~\(\Lambda\) is almost unperforated or even unperforated.
  This holds, for instance, when~\(\Lambda\) is strongly connected
  and row-finite with no sources.  Ara and Exel constructed an
  action of a free group on a Cantor set with an element in the type
  semigroup~\(S(\Gr)\) which is paradoxical but not properly
  infinite (see \cite{Ara-Exel:Dynamical_systems}*{Corollary~7.12}).
  It is not known if this phenomenon may occur also when the action
  is residually topologically free, exact and every element in the
  associated type semigroup is paradoxical.  In other words, it is
  not known whether the conditions
  \ref{item:pure_infiniteness_groupoid1}
  and~\ref{item:pure_infiniteness_groupoid2} in
  Theorem~\ref{the:purely_infinite_semigroup_dichotomy} remain
  equivalent without assumptions on \(S_\B(\Gr)\) or~\(S(\Gr)\).
  This question is particularly relevant when~\(\Gr\) is minimal
  (see Proposition~\ref{prop:unperforation_for_minimal_exel_pardo}
  below).
\end{remark}

Let~\(D\) be an exotic \(\Cst\)\nb-algebra for the twisted groupoid
\((\Gr,\LL)\), that is, the canonical map factors into quotient
maps \(\Cst(\Gr,\LL) \onto D \onto \Cst_\ess(\Gr,\LL)\).
As noted above, if some \(U\in \OO\) is properly infinite in
\(S_\B(\Gr)\), then there is a properly infinite element in~\(D\).
Now we will notice that if there are paradoxical elements
in~\(S_\B(\Gr)\), then there are properly infinite elements in the
stabilised \(\Cst\)\nb-algebra \(D\otimes \Comp\):

\begin{proposition}
  \label{prop:Blackadar_Cuntz_stable_finite}
  Suppose that~\(D\) is an exotic \(\Cst\)\nb-algebra for the
  twisted groupoid \((\Gr,\LL)\) and let \(\B\subseteq\Bis(\Gr)\) be
  any inverse semigroup basis for~\(\Gr\) that consists of
  \(\sigma\)\nb-compact subsets that trivialise the bundle~\(\LL\).
  If~\(S_\B(\Gr)\) contains a paradoxical element, then
  \(D\otimes \Comp\) contains a properly infinite element.

  If, in addition, \(D\) is simple and there is a paradoxical
  \([f]\in S_\B(\Gr)\) with precompact \(\supp(f)\), then~\(D\) is
  not stably finite, that is, \(D\otimes \Comp\) contains an
  infinite projection.
\end{proposition}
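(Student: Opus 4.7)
The plan for the first assertion is to push the paradoxical element into the Cuntz semigroup \(\Cu(D)\). First I would verify that \(S_\B(\Gr)\) is conical: if \(f \precsim_\B 0\) in \(\F(\OO)\), then any witness \(b \in \F(\B)\) for \(k \ll f\) must satisfy \(\rg_*b \le 0\), forcing \(b = 0\) and hence \(k = 0\); so \(f = 0\) by Proposition~\ref{pro:FO_continuous}. Consequently, Lemma~\ref{lem:paradoxical_implies_properly_infinite} yields some \(m \ge 1\) with \(m[f]\) properly infinite in \(S_\B(\Gr)\). Next I would invoke the order-preserving homomorphism \(\Psi\colon S_\B(\Gr) \to W(D) \subseteq \Cu(D)\) of Corollary~\ref{cor:from_type_semigroup_to_Cuntz_semigroup}. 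Since \(\Cont_0(X)\) embeds into every exotic completion sandwiched between \(\Cst(\Gr,\LL)\) and \(\Cst_\ess(\Gr,\LL)\), writing \(f = \sum_i 1_{U_i}\) with some \(U_i \ne \emptyset\) gives \(\Psi([f]) \ne 0\). Transporting \((m+1)[f] \precsim m[f]\) across \(\Psi\) into the conical monoid \(\Cu(D) = W(D \otimes \Comp)\) shows that \(m\Psi([f])\) is properly infinite there, and any representative is a properly infinite positive element lying in some \(M_N(D) \subseteq D \otimes \Comp\).

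For the second assertion I would combine this with the structure theory of simple purely infinite \(\Cst\)\nb-algebras. Since \(D\) and \(\Comp\) are simple, so is \(D \otimes \Comp\); together with the nonzero properly infinite element from Part~1, \cite{Kirchberg-Rordam:Non-simple_pi}*{Theorem~4.16} then forces \(D \otimes \Comp\) to be purely infinite. To extract a projection I would exploit the precompactness of \(\supp(f)\) to localise the argument: choose \(g \in \Contc(X)^+\) with \(0 \le g \le 1\) and \(g \equiv 1\) on \(\cl{\supp(f)}\), and let \(P \in \Comp\) be a rank-\(N\) projection so that \(G \defeq g \otimes P \in D \otimes \Comp\). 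The representative of \(m\Psi([f])\) may be taken of the form \(b = \text{diag}(a_1, \dotsc, a_n, \dotsc) \in M_N(\Cont_0(X)) \subseteq M_N(D)\) with each \(a_i\) supported in the precompact \(\cl{U_i} \subseteq \cl{\supp(f)}\); a direct computation then gives \(G b G = b\), so that \(b\) lies in the hereditary subalgebra \(B \defeq \cl{G(D \otimes \Comp)G} \cong \cl{gDg} \otimes M_N\). Moreover \(G\) is a strictly positive element of \(B\), so \(B\) is \(\sigma\)\nb-unital, and as a hereditary subalgebra of a simple purely infinite \(\Cst\)\nb-algebra it is simple and purely infinite itself. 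Zhang's theorem that simple purely infinite \(\Cst\)\nb-algebras have real rank zero then supplies a nonzero projection \(p \in B\), and since \(p\) sits inside a simple purely infinite \(\Cst\)\nb-algebra, it is properly infinite and hence infinite.

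The hard part will be this last step, going from a properly infinite positive element to an honest infinite projection in \(D \otimes \Comp\). Proper infiniteness of a positive element does not, by itself, imply Cuntz equivalence to a projection, and the precompactness hypothesis on \(\supp(f)\) is exactly what lets me corner the properly infinite element inside a \(\sigma\)\nb-unital hereditary subalgebra in which Zhang's real-rank-zero theorem applies; the simplicity and pure infiniteness of the ambient algebra then make every projection therein automatically infinite.
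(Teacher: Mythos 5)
Your first assertion is argued correctly and by essentially the same route as the paper: both proofs pass from a paradoxical element to a properly infinite multiple via Lemma~\ref{lem:paradoxical_implies_properly_infinite} and then push that class into the Cuntz semigroup through the homomorphism of Corollary~\ref{cor:from_type_semigroup_to_Cuntz_semigroup}; the paper realises the class by a single function on the stabilised unit space \(X\times\N\) rather than a diagonal matrix over \(D\), which is only cosmetic, and your explicit verification that \(S_\B(\Gr)\) is conical (needed to invoke that lemma) is a welcome addition.

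The second assertion, however, contains a genuine gap. You claim that simplicity of \(D\otimes\Comp\) together with the existence of \emph{one} nonzero properly infinite positive element forces \(D\otimes\Comp\) to be purely infinite by \cite{Kirchberg-Rordam:Non-simple_pi}*{Theorem~4.16}. That theorem characterises pure infiniteness by the requirement that \emph{every} nonzero positive element be properly infinite; a single properly infinite element does not suffice, even in a simple algebra. R\o{}rdam's example \cite{Rordam:finite_and_infinite}*{Theorem~6.10} --- cited in the introduction of this paper for exactly this reason --- is a simple, stable \(\Cst\)\nb-algebra containing both a finite and a properly infinite projection, hence not purely infinite. Everything downstream of this step (hereditary subalgebras, Zhang's theorem) therefore rests on a false premise, and you are using the precompactness of \(\supp(f)\) for the wrong purpose: localising the element in a \(\sigma\)\nb-unital hereditary subalgebra buys you nothing once pure infiniteness is unavailable. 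The actual role of precompactness is to place the properly infinite element inside the \emph{Pedersen ideal} of the stable simple \(\Cst\)\nb-algebra \(D\otimes\Comp\); the Blackadar--Cuntz dichotomy \cite{Blackadar-Cuntz:The_structure_of_stable_simple}*{Theorem~1.2} then applies, since in the absence of an infinite projection \(D\otimes\Comp\) would carry a densely defined dimension function, finite on the Pedersen ideal and faithful by simplicity, which is incompatible with that ideal containing a properly infinite element. This is how the paper concludes, and your argument needs to be repaired along these lines.
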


\begin{proof}
  If \([f]\in S_\B(\Gr)\) is paradoxical, then~\(l[f]\) is properly
  infinite in \(S_\B(\Gr)\) for some \(l\in \N\) (see Lemma \ref{lem:paradoxical_implies_properly_infinite}).  This corresponds
  to \(U\in K(\OO)\) such that~\([1_U]\) is properly infinite in
  \(S_{\K(\B)}(\Gr \times \N^2)\cong S_\B(\Gr)\) (see
  Proposition~\ref{prop:stabilisation_vs_type_semigroup}).  Taking
  any
  \(a\in \Cont_0(X\times \N)\cong \Cont_0(X)\otimes c_0 \subseteq
  D\otimes \Comp\) with \(\supp(a)=U\) we conclude that~\(a\) is
  properly infinite in \(D\otimes \Comp\) by
  Corollary~\ref{cor:from_type_semigroup_to_Cuntz_semigroup}.
  Assume, in addition, that~\(D\) is simple and that~\(f\) has
  precompact support.  Then \(D\otimes \Comp\) is a stable simple
  \(\Cst\)\nb-algebra and~\(a\) is in Pedersen's ideal of
  \(D\otimes \Comp\) because \(\supp(a)=U\) is precompact.  So
  \(D\otimes \Comp\) contains an infinite projection by
  \cite{Blackadar-Cuntz:The_structure_of_stable_simple}*{Theorem~1.2}.
\end{proof}

This proposition together with the dichotomy for ordered monoids leads
to the following characterisation when a simple twisted groupoid
\(\Cst\)\nb-algebra is stably finite:

\begin{theorem}
  \label{thm:stably_finite_twisted}
  Let \((\Gr,\LL)\) be a twisted groupoid such that the
  \(\Cst\)\nb-algebra \(\Cst_\red(\Gr,\LL)\) is simple.  Let~\(\B\)
  be any inverse semigroup basis for~\(\Gr\) that consists of
  precompact, \(\sigma\)\nb-compact bisections that trivialise the
  bundle~\(\LL\).  The following are equivalent:
  \begin{enumerate}
  \item \label{item:stably_finite_Cartan1}%
    \(\Cst_\red(\Gr,\LL)\) admits a faithful semifinite lower
    semicontinuous trace;
  \item \label{item:stably_finite_Cartan2}%
    \(\Cst_\red(\Gr,\LL)\) is stably finite;
  \item \label{item:stably_finite_Cartan2.5}%
    there are no paradoxical elements in \(S_\B(\Gr)\);
  \item \label{item:stably_finite_Cartan2.8}%
    there is a nonzero element in \(S_\B(\Gr)\) which is not
    paradoxical;
  \item \label{item:stably_finite_Cartan5}%
    \(S_\B(\Gr)\) admits a nontrivial state;
  \item \label{item:stably_finite_Cartan5b}%
    \(S_\B(\Gr)\) admits a finite
    and faithful state;
  \item \label{item:stably_finite_Cartan5.5}%
    \(S_\B(\Gr)\) admits a nontrivial regular state;
  \item \label{item:stably_finite_Cartan5.5b}%
    \(S_\B(\Gr)\) admits a regular, finite and faithful
    state;
  \item \label{item:stably_finite_Cartan6}%
    there is a nontrivial regular \(\Gr\)\nb-invariant Borel
    measure on~\(X\);
  \item \label{item:stably_finite_Cartan6b}%
    there is a locally finite regular
    \(\Gr\)\nb-invariant Borel measure on~\(X\) with full support.
  \end{enumerate}
  If, in addition, \(\Gr\) is second countable, amenable and Hausdorff, then the properties
  \ref{item:stably_finite_Cartan1}--\ref{item:stably_finite_Cartan6b}
  are all equivalent to
  \begin{enumerate}[resume]
  \item \label{item:stably_finite_Cartan8}%
    the \(\Cst\)\nb-algebra \(\Cst_\red(\Gr,\LL)\) is quasidiagonal.
  \end{enumerate}
\end{theorem}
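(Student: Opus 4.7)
The plan is to establish the large equivalence by a cycle of implications through Tarski's Theorem and its regular variant, using simplicity of \(S_\B(\Gr)\) to collapse the three pairs of ``a''/``b'' items \ref{item:stably_finite_Cartan5}/\ref{item:stably_finite_Cartan5b}, \ref{item:stably_finite_Cartan5.5}/\ref{item:stably_finite_Cartan5.5b}, \ref{item:stably_finite_Cartan6}/\ref{item:stably_finite_Cartan6b} right at the start.

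First I would carry out the setup. Since \(\Cst_\red(\Gr,\LL)\) is simple, Lemma~\ref{lem:hereditary_subalgebras_essential} together with Remark~\ref{rem:hereditary_subalgebras_reduced} forces \(\Gr\) to be minimal, and then Corollary~\ref{cor:minimal_implies_simple} yields that \(S_\B(\Gr)\) itself is simple, since by hypothesis \(\OO\) consists of precompact subsets. Lemma~\ref{lem:simplicity_implies_faithfulness_of_states} then upgrades every nontrivial state on \(S_\B(\Gr)\) to one that is automatically faithful and finite, giving at once \ref{item:stably_finite_Cartan5}\(\Leftrightarrow\)\ref{item:stably_finite_Cartan5b} and \ref{item:stably_finite_Cartan5.5}\(\Leftrightarrow\)\ref{item:stably_finite_Cartan5.5b}; the equivalence \ref{item:stably_finite_Cartan6}\(\Leftrightarrow\)\ref{item:stably_finite_Cartan6b} then follows via items (b) and (c) of Theorem~\ref{thm:Riesz_for_monoids}.

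The main cycle I would run is
\[
\ref{item:stably_finite_Cartan1}\Rightarrow\ref{item:stably_finite_Cartan2}\Rightarrow\ref{item:stably_finite_Cartan2.5}\Rightarrow\ref{item:stably_finite_Cartan2.8}\Leftrightarrow\ref{item:stably_finite_Cartan5}\Rightarrow\ref{item:stably_finite_Cartan5.5}\Leftrightarrow\ref{item:stably_finite_Cartan6}\Rightarrow\ref{item:stably_finite_Cartan1}.
\]
The step \ref{item:stably_finite_Cartan1}\(\Rightarrow\)\ref{item:stably_finite_Cartan2} is the standard fact that a faithful semifinite lower semicontinuous trace separates Murray--von Neumann subequivalence of projections. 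For \ref{item:stably_finite_Cartan2}\(\Rightarrow\)\ref{item:stably_finite_Cartan2.5} I argue by contrapositive using Proposition~\ref{prop:Blackadar_Cuntz_stable_finite}: since \(\OO\) consists of precompact subsets, any paradoxical element of \(S_\B(\Gr)\) has a representative \(f\in\F(\OO)\) with precompact support, and the proposition produces an infinite projection in \(\Cst_\red(\Gr,\LL)\otimes\Comp\). The implication \ref{item:stably_finite_Cartan2.5}\(\Rightarrow\)\ref{item:stably_finite_Cartan2.8} is trivial, \ref{item:stably_finite_Cartan2.8}\(\Leftrightarrow\)\ref{item:stably_finite_Cartan5} is Tarski's Theorem (the corollary to Theorem~\ref{thm:Handelman's_tarski}), \ref{item:stably_finite_Cartan5.5}\(\Leftrightarrow\)\ref{item:stably_finite_Cartan6} is Theorem~\ref{thm:Riesz_for_monoids}, and \ref{item:stably_finite_Cartan6}\(\Rightarrow\)\ref{item:stably_finite_Cartan1} is Corollary~\ref{cor:non_purely_infinite}, which uses simplicity of \(\Cst_\red(\Gr,\LL)\) to extract a faithful semifinite lower semicontinuous trace from any nontrivial regular invariant measure. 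The most delicate step of the cycle is \ref{item:stably_finite_Cartan5}\(\Rightarrow\)\ref{item:stably_finite_Cartan5.5}: a general nontrivial state on \(S_\B(\Gr)\) need not be regular, but I would apply the regularization \(\nu\mapsto\cl\nu\) of Proposition~\ref{prop:regularization_of_states} and rescue nontriviality of \(\cl\nu\) via Lemma~\ref{lem:nontrivial_state_finitely_many_invariant}, which applies because minimality of \(\Gr\) leaves only two \(\Gr\)\nb-invariant open subsets of \(X\).

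For the additional equivalence with \ref{item:stably_finite_Cartan8} under the hypotheses that \(\Gr\) is second countable, amenable and Hausdorff, the algebra \(\Cst_\red(\Gr,\LL)\) is separable, simple, nuclear and satisfies the UCT. The implication \ref{item:stably_finite_Cartan8}\(\Rightarrow\)\ref{item:stably_finite_Cartan2} is classical, since quasidiagonal \(\Cst\)\nb-algebras are stably finite. For \ref{item:stably_finite_Cartan1}\(\Rightarrow\)\ref{item:stably_finite_Cartan8} I would invoke the Tikuisis--White--Winter theorem. The main obstacle here is the possibly nonunital setting: if \(X\) is not compact then the faithful semifinite trace from \ref{item:stably_finite_Cartan1} does not immediately produce a faithful tracial state. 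To handle this, I would pass to the unital hereditary corner \(f_K\Cst_\red(\Gr,\LL)f_K\) for some \(f_K\in \Cont_c(X)^+\) with precompact open support \(K\in\OO\); by Lemma~\ref{lem:hereditary_subalgebras_essential} and minimality of \(\Gr\), this corner is Morita--Rieffel equivalent to \(\Cst_\red(\Gr,\LL)\) and inherits a faithful tracial state from the restriction of the faithful semifinite trace. Applying TWW to this unital corner yields its quasidiagonality, and quasidiagonality then transfers back to \(\Cst_\red(\Gr,\LL)\) itself through the Morita equivalence. Alternatively, one appeals directly to a nonunital version of the TWW theorem. The handling of nonunitality, together with the fact (pointed out by Mart\'inez) that this implication genuinely fails outside the Hausdorff setting, is where the most care is required; the rest of the argument is a direct consequence of the machinery developed earlier in the paper.
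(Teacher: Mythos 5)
Your proposal is correct and follows essentially the same route as the paper: the same reduction to a simple type semigroup via minimality, the same cycle \ref{item:stably_finite_Cartan1}\(\Rightarrow\)\ref{item:stably_finite_Cartan2}\(\Rightarrow\)\ref{item:stably_finite_Cartan2.5}\(\Rightarrow\)\ref{item:stably_finite_Cartan2.8}\(\Rightarrow\)\ref{item:stably_finite_Cartan5}\(\Rightarrow\)\ref{item:stably_finite_Cartan5.5}\(\Leftrightarrow\)\ref{item:stably_finite_Cartan6}\(\Rightarrow\)\ref{item:stably_finite_Cartan1} using Proposition~\ref{prop:Blackadar_Cuntz_stable_finite}, Tarski's Theorem, Lemma~\ref{lem:nontrivial_state_finitely_many_invariant}, Theorem~\ref{thm:Riesz_for_monoids} and Corollary~\ref{cor:non_purely_infinite}, and the same hereditary-subalgebra-plus-Tikuisis--White--Winter argument for \ref{item:stably_finite_Cartan8}. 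The only slip is cosmetic: the corner \(f_K\Cst_\red(\Gr,\LL)f_K\) need not be unital (its unit space \(K\) is only precompact), but this is harmless because the Tikuisis--White--Winter corollary is applied, as in the paper, to a separable nuclear UCT algebra carrying a faithful tracial state, and quasidiagonality transfers under stable isomorphism in the separable setting.
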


\begin{proof}
  If \(\Cst_\red(\Gr,\LL)\) is simple, then~\(\Gr\) is minimal.  Hence
  the monoid~\(S_\B(\Gr)\) is simple by
  Corollary~\ref{cor:minimal_implies_simple} (here we use that
  elements in~\(\OO\) are precompact).  Therefore, every nontrivial
  state on~\(S_\B(\Gr)\) has to be finite and faithful by
  Lemma~\ref{lem:simplicity_implies_faithfulness_of_states}.  So
  \ref{item:stably_finite_Cartan5}$\iff$\ref{item:stably_finite_Cartan5b}
  and
  \ref{item:stably_finite_Cartan5.5}$\iff$\ref{item:stably_finite_Cartan5.5b}.
  Similarly, \ref{item:stably_finite_Cartan6}
  and~\ref{item:stably_finite_Cartan6b} are equivalent because any
  \(\Gr\)\nb-invariant Borel measure on~\(X\) is locally finite and
  has full support because~\(\Gr\) is minimal.
  Conditions \ref{item:stably_finite_Cartan5}
  and~\ref{item:stably_finite_Cartan5.5} are equivalent by
  Lemma~\ref{lem:nontrivial_state_finitely_many_invariant}, while
  \ref{item:stably_finite_Cartan5.5}
  and~\ref{item:stably_finite_Cartan6} are equivalent by
  Theorem~\ref{thm:Riesz_for_monoids}.

  Corollary~\ref{cor:non_purely_infinite} shows that
  \ref{item:stably_finite_Cartan5.5}
  implies~\ref{item:stably_finite_Cartan1}, and it is well-known
  that \ref{item:stably_finite_Cartan1}
  implies~\ref{item:stably_finite_Cartan2} (see
  \cite{Blanchard-Kirchberg:Non-simple_infinite}*{Remark~2.27.(viii)}).
  That~\ref{item:stably_finite_Cartan2}
  implies~\ref{item:stably_finite_Cartan2.5} follows from
  Proposition \ref{prop:Blackadar_Cuntz_stable_finite},
  and~\ref{item:stably_finite_Cartan2.5} obviously
  implies~\ref{item:stably_finite_Cartan2.8}.
  Condition~\ref{item:stably_finite_Cartan2.8}
  implies~\ref{item:stably_finite_Cartan5} by Tarski's Theorem,
  Corollary~\ref{cor:original_Tarski}.

  It is well known that~\ref{item:stably_finite_Cartan8}
  implies~\ref{item:stably_finite_Cartan2}.  Conversely,
  \ref{item:stably_finite_Cartan5.5} and the proof of
  Corollary~\ref{cor:non_purely_infinite} imply that
  \(\Cst_\red(\Gr,\LL)\) contains a nonzero hereditary
  subalgebra~\(H\) with a faithful trace which is isomorphic to
  \(\Cst_\red(\Gr_U,\LL_U)\), where~\(U\) is an open subset of~\(X\).
  The properties of \(\Gr\) that we assume here pass to~\(\Gr_U\)
  and so the \(\Cst\)\nb-algebra \(H\cong \Cst_\red(\Gr_U,\LL_U)\) is
  separable nuclear and satisfies the UCT (see
  \cite{Takeishi:Nuclearity}*{Theorem 5.4} and
  \cite{Barlak-Li:Cartan_UCT}*{Theorem 3.1}). Hence~\(H\) is
  quasidiagonal by
  \cite{Tikuisis-White-Winter:Quasidiagonality_nuclear}*{Corollary~B}.
  As~\(H\) is stably isomorphic to \(\Cst_\red(\Gr,\LL)\), we conclude
  that \(\Cst_\red(\Gr,\LL)\) is also quasidiagonal.
\end{proof}

\begin{remark}
  If~\(\Gr\) is ample and~\(\B\) is the family of compact open
  bisections, then Theorem~\ref{thm:stably_finite_twisted} holds
  with \(S_\B(\Gr)\) replaced by~\(S(\Gr)\) (see
  Proposition~\ref{prop:type_vs_groupoid_semigroup}).  Also, if the
  equivalent conditions in this theorem hold, then
  \(S_\B(\Gr)\cong S(\Gr)\).  In this way,
  Theorem~\ref{thm:stably_finite_twisted} generalises the
  corresponding results in \cites{Boenicke-Li:Ideal,
    Rainone-Sims:Dichotomy}.
\end{remark}

In the stably finite case, the type semigroup is usually
complicated, especially if one takes~\(\OO\) to be the whole
topology (like in~\cite{Ma:Purely_infinite_groupoids}).  An
appropriate choice of~\(\OO\) may sometimes simplify things:

\begin{example}
  \label{Ex:irrational_rotation}
  Consider the irrational rotation algebra
  \(A_\theta\defeq \Cont(\T)\rtimes\Z\) for some
  \(\theta\notin \Q\), obtained from the action of~\(\Z\) on the
  unit circle~\(\T\) by
  \(x\to x \mathrm{e}^{2\pi \mathrm{i}\theta}\).  It is modeled by
  the transformation groupoid \(\Gr_{\theta}\defeq \T\rtimes \Z\).
  Let \(\OO\defeq \setgiven{(a,b)\subseteq \mathbb{T}}{a,b\in\Q}\)
  be the set of all arcs with rational endpoints.  If
  \(\alpha, \beta\in \OO\), then \(1_\alpha\precsim_\B 1_\beta\) if
  and only if \(\abs{\alpha}<\abs{\beta}\) or \(\alpha=\beta\).
  Thus \(\alpha\approx_\B\beta\) if and only if \(\alpha=\beta\),
  and so \(S_\B(\Gr_{\theta})=\F(\OO)\).  The order on
  \(S_\B(\Gr_{\theta})\), however, is induced by the length of arcs,
  as described above for generators.  In particular, for any nonzero
  \(\alpha\in \OO\) there is \(n\in \N\) with
  \(1\le n \abs{\alpha}\) and so
  \(1_{\T}\precsim_\B n 1_{\alpha}\).  Hence \(S_\B(\Gr_{\theta})\)
  is simple.  Up to scaling, there is a unique nontrivial state on
  \(S_\B(\Gr_{\theta})\), which is determined by
  \(\nu(1_{\alpha})=\abs{\alpha}\) for \(\alpha\in \OO\).  In
  accordance with Theorem~\ref{thm:stably_finite_twisted}, this
  state corresponds to the unique tracial state on
  \(A_\theta\cong \Cst_\red(\Gr_{\theta})\).
\end{example}

We now give a sample series of dichotomy results that follow from the above theorems.

\begin{corollary}
  \label{cor:dichotomy_for_topologically_free_groupoids}
  Let \((\Gr,\LL)\) be a twisted groupoid with minimal and
  topologically free~\(\Gr\).
  Let~\(\B\) be an inverse semigroup basis consisting of precompact,
  \(\sigma\)\nb-compact subsets that trivialise the twist~\(\LL\).
  Assume that there is no \(f\in \Cst_\red(\Gr,\LL)^+\setminus\{0\}\)
  for which \(\setgiven*{x\in X}{\E(f)(x)\neq 0}\) is meagre.
  Assume that the type semigroup~\(S_\B(\Gr)\) has plain paradoxes.
  Then \(\Cst_\red(\Gr,\LL)\) is simple and either purely infinite or
  stably finite.
  In the latter case, it admits a faithful semifinite lower
  semicontinuous trace, and even a tracial state if~\(X\) is compact.
\end{corollary}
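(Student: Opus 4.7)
The hypothesis on meagre sets is precisely the criterion of Proposition~\ref{prop:essential_reduced_coincide}, so the canonical quotient yields an isomorphism $\Cst_\red(\Gr,\LL) \cong \Cst_\ess(\Gr,\LL)$. Minimality of $\Gr$ forces $\emptyset$ and $X$ to be the only open $\Gr$-invariant subsets of $X$; from this I would read off that $\Gr$ is automatically residually topologically free (the only nonempty closed $\Gr$-invariant subset is $X$ itself, and there $\Gr$ is topologically free by hypothesis), and that $(\Gr,\LL)$ is trivially both exact and essentially exact. Then Proposition~\ref{prop:ideals_twisted_groupoids} forces $\Cst_\ess(\Gr,\LL)$, and hence $\Cst_\red(\Gr,\LL)$, to be simple. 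Since by assumption $\B$ consists of precompact bisections --- so every element of $\OO$ is precompact --- Corollary~\ref{cor:minimal_implies_simple} shows that the monoid $S_\B(\Gr)$ is itself simple.

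With these structural facts in hand, I would invoke the dichotomy of Corollary~\ref{cor:dichotomy_for_monoids} applied to the simple monoid $S_\B(\Gr)$, which has plain paradoxes by hypothesis: either $S_\B(\Gr)$ is purely infinite, or it admits a nontrivial state, and in the latter case Lemma~\ref{lem:simplicity_implies_faithfulness_of_states} ensures that this state is automatically faithful and finite. In the purely infinite case, Theorem~\ref{the:purely_infinite_semigroup_dichotomy} applies via its condition~\ref{enu:pure_infiniteness_groupoid1} (only two $\Gr$-invariant open subsets, so plain paradoxes alone suffice, and there is no need to assume almost unperforation) and yields that $\Cst_\red(\Gr,\LL)=\Cst_\ess(\Gr,\LL)$ is purely infinite. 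In the remaining case, Theorem~\ref{thm:stably_finite_twisted} is available because $\Cst_\red(\Gr,\LL)$ is simple and $\B$ has the required properties; the equivalence of its conditions~\ref{item:stably_finite_Cartan5}, \ref{item:stably_finite_Cartan2} and~\ref{item:stably_finite_Cartan1} then converts our nontrivial state into stable finiteness of $\Cst_\red(\Gr,\LL)$ together with a faithful semifinite lower semicontinuous trace on it.

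For the last assertion, I would observe that if $X$ is compact then $\OO$ being a basis closed under finite unions lets us cover $X$ by finitely many sets in $\OO$, so $X \in \OO$ and $[1_X] \in S_\B(\Gr)$ is a well-defined nonzero element on which the faithful finite state $\nu$ is finite. After normalising so that $\nu([1_X])=1$, the supremum $\sup\setgiven{\nu([1_U])}{U\in\OO}$ equals $1$, and Theorem~\ref{thm:Riesz_for_monoids}.\ref{it:a} together with~\ref{it:c} produces a faithful $\Gr$-invariant Borel probability measure on $X$, corresponding to a faithful tracial state on $\Cst_\red(\Gr,\LL)$. The main work in carrying out this plan is really just the bookkeeping of verifying the hypotheses of Theorems~\ref{the:purely_infinite_semigroup_dichotomy} and~\ref{thm:stably_finite_twisted}; no substantive new obstacle arises, since minimality collapses the ideal-theoretic assumptions and the conditions on $\B$ are exactly those stipulated in the hypothesis.
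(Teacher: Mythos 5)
Your proposal is correct and follows essentially the same route as the paper: reduce to $\Cst_\red(\Gr,\LL)=\Cst_\ess(\Gr,\LL)$ via Proposition~\ref{prop:essential_reduced_coincide}, get simplicity from Proposition~\ref{prop:ideals_twisted_groupoids} (the paper leaves implicit, as you spell out, that minimality makes residual topological freeness and essential exactness automatic), and then combine Corollary~\ref{cor:dichotomy_for_monoids} with Theorems~\ref{the:purely_infinite_semigroup_dichotomy} and~\ref{thm:stably_finite_twisted}. The only point worth flagging is that Theorem~\ref{thm:Riesz_for_monoids} requires a \emph{regular} state, so in your final paragraph you should pass to the regularisation first; this is already covered by the equivalences you invoke (condition~\ref{item:stably_finite_Cartan5.5b} of Theorem~\ref{thm:stably_finite_twisted}, resting on Lemma~\ref{lem:nontrivial_state_finitely_many_invariant}), so it is not a gap.
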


\begin{proof}
  By Propositions \ref{prop:essential_reduced_coincide}
  and~\ref{prop:ideals_twisted_groupoids},
  \(\Cst_\red(\Gr,\LL)=\Cst_\ess(\Gr,\LL)\) is simple.  Hence it is
  either purely infinite or stably finite by
  Corollary~\ref{cor:dichotomy_for_monoids} and Theorems
  \ref{the:purely_infinite_semigroup_dichotomy}
  and~\ref{thm:stably_finite_twisted}.
\end{proof}

\begin{corollary}
  \label{cor:dichotomy_for_universal}
  Assume that~\(\Gr\) is an \'etale groupoid such that~\(\Cst(\Gr)\)
  is simple and there is an inverse semigroup basis~\(\B\) of
  precompact \(\sigma\)\nb-compact bisections such
  that~\(S_\B(\Gr)\) has plain paradoxes.
  Then~\(\Cst(\Gr)\) is either purely infinite or stably finite.
\end{corollary}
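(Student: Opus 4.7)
The plan is to reduce the statement to the abstract dichotomy for ordered monoids, Corollary \ref{cor:dichotomy_for_monoids}, and then transfer back to the groupoid $\Cst$\nb-algebra via the stable finiteness and pure infiniteness criteria already proved. Since $\Cst(\Gr)$ is simple, the kernels of the canonical quotient maps $\Cst(\Gr) \onto \Cst_\red(\Gr) \onto \Cst_\ess(\Gr)$ are trivial as ideals, so all three algebras coincide. By Lemma \ref{lem:hereditary_subalgebras_essential}, simplicity of $\Cst_\ess(\Gr)$ forces $\Gr$ to be minimal, and then Corollary \ref{cor:minimal_implies_simple} shows that the type semigroup $S_\B(\Gr)$ is itself simple, thanks to the precompactness assumption on~$\B$.

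Since $S_\B(\Gr)$ is simple and has plain paradoxes, Corollary \ref{cor:dichotomy_for_monoids} gives the clean alternative: either $S_\B(\Gr)$ admits a nontrivial state, which is automatically faithful and finite by Lemma \ref{lem:simplicity_implies_faithfulness_of_states}, or $S_\B(\Gr)$ is purely infinite. In the first case, I invoke the implication \ref{item:stably_finite_Cartan5}$\Rightarrow$\ref{item:stably_finite_Cartan2} of Theorem \ref{thm:stably_finite_twisted}, applied with the trivial twist (which any $\B$ trivialises), to conclude that $\Cst(\Gr) = \Cst_\red(\Gr)$ is stably finite; the remaining hypotheses of that theorem are part of our setup.

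In the second case, I would apply Theorem \ref{the:purely_infinite_semigroup} to obtain pure infiniteness of $\Cst(\Gr) = \Cst_\ess(\Gr)$. Minimality of $\Gr$ implies that the only open invariant subsets of $X$ are $\emptyset$ and $X$, so condition \ref{enu:pure_infiniteness_groupoid1} of that theorem (only finitely many invariant open subsets) is automatic, essential exactness is vacuous, and residual topological freeness of $\Gr$ reduces to topological freeness. The main obstacle is therefore to derive topological freeness of $\Gr$ from the simplicity of $\Cst(\Gr) = \Cst_\ess(\Gr)$ alone. I expect this to follow from the essential simplicity criteria in \cite{Kwasniewski-Meyer:Essential}: a nonempty open subset of $\operatorname{Iso}(\Gr) \setminus X$ would produce a nonzero element of $\Cst_\ess(\Gr)$ in the kernel of the essential pseudo-expectation, contradicting the injectivity enforced by essential simplicity. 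Once topological freeness is in hand, Theorem \ref{the:purely_infinite_semigroup} closes the argument.
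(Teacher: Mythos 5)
Your argument is correct and follows the paper's route: the paper likewise deduces \(\Cst(\Gr)=\Cst_\red(\Gr)=\Cst_\ess(\Gr)\) together with minimality and topological freeness from simplicity (citing \cite{Kwasniewski-Meyer:Essential}*{Theorem~7.29} for the topological freeness step you flag as the main obstacle, and Lemma~\ref{lem:hereditary_subalgebras_essential} for minimality), and then concludes by invoking Corollary~\ref{cor:dichotomy_for_topologically_free_groupoids}. What you have written is essentially an unfolding of the proof of that intermediate corollary -- the same combination of Corollary~\ref{cor:dichotomy_for_monoids}, Theorem~\ref{thm:stably_finite_twisted}, and the pure infiniteness criterion, with the correct observations that minimality renders essential exactness, residual topological freeness, and the finiteness of the invariant-open-subset lattice automatic.
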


\begin{proof}
  If~\(\Cst(\Gr)\) is simple, then
  \(\Cst(\Gr)=\Cst_\red(\Gr)=\Cst_\ess(\Gr)\), and the groupoid is
  minimal and topologically free by
  \cite{Kwasniewski-Meyer:Essential}*{Theorem~7.29} and
  Lemma~\ref{lem:hereditary_subalgebras_essential}.  Hence the
  assertion is a special case of
  Corollary~\ref{cor:dichotomy_for_topologically_free_groupoids}.
\end{proof}

\begin{corollary}
  \label{cor:dichotomy_for_ample}
  Let~\(\Gr\) be a minimal, topologically free, ample groupoid.  Let
  \(\B\subseteq \Bis(\Gr)\) be an inverse semigroup basis consisting
  of compact, regular open subsets as in
  Proposition~\textup{\ref{prop:essential_reduced_coincide}}.
  Assume that the type semigroup~\(S_\B(\Gr)\) has plain paradoxes.
  Then \(\Cst_\red(\Gr,\LL)\) is simple for any
  twist~\(\LL\) over~\(\Gr\).  Either \(\Cst_\red(\Gr,\LL)\) is
  purely infinite for all twists~\(\LL\) or it is stably finite for
  all twists~\(\LL\).
\end{corollary}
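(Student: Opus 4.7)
The key observation is that the type semigroup $S_\B(\Gr)$ is defined in terms of $\Gr$ and $\B$ alone and is blind to the twist $\LL$. The plan is to reduce the statement, for each individual twist $\LL$, to Corollary~\ref{cor:dichotomy_for_topologically_free_groupoids}, and then to notice that the alternative selected by the dichotomy is dictated by $S_\B(\Gr)$ and hence is uniform in $\LL$.

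First I would verify that the hypotheses of Corollary~\ref{cor:dichotomy_for_topologically_free_groupoids} hold for every twist $\LL$ over $\Gr$. The bisections in $\B$ are compact, hence both precompact and $\sigma$\nb-compact; since $\Gr$ is ample and each $W\in\B$ is a compact Hausdorff subset of $\Gr$, the remark following Corollary~\ref{cor:from_type_semigroup_to_Cuntz_semigroup} gives that $\LL$ trivialises on each $W\in\B$. The regularity hypothesis on $\B$, together with the ample case of Proposition~\ref{prop:essential_reduced_coincide}, yields $\Cst_\red(\Gr,\LL)=\Cst_\ess(\Gr,\LL)$, which is equivalent to the nonmeagre-support condition in the hypothesis of Corollary~\ref{cor:dichotomy_for_topologically_free_groupoids}. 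The remaining hypotheses (minimality, topological freeness, plain paradoxes of $S_\B(\Gr)$) are given. Consequently, $\Cst_\red(\Gr,\LL)$ is simple and either purely infinite or stably finite for each individual twist.

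Second, I would argue that the alternative does not depend on $\LL$. Since $\Gr$ is minimal and elements of $\OO$ are compact (in particular precompact), Corollary~\ref{cor:minimal_implies_simple} gives that $S_\B(\Gr)$ is simple. Combined with plain paradoxes, Corollary~\ref{cor:dichotomy_for_monoids} produces the intrinsic dichotomy: either $S_\B(\Gr)$ admits a nontrivial state, or $S_\B(\Gr)$ is purely infinite. Neither condition mentions $\LL$. In the first case, the implication \ref{item:stably_finite_Cartan5}$\Rightarrow$\ref{item:stably_finite_Cartan2} in Theorem~\ref{thm:stably_finite_twisted} yields that $\Cst_\red(\Gr,\LL)$ is stably finite for every $\LL$. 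In the second case, minimality of $\Gr$ ensures that condition~\ref{enu:pure_infiniteness_groupoid1} of Theorem~\ref{the:purely_infinite_semigroup} holds (only the two $\Gr$\nb-invariant open subsets $\emptyset$ and $X$), so that theorem gives that $\Cst_\ess(\Gr,\LL)=\Cst_\red(\Gr,\LL)$ is purely infinite for every $\LL$.

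The main delicate point I anticipate is confirming the $\Cst_\red=\Cst_\ess$ identification from the regularity assumption on $\B$: the phrase ``as in Proposition~\ref{prop:essential_reduced_coincide}'' in the hypothesis is carrying the burden that every compact open subset of $\Gr$ (not only those in $\B$) is regular, so that the ample case of that proposition applies. Once this is granted, the rest is a routine assembly of preceding dichotomy and structural results, and the uniformity of the alternative in $\LL$ comes for free from the fact that $S_\B(\Gr)$ does not see the twist.
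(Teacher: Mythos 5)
Your proof is correct and follows essentially the same route as the paper: both hinge on the observations that the regularity hypothesis forces \(\Cst_\red(\Gr,\LL)=\Cst_\ess(\Gr,\LL)\), that minimality makes \(S_\B(\Gr)\) simple so that plain paradoxes yields the intrinsic dichotomy ``nontrivial state versus purely infinite'', and that this dichotomy is then transported to the \(\Cst\)\nb-algebras via Theorems \ref{the:purely_infinite_semigroup} and~\ref{thm:stably_finite_twisted}, uniformly in \(\LL\) because \(S_\B(\Gr)\) does not see the twist. The only cosmetic differences are that the paper cites Theorem~\ref{the:purely_infinite_semigroup_dichotomy} and Lemma~\ref{lem:ample_change_B} directly where you route through Corollary~\ref{cor:dichotomy_for_topologically_free_groupoids} and Corollary~\ref{cor:dichotomy_for_monoids}, and that you (rightly) make explicit the automatic trivialisation of \(\LL\) on compact bisections of an ample groupoid.
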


\begin{proof}
  Since~\(\B\) consists of compact, regular open subsets, every
  compact open subset in~\(\Gr\) is regular open.  Hence
  \(\Cst_\red(\Gr,\LL)=\Cst_\ess(\Gr,\LL)\) for any twist~\(\LL\) by
  Proposition~\ref{prop:essential_reduced_coincide}.  This is simple
  by Proposition~\ref{prop:ideals_twisted_groupoids}.  In view of
  Lemma~\ref{lem:ample_change_B}, the type semigroup \(S_\B(\Gr)\)
  remains unchanged if we replace~\(\B\) by another inverse
  semigroup basis consisting of compact open subsets.  By
  Theorem~\ref{the:purely_infinite_semigroup_dichotomy},
  \(\Cst_\red(\Gr,\LL)\) fails to be purely infinite for a
  twist~\(\LL\) if and only if \(S_\B(\Gr)\) admits a nontrivial
  state.  Then \(\Cst_\red(\Gr,\LL)\) admits a lower semicontinuous
  trace by Theorem~\ref{thm:stably_finite_twisted}.  Here the same
  case of the dichotomy occurs for all~\(\LL\) because \(S_\B(\Gr)\)
  does not depend on~\(\LL\).
\end{proof}

Finally, we phrase our dichotomy for simple  groupoid
\(\Cst\)\nb-algebras in terms of Cartan inclusions.  Let
\(A\subseteq B\) be a nondegenerate \emph{regular
  \(\Cst\)\nb-inclusion}, that is, \(A B= B\) and \(B\)  is
the closed linear span of the \emph{normalisers} of~\(A\):
\[
  \NN_A(B)\defeq\setgiven{b\in B}{ b A b^*\subseteq A \text{ and
    }b^* A b\subseteq A}.
\]
Assume also that \(A\cong\Cont_0(X)\) is commutative.
The inclusion
\(A\subseteq B\) is \emph{Cartan} if and only if there is a unique
faithful conditional expectation \(E\colon B\to A\) (see
\cite{Kwasniewski-Meyer:Cartan}*{Theorem~7.2 and Corollary~7.6}).
Then~\(A\) is a maximal abelian subalgebra in~\(B\).  After
tensoring with the standard diagonal \(c_0\) of \(\Comp\), \(A\otimes c_0\) is also a regular maximal abelian
subalgebra in \(B\otimes \Comp\).  Let \(\K(B)\) denote the Pedersen
ideal of~\(B\).  Then \(\K(A)=\Contc(X)\), \(\K(c_0)=c_{00}\) is the
ideal of  sequences with finite support, and \(\K(\Comp)=\Fin\) is the
ideal of finite rank operators.  Thus the inclusion
\(\K(A)\otimes \K(c_0)\subseteq \K(B)\otimes \K(\Comp)\) translates
to
\(\Contc(X\times \N)\cong \Contc(X,c_{00})\subseteq \K(B)\otimes
\Fin\).  The family
\[
  \NN_{\K(A)}^\mathrm{stab}(\K(B))
  \defeq \setgiven{b\otimes k} {b \in \NN_{A}(B),\ k \in \NN_{c_0}(\Comp) \text{ and }bb^*\otimes k^*k\in \K(A)\otimes \K(c_0)}
\]
consists of simple tensors in \(\K(B)\otimes \K(\Comp)\subseteq B\otimes \Comp\) that are normalisers of \(\K(A)\otimes \K(c_0)\cong \Contc(X\times \N)\) (in particular
\(bb^*\in \K(A)\) implies that \(b\in \K(A)B\subseteq \K(B)\)).

For
\(a,b\in (\K(A)\otimes \K(c_0))^+\cong\Contc(X\times \N)^+\), we write \(a\preceq b\) if for
every \(\varepsilon >0\) there are
\(b_1,\dotsc b_n\in \NN_{\K(A)}^\mathrm{stab}(\K(B))\) such that
\begin{equation}\label{eq:Cuntz_relation_Cartan}
  (a-\varepsilon)_+ \le \sum_{i=1}^n b_i^* b_i, \qquad
  \sum_{i=1}^n b_i b_i^* \le b
  \quad\text{ and } \quad
  b_j^*b_i=0\text{ for all }i\neq j.
\end{equation}

We also write \(a\approx b\) if \(a\preceq b\) and \(b\preceq a\).

\begin{proposition}
  \label{prop:Cartan_type_semigroup}
  Let \(A\subseteq B\) be a Cartan inclusion.  Let \((\Gr,\LL)\) be
  the twisted groupoid that models the inclusion \(A\subseteq B\)
  and let~\(\B\) be the inverse semigroup basis consisting of all
  \(\sigma\)\nb-compact and precompact bisections that trivialise
  the bundle~\(\LL\).  The relation~\(\approx\) defined above is an
  equivalence relation on \((\K(A)\otimes \K(c_0))^+\) and the
  corresponding quotient set
  \(W(A,B)=\setgiven{[a]}{a\in(\K(A)\otimes \K(c_0))^+}\) is
  isomorphic to \(S_\B(\Gr)\) as an ordered abelian monoid with the
  operations \([a]+[b]=[a\oplus b]\) and \([a]\preceq [b]\) if and
  only if \(a\preceq b\), for \(a,b\in \K(A)\otimes \K(c_0)\).
\end{proposition}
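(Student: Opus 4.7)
The plan is to realise $W(A,B)$ as the stabilised type semigroup $S_{\K(\B)}(\Gr\times\RR)$, which is isomorphic to $S_\B(\Gr)$ by Proposition~\ref{prop:stabilisation_vs_type_semigroup}. By the Kumjian--Renault reconstruction we identify $B\cong \Cst_\red(\Gr,\LL)$ and $A\cong \Cont_0(X)$; every normaliser $n\in\NN_A(B)$ with $nn^*\in \K(A)$ corresponds to a section $f_n\in \Contc(U_n,\LL)$ over a precompact bisection $U_n\in\Bis(\Gr)$ trivialising $\LL$. After tensoring with $\K(c_0)$, the set $\NN_{\K(A)}^{\mathrm{stab}}(\K(B))$ is in bijection with sections of $\LL\otimes\C$ supported on bisections in $\K(\B)$, while positive elements of $\K(A)\otimes\K(c_0)\cong \Contc(X\times\N)$ have open supports lying in $\K(\OO)$. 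The candidate isomorphism is $\Psi\colon [a]\mapsto [1_{\supp(a)}]\in S_{\K(\B)}(\Gr\times\RR)$.

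The heart of the argument is the equivalence
\[
  a\preceq b \quad\Longleftrightarrow\quad [1_{\supp(a)}]\precsim_{\K(\B)} [1_{\supp(b)}].
\]
Once this is established, $\Psi$ descends to a well-defined injective order-preserving map, which automatically makes $\approx$ an equivalence relation; surjectivity is immediate, and additivity $[a]+[b]=[a\oplus b]\mapsto [1_{\supp(a)}]+[1_{\supp(b)}]$ follows from the disjointness of shifted $\N$\nb-slots in $a\oplus b$. For the forward implication, fix $\varepsilon>0$ with witnesses $b_i$ supported on bisections $W_i\in\K(\B)$. A convolution computation in $\Sect(\Gr\times\RR,\LL\otimes\C)$ shows that the orthogonality $b_j^*b_i=0$ forces disjoint ranges $\rg(W_i)\cap \rg(W_j)=\emptyset$ (after replacing each $W_i$ by the open support of $b_i$); the two inequalities in the definition of $\preceq$ then yield $\supp((a-\varepsilon)_+)\subseteq \bigcup \s(W_i)$ and $\bigsqcup \rg(W_i)\subseteq \supp(b)$. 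Setting $c\defeq\sum 1_{W_i}\in \F(\K(\B))$ gives $1_{\supp((a-\varepsilon)_+)}\le \s_*(c)$ and $\rg_*(c)\le 1_{\supp(b)}$; since every $k\in\F(\K(\OO))$ with $k\ll 1_{\supp(a)}$ satisfies $k\le 1_{\supp((a-\varepsilon)_+)}$ for sufficiently small $\varepsilon$ (by compactness of $\cl{\supp(k)}\subseteq\supp(a)$ and continuity of $a$), this delivers $[1_{\supp(a)}]\precsim_{\K(\B)} [1_{\supp(b)}]$.

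The converse is the main obstacle. Given the groupoid-level comparison and $\varepsilon>0$, apply Proposition~\ref{pro:precsim_relation} to $K\defeq\cl{\supp((a-\varepsilon)_+)}$ to obtain bisections $W_1,\dots,W_N\in\K(\B)$ with $K\subseteq\bigcup \s(W_i)$ and disjoint $\bigsqcup\cl{\rg(W_i)}\subseteq \supp(b)$ compact. A partition of unity $(w_i)$ on $K$ subordinate to $\{\s(W_i)\}$ lets one mimic the construction in the proof of Lemma~\ref{lem:from_type_semigroup_to_Cuntz_semigroup}, defining sections $b_i$ on $W_i$ with $|b_i(\gamma)|^2=w_i(\s(\gamma))(a-\varepsilon)_+(\s(\gamma))$, yielding $\sum b_i^*b_i=(a-\varepsilon)_+$ and orthogonality by disjoint ranges. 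The delicate step is enforcing the pointwise inequality $\sum b_ib_i^*\le b$ rather than mere support containment; this requires exploiting the $\N$\nb-direction of $\K(\B)$ to subdivide each $W_i$ into finer precompact pieces targeted at distinct $\N$\nb-slots, and to rescale each section so that the transported density of $(a-\varepsilon)_+$ along $\theta_{W_i}$ is dominated by $b$ on the corresponding piece. Combined with the positive lower bound for $b$ on the compact set $\bigsqcup\cl{\rg(W_i)}\subseteq\supp(b)$, this produces normalisers witnessing $a\preceq b$, and the desired isomorphism $W(A,B)\cong S_{\K(\B)}(\Gr\times\RR)\cong S_\B(\Gr)$ follows.
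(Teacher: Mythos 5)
Your overall strategy is the same as the paper's: identify \(B\otimes\Comp\) with the \(\Cst\)\nb-algebra of the stabilised twisted groupoid, send a normaliser to its open support, and reduce everything to the equivalence \(a\preceq b\iff \supp(a)\precsim\supp(b)\). Your forward implication is correct, and you have put your finger on exactly the right spot in the converse: producing orthogonal normalisers satisfying the \emph{pointwise} inequality \(\sum_i b_ib_i^*\le b\). But your proposed repair --- subdividing the \(W_i\) into pieces aimed at distinct \(\N\)\nb-slots and rescaling --- cannot work. The orthogonality \(b_j^*b_i=0\) forces the ranges \(\rg(\supp b_i)\) to be pairwise disjoint subsets of the \emph{fixed} set \(\supp(b)\), so there are no spare slots to spread into; and since \(\sum_i b_i^*b_i\) and \(\sum_i b_ib_i^*\) are intertwined by the \(\theta_{W_i}\) (in particular have the same total mass against any invariant measure), rescaling to force \(\sum b_ib_i^*\le b\) destroys \((a-\varepsilon)_+\le\sum b_i^*b_i\). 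Concretely, take \(A=B=\C\), \(a=e_{00}\) and \(b=\tfrac12 e_{00}\) in \(c_{00}^+\): then \(\supp(a)=\supp(b)\), but any orthogonal family with \(\sum_i b_ib_i^*\le\tfrac12 e_{00}\) reduces to a single \(b_1=\lambda e_{0l}\) with \(\abs{\lambda}^2\le\tfrac12\), so \((a-\varepsilon)_+\le b_1^*b_1\) fails for every \(\varepsilon<\tfrac12\). Thus, with the pointwise reading of~\eqref{eq:Cuntz_relation_Cartan}, the equivalence you are trying to prove is actually false, and no construction can close this gap.

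The way out --- under which your argument goes through --- is to read the second condition in~\eqref{eq:Cuntz_relation_Cartan} as hereditary containment \(\sum_i b_ib_i^*\in \overline{b\,(\K(A)\otimes\K(c_0))\,b}\) (equivalently, containment of open supports), consistently with~\eqref{eq:algebraic_Cuntz_comparison}. With that reading your partition-of-unity construction, exactly as in Lemma~\ref{lem:from_type_semigroup_to_Cuntz_semigroup} and with no rescaling at all, already finishes the converse: take \(b_i\) with \(b_i^*b_i=w_i\cdot(a-\varepsilon)_+\), so that \(\sum_i b_i^*b_i=(a-\varepsilon)_+\) and the disjoint ranges land in \(\supp(b)\). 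For comparison, the paper does not isolate this step: it checks that \(n\mapsto\supp(n)\) is a surjective multiplicative, involutive map from the normalisers (enlarged by \(\K(A)\otimes\K(c_0)\), a bookkeeping step you omit) onto \(\widetilde{\K}(\B)\), and then asserts the equivalence of the two relations in one line --- so the difficulty you identified is genuinely glossed over there as well. A final small point: the support of a simple tensor \(n\otimes k\) has the form \(U\times V\) with \(V\) a finite bisection of \(\RR\), so it lies in \(\widetilde{\K}(\B)\) of Remark~\ref{remark:stabilisation_dynamical2} rather than in \(\K(\B)\); this is harmless by that remark, but your write-up should work with the former.
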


\begin{proof}
  Replacing \(\NN_{\K(A)}^\mathrm{stab}(\K(B))\) by
  \(\NN\defeq \NN_{\K(A)}^\mathrm{stab}(\K(B))\cup
  \bigl(\K(A)\otimes \K(c_0)\bigr)\) does not change the
  relation~\(\preceq \) on \(\K(A)\otimes \K(c_0))^+\).  Indeed,
  assume that~\eqref{eq:Cuntz_relation_Cartan} holds for
  \(b_1,\dotsc b_n\in \NN\).  Assume that~\(b_i\) for some~\(i\)
  belongs to \(\K(A)\otimes \K(c_0)=\Contc(X\times \N)\).  Then
  \(b_i=\sum_{k\in F}a_k\otimes 1_{k}\) for a finite subset
  \(F\subseteq \N\) and \(a_k\in \Contc(X)\) for \(k\in F\).  Let
  \(b_{i,k}\defeq a_k\otimes 1_{k} \) for \(k\in F\).  Then
  \(b_i^*b_i=b_ib_i^*=\sum_{k\in F} b_{i,k}^*b_{i,k}= \sum_{k\in F}
  b_{i,k} b_{i,k}^*\) and \(b_{i,k}^{(*)}b_{i,l}^{(*)}=0\) for
  \(k\neq l\).  Thus the relations~\eqref{eq:Cuntz_relation_Cartan}
  remain unchanged if we replace~\(b_i\) by the family of
  \(b_{i,k}\in \NN_{\K(A)}^\mathrm{stab}(\K(B))\) for all
  \(k\in F\).  The operations inherited from the \(\Cst\)-algebra
  \(B\otimes \Comp\) make~\(\NN\) a \Star{}semigroup.

  By assumption, \(B=\Cst_\red(\Gr,\LL)\) and~$\Gr$ has unit
  space~\(X\).  By Lemma~\ref{lem:stabilisation_groupoids},
  \(B\otimes \Comp\) is modelled by the stabilised twisted groupoid
  \((\Gr\times \RR,\LL\otimes \C)\) with the unit space
  \(X\times \N\).  Let~\(\B\) be the inverse semigroup basis as in
  the statement.  We consider the inverse semigroup basis
  \(\widetilde{\K}(\B)=\setgiven{U\times V\in\B\times
    \Bis(\RR)}{V\text{ is finite}}\cup \K(\OO)\) for
  \(\Gr \times \RR\) from Remark
  \ref{remark:stabilisation_dynamical2}, and we use the stabilised
  picture
  \(S_{\widetilde{\K}(\B)}(\Gr \times \RR) = \setgiven{[1_U]}{U\in
    \K(\OO)}/{\approx}_{\widetilde{\K}(\B)}\) of \(S_\B(\Gr)\) (see
  Proposition~\ref{prop:stabilisation_vs_type_semigroup}).  Then
  \(\widetilde{\K}(\B)\) consists of precompact,
  \(\sigma\)\nb-compact bisections of \(\Gr\times \RR\) that
  trivialise the bundle~\(\LL\otimes \C\).  There is a natural
  injective bounded linear map
  \(\Cst_\red(\Gr\times \RR,\LL\otimes \C) \subseteq
  \Cont_0(\Gr\times \RR,\LL\otimes \C)\) under which multiplication
  and involution are given by the same formulas as on
  \(\Contc(\Gr\times \RR,\LL\otimes \C)\) (see, for instance,
  \cite{Bardadyn-Kwasniewski-McKee:Banach_algebras_simple_purely_infinite}*{Proposition~3.15}).
  We claim that the map
  \[
    \varphi\colon \NN\to \widetilde{\K}(\B),\qquad
    n\mapsto \supp(n),
  \]
  is well defined and surjective.  Indeed, if \(b\in \NN_{A}(B)\)
  and \(b^*b\in \K(A)=\Contc(X)\), then \(\supp(b)\) is a
  \(\sigma\)-compact open bisection as an open support of a
  normaliser section.  It is also precompact because
  \(\s(\supp(b))=\supp (b^*b)\) is precompact.  Conversely, for any
  \(U\in \B\) there is a section \(b\in \Cont_0(U,\LL)\subseteq B\)
  with \(\supp(b)=U\) so that \(b\in \NN_{A}(B)\) and
  \(b^*b\in \K(A)\).  In particular, there is a surjection
  \[
    \setgiven{k\in \K(\Comp)}{k^*k\in \K(c_0)=\Fin} \onto
    \setgiven{V\in\Bis(\RR)}{V\text{ is finite}},\qquad
    k \mapsto \supp(k).
  \]
  Thus the map
  \(\NN_{\K(A)}^\mathrm{stab}(\K(B))\to \setgiven{U\times
    V\in\B\times \Bis(\RR)}{V\text{ is finite}} \) defined by
  \(b\otimes k \mapsto \supp(b\otimes k)=\supp(b)\times\supp(k)\) is
  surjective.  So is the map \(\K(A)\otimes \K(c_0)\to \K(\OO) \)
  defined by
  \( \sum_{k\in F}a_k\otimes 1_{k} \mapsto \supp\bigl(\sum_{k\in
    F}a_k\otimes 1_{k}\bigr) = \bigcup_{k\in F} \supp(a_k)\times
  \{k\}\).  This proves our claim.

  It is readily seen that the surjection~\(\varphi\) preserves
  multiplication and involution.  This implies that if
  \(a, b\in (\K(A)\otimes \K(c_0))^+=\Contc(X\times \N)^+\), then
  \(a\preceq b\) if and only if \(\supp(a)\precsim \supp(b)\) in the
  sense described in Remark~\ref{remark:stabilisation_dynamical}.
  Hence~\(\preceq\) is a preorder and the surjection
  \(\K(A)\otimes \K(c_0)\to \F(\K(\OO))/{\approx}\) with
  \(a\mapsto [1_{\supp(a)}]\) descends to an order-preserving
  bijection \(W(A,B)\cong S_\B(\Gr)\).  The addition in
  \(S_\B(\Gr)\) transfers to the addition on \(W(A,B)\) as in the
  assertion.
\end{proof}

\begin{corollary}
  \label{cor:Cartan_type_semigroup}
  Let \(A\subseteq B\) be a Cartan inclusion and let \(W(A,B)\) be
  the associated ordered abelian monoid described above.  Then~\(B\)
  is simple if and only if \(W(A,B)\) is simple.  Assume this.
  Then~\(B\) is stably finite if and only if \(W(A,B)\) has a
  nontrivial state.  If \(W(A,B)\) is purely infinite, then so
  is~\(B\), and the converse holds if \(W(A,B)\) has plain paradoxes.
\end{corollary}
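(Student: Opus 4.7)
The plan is to transfer each of the three claims across the isomorphism \(W(A,B)\cong S_\B(\Gr)\) of ordered monoids given by Proposition~\ref{prop:Cartan_type_semigroup} and then invoke the structural theorems of this section. The twisted groupoid \((\Gr,\LL)\) appearing there models the Cartan pair \(A\subseteq B\); it is \'etale, Hausdorff, and topologically principal, so in particular~\(\Gr\) is topologically free and \(\Cst_\red(\Gr,\LL)=\Cst_\ess(\Gr,\LL)=B\) (by Proposition~\ref{prop:essential_reduced_coincide}). The basis~\(\B\) is prescribed to consist of precompact, \(\sigma\)\nb-compact bisections trivialising~\(\LL\), which matches the standing hypotheses on~\(\B\) in Theorems~\ref{the:purely_infinite_semigroup}, \ref{the:purely_infinite_semigroup_dichotomy}, and~\ref{thm:stably_finite_twisted}.

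For the simplicity equivalence, I would first invoke Lemma~\ref{lem:hereditary_subalgebras_essential}, which forces minimality of~\(\Gr\) whenever~\(B\) is simple. Conversely, minimality leaves only \(\emptyset\) and~\(X\) as open \(\Gr\)\nb-invariant subsets, so residual topological freeness holds automatically and essential exactness is trivially true; Proposition~\ref{prop:ideals_twisted_groupoids} then excludes nontrivial ideals in~\(B\). Since \(\OO\) consists of precompact sets, Corollary~\ref{cor:minimal_implies_simple} identifies simplicity of \(S_\B(\Gr)\) with minimality of~\(\Gr\). Chaining these equivalences proves the first assertion.

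Now assume that~\(B\) is simple, equivalently that~\(\Gr\) is minimal. The stable finiteness characterisation is exactly the equivalence \ref{item:stably_finite_Cartan2}\(\Leftrightarrow\)\ref{item:stably_finite_Cartan5} in Theorem~\ref{thm:stably_finite_twisted}, whose hypotheses are now met. For the implication ``\(W(A,B)\) purely infinite \(\Rightarrow\) \(B\) purely infinite'', I would apply Theorem~\ref{the:purely_infinite_semigroup}: residual topological freeness and essential exactness come for free from minimality, and condition~\ref{enu:pure_infiniteness_groupoid1} (finitely many open invariant subsets) holds trivially. For the converse under the plain paradoxes assumption, Theorem~\ref{the:purely_infinite_semigroup_dichotomy} applies in the same way; crucially, its clause permitting plain paradoxes in place of almost unperforation in case~\ref{enu:pure_infiniteness_groupoid1} is precisely what is needed.

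The only real obstacle is bookkeeping: checking that minimality of~\(\Gr\) trivialises the ``residually topologically free'' and ``(essentially) exact'' hypotheses of the applied theorems (only \(\emptyset\) and~\(X\) are invariant), and verifying that the isomorphism \(W(A,B)\cong S_\B(\Gr)\) of Proposition~\ref{prop:Cartan_type_semigroup} preserves the relevant notions (simplicity, pure infiniteness, states), which it does by construction as an order-isomorphism of abelian monoids.
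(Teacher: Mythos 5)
Your proposal is correct and follows essentially the same route as the paper: transfer everything through the isomorphism \(W(A,B)\cong S_\B(\Gr)\) of Proposition~\ref{prop:Cartan_type_semigroup}, identify simplicity of the monoid with minimality of~\(\Gr\) (hence with simplicity of~\(B\), since the Weyl groupoid is Hausdorff and topologically free), and then quote Theorems~\ref{the:purely_infinite_semigroup} and~\ref{thm:stably_finite_twisted}. Your extra detour through Theorem~\ref{the:purely_infinite_semigroup_dichotomy} for the converse pure-infiniteness direction is a legitimate alternative to the paper's direct use of Theorem~\ref{thm:stably_finite_twisted} combined with Tarski's Theorem, but it is the same argument in substance.
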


\begin{proof}
  By Corollary~\ref{cor:minimal_implies_simple},
  \(W(A,B)\cong S_\B(\Gr)\) is simple if and only if~\(\Gr\) is
  minimal, which is, in turn, equivalent to \(B=\Cst_\red(\Gr,\LL)\)
  being simple because \(\Gr\) is topologically free and Hausdorff.
  The remaining part follows from Theorems
  \ref{the:purely_infinite_semigroup}
  and~\ref{thm:stably_finite_twisted}.
\end{proof}

For a \(\Cst\)\nb-inclusion \(A\subseteq B\), let
\(\mathbb{I}^B(A)\defeq\setgiven{J\cap A}{J \text{ is an ideal in }B}\)
denote the lattice of restricted ideals.  Recall that Kumjian's
\(\Cst\)\nb-diagonals~\cite{Kumjian:Diagonals} are special Cartan
inclusions.

\begin{corollary}
  \label{cor:diagonal_type_semigroup}
  Let \(A\subseteq B\) be a \(\Cst\)\nb-diagonal and let \(W(A,B)\)
  be the associated ordered abelian monoid.  Assume that~\(B\) is
  nuclear and either that \(\mathbb{I}^B(A)\) is finite or that projections
  in~\(A\) separate ideals in~\(\mathbb{I}^B(A)\).  If \(W(A,B)\) is purely
  infinite, then~\(B\) is purely infinite.  If \(W(A,B)\)
  is almost unperforated, then~\(B\) is purely infinite if and only
  if \(W(A,B)\) is purely infinite \textup{(}and otherwise it has a
  nontrivial lower semicontinuous trace\textup{)}.
\end{corollary}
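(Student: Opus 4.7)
The plan is to translate the corollary into groupoid language using Kumjian's theorem and then apply Theorem~\ref{the:purely_infinite_semigroup} together with the Tarski--R\o{}rdam machinery of Section~\ref{sec:ideals_states}.

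First, by Kumjian's theorem, the $\Cst$\nb-diagonal $A\subseteq B$ corresponds to a twisted principal Hausdorff \'etale groupoid $(\Gr,\LL)$ with $B\cong \Cst_\red(\Gr,\LL)$. Hausdorffness yields $\Cst_\red(\Gr,\LL)=\Cst_\ess(\Gr,\LL)=B$. Principality implies that every restriction $\Gr_C$ to a closed invariant $C\subseteq X$ is still free, so $\Gr$ is residually topologically free. Since $B$ is nuclear and $\Gr$ is Hausdorff \'etale, $\Gr$ is topologically amenable and hence exact; combined with Hausdorffness this gives essential exactness of $(\Gr,\LL)$. By Proposition~\ref{prop:Cartan_type_semigroup}, $W(A,B)\cong S_\B(\Gr)$ as ordered monoids, where $\B$ is the inverse semigroup basis of precompact $\sigma$\nb-compact bisections trivialising~$\LL$.

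Next I translate the two alternative hypotheses. By Proposition~\ref{prop:ideals_twisted_groupoids} (applicable because $\Gr$ is residually topologically free and essentially exact) and Lemma~\ref{lem:order_ideals}, the lattice $\mathbb{I}^B(A)$ of restricted ideals is naturally in bijection with the lattice of open $\Gr$\nb-invariant subsets of $X$, which is further in bijection with the regular ideals in $S_\B(\Gr)$. Projections in $A=\Cont_0(X)$ are exactly indicator functions of compact open subsets of $X$, and these belong to $\OO$ because the twist is trivial on $X$. Hence ``$\mathbb{I}^B(A)$ is finite'' becomes condition~\ref{enu:pure_infiniteness_groupoid1} of Theorem~\ref{the:purely_infinite_semigroup}, and ``projections separate the restricted ideals'' becomes condition~\ref{enu:pure_infiniteness_groupoid3}. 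Applying that theorem yields the first assertion: if $W(A,B)$ is purely infinite, then $B$ is purely infinite.

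For the second assertion, assume $W(A,B)$ is almost unperforated. The implication $W(A,B)$ purely infinite $\Rightarrow$ $B$ purely infinite is the first part. For the converse, since $\OO$ consists of $\sigma$\nb-compact sets, Corollary~\ref{cor:pure_infinite_implies_paradoxical} gives that $B$ purely infinite forces $S_\B(\Gr)\cong W(A,B)$ to be purely infinite. For the ``otherwise'' clause, suppose $B$ (equivalently $W(A,B)$) is not purely infinite; by Remark~\ref{rem:paradoxical_elements} almost unperforation implies plain paradoxes, so Corollary~\ref{cor:dichotomy_for_monoids} produces a nontrivial state on $W(A,B)$. Corollary~\ref{cor:regular_states_for_unperforated} promotes this to a nontrivial \emph{regular} state, and Theorem~\ref{thm:Riesz_for_monoids} then yields the desired nontrivial lower semicontinuous trace on $B=\Cst_\red(\Gr,\LL)$.

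The main obstacle is the passage from nuclearity of $B$ to essential exactness of $(\Gr,\LL)$: this rests on the Anantharaman-Delaroche correspondence between nuclearity of the reduced groupoid $\Cst$\nb-algebra and topological amenability of the groupoid in the Hausdorff \'etale setting. All other steps are direct applications of results already established earlier in the paper.
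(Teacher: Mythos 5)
Your proof is correct and follows essentially the same route as the paper: pass to the Kumjian--Renault groupoid model, check residual topological freeness, (essential) exactness via nuclearity/amenability, translate the ideal hypotheses into conditions \ref{enu:pure_infiniteness_groupoid1}/\ref{enu:pure_infiniteness_groupoid3} of Theorem~\ref{the:purely_infinite_semigroup}, and then invoke the pure-infiniteness and dichotomy machinery. The only cosmetic differences are that you obtain the correspondence between $\mathbb{I}^B(A)$ and open invariant subsets from Proposition~\ref{prop:ideals_twisted_groupoids} rather than the external Stone-duality reference the paper cites, and you unpack Theorem~\ref{the:purely_infinite_semigroup_dichotomy} into its ingredients (Corollaries \ref{cor:pure_infinite_implies_paradoxical}, \ref{cor:dichotomy_for_monoids}, \ref{cor:regular_states_for_unperforated} and Theorem~\ref{thm:Riesz_for_monoids}) instead of citing it wholesale.
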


\begin{proof}
  Since \(A=\Cont_0(X)\subseteq B\) is a \(\Cst\)\nb-diagonal, we may
  assume \(B=\Cst_\red(\Gr,\LL)\), where~\(\Gr\) is principal with
  unit space~\(X\).  By
  Theorem~\cite{Kwasniewski-Meyer:Stone_duality}*{Proposition~6.19},
  an ideal in \(\Cont_0(X)\) is restricted if and only if the
  underlying open subset in~\(X\) is \(\Gr\)\nb-invariant, that is,
  \[
    \mathbb{I}^B(A)=\setgiven{\Cont_0(U)}{U\text{ is open
        \(\Gr\)\nb-invariant}}
  \]
  Since~\(B\) is nuclear, \(\Gr\) is amenable.  Hence \((\Gr,\LL)\) is
  exact and residually topologically free.  Thus the assertion follows
  from Theorems \ref{the:purely_infinite_semigroup}
  and~\ref{the:purely_infinite_semigroup_dichotomy}.
\end{proof}

\section{Self-similar group actions on graphs and Exel--Pardo algebras}
\label{sec:self-similar}

We fix a \emph{self-similar action of a group~\(\Gamma\) on a directed
  graph} as defined in~\cite{Exel-Pardo:Self-similar}.  Thus we let
\(E=(E^0,E^1,\rg,\s)\) be a \emph{row-finite directed graph
  without sources}, and we assume that~\(\Gamma\) is a group acting
on~\(E\) by graph automorphisms and equipped with a restriction map
\(\Gamma\times E^1\ni (g, e)\mapsto g|_e \in \Gamma\) satisfying
\[
  (gh)|_e  = g|_{he}  h|_e, \qquad
  g|_e \s(e) = g \s(e) \qquad
  \text{ for all } g\in \Gamma, e\in E^1.
\]
Let~\(E^*\) be the set of finite paths and~\(E^\infty\) the set of
infinite paths in~\(E\).  The \(\Gamma\)\nb-action on~\(E\) extends
uniquely to \(\Gamma\)\nb-actions on \(E^*\) and~\(E^\infty\) such
that \(g (e \mu)=(g e) (g|_e \mu)\) for all \(g\in \Gr\),
\(e\in E^1\) and \(\mu \in E^*\cup E^\infty\) with
\(\s(e)=\rg(\mu)\) (see \cite{Exel-Pardo:Self-similar}*{Propositions
  2.4 and~8.1}).  We define an inverse semigroup
\[
  S_{\Gamma,E} \defeq \setgiven{(\alpha,g,\beta)\in E^*\times
    \Gamma\times E^*}
  {\s(\alpha)=gs(\beta)}\cup \{0\},
\]
with the operations
\begin{align*}
  (\alpha,g,\beta) (\gamma,h,\delta)
  &=
    \begin{cases}
      (\alpha(g\epsilon),g|_\epsilon h,\delta) &\text{if } \gamma=\beta\epsilon \\
      (\alpha,g(h^{-1}|_\epsilon)^{-1},\delta (h^{-1}\epsilon)),
                                               &\text{if } \beta=\gamma\epsilon,\\
      0 &\text{otherwise},
    \end{cases}\\
  (\alpha,g,\beta)^*
  &=(\beta,g^{-1},\alpha).
\end{align*}
In particular,
\((\alpha,g,\beta) (\beta\epsilon,h,\delta)=(\alpha (g\epsilon),
g|_{\epsilon}h, \delta)\).  This inverse semigroup acts naturally
on~\(E^\infty\), where the partial homeomorphism corresponding to an
element \((\alpha,g,\beta)\in S_{\Gamma,E}\) has the cylinder set
\(Z(\beta)\defeq \setgiven{\beta\eta}{\eta\in E^\infty}\) as its
domain and sends \(\beta\eta\) to \(\alpha (g\eta)\).  We let
\[
  \Gr\defeq S_{\Gamma,E}\ltimes  E^\infty
\]
be the associated transformation groupoid.  We denote the
appropriate equivalence class of $(\alpha, g ,\beta;\xi)$ by
$[\alpha,g,\beta; \xi]$.  Then
\[
  \Gr =\setgiven{[\alpha,g,\beta; \xi]}
  {(\alpha,g,\beta) \in S_{\Gamma,E} \text{ and } \xi \in Z(\beta)}
\]
with the multiplication
\[
  [\alpha,g,\beta;  \beta\xi] [\gamma, h,\delta; \delta \eta]
  = [(\alpha,g,\beta)(\gamma,h,\delta); \delta \eta]
\]
whenever $\beta \xi = \gamma(h\eta)$, and the inverse
$[\alpha,g,\beta;\beta\xi]^{-1} = [\beta, g^{-1}, \alpha;
\alpha(g\xi)]$.  A basis for an \'etale topology on~$\Gr$ is
generated by the compact open bisections
\[
  Z(\alpha,g,\beta) \defeq
  \setgiven{ [\alpha,g,\beta;\xi]}{\xi \in Z(\beta)}, \qquad
  (\alpha,g,\beta)\in S_{\Gamma,E}
\]
(see \cite{Exel-Pardo:Self-similar}*{Proposition~9.4}).  In
particular, the source and range of \(Z(\alpha,g,\beta)\) are
\(Z(\beta)\) and \(Z(\alpha)\), respectively.  The unit space
\(X= \setgiven{[\rg(\xi),1, \rg(\xi); \xi]}{ \xi \in E^\infty}\) is
naturally identified with~\(E^\infty\).

Since the range map \(\rg\colon E^\infty\to E^0\) is proper and
surjective, the composition operator
\[
  \tau(f)\defeq f\circ \rg, \qquad f\in \Contc(E^0,\N)
\]
is a well-defined injective monoid homomorphism
\(\tau\colon \Contc(E^0,\N)\to \Contc(E^{\infty},\N)\).  This
homomorphism is determined by the formula \(\tau(1_{\{w\}})=1_{Z(w)}\) for
all \(w\in E^0\).  We want to describe the pull-back of the
dynamical preorder~\(\precsim_\B\) on \(\Contc(E^{\infty},\N)\)
induced by the inverse semigroup~\(\B\) of compact open bisections
of~\(\Gr\), via the map~\(\tau\).  We will also describe the pull-back
of the equivalence relation~\(\sim_{\Gr}\) given
by~\eqref{eq:Rainone_Sims_equiv}, considered
in~\cite{Rainone-Sims:Dichotomy} (and stronger than the
relation~\(\approx_\B\) induced by~\(\precsim_\B\)).  To this end,
for any \(f\colon E^0\to \N\) and \(n>0\) we define
\(\Theta^n(f)\colon E^0\to \N\) by
\[
  \Theta^n(f)(v)\defeq\sum_{\lambda\in E^nv} f(\rg(\lambda)).
\]
This defines an additive map
\(\Theta^n\colon\Contc(E^0,\N)\to \Contc(E^0,\N)\) with
\[
  \Theta^n(1_{\{w\}})=\sum_{v\in E^0} \abs{w E^n v}1_{\{v\}}
  \qquad \text{for }w\in E^0.
\]

\begin{lemma}
  \label{lem:auxillaty_graph_preorder}
  For any \(f\in \Contc(E^0,\N)\),
  \begin{enumerate}
  \item \label{enu:auxillaty_graph_preorder1}%
    if \(n>0\), then \(\tau(f) \sim_{\Gr} \tau(\Theta^n(f))\);
  \item \label{enu:auxillaty_graph_preorder1.5}%
    if \(g\in \Gamma\) and \(g\cdot f(v)\defeq f(g^{-1}v)\), then  \(\tau(g\cdot f)\sim_{\Gr} \tau(f)\);
  \item \label{enu:auxillaty_graph_preorder2}%
    if \(\tau(f)=\sum_{i=1}^m 1_{Z(\beta_i)}\) for some
    \(\beta_i\in E^n\), then
    \(\Theta^n(f)=\sum_{i=1}^m 1_{\{\s(\beta_i)\}}\).
  \end{enumerate}
\end{lemma}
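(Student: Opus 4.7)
The plan is to prove the three parts separately. For \ref{enu:auxillaty_graph_preorder1} and \ref{enu:auxillaty_graph_preorder1.5}, additivity of~$\tau$, of~$\Theta^n$, and of the relation~$\sim_\Gr$ (witnesses add under juxtaposition) reduces matters to the case $f=1_{\{w\}}$ for a single vertex $w\in E^0$.

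For \ref{enu:auxillaty_graph_preorder1}, the key observation is the clopen partition $Z(w)=\bigsqcup_{\mu} Z(\mu)$, where $\mu$ ranges over length-$n$ paths with $\rg(\mu)=w$; this set is nonempty by the no-sources assumption and finite by row-finiteness.  For each such~$\mu$, the triple $(\s(\mu),1,\mu)$ lies in $S_{\Gamma,E}$ because $\s(\s(\mu))=\s(\mu)=1\cdot\s(\mu)$, and the associated compact open bisection $Z(\s(\mu),1,\mu)$ has source $Z(\mu)$ and range $Z(\s(\mu))$.  Setting $b\defeq\sum_{\mu} 1_{Z(\s(\mu),1,\mu)}\in\F(\B)$, I would verify that $\s_* b = \sum_\mu 1_{Z(\mu)} = 1_{Z(w)}=\tau(1_{\{w\}})$ follows from the partition, while $\rg_* b = \sum_\mu 1_{Z(\s(\mu))} = \tau(\Theta^n(1_{\{w\}}))$ is immediate from the definition of~$\Theta^n$.

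For \ref{enu:auxillaty_graph_preorder1.5}, the witness is more direct still: $g\cdot 1_{\{w\}}=1_{\{gw\}}$, the triple $(gw,g,w)$ lies in $S_{\Gamma,E}$ since $\s(gw)=gw=g\s(w)$, and the bisection $Z(gw,g,w)$ has source $Z(w)$ and range $Z(gw)$; hence $1_{Z(gw,g,w)}\in\F(\B)$ witnesses the equivalence.

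Part~\ref{enu:auxillaty_graph_preorder2} reduces to a direct unpacking.  The cylinders $\{Z(\beta):\beta\in E^n\}$ clopen-partition $E^\infty$, so the hypothesis $\tau(f)=\sum_{i=1}^m 1_{Z(\beta_i)}$ is equivalent to the numerical identity $f(\rg(\beta))=\abs{\{i:\beta_i=\beta\}}$ for every $\beta\in E^n$.  Evaluating at a vertex $v\in E^0$, the definition of $\Theta^n$ then gives
\[
  \Theta^n(f)(v)
  = \sum_{\lambda\in E^nv} f(\rg(\lambda))
  = \sum_{\lambda\in E^nv} \abs{\{i:\beta_i=\lambda\}}
  = \abs{\{i:\s(\beta_i)=v\}}
  = \Bigl(\sum_{i=1}^m 1_{\{\s(\beta_i)\}}\Bigr)(v).
\]
The only real difficulty throughout is bookkeeping: aligning the source/range conventions on finite paths, infinite paths and groupoid bisections, and verifying in each step that the triples used genuinely belong to $S_{\Gamma,E}$.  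Once those conventions are fixed, there is no substantive mathematical obstacle.
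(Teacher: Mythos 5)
Your proof is correct and follows essentially the same route as the paper: reduce \ref{enu:auxillaty_graph_preorder1} and \ref{enu:auxillaty_graph_preorder1.5} to $f=1_{\{w\}}$ by additivity, witness the equivalences with the bisections $Z(\s(\mu),1,\mu)$ (the paper uses their inverses $Z(\alpha_i,e,\s(\alpha_i))$) and $Z(gw,g,w)$, and unpack the definitions for \ref{enu:auxillaty_graph_preorder2}. The only cosmetic difference is in \ref{enu:auxillaty_graph_preorder2}, where you translate the hypothesis into the pointwise identity $f(\rg(\beta))=\abs{\setgiven{i}{\beta_i=\beta}}$ and compute directly on $E^0$, whereas the paper computes $\tau(\Theta^n(f))$ on $E^\infty$ and invokes injectivity of $\tau$; both amount to the same calculation.
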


\begin{proof}
  \ref{enu:auxillaty_graph_preorder1}: Since~\(\Theta^n\)
  and~\(\tau\) are additive and~\(\sim_{\Gr}\) respects addition, it
  suffices to consider \(f=1_{\{w\}}\) for a vertex \(w\in E^0\).
  We may enumerate paths in~\(w E^n\) as \(\{\alpha_i\}_{i=1}^m\),
  where \(m\defeq \abs{w E^n}<\infty\).  Then
  \(\Theta^n(1_{\{w\}})=\sum_{i=1}^m 1_{\{\s(\alpha_i)\}}\) and,
  therefore, using the bisections \(Z(\alpha_i,e,\s(\alpha_i))\),
  \(i,\dotsc, m\), we get
  \[
    \tau(\Theta^n(1_{\{w\}}))
    = \sum_{i=1}^m \tau(1_{\{\s(\alpha_i)\}})
    = \sum_{i=1}^m 1_{Z(\s(\alpha_i))}
    \sim_{\Gr} \sum_{i=1}^m 1_{Z(\alpha_i)}
    = 1_{Z(w)}
    = \tau(1_{\{w\}}).
  \]

  \ref{enu:auxillaty_graph_preorder1.5}: As
  in~\ref{enu:auxillaty_graph_preorder1}, we may assume that
  \(f=1_{\{w\}}\).  Then \(g\cdot f=1_{\{gw\}}\).  Hence using the
  bisection \(Z(gw,g,w)\) we get
  \[
    \tau(f)
    = 1_{Z(w)}\sim_{\Gr}1_{Z(gw)}
    = \tau (g\cdot f).
  \]

  \ref{enu:auxillaty_graph_preorder2}: Write an element
  in~\(E^\infty\) as~\(\lambda' \mu\) for some
  \(\lambda'\in E^nr(\mu)\) and \(\mu\in E^{\infty}\).  Then
  \(f(r(\lambda'))=\tau(f)(\lambda'\mu)=
  \abs{\setgiven{i}{\lambda'=\beta_i}}\).  Using this we get
  \begin{align*}
    \tau(\Theta^n(f))(\lambda\mu)
    &  = \Theta^n(f)(\rg(\lambda))
      = \sum_{\lambda'\in E^nr(\lambda)} f(\rg(\lambda'))
      = \sum_{\lambda'\in E^nr(\lambda)} \abs{\setgiven{i}{\lambda'=\beta_i}}
    \\
    &= \abs{\setgiven{i}{\s(\beta_i)
      = \rg(\lambda)}}
      = \sum_i 1_{Z(\s(\beta_i))}(\lambda\mu)
      = \tau\biggl(\sum_{i=1}^m 1_{\{\s(\beta_i)\}}\biggr)(\lambda\mu).
  \end{align*}
  Since~\(\tau\) is injective, we get
  \(\Theta^n(f)=\sum_{i=1}^m 1_{\{\s(\beta_i)\}}\).
\end{proof}

\begin{definition}
  For any \(f,\tilde{f}\in \Contc(E^0,\N)\) we write:
  \begin{enumerate}
  \item \(f\sim_{\Theta} \tilde{f}\) if there are \(p,q\in\N\) such
    that \(\Theta^p(f)=\Theta^q(\tilde{f})\);
  \item \(f\sim_{\Gamma} \tilde{f}\) if there are \(g\in \Gamma\)
    such that \(f(v)= \tilde{f}(gv)\) for all \(v\in E^0\);
  \item \(f\sim \tilde{f}\) if there are
    \(f_1,\dotsc,f_n,\tilde{f}_1\dots,\tilde{f}_n,\tilde{\tilde{f}}_1\dots,\tilde{\tilde{f}}_n\in\Contc(E^0,\N)\)
    such that
    \(f_i\sim_{\Gamma} \tilde{f}_i\sim_{\Theta}
    \tilde{\tilde{f}}_i\) for \(i=1,\dotsc,n\) and
    \(f\sim_{\Theta}\sum_{i=1}^nf_i\) and
    \(\sum_{i=1}^n\tilde{\tilde{f}}_i\sim_{\Theta} \tilde{f}\);
  \item \(f\precsim \tilde{f}\) if there are
    \(f_1,\dotsc,f_n,\tilde{f}_1\dots,\tilde{f}_n,\tilde{\tilde{f}}_1\dots,\tilde{\tilde{f}}_n,\tilde{\tilde{f}}\in\Contc(E^0,\N)\)
    such that
    \(f_i\sim_{\Gamma} \tilde{f}_i\sim_{\Theta}
    \tilde{\tilde{f}}_i\) for \(i=1,\dotsc,n\) and
    \(f\sim_{\Theta}\sum_{i=1}^nf_i\) and
    \(\sum_{i=1}^n\tilde{\tilde{f}}_i\le\tilde{\tilde{f}}\sim_{\Theta}
    \tilde{f}\).
  \item \(f\approx \tilde{f}\) if \(f\precsim \tilde{f}\) and \(\tilde{f}\precsim f\).
  \end{enumerate}
\end{definition}

\begin{proposition}
  \label{prop:pullbacks_of_relations_on_Exel_Pardo}
  Let~\(\B\) be the inverse semigroup of compact open bisections in
  the groupoid \(\Gr=S_{\Gamma,E}\ltimes E^\infty\).  For any
  \(f,\tilde{f}\in \Contc(E^0,\N)\), \(f\precsim \tilde{f}\) if and
  only if \(\tau(f)\precsim_\B\tau(\tilde{f})\).  Similarly,
  \(f\sim \tilde{f}\) if and only if
  \(\tau(f)\sim_\Gr\tau(\tilde{f})\) as
  in~\eqref{eq:Rainone_Sims_equiv}.
\end{proposition}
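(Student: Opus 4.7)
The plan is to prove both directions by exhibiting explicit witnesses, with Lemma~\ref{lem:auxillaty_graph_preorder} serving as the bridge between the graph-level relations $\sim_\Theta, \sim_\Gamma$ and the groupoid-level $\sim_\Gr$.

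The forward implications are formal. Starting from the data defining $f \precsim \tilde{f}$ (or $f \sim \tilde{f}$), I apply $\tau$ termwise: parts~\ref{enu:auxillaty_graph_preorder1} and~\ref{enu:auxillaty_graph_preorder1.5} of Lemma~\ref{lem:auxillaty_graph_preorder} turn each $\sim_\Theta$ and $\sim_\Gamma$ link into a $\sim_\Gr$ link, while any inequality $\sum \tilde{\tilde{f}}_i \le \tilde{\tilde{f}}$ pulls back to $\sum \tau(\tilde{\tilde{f}}_i) \le \tau(\tilde{\tilde{f}})$. Composing these yields $\tau(f) \precsim_\B \tau(\tilde{f})$ by transitivity of $\precsim_\B$, or $\tau(f) \sim_\Gr \tau(\tilde{f})$ when no inequality step appears (using that $\sim_\Gr$ is an equivalence relation respecting addition).

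For the converse of the $\precsim$ statement, the plan runs as follows. Since $\Gr$ is ample and $\tau(f)$ has compact open support, $\tau(f) \ll \tau(f)$ by Remark~\ref{rem:compact_in_FO}, so $\tau(f) \precsim_\B \tau(\tilde{f})$ provides $b = \sum_{j=1}^N 1_{Z(\alpha_j, g_j, \beta_j)} \in \F(\B)$ with $\tau(f) \le \s_* b$ and $\rg_* b \le \tau(\tilde{f})$. I would put $b$ into canonical form by two refinements. First, using $Z(\alpha, g, \beta) = \bigsqcup_{e} Z(\alpha \cdot ge, g|_e, \beta e)$ iteratively, I arrange $|\beta_j| = n$ for a common $n$; note that the defect $d_j \defeq |\alpha_j| - |\beta_j|$ is refinement-invariant. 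Then, since every summand of $\s_* b$ is supported on a cylinder $Z(\lambda)$ with $\lambda \in E^n$, I prune $b$ to a subsum $b'$ in which each $\lambda \in E^n$ appears as a $\beta_j$ exactly $f(\rg(\lambda))$ times. This enforces $\s_* b' = \tau(f)$ while preserving $\rg_* b' \le \tau(\tilde{f})$, and Lemma~\ref{lem:auxillaty_graph_preorder}.\ref{enu:auxillaty_graph_preorder2} yields $\Theta^n(f) = \sum_j 1_{\{\s(\beta_j)\}}$. Next, using the dual refinement $Z(\alpha, g, \beta) = \bigsqcup_{e} Z(\alpha e, (g^{-1}|_e)^{-1}, \beta \cdot g^{-1} e)$, I extend every $\alpha_j$ to a common length $m \defeq \max_j |\alpha_j|$, producing refined bisections indexed by paths $\delta \in \s(\alpha_j) E^{m - n - d_j}$.

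Summing the pointwise inequality $\rg_* b' \le \tau(\tilde{f})$ over paths in $E^m$ ending at a given vertex $v$, the identity $\s(\alpha_j) = g_j \s(\beta_j)$ built into $S_{\Gamma, E}$ should yield
\[
\sum_{j} \Theta^{m-n-d_j}\bigl(1_{\{g_j \s(\beta_j)\}}\bigr)(v) \le \Theta^m(\tilde{f})(v).
\]
Setting $F_j \defeq 1_{\{\s(\beta_j)\}}$, $\tilde{F}_j \defeq g_j \cdot F_j = 1_{\{\s(\alpha_j)\}}$, $h_j \defeq \Theta^{m-n-d_j}(\tilde{F}_j)$, and $\tilde{\tilde{f}} \defeq \Theta^m(\tilde{f})$, one obtains $F_j \sim_\Gamma \tilde{F}_j \sim_\Theta h_j$, $f \sim_\Theta \sum_j F_j$, $\tilde{\tilde{f}} \sim_\Theta \tilde{f}$, and $\sum_j h_j \le \tilde{\tilde{f}}$, which is precisely the data defining $f \precsim \tilde{f}$. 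The converse of the $\sim$ statement is parallel: the definition~\eqref{eq:Rainone_Sims_equiv} of $\sim_\Gr$ directly provides a single $b$ with $\s_* b = \tau(f)$ and $\rg_* b = \tau(\tilde{f})$, and the same two-step refinement preserves equalities throughout, yielding $\sum_j h_j = \tilde{\tilde{f}}$. The main technical obstacle is that source and range lengths of the bisections $Z(\alpha_j, g_j, \beta_j)$ cannot be adjusted independently under refinement, so the defects $d_j$ cannot be homogenised across $j$; the resolution is to let the $\Theta$-exponent $m - n - d_j$ depend on $j$, a flexibility the definition of $\precsim$ accommodates via its intermediate family $(\tilde{\tilde{f}}_i)$.
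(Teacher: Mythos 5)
Your proposal is correct and follows essentially the same route as the paper's proof: the forward direction by applying $\tau$ termwise via Lemma~\ref{lem:auxillaty_graph_preorder}, and the converse by extracting $b$ from the ample-case form of $\precsim_\B$, refining sources and ranges to common lengths $n$ and $m$, pruning to force $\s_*b'=\tau(f)$, and reading off the witnesses $1_{\s(\beta_j)}\sim_\Gamma 1_{\s(\alpha_j)}\sim_\Theta \Theta^{m-|\alpha_j|}(1_{\s(\alpha_j)})$ with $\Theta^m(\tilde f)$ as the dominating element, exactly as in the paper (where the exponents are written $q-\abs{\alpha_i}$). Your observation that the defect $\abs{\alpha_j}-\abs{\beta_j}$ is refinement-invariant, forcing $j$-dependent $\Theta$-exponents absorbed by the intermediate family in the definition of $\precsim$, is precisely the point the paper's argument handles the same way.
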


\begin{proof}
  We only show the first equivalence, as the proof readily
  simplifies to give the second one.  Assume that
  \(f\precsim \tilde{f}\) and let
  \(f_1,\dotsc,f_n,\tilde{f}_1,\dotsc,\tilde{f}_n,\tilde{\tilde{f}}_1\dotsc,
  \tilde{\tilde{f}}_n,\tilde{\tilde{f}}\in\Contc(E^0,\N)\) be
  elements witnessing that.  By \ref{enu:auxillaty_graph_preorder1}
  and~\ref{enu:auxillaty_graph_preorder1.5} of
  Lemma~\ref{lem:auxillaty_graph_preorder} and because~\(\tau\) is
  monotone, we get
  \begin{equation*}
    \tau(f)
    \sim_\Gr\tau \biggl(\sum_{i=1}^n\tilde{f}_i \biggr)
    = \sum_{i=1}^n\tau \bigl(\tilde{f}_i\bigr)
    \sim_\Gr \sum_{i=1}^n\tau\biggl(\tilde{\tilde{f}}_i\biggr)
    = \tau\biggl(\sum_{i=1}^n \tilde{\tilde{f}}_i\biggr)
    \le \tau\biggl(\tilde{\tilde{f}}\biggr)
    \sim_\Gr \tau\bigl(\tilde{f}\bigr).
  \end{equation*}
  Since both \(\sim_\Gr\) and~\(\le\) are stronger
  than~\(\prec_\B\), this implies that
  \(\tau(f) \precsim_\B \tau(\tilde{f})\).

  Conversely, assume that \(\tau(f)\precsim_\B\tau(\tilde{f})\).  By
  Definition~\ref{def:preorder_relation}, this means that there are
  \(b=\sum_i 1_{Z(\alpha_i,g_i,\beta_i)}\in \F(\B)=\Contc(\Gr,\N)\)
  (here \(\alpha_i,\beta_i\in E^*\), \(g_i\in \Gamma\) and
  \(\s(\alpha_i)=g_is(\beta_i)\) for each~\(i\)) such that
  \(\tau(f)\le \sum_i 1_{Z(\beta_i)}\) and
  \(\sum_i 1_{Z(\alpha_i)}\le \tau(\tilde{f})\).  By multiplying
  each \(Z(\alpha_i,g_i,\beta_i)\) with the support of~\(\tau(f)\),
  which is compact open, we arrange that
  \[
    \tau(f)=\sum_{i=1}^n 1_{Z(\beta_i)},
    \qquad \text{ and }\qquad
    \sum_{i=1}^n 1_{Z(\alpha_i)}\le \tau(\tilde{f}).
  \]
  Moreover, we may assume that all~\(\beta_i\) have the same length,
  say \(p\in \N\).  Indeed, putting
  \(p\defeq \max_{i=1,\dotsc,n} \abs{\beta_i}\) and enumerating paths
  in each finite set \(\s(\beta_i)E^{p-\abs{\beta_i}}\) as
  \(\{\gamma_{i,j}\}_j\) we get that \(\{g_i\gamma_{i,j}\}_j\)
  enumerate elements in \(\s(\alpha_i)E^{p-\abs{\beta_i}}\), and
  therefore \(1_{Z(\beta_i)}=\sum_j1_{\beta_i\gamma_{i,j}}\) and
  \(1_{Z(\alpha_i)}=\sum_j1_{\alpha_i g_i\gamma_{i,j}}\).  Thus it
  suffices to replace each~\(\beta_i\) by~\(\beta_i\gamma_{i,j}\),
  each~\(\alpha_i\) by~\(\beta_i g_i\gamma_{i,j}\), and~$g_i$
  by~$g_i|_{\gamma_{i,j}}$.  Then Lemma
  \ref{lem:auxillaty_graph_preorder}.\ref{enu:auxillaty_graph_preorder2}
  gives
  \[
    \Theta^p(f)=\sum_{i=1}^n 1_{\{\s(\beta_i)\}}.
  \]
  As \(g_is(\beta_i)=\s(\alpha_i)\), we get
  \(1_{\{\s(\beta_i)\}}\sim_{\Gamma} 1_{\{\s(\alpha_i)\}}\) for each~\(i\).
  Since \(\sum_{i=1}^n 1_{Z(\alpha_i)}\le \tau(\tilde{f})\), we may
  write
  \(\tau(\tilde{f})=\sum_{i=1}^n 1_{Z(\alpha_i)} +\sum_{i=n+1}^{n'}
  1_{Z(\alpha_i)}\) for some \(\alpha_i\in E^*\), for
  \(i=n+1,\dotsc, n'\).  Putting
  \(q\defeq \max_{i=1,\dotsc, n'} \abs{\alpha_i}\) and enumerating
  paths in each finite set \(\s(\alpha_i)E^{p-\abs{\alpha_i}}\) as
  \(\{\gamma_{i,j}\}_{j=1}^{n_i}\) we get
  \(1_{Z(\alpha_i)}=\sum_j 1_{Z(\alpha_i\gamma_{i,j})}\).  Then
  \(\tau(\tilde{f})=\sum_{i=1}^{n'}\sum_j
  1_{Z(\alpha_i\gamma_{i,j})}\), where each \(\alpha_i\gamma_{i,j}\)
  is in~\(E^q\).  Hence by
  Lemma~\ref{lem:auxillaty_graph_preorder}.\ref{enu:auxillaty_graph_preorder2}
  \[
    \Theta^q(\tilde{f})
    =  \sum_{i=1}^{n'}\sum_{j=1}^{n_i} 1_{\{\s(\gamma_{i,j})\}}
  \]
  For each~\(i\), we have
  \(\tau(1_{\{\s(\alpha_i)\}})= 1_{Z(\s(\alpha_i))}=\sum_{j=1}^{n_i}
  1_{Z(\gamma_{i,j})}\) where each~\(\gamma_{i,j}\) has length
  \(q-\abs{\alpha_i}\).  Hence
  Lemma~\ref{lem:auxillaty_graph_preorder}.\ref{enu:auxillaty_graph_preorder2}
  gives
  \[
    \Theta^{q-\abs{\alpha_i}}(1_{\{\s(\alpha_i)\}})
    =  \sum_{j=1}^{n_i} 1_{\{\s(\gamma_{i,j})\}}
  \]
  for each~\(i\).
  Thus we conclude that
  \(\sum_{i=1}^n\Theta^{q-\abs{\alpha_i}}(1_{\{\s(\alpha_i)\}}) =
  \sum_{i=1}^n\sum_{j=1}^{n_i} 1_{\{\s(\gamma_{i,j})\}}\le
  \Theta^q(\tilde{f})\).  Hence putting
  \(f_i\defeq 1_{\{\s(\beta_i)\}}\),
  \(\tilde{f}_i\defeq 1_{\{\s(\alpha_i)\}}\),
  \(\tilde{\tilde{f}}_i\defeq
  \Theta^{q-\abs{\alpha_i}}(1_{\{\s(\alpha_i)\}})\), and
  \(\tilde{\tilde{f}}\defeq\Theta^q(\tilde{f})\), we get elements
  that witness that \(f\precsim \tilde{f}\).
\end{proof}

\begin{corollary}
  \label{cor:type_semigroups_for_Exel_Pardo}
  The relation~\(\precsim\) is a preorder, while \(\approx\)
  and~\(\sim\) are equivalence relations on \(\Contc(E^0,\N)\).  The
  quotients
  \[
    W(\Gamma,E)\defeq \Contc(E^0,\N)/{\approx},
    \qquad
    V(\Gamma,E)\defeq \Contc(E^0,\N)/{\sim}
  \]
  are preordered monoids with the addition induced from
  \(\Contc(E^0,\N)\), partial order on \(W(\Gamma,E)\) induced
  by~\(\precsim\), and the algebraic preorder on \(V(\Gamma,E)\).  The
  map~\(\tau\) descends to isomorphisms
  \[
    W(\Gamma,E)\cong S_\B(\Gr),\qquad V(\Gamma,E)\cong S(\Gr)
  \]
  of ordered monoids. In particular, the semigroup
  $V(\Gamma,E)\cong S(\Gr)$ is the universal abelian monoid given by
  generators \(\{a_{v}: v\in E^0\}\) subject to relations
  \[
    a_v=\sum_{e\in \rg^{-1}(v)} a_{\s(e)}\quad \text{ and }\quad   a_v = a_{gv}
  \]
  for \(v\in E^0\) and \(g\in \Gamma\).  In addition,
  \(W(\Gamma,E)\cong S_\B(\Gr)\) is the quotient
  $\widetilde{V(\Gamma,E)}$ of $V(\Gamma,E)$ by the congruence
  generated by the algebraic preorder.
\end{corollary}

\begin{proof}
  The map~\(\tau\) descends to surjective maps onto \(S_\B(\Gr)\)
  and~\(S(\Gr)\) because for any \(\alpha\in E^*\), the bisection
  \(Z(\alpha, e, \s(\alpha))\) witnesses the equivalence
  \(1_{Z(\alpha)}\sim_{\Gr} 1_{Z(\s(\alpha))}\), and the latter span
  both variants of the type semigroup.  The injectivity claim follows
  from Proposition~\ref{prop:pullbacks_of_relations_on_Exel_Pardo} and
  the constructions of the ordered monoids \(S_\B(\Gr)\)
  and~\(S(\Gr)\).  The universal description of \(V(\Gamma,E)\)
  translates from the fact that \(\Contc(E^0,\N)\) is the universal
  abelian monoid generated by \(\{1_{\{v\}}: v\in E^0\}\) and
  that~\(\sim\) is, by construction, the smallest congruence on
  \(\Contc(E^0,\N)\) such that $1_{\{v\}}\sim \Theta(1_{\{v\}})$ and
  \(1_{\{v\}}\sim 1_{\{gv\}}\) for \(v\in E^0,\, g\in \Gamma\).  We
  have \(W(\Gamma,E)=\widetilde{V(\Gamma,E)}\) by Proposition
  \ref{prop:type_vs_groupoid_semigroup}.\ref{item:type_vs_groupoid_semigroup1}.
\end{proof}

\begin{remark}
  \label{rem:graph_type_semigroup}
  When the group \(\Gamma=\{e\}\) is trivial, the pair \((\Gamma,E)\)
  reduces to the graph~\(E\).  Then it is reasonable to denote the
  above ordered monoids as \(W(E)\) and~\(V(E)\), respectively.
  Then~\(V(E)\) is exactly the monoid considered
  in~\cite{Rainone-Sims:Dichotomy}*{Definition 8.3}, and according to
  the universal description in the last part of
  Corollary~\ref{cor:type_semigroups_for_Exel_Pardo}, \(V(E)\) is also
  the monoid~\(M_E\) considered
  in~\cite{Ara-Moreno-Pardo:Nonstable_K_for_Graph_Algs}.
  By~\cite{Ara-Moreno-Pardo:Nonstable_K_for_Graph_Algs}, the monoid
  \(V(E)=M_E\) is unperforated and is isomorphic to the monoid given
  by isomorphism classes of finitely generated projective modules over
  the corresponding Leavitt path algebra.  The semigroup~\(W(E)\) is
  often much smaller than \(V(E)\).
\end{remark}

\begin{example}
  \label{ex:type_semigroup_for_Cuntz_algebras}
  Assume \(E^0=\{v\}\) is a singleton.  Then
  \(E^1=\{e_1,\dotsc e_n\}\) is a finite set and self-similar actions
  of a group~\(\Gamma\) on~\(E\) correspond to classical
  self-similar actions of~\(\Gamma\) on the set~\(E^1\).  As noted above,
  the type semigroups do not depend on~\(\Gamma\) in this case.
  Using that \(\Theta(1_{\{v\}})=n 1_{\{v\}}\) one sees that
  \begin{align*}
    W(E)&=\{0, [1_{\{v\}}]_{\approx}\}\cong  \{0,\infty\},\\
    V(E)&=\{0\} \sqcup \setgiven{k [1_{\{v\}}]_{\sim}} {k=1,\dotsc,n-1}
          \cong \{0\}\sqcup \Z_{n-1}
  \end{align*}
  where \(\setgiven{k [1_{\{v\}}]_{\sim}} {k=1,\dotsc,n-1}\) is a cyclic
  group with generator \([1_{\{v\}}]_{\sim}\) and neutral element
  \((n-1)[1_{\{v\}}]_{\sim}\).  Both of these type semigroups are minimal
  and purely infinite.
\end{example}

We now show that the type semigroup of a self-similar action
\((E,\Gamma)\) is isomorphic to the type semigroup of the
graph~\(E_{\Gamma}\) constructed by Larki in
\cite{Larki:dichotomy_for_self-similar_graphs}*{Section~4}.  We let
\(E_{\Gamma}^0\defeq E^0/\Gamma=\setgiven{[v]}{v\in E^0}\) be the
quotient orbit space for the equivalence relation \(w\sim_\Gamma v\)
and we pick a transversal~$\Omega$ (a system of distinct
representatives) for their equivalence classes.  We put
\(E_{\Gamma}^1\defeq \bigcup_{v\in \Omega} \rg^{-1}(v)\subseteq E^1\)
and \(\rg_{\Gamma}(e)\defeq[r(e)]\), \(\s_{\Gamma}(e)\defeq[\s(e)]\)
for \(e\in E_{\Gamma}^1\).  Then
\(E_{\Gamma}\defeq (E_{\Gamma}^0,E_{\Gamma}^1,\rg_{\Gamma},
\s_{\Gamma})\) is a row-finite directed graph that up to an
isomorphism does not depend on the choice of the transversal $\Omega$,
see~\cite{Larki:dichotomy_for_self-similar_graphs}.  The quotient
graph~\(E_\Gamma\) coincides with~\(E\) whenever~\(\Gamma\) does not
move the vertices.

\begin{proposition}
  \label{prop:reducing_to_graphs}
  We have natural isomorphisms of preordered monoids
  \[
    W(\Gamma,E)\cong W(E_\Gamma), \qquad V(\Gamma,E)\cong V(E_\Gamma).
  \]
  In particular, these monoids are unperforated.
\end{proposition}

\begin{proof}
  Let~\(\Phi\) be the map
  \(\Contc(E^0,\N)\ni \sum_{v\in V} 1_{\{v\}}\mapsto \sum_{v\in V}
  1_{[v]}\in \Contc(E^0/\Gamma,\N)\). Clearly, it is additive and
  \(\Phi(1_{\{v\}})=\Phi(1_{\{gv\}})\) for \(g\in \Gamma, v\in E^0\).
  We claim that \(\Phi(\Theta(1_{\{v\}}))=\Theta(\Phi(1_{\{v\}}))\)
  for \(v\in E^0\).  Indeed, since~$g$ acts by automorphisms on~$E$,
  we have that $r^{-1} (g v) = \{ g e: e\in r^{-1}(v)\}$ and thus
  \[
    \Theta(1_{\{v\}})
    = \sum_{e\in \rg^{-1}(v)} 1_{\s(e)}
    = \sum_{f\in r^{-1}(gv)} 1_{s(f)}=\Theta(1_{\{gv\}}).
  \]
  It follows that we may assume that \(v\in \Omega\) is in the
  transversal chosen to construct~\(E_{\Gamma}\).  Then
  \(\Phi(\Theta(1_{\{v\}}))=\sum_{e\in \rg^{-1}(v)}
  1_{[\s(e)]}=\sum_{e\in \rg_{\Gamma}^{-1}(v)}
  1_{\s_{\Gamma}(e)}=\Theta(\Phi(1_{\{v\}}))\) by construction.  This
  shows the claim.  Now it follows that~$\Phi$ descends to a
  homomorphism \(V(\Gamma,E)\to V(E_\Gamma)\).  To see that it is
  injective, consider the well-defined additive map
  \[
    \Psi\colon \Contc(E^0/\Gamma,\N)\to V(\Gamma,E), \qquad
    \sum_{v\in V} 1_{[v]}\mapsto  [\sum_{v\in V} 1_{\{v\}}]
  \]
  The claim above implies that
  \(\Psi(\Theta(1_{[v]}))=\Psi(1_{[v]})\).  Thus~\(\Psi\) descends to
  a homomorphism \(V(E_\Gamma)\to V(\Gamma,E)\) that is inverse to the
  previous one.  Accordingly, \(V(\Gamma,E)\cong V(E_\Gamma)\).  This
  implies that
  \(W(\Gamma,E)=\widetilde{V(\Gamma,E)}\cong
  \widetilde{V(E_\Gamma)}\cong W(E_\Gamma)\).  The monoids are
  unperforated by Remark~\ref{rem:graph_type_semigroup} and
  Proposition
  \ref{prop:type_vs_groupoid_semigroup}.\ref{item:type_vs_groupoid_semigroup1.33}.
\end{proof}

\begin{definition}[see \cite{Exel-Pardo:Self-similar}*{Definitions
    13.4 and~14.1}]
  The \emph{base points} of a path
  \(\alpha=\alpha_1\alpha_2\cdots \in E^* \cup E^{\infty}\) are the
  vertices \(\rg(\alpha_1)\) and \(\s(\alpha_i)\) for all~\(i\).  A
  path has an \emph{entrance} if at least one of its base points
  receives at least two different edges.  A
  \emph{\(\Gamma\)\nb-path} from \(v\in E^0\) to \(w\in E^0\) is a
  pair \((\alpha,g)\in E^*\times \Gamma\) where \(v=\rg(\alpha)\)
  and \(\s(\alpha)=gw\).  If, in addition, \(v=w\) and~\(\alpha\) is
  nontrivial, we call~\(\alpha\) a
  \emph{\(\Gamma\)\nb-cycle}.  We call~\(E\)
  \(\Gamma\)\nb-\emph{cofinal} (or \emph{weakly
    \(\Gamma\)\nb-transitive}) if for every \(\mu\in E^\infty\) and
  every \(v\in E^0\) there is a \(\Gamma\)\nb-path from \(v\) to one of the
  base points of~\(\mu\).
\end{definition}

\begin{proposition}
  \label{prop:unperforation_for_minimal_exel_pardo}
  The following conditions are equivalent:
  \begin{enumerate}
  \item\label{enu:unperforation_for_graphs1}%
    the groupoid~\(\Gr\) associated to \((\Gamma,E)\) is minimal;
  \item\label{enu:unperforation_for_graphs2}%
    the groupoid~\(\Gr_{E_\Gamma}\) associated to~\(E\) is minimal;
  \item\label{enu:unperforation_for_graphs3}%
    the graph~\(E\) is \(\Gamma\)\nb-cofinal;
  \item\label{enu:unperforation_for_graphs4}%
    the graph~\(E_\Gamma\) is cofinal;
  \item\label{enu:unperforation_for_graphs5}%
    the type semigroup
    \(S_\B(\Gr)\cong W(\Gamma,E)\cong W(E_\Gamma)\) is simple;
  \item\label{enu:unperforation_for_graphs6}%
    the type semigroup \(S(\Gr) \cong V(\Gamma,E)\cong V(E_\Gamma)\)
    is simple.
  \end{enumerate}
  If these equivalent conditions hold, then the type semigroups in
  \ref{enu:unperforation_for_graphs5}
  and~\ref{enu:unperforation_for_graphs6}
  are purely infinite if~\(E_\Gamma\) has a cycle
  with an entrance, and they have a finite faithful state when ~\(E_\Gamma\) has no cycles.
\end{proposition}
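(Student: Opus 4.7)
The plan is to deduce the six equivalences from results already established in the section and then handle the dichotomy case by case, working primarily in $V(E/\Gamma)$ since its preorder is algebraic. Corollaries \ref{cor:type_semigroups_for_Exel_Pardo} and \ref{cor:reducing_to_graphs} identify $S_\B(\Gr)\cong W(\Gamma,E)\cong W(E/\Gamma)$ and $S(\Gr)\cong V(\Gamma,E)\cong V(E/\Gamma)$, so conditions \ref{enu:unperforation_for_graphs5} and~\ref{enu:unperforation_for_graphs6} may be tested on either side. Then \ref{enu:unperforation_for_graphs5}$\Leftrightarrow$\ref{enu:unperforation_for_graphs6} is Proposition~\ref{prop:type_vs_groupoid_semigroup}.\ref{item:type_vs_groupoid_semigroup1.66}; \ref{enu:unperforation_for_graphs1}$\Leftrightarrow$\ref{enu:unperforation_for_graphs5} and \ref{enu:unperforation_for_graphs2}$\Leftrightarrow$\ref{enu:unperforation_for_graphs6} follow from Corollary~\ref{cor:minimal_implies_simple} since the basic bisections $Z(\alpha,g,\beta)$ are compact open and in particular precompact. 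For \ref{enu:unperforation_for_graphs3}$\Leftrightarrow$\ref{enu:unperforation_for_graphs4} the key point is the bijection between $\Gamma$-paths in $E$ and paths in $E/\Gamma$ (a $\Gamma$-path from $u$ to $v$ projects to a path $[u]\to[v]$, and any path in $E/\Gamma$ lifts by translating one endpoint appropriately) combined with the observation that base points of $\mu\in E^\infty$ project to base points of $[\mu]\in(E/\Gamma)^\infty$ and every element of $(E/\Gamma)^\infty$ lifts to $E^\infty$. Finally, \ref{enu:unperforation_for_graphs2}$\Leftrightarrow$\ref{enu:unperforation_for_graphs4} is the classical graph-groupoid equivalence between minimality of $\Gr_{E/\Gamma}$ and cofinality of $E/\Gamma$, closing the chain.

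Assume the equivalent conditions hold and suppose $E/\Gamma$ admits a cycle $\alpha=\alpha_1\cdots\alpha_n$ at $v$ with an entrance: some base point $u=\rg(\alpha_i)$ satisfies $|uE^1|\ge 2$, so there is an edge $e\ne\alpha_i$ with $\rg(e)=u$. Extending $\alpha_1\cdots\alpha_{i-1}e$ by any path $\xi$ of length $n-i$ with $\rg(\xi)=\s(e)$ (which exists since $E/\Gamma$ has no sources) produces a length-$n$ path with range $v$ distinct from $\alpha$, so $\Theta^n(1_v)=1_v+g$ with $g\ne 0$. Lemma~\ref{lem:auxillaty_graph_preorder}.\ref{enu:auxillaty_graph_preorder1} yields the equation $[1_v]=[1_v]+[g]$ in $V(E/\Gamma)$, which iterates to $[1_v]=[1_v]+k[g]$ for every $k\ge 1$. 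Simplicity of $V(E/\Gamma)$ makes $[g]$ an order unit, so $[1_v]\le K[g]$ algebraically for some $K$, and then $[1_v]=[1_v]+K[g]\ge 2[1_v]$ shows $[1_v]$ is properly infinite. For an arbitrary vertex $w$, cofinality applied to an infinite path $\bar\mu$ with $\rg(\bar\mu)=w$ produces a finite path $\delta$ from $w$ to $v$ (concatenate a suitable prefix of $\bar\mu$ with the path provided by cofinality). Then $\Theta^{|\delta|}(1_w)=1_v+h$ for some $h\ge 0$, and so $[1_w]=[1_v]+[h]$ in $V(E/\Gamma)$; substituting the relation for $[1_v]$ yields $[1_w]=[1_w]+[g]$, and the same iteration argument shows $[1_w]$ is properly infinite. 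Since every nonzero element of $V(E/\Gamma)$ is a finite $\N$-linear combination of such $[1_w]$, the monoid $V(E/\Gamma)$ is purely infinite, and so is $W(E/\Gamma)\cong\widetilde{V(E/\Gamma)}$ by Proposition~\ref{prop:type_vs_groupoid_semigroup}.\ref{item:type_vs_groupoid_semigroup1.33}; plain paradoxes then hold vacuously.

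If no cycle in $E/\Gamma$ has an entrance, I claim $E/\Gamma$ must be either acyclic or consist of a single cycle. Indeed, if $\alpha$ is such a cycle at $v$, every cycle vertex $v_i$ satisfies $v_iE^1=\{\alpha_{i+1}\}$, so any path starting at $v_i$ is forced to follow $\alpha$; cofinality applied to $\alpha^\infty$ then forces every vertex of $E/\Gamma$ to lie on $\alpha$, and consequently every edge equals some $\alpha_i$. In the single-cycle case the relations collapse all $[1_{v_i}]$ to a common class, giving $W(E/\Gamma)\cong V(E/\Gamma)\cong\N$, which admits the obvious finite faithful state $k\mapsto k$. In the acyclic case, $\Gr_{E/\Gamma}$ is Hausdorff and condition~(L) is vacuous, so $\Cst_\red(\Gr_{E/\Gamma})=\Cst(E/\Gamma)$ is a simple AF algebra and hence stably finite; Theorem~\ref{thm:stably_finite_twisted} applied to $\Gr_{E/\Gamma}$ then furnishes a nontrivial finite faithful state on $S_\B(\Gr_{E/\Gamma})\cong W(E/\Gamma)$. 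In both subcases, plain paradoxes follow from Corollary~\ref{cor:dichotomy_for_monoids}.

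The main technical obstacle is propagating proper infiniteness from the distinguished cycle vertex $v$ to an arbitrary $w$ in the cycle-with-entrance case; simplicity alone is insufficient, as the example $\{0,1,\infty\}$ discussed after Corollary~\ref{cor:dichotomy_for_monoids} shows a simple monoid with a paradoxical but not properly infinite element. The crucial observation is that we obtain an actual equation $[1_v]=[1_v]+[g]$ rather than merely an inequality, and cofinality lets us transfer it verbatim to $[1_w]=[1_w]+[g]$ via the intermediate identity $[1_w]=[1_v]+[h]$, after which the iteration argument applies uniformly to every vertex.
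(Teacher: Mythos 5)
Your proof is correct, and for the dichotomy it follows the same basic strategy as the paper (reduce to the quotient graph via Corollary~\ref{cor:reducing_to_graphs}, trichotomise on cycles, use~\(\Theta\) to manufacture infinite elements, and use simplicity to upgrade). The differences are worth recording. For the six equivalences, the paper cites the Exel--Pardo--Starling minimality criterion for \ref{enu:unperforation_for_graphs1}\(\Leftrightarrow\)\ref{enu:unperforation_for_graphs3} and Larki's lemma for \ref{enu:unperforation_for_graphs3}\(\Leftrightarrow\)\ref{enu:unperforation_for_graphs4}; you instead route everything through the type semigroups, obtaining \ref{enu:unperforation_for_graphs1}\(\Leftrightarrow\)\ref{enu:unperforation_for_graphs2} from Corollary~\ref{cor:minimal_implies_simple} applied on both sides of the isomorphism \(W(\Gamma,E)\cong W(E/\Gamma)\), and proving \ref{enu:unperforation_for_graphs3}\(\Leftrightarrow\)\ref{enu:unperforation_for_graphs4} by hand via the path correspondence. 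This is a legitimate alternative; it trades two external citations for a short combinatorial argument, and the chain \ref{enu:unperforation_for_graphs1}\(\Leftrightarrow\)\ref{enu:unperforation_for_graphs5}\(\Leftrightarrow\)\ref{enu:unperforation_for_graphs6}\(\Leftrightarrow\)\ref{enu:unperforation_for_graphs2}\(\Leftrightarrow\)\ref{enu:unperforation_for_graphs4}\(\Leftrightarrow\)\ref{enu:unperforation_for_graphs3} does close up. In the cycle-with-entrance case the paper shows each \([1_w]\) is merely \emph{infinite} (via an infinite summand) and then invokes simplicity together with Lemma~\ref{lem:residually_infinite} to upgrade to proper infiniteness; you instead exploit that \([g]\) is an order unit to get \([1_v]=[1_v]+K[g]\ge 2[1_v]\) directly, which is equally valid and, as you note, is exactly the mechanism needed to avoid the \(\{0,1,\infty\}\) pitfall. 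Your handling of the cofinality step is also more careful than the paper's: you produce a path \(\delta\) with \(\rg(\delta)=w\) and \(\s(\delta)=v\) by prepending a prefix of an infinite path out of~\(w\), which is what the identity \(\Theta^{\abs{\delta}}(1_{\{w\}})=1_{\{v\}}+h\) actually requires, whereas the paper's phrasing applies cofinality to \(\alpha^\infty\) and obtains the path with the roles of source and range apparently reversed. Finally, your explicit justification that ``no cycle has an entrance'' plus cofinality forces either a single cycle or an acyclic graph fills in a case division the paper leaves implicit, and your extraction of a finite faithful state in both subcases addresses the last clause of the statement directly.
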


\begin{proof}
  The
  equivalence \ref{enu:unperforation_for_graphs3}\(\Leftrightarrow\)\ref{enu:unperforation_for_graphs4}
  is  \cite{Larki:dichotomy_for_self-similar_graphs}*{Lemma
    4.7.(2)}. The equivalence
  \ref{enu:unperforation_for_graphs1}\(\Leftrightarrow\)\ref{enu:unperforation_for_graphs3}
  holds by the extension of
  \cite{Exel-Pardo:Self-similar}*{Theorem~13.6} in
  \cite{Exel-Pardo-Starling:Self-similar}*{Theorem~4.3}.  This also
  gives, as a special case, the well-known equivalence
  \ref{enu:unperforation_for_graphs2}\(\Leftrightarrow\)\ref{enu:unperforation_for_graphs4}.
  We have
  \ref{enu:unperforation_for_graphs1}\(\Leftrightarrow\)\ref{enu:unperforation_for_graphs5}
  by Corollary~\ref{cor:minimal_implies_simple} and
  \ref{enu:unperforation_for_graphs5}\(\Leftrightarrow\)\ref{enu:unperforation_for_graphs6}
  by Proposition~\ref{prop:type_vs_groupoid_semigroup}.  This proves
  that
  \ref{enu:unperforation_for_graphs1}--\ref{enu:unperforation_for_graphs6}
  are equivalent.  For the second part of the assertion, it suffices
  to consider~\(W(E)\) for a
  cofinal graph~\(E\).   As then~\(V(E)\) has the same properties by
  Proposition~\ref{prop:type_vs_groupoid_semigroup},
  and we may apply all this to the graph \(E_{\Gamma}\) in
  \ref{enu:unperforation_for_graphs4} to get that the semigroups in
  \ref{enu:unperforation_for_graphs5}
  and~\ref{enu:unperforation_for_graphs6} have the corresponding properties.  Let
  then~\(E\) be a
  cofinal directed graph (row-finite without sources).

  If~\(E\) has no cycles, then~\(\Cst(E)\) is
  a simple AF-algebra (see
  \cite{Kumjian-Pask-Raeburn:Cuntz-Krieger_graphs}*{Theorem~2.4} and
  \cite{Bates-Pask-Raeburn-Szymanski:Row_finite}*{Proposition~5.1}).
  Hence it is stably finite, and~\(W(E)\) has a finite faithful state by
  Theorem~\ref{thm:stably_finite_twisted}.
  Now suppose
  that~\(E\) contains a cycle~\(\alpha\) with an entrance.  If
  \(w=\rg(\alpha)=\s(\alpha)\), then
  \[
    \Theta^{\abs{\alpha}}(1_{\{w\}})
    = \sum_{v\in E^0} \abs{wE^{\abs{\alpha}}v}1_{\{v\}}
    = 1_{\{w\}} +f_0 \quad\text{where } 0\neq f_0\in \Contc(E^0,\N).
  \]
  Hence~\([1_{\{w\}}]\) is infinite in~\(W(E)\).  Since every base point
  of~\(\alpha\) is a range of a cycle with an entrance, the same
  holds for any base point of~\(\alpha\).  Now let \(w\in E\) be
  arbitrary.  Applying that~\(E\) is cofinal to the infinite
  concatenation of the path~\(\alpha\), we see that there is
  a path~\(\beta\) that ends in~\(w\) and starts at some base
  point~\(v\) of~\(\alpha\).  Then
  \(\Theta^{\abs{\beta}}(1_{\{w\}})=1_{\{v\}} +f_0\) for some
  \(f_0\in \Contc(E^0,\N)\).  Hence \([1_{\{w\}}]=[1_{\{v\}}]+[f_0]\) is
  infinite in~\(W(E)\) as it has an infinite summand.  This implies
  that all nonzero elements in~\(W(E)\) are infinite.
  Since~\(W(E)\) is simple, this means that~\(W(E)\) is properly
  infinite (see
  Lemma~\ref{lem:residually_infinite}).
\end{proof}

\begin{definition}
  A \emph{graph trace} (see
  \cite{Hjelmborg:Purely_infinite_graphs}*{Definition~2.7}), is a
  map $T\colon E^0\to[0,\infty]$ such that
  \[
    T(v)=\sum_{\rg(e)=v} T(\s(e))
    \qquad \text{for every }v\in E^0.
  \]
  If, in addition, \(T\) is \(\Gamma\)\nb-invariant in the sense
  that \(T(v)=T(gv)\) for all \(g\in \Gamma\), \(v\in E^0\), we
  call~\(T\) a \emph{graph \(\Gamma\)\nb-trace} (see
  \cite{Larki:dichotomy_for_self-similar_graphs}*{Definition~4.11}).
  We call~\(T\) \emph{nontrivial} if \(0< T(v)<\infty\) for
  some \(v\in E^0\), \emph{faithful} if \(0< T(v)\) for all
  \(v\in E^0\), and \emph{normalised} if \(\sum_{v\in E^0} T(v)=1\).
\end{definition}

\begin{lemma}
  \label{lem:graph_traces_vs_states}
  There is a bijection between states~\(\nu\)
  on~\(W(\Gamma,E)\) and graph \(\Gamma\)\nb-traces~\(T\).  It is
  given by \(\nu([1_{\{v\}}])=T(v)\) for \(v\in E^0\).
\end{lemma}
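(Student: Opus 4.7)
The plan is to construct the bijection in both directions explicitly and check compatibility with the equivalence relation defining $W(\Gamma,E)=\Contc(E^0,\N)/{\approx}$.

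First, given a graph $\Gamma$-trace $T\colon E^0\to[0,\infty]$, define $\tilde\nu\colon\Contc(E^0,\N)\to[0,\infty]$ by $\tilde\nu(f)\defeq\sum_{v\in E^0}f(v)\,T(v)$. This is additive and $\le$-monotone by construction. I need to check that it respects the three building blocks of~$\precsim$. For $\sim_\Gamma$ the $\Gamma$-invariance of~$T$ gives $\tilde\nu(g\cdot f)=\sum_v f(g^{-1}v)T(v)=\sum_v f(v)T(gv)=\tilde\nu(f)$. For $\sim_\Theta$, the key computation is
\[
  \tilde\nu(\Theta^n(f))
  = \sum_{v}T(v)\sum_{\lambda\in E^nv}f(\rg(\lambda))
  = \sum_{w}f(w)\sum_{v}\abs{wE^nv}\,T(v),
\]
so I need the identity $T(w)=\sum_v \abs{wE^nv}T(v)$ for all $n\ge 0$; this follows by induction from the defining graph trace identity $T(w)=\sum_{e\in wE^1}T(\s(e))=\sum_v\abs{wE^1v}T(v)$. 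Therefore $\tilde\nu(\Theta^n(f))=\tilde\nu(f)$, which gives invariance under $\sim_\Theta$. Unwinding the definition of $\precsim$ then yields $\tilde\nu(f)\le\tilde\nu(\tilde f)$ whenever $f\precsim\tilde f$ via the chain of equalities and one inequality allowed by the decomposition. Hence $\tilde\nu$ descends to an order-preserving additive map $\nu\colon W(\Gamma,E)\to[0,\infty]$, that is, a state, with $\nu([1_{\{v\}}])=T(v)$.

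Conversely, given a state $\nu$ on $W(\Gamma,E)$, I would define $T(v)\defeq\nu([1_{\{v\}}])$ for $v\in E^0$. The $\Gamma$-invariance of~$T$ follows because for any $g\in\Gamma$ one has $1_{\{v\}}\sim_\Gamma 1_{\{gv\}}$, hence these represent the same class in $W(\Gamma,E)$ and $\nu$ takes the same value on both. The graph trace identity follows from the relation $1_{\{w\}}\sim_\Theta\Theta(1_{\{w\}})=\sum_v\abs{wE^1v}\,1_{\{v\}}$ (take $p=1$, $q=0$ in the definition of $\sim_\Theta$), combined with additivity of~$\nu$: $T(w)=\nu([1_{\{w\}}])=\sum_v\abs{wE^1v}\,T(v)=\sum_{e\in wE^1}T(\s(e))$.

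Finally, the two constructions are inverse to each other: starting from~$T$ and returning to $\nu([1_{\{v\}}])=T(v)$ is immediate from the definition; conversely, starting from~$\nu$ and then forming~$\tilde\nu$ from the associated trace, both maps are additive on $\Contc(E^0,\N)$ and agree on each generator $[1_{\{v\}}]$, so they coincide on all of $W(\Gamma,E)$. The only subtle point in the whole argument is verifying that the iterated graph trace identity $T(w)=\sum_v\abs{wE^nv}T(v)$ is the exact numerical counterpart of $\Theta$-invariance, and this is what makes the bijection work cleanly; the rest is bookkeeping with the definitions of $\sim_\Gamma$, $\sim_\Theta$ and~$\precsim$.
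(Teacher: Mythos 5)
Your proof is correct, and its core --- the formulas \(T(v)=\nu([1_{\{v\}}])\) and \(\tilde\nu(f)=\sum_v f(v)T(v)\), together with the observation that the graph trace identity is exactly \(\Theta\)\nb-invariance of~\(\tilde\nu\) --- coincides with the paper's. The one structural difference is how the group is handled: the paper first invokes Corollary~\ref{cor:reducing_to_graphs} to replace \(W(\Gamma,E)\) by \(W(E/\Gamma)\) and thereby reduces to trivial~\(\Gamma\), whereas you keep~\(\Gamma\) in play and verify \(\sim_\Gamma\)\nb-invariance of~\(\tilde\nu\) directly from the \(\Gamma\)\nb-invariance of~\(T\). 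Your route is slightly longer but self-contained, and it has the side benefit of making explicit the check --- which the paper compresses into ``hence \(\nu\) descends to a state'' --- that \(\tilde\nu\) is compatible with the full preorder~\(\precsim\) (the chain \(f\sim_\Theta\sum f_i\), \(f_i\sim_\Gamma\tilde f_i\sim_\Theta\tilde{\tilde f}_i\), \(\sum\tilde{\tilde f}_i\le\tilde{\tilde f}\sim_\Theta\tilde f\)), not merely with~\(\sim_\Theta\); you also record the easy verification that the two constructions are mutually inverse, which the paper omits. One nitpick: your use of \(\sim_\Theta\) with ``\(p=1\), \(q=0\)'' presumes \(\Theta^0=\mathrm{id}\), which the paper only defines for \(n>0\); taking \(p=2\), \(q=1\) avoids this and changes nothing else.
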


\begin{proof}
  For any state~\(\nu\) on~\(W(\Gamma,E)\)
  the formula \(T(v)\defeq \nu([1_{\{v\}}])\) for \(v\in E^0\)
  defines a graph \(\Gamma\)-trace because
  \(T(v)
  =\nu([1_{\{v\}}])=\nu([\Theta(1_{\{v\}})])=\nu([\sum_{\rg(e)=v}
  1_{\{\s(e)\}}])=\sum_{\rg(e)=v}\nu([
  1_{\{\s(e)\}}])=\sum_{\rg(e)=v}T(\s(e))\) and \(T(gv)=\nu([1_{\{gv\}}])=\nu([1_{\{v\}}])=T(v)\).
  Conversely, for any graph trace~\(T\), we define an additive map
  \(\nu\colon \Contc(E^0,\N) \to [0,\infty]\) by
  \(\nu(f)\defeq \sum_{v\in E^0} f(v) T(v)\).  Then
  \begin{align*}
    \nu(\Theta(f))
    &=\sum_{v\in E^0} \Theta(f)(v) T(v)
      = \sum_{v\in E^0}\sum_{\s(e)=v} f(\rg(e)) T(v)
      = \sum_{e\in E^1} f(\rg(e)) T(\s(e))
    \\
    &=\sum_{v\in E^0}\sum_{\rg(e)=v} f(v) T(\s(e))
      = \sum_{v\in E^0} f(v) T(v)=\nu(f)
  \end{align*}
  and \( \nu(1_{\{gv\}})=T(gv)=T(v)= \nu(1_{\{v\}})\).
  Hence~\(\nu\) descends to a state on~\(W(E)\).  This shows the
  asserted bijection.
\end{proof}

We now apply these results to Exel--Pardo algebras (see
\cite{Exel-Pardo:Self-similar}*{Definition~3.2},
\cite{Exel-Pardo-Starling:Self-similar}*{Definition~1.2}).

\begin{definition}
  The \emph{Exel--Pardo \(\Cst\)\nb-algebra} \(\OO_{(\Gamma,E)}\) is
  the universal \(\Cst\)\nb-algebra with generators
  \(\setgiven{p_x}{x\in E^0}\cup\setgiven{\s_e}{e\in E^1}\cup
  \setgiven{u_g}{g\in \Gamma}\) where
  \(\setgiven{p_x}{x\in E^0}\cup\setgiven{\s_e}{e\in E^1}\) is a
  Cuntz--Krieger \(E\)\nb-family, \(\setgiven{u_g}{g\in \Gamma}\) is a
  unitary representation of~\(\Gr\) and
  \[
    u_g \s_e = \s_{ge} u_{g|e}, \qquad
    u_gp_xu_g^* = p_{gx}  \qquad
    \text{for all } g\in \Gamma, e\in E^1, x\in E^0.
  \]
\end{definition}

As shown in \cite{Exel-Pardo:Self-similar}*{Proposition~11.1} the
elements \(\setgiven{p_x}{x\in E^0}\cup\setgiven{\s_e}{e\in E^1}\)
generate a copy of the graph \(\Cst\)\nb-algebra \(\Cst(E)\) inside
\(\OO_{(\Gamma,E)}\).  However, usually
\(\setgiven{p_x}{x\in E^{(0)}}\cup \setgiven{u_g}{g\in \Gamma}\)
generates only a homomorphic image of the crossed product
\(\Cont(E^0)\rtimes \Gamma\).  Even more,
\cite{Exel-Pardo:Self-similar}*{Corollary~6.4} implies a natural
isomorphism
\[
  \OO_{(\Gamma,E)}\cong \Cst(\Gr),
  \qquad\text{where }
  \Gr=S_{\Gamma,E}\ltimes  E^\infty.
\]
The canonical diagonal subalgebra
\(\mathcal{D}_E\cong \Cont_0(E^\infty)\) of \(\Cst(E)\) sits
in~\(\OO_{(\Gamma,E)}\) as the algebra of functions on the unit
space of its canonical groupoids model.  In particular, we have a
canonical generalised expectation
\(\E\colon \OO_{(\Gamma,E)}\to \Borel(E^\infty)\).

\begin{proposition}
  \label{prop:tracial_states_on_Exel-Pardo}
  Let~\(\Gamma\) be a group acting self-similarly on a row-finite
  graph \(E=(E^0,E^1,\rg,\s)\).  The equalities
  \(\tau(p_v)=T(v)= [T]([v])=[\tau](p_{[v]})\) for \(v\in E^0\)
  establish bijections between the following objects:
  \begin{enumerate}
  \item \label{it:EP_traces1}%
    lower semicontinuous traces~\(\tau\) on~\(\OO_{(\Gamma,E)}\)
    that factor through~\(\E\) \textup{(}see
    Proposition~\textup{\ref{prop:states_and_traces})};
  \item \label{it:EP_traces2}%
    graph \(\Gamma\)\nb-traces \(T\colon E^0 \to [0,\infty]\);
  \item \label{it:EP_traces3}%
    graph traces \([T]\colon E^0/\Gamma \to [0,\infty]\) for the
    quotient graph~\(E_{\Gamma}\);
  \item \label{it:EP_traces4}%
    lower semicontinuous traces~\([\tau]\) on~\(\Cst(E_\Gamma)\)
    that factor through the canonical expectation from
    \(\Cst(E_\Gamma)\) onto its diagonal subalgebra
    \(\mathcal{D}_{E_\Gamma}\cong \Cont_0(E_\Gamma^\infty)\).
  \end{enumerate}
  If all orbits \(\Gamma v\) for \(v\in E^0\) are finite, then also the
  equalities
  \[
    \tau(p_v)
    = T(v)
    = \abs{\Gamma v}^{-1}[T]([v])
    = \abs{\Gamma v}^{-1}[\tau](p_{[v]})
  \]
  for \(v\in E^0\) establish bijections between the above objects.  In
  addition, these bijections restrict to bijections between tracial
  states and normalised graph traces.
\end{proposition}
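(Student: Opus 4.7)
The overall strategy is to chain together \textup{Theorem~\ref{thm:Riesz_for_monoids}}, \textup{Corollary~\ref{cor:type_semigroups_for_Exel_Pardo}}, \textup{Lemma~\ref{lem:graph_traces_vs_states}}, and \textup{Corollary~\ref{cor:reducing_to_graphs}}: traces factoring through $\E$ correspond to regular $\Gr$\nobreakdash-invariant Borel measures on $E^\infty$, which become regular states on $S_\B(\Gr)$, which translate into states on $W(\Gamma,E)$ via $S_\B(\Gr)\cong W(\Gamma,E)$, and finally into graph $\Gamma$\nobreakdash-traces on $E^0$. The same pipeline applied to $E/\Gamma$ regarded as an ordinary graph (i.e., with trivial self-similar action) handles the $(3)\leftrightarrow(4)$ bijection.

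For $(1)\leftrightarrow(2)$ I would first note that the groupoid $\Gr=S_{\Gamma,E}\ltimes E^\infty$ is ample, so the inverse semigroup basis $\B$ of compact open bisections has $\OO$ consisting of compact open subsets, and by \textup{Remark~\ref{rem:regular_states}} every state on $S_\B(\Gr)$ is automatically regular. A lower semicontinuous trace on $\OO_{(\Gamma,E)}\cong\Cst(\Gr)$ factoring through $\E$ vanishes on $\Null_E$ and hence descends to $\Cst_\red(\Gr)$, so \textup{Theorem~\ref{thm:Riesz_for_monoids}} supplies the bijection with states $\nu$ on $S_\B(\Gr)$. Composing with the isomorphism $[1_{Z(v)}]\mapsto[1_v]$ from \textup{Corollary~\ref{cor:type_semigroups_for_Exel_Pardo}} and the identification $T(v)=\nu([1_v])$ from \textup{Lemma~\ref{lem:graph_traces_vs_states}}, and using that $p_v=1_{Z(v)}$ inside $\Cst(E)\subseteq\OO_{(\Gamma,E)}$, one reads off $\tau(p_v)=T(v)$. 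The link $(2)\leftrightarrow(3)$ is then supplied by \textup{Corollary~\ref{cor:reducing_to_graphs}}: the isomorphism $W(\Gamma,E)\cong W(E/\Gamma)$ transports states, and via \textup{Lemma~\ref{lem:graph_traces_vs_states}} on both sides this becomes the identification $T(v)=[T]([v])$.

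For the alternative chain, assuming every orbit $\Gamma v$ is finite, I would repackage a graph $\Gamma$\nobreakdash-trace $T$ by the orbit sum $[T]([v])\defeq \sum_{w\in\Gamma v}T(w)=|\Gamma v|\,T(v)$, using the $\Gamma$\nobreakdash-invariance of $T$. One then verifies by an orbit-counting argument (partitioning the edges of $E$ with range lying in $\Gamma v$ into $\Gamma$\nobreakdash-orbits and applying the orbit--stabiliser formula to both vertices and edges) that the resulting $[T]$ satisfies the graph trace identity on $E/\Gamma$, and conversely that every graph trace on $E/\Gamma$ arises this way from a unique $\Gamma$\nobreakdash-invariant $T$. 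The last claim about tracial states and normalised graph traces then follows from the two identities $\sum_{v\in E^0}T(v)=\tau(\mathbf{1})$ and $\sum_{[v]\in E^0/\Gamma}[T]([v])=\sum_{v\in E^0}T(v)$, which hold in the orbit-sum normalisation because the factors $|\Gamma v|$ cancel exactly.

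The main obstacle is the orbit-counting step for the second bijection: one must carefully match the action of vertex stabilisers on the incoming edges with the passage to $\Gamma$\nobreakdash-orbits of edges in $E/\Gamma$, and the $|\Gamma v|^{-1}$ rescaling factors must appear as precisely the right cancellations for the graph trace identity to transport across the quotient map. Everything else is essentially bookkeeping on top of the earlier theorems.
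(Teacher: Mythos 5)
Your main pipeline is exactly the paper's: traces factoring through \(\E\) correspond to regular states on \(S_\B(\Gr)\) by Theorem~\ref{thm:Riesz_for_monoids} (with regularity automatic since \(\Gr\) is ample), these become states on \(W(\Gamma,E)\) via Corollary~\ref{cor:type_semigroups_for_Exel_Pardo}, and then graph \(\Gamma\)\nb-traces via Lemma~\ref{lem:graph_traces_vs_states}; the bijection \ref{it:EP_traces3}\(\leftrightarrow\)\ref{it:EP_traces4} is the special case of a trivial group, and \ref{it:EP_traces2}\(\leftrightarrow\)\ref{it:EP_traces3} with \(T(v)=[T]([v])\) comes from transporting states along \(W(\Gamma,E)\cong W(E/\Gamma)\) using Corollary~\ref{cor:reducing_to_graphs}. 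Up to and including the identification of tracial states with the condition \(\sum_{v}T(v)=1\), this is the proof in the paper.

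Where you diverge is the rescaled chain \(T(v)=\abs{\Gamma v}^{-1}[T]([v])\). The paper treats this as a formal relabelling of the bijection already obtained and only checks that the orbit-sum normalisation makes the total masses agree, which is what yields the correspondence between tracial states and \emph{normalised} graph traces on \(E/\Gamma\). You instead propose to verify directly, by orbit counting, that \([T]([v])\defeq\sum_{w\in\Gamma v}T(w)\) satisfies the graph-trace identity on \(E/\Gamma\). That step is not the routine bookkeeping you describe: unwinding it, the identity \([T]([v])=\sum_{\rg([e])=[v]}[T]([\s(e)])\) reduces to the claim that \(\abs{\Gamma e}=\abs{\Gamma \s(e)}\) for every edge \(e\), i.e.\ that the stabiliser of an edge agrees with the stabiliser of its source vertex, whereas a priori one only has the inclusion \(\mathrm{Stab}(e)\subseteq\mathrm{Stab}(\s(e))\). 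You name this as ``the main obstacle'' but never resolve it, so as written the direct route has a gap precisely at its only nontrivial point. Either supply the stabiliser comparison in your setting, or follow the paper and obtain both versions of \ref{it:EP_traces2}\(\leftrightarrow\)\ref{it:EP_traces3} from the type-semigroup isomorphism \(W(\Gamma,E)\cong W(E/\Gamma)\) of Corollary~\ref{cor:reducing_to_graphs}, so that the only thing left to check by hand is the normalisation identity \(\sum_{[v]}[T]([v])=\sum_{v}T(v)\), which your last paragraph does correctly.
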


\begin{proof}
  Using the isomorphism \(W(\Gamma,E)\cong S_\B(\Gr)\) from
  Corollary~\ref{cor:type_semigroups_for_Exel_Pardo} and applying
  Theorem~\ref{thm:Riesz_for_monoids} to~\(S_\B(\Gr)\) gives a
  bijection between traces~\(\tau\) in~\ref{it:EP_traces1} and
  states~\(\nu\) on~\(W(\Gamma,E)\).  Moreover, \(\tau\) is a state if
  and only if
  \(\sup {}\setgiven{\nu([1_U])}{U\subseteq E^0 \text{ is
      finite}}=1\), which is equivalent to
  \(\sum_{v\in E^0} \nu([1_{\{v\}}])=1\).  Thus combining this with
  the bijection from Lemma~\ref{lem:graph_traces_vs_states}, we get
  that \(\tau(p_v)=T(v)\) establishes a bijection between the objects
  in \ref{it:EP_traces1} and~\ref{it:EP_traces2}, which restricts to a
  bijection between tracial states and normalised graph
  \(\Gamma\)\nb-traces.  This also proves the bijection between the
  objects in \ref{it:EP_traces3} and~\ref{it:EP_traces4}, as it is a
  special case of the previous bijection, where the group is trivial.
  The relation \(T(v)= [T]([v])\) establishes a bijection between the
  objects in \ref{it:EP_traces2} and~\ref{it:EP_traces3}, as they are
  in the corresponding bijections with states on
  \(W(\Gamma,E) \cong W(E_\Gamma)\), see Proposition
  \ref{prop:reducing_to_graphs} and
  Lemma~\ref{lem:graph_traces_vs_states}. When all orbits \(\Gamma v\)
  for \(v\in E^0\) are finite, the relation
  \(T(v)= \abs{\Gamma v}^{-1}[T]([v])\) also establishes a bijection
  between the objects in \ref{it:EP_traces2} and~\ref{it:EP_traces3}
  but this time it restricts to a bijection between the sets of
  normalised \(\Gamma\)\nb-traces on~\(E^0\) and normalised graph
  traces on~\(E^0/\Gamma\).
\end{proof}

\begin{remark}
  In general, there is no bijection between normalised graph
  \(\Gamma\)\nb-traces \(T\colon E^0 \to [0,1]\) and normalised
  graph traces \([T]\colon E^0/\Gamma \to [0,1]\), as any normalised
  \(\Gamma\)\nb-trace has to vanish on all~\(v\) with an infinite
  orbit~\(\Gamma v\).  For instance, take any graph~\(E\) with a
  normalised graph trace~$E$ (like the one in
  Example~\ref{exm:drunken_graph} below) and consider a disjoint
  countable union~\(\Z E\) of its copies.  Let \(\Gamma\defeq \Z\)
  act on~\(\Z E\) by permuting the copies of~\(E\) and let the
  restriction cocycle be trivial.  Then~\(\Z E\) has no
  normalised \(\Gamma\)\nb-traces, as all vertices have infinite
  orbits, but \((\Z E)_{\Gamma} \cong E\) has one.  The explanation is
  that the isomorphism \(W(\Gamma,E)\cong W(E_\Gamma)\) does not
  preserve the ``dimension ranges''.
\end{remark}

\begin{theorem}[Dichotomy for Exel--Pardo algebras]
  \label{thm:Dichotomy_Exel_Pardo}
  Assume that~\(\OO_{(\Gamma,E)}\) is simple.  Then the following
  conditions are equivalent:
  \begin{enumerate}
  \item\label{enu:Dichotomy_Exel_Pardo1}%
    \(\OO_{(\Gamma,E)}\) is not purely infinite;
  \item\label{enu:Dichotomy_Exel_Pardo2}%
    \(\OO_{(\Gamma,E)}\) is stably finite;
  \item\label{enu:Dichotomy_Exel_Pardo3}%
    \(\OO_{(\Gamma,E)}\) has a faithful semifinite lower
    semicontinuous trace;
  \item\label{enu:Dichotomy_Exel_Pardo4}%
    there is a nontrivial graph \(\Gamma\)\nb-trace \textup{(}which
    has to be faithful and finite\textup{)};
  \item\label{enu:Dichotomy_Exel_Pardo5}%
    there is a nontrivial graph trace for~{\(E_\Gamma\)} \textup{(}which has to
    be faithful and finite\textup{)};
  \item\label{enu:Dichotomy_Exel_Pardo6}%
    there are no \(\Gamma\)\nb-cycles in~\(E\);
  \item\label{enu:Dichotomy_Exel_Pardo7}%
    there are no cycles in {\(E_\Gamma\)};
  \item\label{enu:Dichotomy_Exel_Pardo8}%
    the graph \(\Cst\)\nb-algebra {\(\Cst(E_\Gamma)\)} is not purely
    infinite;
  \item\label{enu:Dichotomy_Exel_Pardo9}%
    the graph \(\Cst\)\nb-algebra {\(\Cst(E_\Gamma)\)} is stably
    finite;
  \item\label{enu:Dichotomy_Exel_Pardo10}%
    \(W(\Gamma,E)\cong {W(E_\Gamma)}\not\cong\{0,\infty\}\).
  \end{enumerate}
  If these conditions hold, then
  \(W(\Gamma,E)=V(\Gamma,E)\cong {V(E_\Gamma)=W(E_\Gamma)}\), and
  lower semicontinuous traces on \(\OO_{(\Gamma,E)}\) induced from
  \(\Cont_0(E ^\infty)\) are described in
  Proposition~\textup{\ref{prop:tracial_states_on_Exel-Pardo}}.
\end{theorem}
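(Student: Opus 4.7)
The proof would proceed by combining the general dichotomy of Corollary \ref{cor:dichotomy_for_universal} with the graph-theoretic description of the type semigroup developed in this section. Since $\OO_{(\Gamma,E)} \cong \Cst(\Gr)$ with $\Gr = S_{\Gamma,E} \ltimes E^\infty$, simplicity of $\OO_{(\Gamma,E)}$ forces $\Cst(\Gr) = \Cst_\red(\Gr) = \Cst_\ess(\Gr)$ and ensures that $\Gr$ is minimal and topologically free. By Proposition \ref{prop:unperforation_for_minimal_exel_pardo}, minimality is equivalent to cofinality of $E/\Gamma$, and the type semigroup $S_\B(\Gr) \cong W(\Gamma,E) \cong W(E/\Gamma)$ is simple and has plain paradoxes. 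The inverse semigroup basis $\B$ of compact open bisections $Z(\alpha,g,\beta)$ consists of precompact $\sigma$\nb-compact sets that trivialise the (trivial) twist, so Corollary \ref{cor:dichotomy_for_universal} gives the dichotomy (1)$\iff$(2).

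For the trace equivalences, (2)$\Rightarrow$(3) follows from Theorem \ref{thm:stably_finite_twisted}, which additionally guarantees that the resulting faithful semifinite lower semicontinuous trace factors through $\E$; then Proposition \ref{prop:tracial_states_on_Exel-Pardo} bijects such traces with nontrivial graph $\Gamma$-traces, yielding (3)$\iff$(4), and Corollary \ref{cor:reducing_to_graphs} together with Lemma \ref{lem:graph_traces_vs_states} transfers graph $\Gamma$-traces on $E$ to graph traces on $E/\Gamma$, yielding (4)$\iff$(5). The implications (3)$\Rightarrow$(1) and (5)$\Rightarrow$(3) are immediate, and faithfulness plus finiteness of the nontrivial states is automatic from Lemma \ref{lem:simplicity_implies_faithfulness_of_states} because $W(\Gamma,E)$ is simple.

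The graph-combinatorial equivalence (6)$\iff$(7) follows directly from the definition of $E/\Gamma$, since a $\Gamma$\nb-path $(\alpha,g)$ from $v$ to $v$ descends to a cycle $[\alpha]$ at $[v]$ and any cycle in $E/\Gamma$ lifts to a $\Gamma$\nb-cycle in $E$. For the chain (7)$\iff$(8)$\iff$(9) I would invoke classical graph $\Cst$-algebra theory: since $E/\Gamma$ is cofinal, absence of cycles makes $\Cst(E/\Gamma)$ a simple AF-algebra (stably finite and traceful), while the presence of a cycle with an entrance makes $W(E/\Gamma)$ properly infinite by the proof of Proposition \ref{prop:unperforation_for_minimal_exel_pardo} and hence $\Cst(E/\Gamma)$ purely infinite by Theorem \ref{the:purely_infinite_semigroup_dichotomy} applied to the graph groupoid. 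This simultaneously closes (5)$\iff$(7). The equivalence with (10) is then Remark \ref{rem:purely_infinite_simple_ordered}: since $W(\Gamma,E)$ is simple with plain paradoxes, it is purely infinite precisely when its ordered quotient equals $\{0,\infty\}$. The supplementary identity $W(\Gamma,E) = V(\Gamma,E) \cong V(E/\Gamma) = W(E/\Gamma)$ in the stably finite case follows from Proposition \ref{prop:type_vs_groupoid_semigroup}.\ref{item:type_vs_groupoid_semigroup4}, because the graph trace provides a faithful finite state on $V(E/\Gamma)$, forcing its algebraic preorder to be antisymmetric.

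The hard part will be verifying that simplicity of $\OO_{(\Gamma,E)}$ rules out cycles in $E/\Gamma$ that have no entrance. Such a cycle would force $E/\Gamma$ to be a pure loop (its vertices being all base points of the cycle), and one then needs to check -- using a simplicity criterion for Exel--Pardo algebras beyond mere cofinality, or equivalently the topological freeness of $\Gr$ -- that in this configuration $\Gr$ carries enough isotropy to make $\Cst_\red(\Gr)$ non-simple. This is the only step that is not a direct consequence of the machinery already developed in the paper, and it is essential for identifying condition (7) with the absence of \emph{any} cycle rather than merely the absence of cycles with entrances.
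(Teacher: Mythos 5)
Your proposal follows essentially the same route as the paper: simplicity forces $\Cst(\Gr)=\Cst_\red(\Gr)=\Cst_\ess(\Gr)$ with $\Gr$ minimal and topologically free; Proposition~\ref{prop:unperforation_for_minimal_exel_pardo} gives simplicity and plain paradoxes for $S_\B(\Gr)\cong W(E/\Gamma)$; Corollary~\ref{cor:dichotomy_for_universal} gives \ref{enu:Dichotomy_Exel_Pardo1}$\iff$\ref{enu:Dichotomy_Exel_Pardo2}; Theorem~\ref{thm:stably_finite_twisted} together with Lemma~\ref{lem:graph_traces_vs_states} and Corollary~\ref{cor:reducing_to_graphs} handles \ref{enu:Dichotomy_Exel_Pardo2}--\ref{enu:Dichotomy_Exel_Pardo5}; Theorem~\ref{the:purely_infinite_semigroup_dichotomy} gives \ref{enu:Dichotomy_Exel_Pardo1}$\iff$\ref{enu:Dichotomy_Exel_Pardo10}; and the graph-algebra equivalences \ref{enu:Dichotomy_Exel_Pardo7}--\ref{enu:Dichotomy_Exel_Pardo9} are classical.

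The one step you leave open --- excluding cycles in $E/\Gamma$ without an entrance, which is needed to pass from ``no cycle with an entrance'' (the condition that Proposition~\ref{prop:unperforation_for_minimal_exel_pardo} actually controls, and hence what \ref{enu:Dichotomy_Exel_Pardo10} detects) to the literal condition \ref{enu:Dichotomy_Exel_Pardo7} --- is diagnosed correctly: if $E/\Gamma$ were cofinal with an entrance-less cycle, then $W(E/\Gamma)\cong\N$, so \ref{enu:Dichotomy_Exel_Pardo10} and \ref{enu:Dichotomy_Exel_Pardo1} would hold while \ref{enu:Dichotomy_Exel_Pardo7} fails. The paper closes this exactly along the line you sketch, but by citation rather than by a new argument: topological freeness of $\Gr$ (which simplicity forces) implies that every $\Gamma$\nb-cycle in $E$ has an entrance, by Theorem~14.10 of Exel--Pardo. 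So your ``hard part'' is not something missing from the strategy; it is an external input that you should simply invoke. Two smaller remarks: the equivalence \ref{enu:Dichotomy_Exel_Pardo6}$\iff$\ref{enu:Dichotomy_Exel_Pardo7} is not quite immediate from the definition of $E/\Gamma$ (lifting a cycle of $E/\Gamma$ to a $\Gamma$\nb-cycle of $E$ requires choosing the group elements compatibly; the paper cites Larki's Lemma~4.5 for this); and for \ref{enu:Dichotomy_Exel_Pardo3}$\Rightarrow$\ref{enu:Dichotomy_Exel_Pardo4} you should route through \ref{enu:Dichotomy_Exel_Pardo3}$\Rightarrow$\ref{enu:Dichotomy_Exel_Pardo2} and the cycle of equivalences in Theorem~\ref{thm:stably_finite_twisted}, since an arbitrary faithful semifinite trace need not factor through $\E$ a priori, which is what Proposition~\ref{prop:tracial_states_on_Exel-Pardo} requires.
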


\begin{proof}
  Since~\(\OO_{(\Gamma,E)}\cong \Cst(\Gr)\) is simple, it must be the essential
  \(\Cst\)\nb-algebra of~\(\Gr\), and~\(\Gr\) must be minimal  and
  effective, see \cite{Clark-Exel-Pardo-Sims-Starling:Simplicity_non-Hausdorff}*{Theorem~4.10}.
  Effectiveness of~\(\Gr\) implies that every
  \(\Gamma\)\nb-cycle  in \(E\) has an entrance (see the proof of \cite{Exel-Pardo:Self-similar}*{Theorem~14.10}),
  which is equivalent to saying that
  every cycle in \(E_\Gamma\) has an entrance (see \cite{Larki:dichotomy_for_self-similar_graphs}*{Lemma
    4.7.(1)}). Further, \(E_\Gamma\) is cofinal by Proposition~\ref{prop:unperforation_for_minimal_exel_pardo}.
  Combining these, we get that~\(\Cst(E_\Gamma)\) is simple
  (see \cite{Bates-Pask-Raeburn-Szymanski:Row_finite}*{Proposition~5.1}) which shows that
  conditions~\ref{enu:Dichotomy_Exel_Pardo7}--\ref{enu:Dichotomy_Exel_Pardo9}
  are equivalent by \cite{Bates-Pask-Raeburn-Szymanski:Row_finite}*{Remark~5.6}.
  Condition~\ref{enu:Dichotomy_Exel_Pardo7} is equivalent to~\ref{enu:Dichotomy_Exel_Pardo6} by
  \cite{Larki:dichotomy_for_self-similar_graphs}*{Lemma~4.5.(3)}, and to~\ref{enu:Dichotomy_Exel_Pardo10} by the last part of
  Proposition~\ref{prop:unperforation_for_minimal_exel_pardo}.
  Conditions~\ref{enu:Dichotomy_Exel_Pardo10} and~\ref{enu:Dichotomy_Exel_Pardo1} are equivalent
  by Theorem~\ref{the:purely_infinite_semigroup_dichotomy} and by minimality of \(\Gr\).	
  Since the monoid \(S_\B(\Gr)\cong W(\Gamma,E)\cong { W(E_\Gamma)}\) is always unperforated
  (see Proposition~\ref{prop:reducing_to_graphs}), apply Corollary~\ref{cor:dichotomy_for_universal} to
  get that \ref{enu:Dichotomy_Exel_Pardo1} and
  \ref{enu:Dichotomy_Exel_Pardo2} are equivalent.
  Conditions~\ref{enu:Dichotomy_Exel_Pardo2}--\ref{enu:Dichotomy_Exel_Pardo4} are equivalent by
  Lemma~\ref{lem:graph_traces_vs_states} and Theorem~\ref{thm:stably_finite_twisted}.
  Finally, Proposition~\ref{prop:tracial_states_on_Exel-Pardo} gives
  us the equivalence between \ref{enu:Dichotomy_Exel_Pardo4}
  and~\ref{enu:Dichotomy_Exel_Pardo5}.
\end{proof}

\begin{remark}
  One may define a reduced Exel--Pardo
  algebra~\(\OO_{(\Gamma,E)}^{\red}\) so that it coincides with the
  reduced groupoid \(\Cst\)-algebra, see
  \cite{Bardadyn-Kwasniewski-McKee:Banach_algebras_simple_purely_infinite}*{Definition~7.37}
  (for \(P=\{2\}\)).  Then assuming that ~\(\OO_{(\Gamma,E)}^{\red}\)
  is simple and every \(\Gamma\)\nb-cycle in~\(E\) has an entrance,
  the proof above shows that~\(\OO_{(\Gamma,E)}^{\red}\) is either
  purely infinite or it has a faithful semifinite lower semicontinuous
  trace.
\end{remark}

The above theorem generalises the result of
Larki~\cite{Larki:dichotomy_for_self-similar_graphs} where it is
assumed that the group in question is amenable and the self-similar
action is pseudo-free (a rather strong assumption that among other
things forces the groupoid~\(\Gr_{\Gamma,E}\) to be Hausdorff).  In
contrast to~\cite{Larki:dichotomy_for_self-similar_graphs}, our
proof exploits results on type semigroups.  In fact, we do not know
how to prove it more directly and we believe that there is a gap in
the proof of \((5)\Rightarrow(4)\) in
\cite{Larki:dichotomy_for_self-similar_graphs}*{Theorem~4.12}, where
it is claimed that traces in finite-dimensional algebras may be
extended (somewhat randomly) to get a trace on the inductive limit.
This is a delicate point, which we illustrate with the following
example.

\begin{example}
  \label{exm:drunken_graph}
  Consider the following infinite row-finite directed graph~\(E\):
  \begin{equation}
    \label{eq:drunken_diagram}
    \begin{tikzcd}[ampersand replacement=\&]
      \, \& \overset{a_1}{\bullet}  \& \overset{a_2}{\bullet}  \&  \overset{a_3}{\bullet}  \&  \cdots \\
      \underset{b_1}{\bullet}\& \underset{b_2}{\bullet} \& \underset{b_3}{\bullet} \& \underset{b_4}{\bullet}   \& \cdots
      \arrow[from=1-2, to=2-1]
      \arrow[from=2-2, to=2-1]
      \arrow[from=1-3, to=1-2]
      \arrow[from=1-3, to=2-2]
      \arrow[from=2-2, to=1-2]
      \arrow[from=2-3, to=2-2]
      \arrow[from=2-3, to=1-3]
      \arrow[from=1-4, to=1-3]
      \arrow[from=1-4, to=2-3]
      \arrow[from=2-3, to=1-3]
      \arrow[from=2-4, to=2-3]
      \arrow[from=2-4, to=1-4]
      \arrow[from=1-5, to=1-4, dashed]
      \arrow[from=1-5, to=2-4, dashed]
      \arrow[from=2-5, to=2-4, dashed]
    \end{tikzcd}
  \end{equation}
  By Theorem~\ref{thm:Dichotomy_Exel_Pardo}, this graph admits a
  nontrivial graph trace, which has to be finite and faithful.  Thus
  there are nonzero numbers \(a_n, b_n\) attached to vertices as
  in~\eqref{eq:drunken_diagram} that determine a graph trace
  \(T\colon E^0\to [0,\infty)\).  These numbers have to satisfy the
  recurrence formulas \(a_n= a_{n+1} +b_{n+1}\) and
  \(b_n=a_n+b_{n+1}\) for \(n\ge 1\).  Denoting by
  \((F_n)_{n\in \N}\) the Fibonacci sequence \(0,1,1,2,3,5,\dotsc\),
  it follows that
  \[
    a_{n+1}=F_{2n+1}a_1-F_{2n} b_1,\quad
    b_{n+1}=F_{2n-1}b_1-F_{2n} a_1 \qquad \text{for } n\ge 1.
  \]
  Since all these numbers must to be strictly positive, we get the
  constraints
  \(\frac{F_{2n}}{F_{2n-1}}a_1<b_1< \frac{F_{2n+1}}{F_{2n}}a_1\) for
  all \(n\ge 1\).  But \(\frac{F_{n+1}}{F_n}\) converges to the
  golden ratio \(\varphi=\frac{1+\sqrt{5}}{2}\), and it oscillates
  such that
  \(\frac{F_{2n}}{F_{2n-1}}<\varphi< \frac{F_{2n+1}}{F_{2n}}\).
  Another known magic property of the Fibonacci numbers implies
  $\sum_{n=1}^\infty \abs{F_{n+1}-F_n\varphi} =\varphi$.  So
  $\sum_{n=1} a_n +b_n= a_1 \cdot \varphi$.  Hence putting
  $a_1\defeq\varphi^{-1}$, we get the unique graph trace, which
  induces the tracial state on~$\Cst(E)$.
\end{example}

\begin{bibdiv}
  \begin{biblist}
    \bibselect{references}
  \end{biblist}
\end{bibdiv}
\end{document}